\DeclareMathAlphabet\mathscr{U}{eus}{m}{n}
\SetMathAlphabet\mathscr{bold}{U}{eus}{b}{n}
\DeclareMathAlphabet\matheur{U}{eur}{m}{n}
\SetMathAlphabet\matheur{bold}{U}{eur}{b}{n}
\numberwithin{equation}{section}
\newtheorem{theo}{Theorem}[section]
\newtheorem{prop}[theo]{Proposition}
\newtheorem{lemm}[theo]{Lemma}
\newtheorem{coro}[theo]{Corollary}
\newtheorem{conj}[theo]{Conjecture}
\theoremstyle{definition}
\newtheorem{defi}[theo]{Definition}
\newtheorem{exam}[theo]{Example}
\newtheorem{exas}[theo]{Examples}
\theoremstyle{remark}
\newtheorem{rema}[theo]{Remark}
\newtheorem{prob}[theo]{Problem}
\newcommand{\dT}{\ensuremath|\hspace{-2pt}|}
\begin{document}
\allowdisplaybreaks\frenchspacing

\title{Representations of toral automorphisms}

\author{Klaus Schmidt}

\address{Mathematics Institute, University of Vienna, Oskar-Morgenstern-Platz 1, A-1090 Vienna, Austria, \textup{and} Erwin Schr\"odinger Institute for Mathematical Physics, Boltzmanngasse~9, A-1090 Vienna, Austria} \email{klaus.schmidt@univie.ac.at}


\subjclass[2010]{11K16, 37A45, 37B10, 37C29.}

\keywords{Hyperbolic and quasihyperbolic toral automorphisms, symbolic representations, homoclinic points}



	\begin{abstract}
This survey gives an account of an algebraic construction of symbolic covers and representations of ergodic automorphisms of compact abelian groups, initiated by A.M. Vershik around 1992 for hyperbolic automorphisms of finite-dimensional tori. The key ingredient in this approach, which was subsequently extended to arbitrary expansive automorphisms of compact abelian groups, is the use of homoclinic points of the automorphism.

Although existence and abundance of homoclinic points is intimately connected to expansiveness of the automorphism, it is nevertheless possible to extend certain aspects of this construction to nonexpansive irreducible automorphisms of compact abelian groups (like irreducible toral automorphisms whose dominant eigenvalue is a Salem number). The later sections of this survey discuss the phenomena and problems arising in this extension.
	\end{abstract}

\maketitle

\section{Introduction: Symbolic covers}

A \textit{topological dynamical system} $(X,T)$ is a pair consisting of a compact metrizable space $X$ and a homeomorphism $T$ of $X$. Two such systems $(X,T)$ and $(X',T')$ are \textit{isomorphic} (or \textit{conjugate}) if there exists an equivariant homeo\-morphism $\phi \colon X\longrightarrow X'$.\footnote{\,\label{equivariant}A map $\phi \colon X\longrightarrow X'$ is \textit{equivariant} (or, more precisely, \textit{$(T,T')$-equivariant}) if $\phi \circ T=T'\circ \phi $.} If there exists a continuous, \textit{surjective}, equivariant map $\psi \colon X\longrightarrow X'$ we say that $(X,T)_\psi $ is a \textit{cover} of $(X',T')$ with \textit{covering map} $\psi $ and call $(X',T')$ a \textit{factor} of $(X,T)$ with \textit{factor map} $\psi $.

If the topological entropies of $(X,T)$ and $(X',T')$ coincide, then $(X,T)_\psi $ is an \textit{equal entropy} cover of $(X,T)$ (this property is, of course, independent of the specific covering map $\psi $). If the covering map $\psi \colon X\longrightarrow X'$ is finite-to-one everywhere (resp. bounded-to-one) then $(X,T)_\psi $ is a \textit{finite-to-one} (resp. \textit{bounded-to-one}) cover of $(X',T')$.  Finally, if $(X',T')$ is topologically transitive and $|\psi ^{-1}(\{x\})|=1$ for every doubly transitive point\footnote{\,\label{transitive}A point $x\in X$ is \textit{doubly transitive} if both the forward and backward semi-orbits of $x$ under $T$ are dense in $X$.} $x\in X'$ we say that $(X,T)_\psi $ is an \textit{almost one-to-one cover} of $(X,T)$.\footnote{\,\label{almost 1-1}There are several different definitions of \textit{almost one-to-one covers}, but this one will do for our purposes. There are also different notions of \textit{topological transitivity}; here we could take `density of doubly transitive points' as an appropriate definition.}

A topological dynamical system $(X,T)$ is \textit{expansive} if
	\begin{equation}
	\label{eq:expansive}
\inf\nolimits_{x,x'\in X:\,x\ne x'} \;\sup\nolimits_{n\in \mathbb{Z}}\;d(T^nx,T^nx')>0
	\end{equation}
for some (and hence for every) metric $d$ which induces the topology of $X$. If $X$ is zero-dimensional and $(X,T)$ is expansive we call $(X,T)$ a \textit{symbolic system}.

If $(X,T)$ is a symbolic system it is isomorphic to a \textit{shift space} $(\Omega ,\sigma )$, where $\mathsf{A}$ is a finite set (called an \textit{alphabet}),
	\begin{equation}
	\label{eq:shift}
(\sigma \omega )_n=\omega _{n+1},\enspace n\in \mathbb{Z},
	\end{equation}
is the \textit{shift} on $\mathsf{A}^\mathbb{Z}$, and $\Omega \subset \mathsf{A}^\mathbb{Z}$ is a closed, shift-invariant set. Here we do not distinguish notationally between $\sigma $ and its restriction $\sigma |_\Omega $ to $\Omega $.

Recall that a shift space $\Omega \subset \mathsf{A}^\mathbb{Z}$ is \textit{of finite type} (abbreviated as \textit{SFT}) if there exists a finite subset $F\subset \mathbb{Z}$ such that
	\begin{equation}
	\label{eq:finite type}
\Omega =\{\omega \in \mathsf{A}^\mathbb{Z}:\pi _F(\sigma ^n\omega )\in \pi _F(\Omega )\;\textup{for every}\;n\in \mathbb{Z}\},
	\end{equation}
where $\pi _F$ is the projection of each $\omega \in \mathsf{A}^\mathbb{Z}$ onto its coordinates in $F$. A symbolic system $(X,T)$ is \textit{of finite type} if it is isomorphic to a shift space $(\Omega ,\sigma )$ for some finite alphabet $\mathsf{A}$ and some \textit{SFT} $\Omega \subset \mathsf{A}^\mathbb{Z}$. A more intrinsic definition of symbolic systems of finite type can be given in terms of a descending chain condition: a symbolic system $(X,T)$ is of finite type if and only if every sequence $(X_n,T_n)_{n\ge1}$ of symbolic systems with $X_{n+1}\subset X_n$ and $T_{n+1}=T_n|_{X_{n+1}}$ for every $n\ge1$, and with $X=\bigcap_{n\ge 1}X_n$, satisfies that $X=X_N$ for some $N\ge1$.

A symbolic system $(X,T)$ is \textit{sofic} if it is a factor of a symbolic system of finite type.

An almost one-to-one symbolic cover $(X,T)_\psi $ of a topological dynamical system $(X',T')$ is a \textit{symbolic representation} of $(X',T')$.

\smallskip Representations of smooth dynamical systems (like hyperbolic toral automorphisms) by symbolic systems which are sofic or of finite type are extremely useful for determining dynamical properties of the systems which would be much more difficult to obtain by other means. There are many classical examples of such representations, most importantly the ones arising from \textit{Markov partitions} of hyperbolic toral automorphisms and, more generally, of axiom A diffeomorphisms, described in the papers by Adler-Weiss \cite{AW, AW2}, Sinai \cite{Sinai1, Sinai2} and Bowen \cite{Bowen1} at varying levels of generality.

Let me recall the notion of a Markov partition in a particularly simple example: the automorphism $\alpha _A$ of $\mathbb{T}^2=\mathbb{R}^2/\mathbb{Z}^2$ defined by the hyperbolic matrix
	\begin{equation}
	\label{eq:A}
A=
	\left(
	\begin{matrix}
0&1
	\\
1&1
	\end{matrix}\right)
\in \textup{GL}(2,\mathbb{Z}).
	\end{equation}
The matrix $A$ has one-dimensional expanding and contracting eigenspaces $v_+$ and $v_-$, respectively. Under the quotient map $\pi \colon \mathbb{R}^2\longrightarrow \mathbb{T}^2$ these eigenspaces get sent to dense $\alpha _A$-invariant subgroups $\pi (v_{\pm})$ of $\mathbb{T}^2$ which intersect in a countable dense subgroup $\Delta _{\alpha _A}(\mathbb{T}^2)= \pi (v_+)\cap \pi (v_-)\subset \mathbb{T}^2$. Every $w\in \Delta _{\alpha _A}(\mathbb{T}^2)$ is \textit{homoclinic to $0$}, or simply \textit{homoclinic}, in the sense that
	\begin{equation}
	\label{eq:homoclinic}
\lim_{|n|\to\infty }\alpha _A^nw=0.
	\end{equation}
In Figure 1 four of these homoclinic points are marked with the symbols $x^\Delta ,y^\Delta ,\linebreak[0]z^\Delta $ and $w^\Delta $.

	\begin{figure}[ht]
\vspace{-5mm}\hspace{-2mm}\includegraphics[width=70mm]{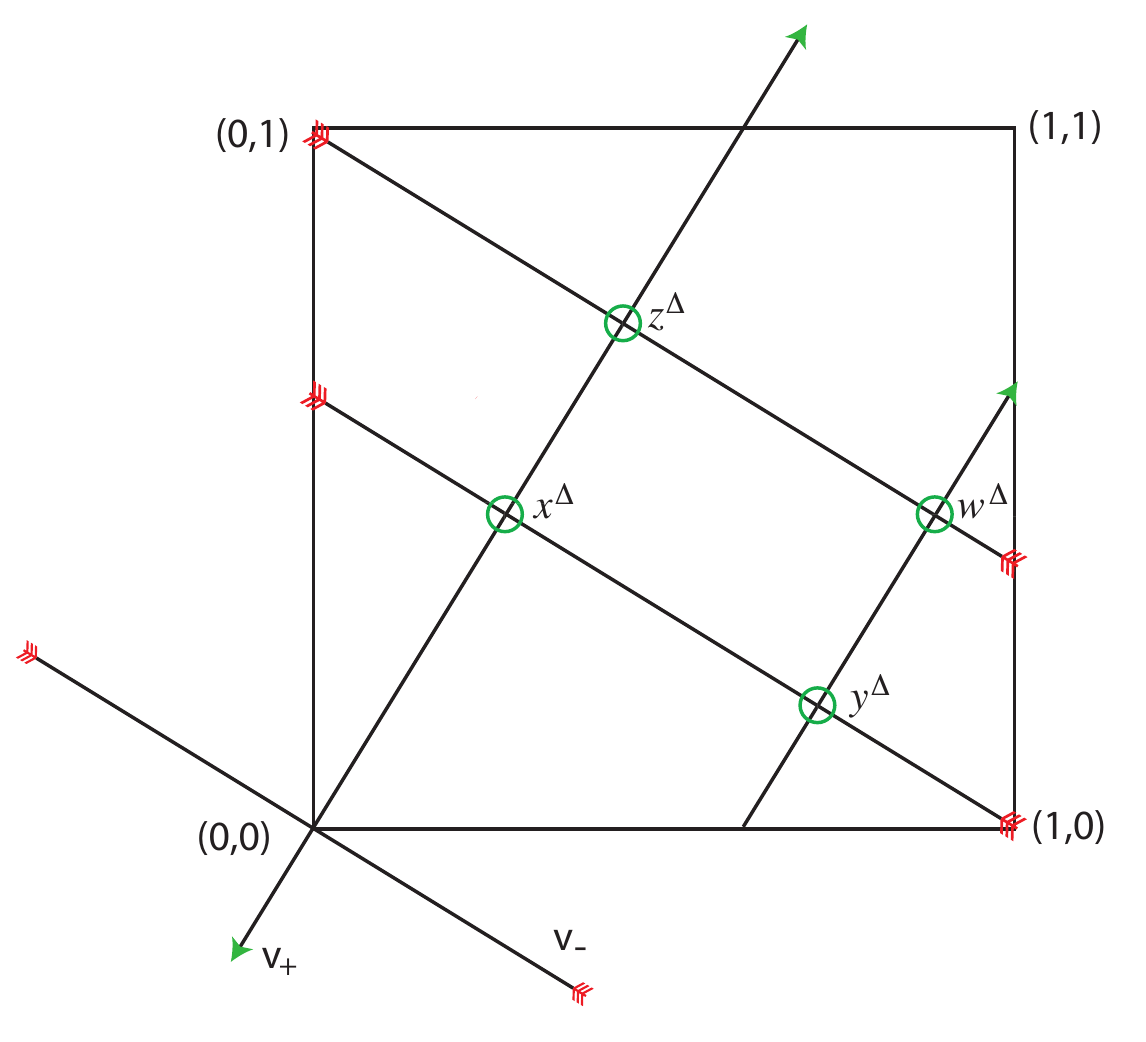}\hspace{6mm}\raisebox{11.3mm}{\includegraphics[width=46.5mm]{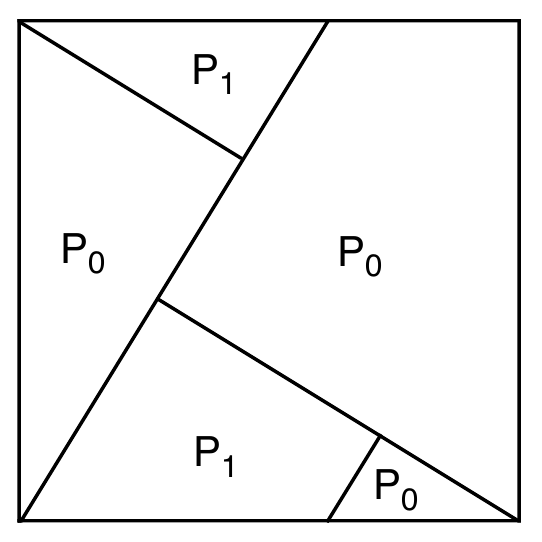}}\vspace{-14mm}
	\\
\begin{center}\textsc{\small ~\hspace{11mm}Figure~1\label{Figure1}} \hspace{45.5mm}\textsc{\small Figure~2\label{Figure2}}\end{center}\vspace{3mm}
	\end{figure}

Figure 2 shows a cover of $\mathbb{T}^2$ by two closed rectangles $\mathsf{P}_0$ and $\mathsf{P}_1$ with disjoint interiors, whose vertices are homoclinic points, and whose edges are certain connected subsets of the dense subgroups $\pi (v_\pm)\subset \mathbb{T}^2$ drawn in Figure 1. One easily checks that $\alpha _A(\mathsf{P}_1)\subset \mathsf{P}_0$ and $\alpha _A(\mathsf{P}_0)\subset \mathsf{P}_0 \cup \mathsf{P}_1$. Furthermore, if we associate with every $x\in \mathbb{T}^2$ the sequence $\psi (x)=(\psi (x)_n)_{n\in \mathbb{Z}}\in \{0,1\}^\mathbb{Z}$ with
	\begin{displaymath}
\psi (x)_n =
	\begin{cases}
1 &\textup{if}\;\alpha _A^nx\in \mathsf{P}_1,
	\\
0&\textup{otherwise},
	\end{cases}
	\end{displaymath}
we obtain an equivariant Borel map $\psi $ from $\mathbb{T}^2$ to the `golden mean shift'
	\begin{equation}
	\label{eq:golden}
\Omega _{GM}=\{\omega =(\omega _k)_{k\in \mathbb{Z}}\in \{0,1\}^\mathbb{Z}: \omega _k\omega _{k+1}=0\;\textup{for every}\;k\in \mathbb{Z}\},
	\end{equation}
which admits a continuous, surjective, equivariant, and at most two-to-one map $\phi \colon \Omega _{GM}\linebreak[0]\longrightarrow \mathbb{T}^2$ satisfying $\phi \circ \psi (x)=x$ for every $x\in \mathbb{T}^2$.
	\label{cover1}
Then $(\Omega _{GM},\sigma )_\phi $ is a symbolic representation of finite type of $(\mathbb{T}^2,\alpha _A)$, and the cover $\mathsf{P}=\{\mathsf{P}_0,\mathsf{P}_1\}$ is called a \textit{Markov partition} of $(\mathbb{T}^2,\alpha _A)$ (although it is not, of course, a partition). A detailed discussion of this construction requires a bit of care (cf., e.g., \cite{Adler}).

The geometric construction of Markov partitions for general irreducible\footnote{\,An automorphism $\alpha $ of a compact abelian group $X$ is \textit{irreducible} if every closed, $\alpha $-invariant subgroup $Y\subsetneq X$ is finite.} hyperbolic automorphisms of $\mathbb{T}^n,\,n\ge3$, yields much more complicated sets whose boundaries cannot be smooth (cf. \cite{Bowen 2}). For a nice overview of the quite intricate geometric constructions of Markov partitions for Pisot- and more general hyperbolic automorphisms of $\mathbb{T}^n$ we refer to \cite{AHFI}.

A rather different approach to symbolic representations of toral automorphisms has its origins in the paper \cite{Ver2} by Vershik, who obtained a representation of the toral automorphism $\alpha _A$ in \eqref{eq:A} by the golden mean shift $\Omega _{GM}$ in \eqref{eq:golden} by using homoclinic points rather than Markov partitions. In a series of papers this construction was subsequently extended to arbitrary hyperbolic toral automorphisms (cf., e.g., \cite{SV, S3, Sidorov}); related, but somewhat different, constructions appear in \cite{Borgne, KV}.

In \cite{ES}, a systematic approach to Vershik's `homoclinic' construction of symbolic covers of expansive group automorphisms (and, more generally, of expansive $\mathbb{Z}^d$-actions by automorphisms of compact abelian groups) was developed, based on the analysis of the homoclinic group of expansive algebraic $\mathbb{Z}^d$-actions in \cite{LS}. In all these considerations, the hypothesis of expansiveness is (almost) indispensable. If the condition of expansiveness is weakened, there may be no nonzero homoclinic points and most of the machinery described here is either unavailable or has to be modified considerably (cf. Section \ref{s:nonexpansive}).

\smallskip Let me briefly describe Vershik's approach in the case of the already familiar toral automorphism $\alpha _A$ in \eqref{eq:A}.

We write $\Delta _{\alpha _A}(\mathbb{T}^2)=\pi (v_+)\cap \pi (v_-)$ for the homoclinic group of $\alpha _A$ and take a nonzero point $w\in \Delta _{\alpha _A}(\mathbb{T}^2)$. Since the convergence in \eqref{eq:homoclinic} is exponentially fast as $|n|\to\infty $, we obtain a well-defined group homomorphism $\xi _w\colon \ell ^\infty (\mathbb{Z},\mathbb{Z})\longrightarrow \mathbb{T}^2$ (where $\ell ^\infty (\mathbb{Z},\mathbb{Z})$ is the group of bounded two-sided integer sequences with coordinate-wise addition) by setting
	\begin{equation}
	\label{eq:xi_w}
\xi _w(v) = \sum\nolimits_{n\in \mathbb{Z}}v_n\alpha _A^{-n}w
	\end{equation}
for every $v=(v_n)_{n\in \mathbb{Z}}\in \ell ^\infty (\mathbb{Z},\mathbb{Z})$. The homomorphism $\xi _w$ is clearly $(\alpha _A,\bar{\sigma })$-equivariant (cf. Footnote \ref{equivariant} \vpageref{equivariant}), where $\bar{\sigma }$ is the shift $(\bar{\sigma }v)_n=v_{n+1}$ on $\ell ^\infty (\mathbb{Z},\mathbb{Z})$. In \cite{Ver3} Vershik showed that the restriction of $\xi _w$ to the golden mean shift $\Omega _{GM}\subset \ell ^\infty (\mathbb{Z},\mathbb{Z})$ in \eqref{eq:golden} is a surjective map from $\Omega _{GM}$ to $\mathbb{T}^2$, and that $(\Omega _{GM},\bar{\sigma })_{\xi _w}$ is a bounded-to-one symbolic cover of $(\mathbb{T}^2,\alpha _A)$. If $w\in\Delta _{\alpha _A}(\mathbb{T}^2)$ a `good' homoclinic point (like $x^\Delta ,y^\Delta ,z^\Delta $, but not $w^\Delta $, in Figure 1 \vpageref{Figure1}), the covering map $\xi _w$ is almost one-to-one, so that $(\Omega _{GM},\bar{\sigma })_{\xi _w}$ becomes a symbolic representation of $(\mathbb{T}^2,\alpha _A)$ (cf. \cite{SV}).

Note that the only difference between the representations $(\Omega _{GM},\sigma )_\phi $ above and $(\Omega _{GM},\bar{\sigma })_{\xi _{x^\Delta }}$ lies in the choice of the covering maps.\footnote{\,The notational distinction between the shift operator $\sigma $ on $\Omega _{GM}$ and the restriction of $\bar{\sigma }$ to $\Omega _{GM}\subset \ell ^\infty (\mathbb{Z},\mathbb{Z})$ is rather pedantic (since they coincide); it is intended to remind the reader that there is nothing really special about the \textit{SFT} $\Omega _{GM}\subset \ell ^\infty (\mathbb{Z},\mathbb{Z})$, and that there are other closed, bounded, $\bar{\sigma }$-invariant subsets $\Omega '\subset \ell ^\infty (\mathbb{Z},\mathbb{Z})$ which could serve equally well as symbolic representations of $(\mathbb{T}^2,\alpha _A)$ with covering map $\xi _{x^\Delta }\negthinspace|_{\Omega '}$ (cf., e.g., Corollary \ref{c:1-1-cover}).}

\smallskip In order to describe in greater detail the homoclinic construction of symbolic covers and representations for expansive automorphisms of compact connected abelian groups we have to discuss homoclinic points of expansive group automorphisms, with a little excursion into the nonexpansive case.

\section{Irreducible automorphisms of compact abelian groups}\label{s:irreducible}
	\begin{defi}
	\label{d:homoclinic}
Let $\alpha $ be a continuous automorphism of a compact abelian group $X$ with identity element $0=0_X$. A point $x \in X$ is \emph{$\alpha $-homoclinic} (or simply \emph{homoclinic}) if $\lim_{|n|\to \infty }\alpha ^ nx=0$. The set $\Delta _\alpha (X)$ of homoclinic points in $X$ is an $\alpha $-invariant subgroup.
	\end{defi}

Recall that two continuous automorphisms $\alpha $ and $\beta $ of compact abelian groups $X$ and $Y$ are \emph{finitely equivalent} if there exist continuous, surjective, equivariant and finite-to-one group homomorphisms $\phi \colon X \longrightarrow Y$ and $\psi \colon Y \longrightarrow X$. In order to describe all irreducible automorphisms of compact abelian groups up to finite equivalence we denote by $R_1=\mathbb{Z}[u ^{\pm 1}]$ the ring of Laurent polynomials with integral coefficients and write every $h \in R_1$ as
	\begin{equation}
	\label{eq:h}
h=\sum\nolimits_{k \in \mathbb{Z}}h_ku ^ k
	\end{equation}
with $h_k \in \mathbb{Z}$ for all $k$ and $h_k=0$ for all but finitely many $k$. Fix an irreducible polynomial
	\begin{equation}
	\label{eq:f}
f=f_0+\dots +f_mu ^ m \in R_1
	\end{equation}
with $m>0$, $f_m>0$ and $f_0\ne0$, denote by $\Theta _f$ the set of roots of $f$, and set
	\begin{equation}
	\label{eq:Omega}
	\begin{gathered}
\Theta _f ^-=\{ \theta \in \Theta _f:|\theta |<1 \},\enspace \enspace \Theta _f ^+=\{ \theta \in \Theta _f:|\theta |>1 \},
	\\
\enspace \Theta _f ^\circ =\{ \theta \in \Theta _f:|\theta |=1 \}.
	\end{gathered}
	\end{equation}
We define the shift $\sigma \colon \mathbb{T}^ \mathbb{Z}\longrightarrow \mathbb{T}^ \mathbb{Z}$ as in \eqref{eq:shift} by
	\begin{displaymath}
\sigma (x)_n=x_{n+1}
	\end{displaymath}
for every $x=(x_n)\in \mathbb{T}^ \mathbb{Z}$ and consider, for every nonzero $h \in R_1$ of the form \eqref{eq:h}, the shift-commuting surjective group homomorphism
	\begin{equation}
	\label{eq:hsigma}
\smash{h(\sigma )=\sum\nolimits_{k \in \mathbb{Z}}h_k \sigma ^ k\colon \mathbb{T}^ \mathbb{Z}\longrightarrow \mathbb{T}^ \mathbb{Z}.}
	\end{equation}
Put
	\begin{equation}
	\label{eq:principal}
X_f=\{ x \in \mathbb{T}^{\mathbb{Z}}: f(\sigma )(x)=0 \} = \ker f(\sigma ),
	\end{equation}
and write
	\begin{equation}
	\label{eq:alpha2}
\alpha _f= \sigma |_{X_f}
	\end{equation}
for the restriction of $\sigma $ to $X_f \subset \mathbb{T}^{\mathbb{Z}}$. By \cite[Theorem 7.1 and Propositions 7.2 -- 7.3]{DSAO}, $\alpha _f$ is nonexpansive if and only if $\Theta _f ^\circ \ne\varnothing $, and ergodic if and only if $f$ is not cyclotomic (i.e. if and only if $f$ does not divide $u ^ m-1$ for any $m\ge1$). In view of this we adopt the following terminology.
	\begin{defi}
	\label{d:hyperbolic}
The polynomial $f$ in \eqref{eq:f} is \emph{hyperbolic} if $\Theta _f ^\circ =\varnothing $, \emph{nonhyperbolic} if $\Theta _f ^\circ \ne\varnothing $, and \emph{cyclotomic} if $\Theta _f ^\circ$ contains a root of unity.
	\end{defi}

According to \cite{S2}, every irreducible automorphism $\alpha $ of a compact abelian group $X$ is finitely equivalent to an automorphism of the form $\alpha _f$ for some irreducible polynomial $f \in R_1$.

	\begin{exam}
	\label{e:companion}
If the polynomial $f$ in \eqref{eq:f} satisfies that $f_m=|f_0|=1$, then $X_f$ is isomorphic to $\mathbb{T}^ m=\mathbb{R}^ m/\mathbb{Z}^ m$, and the shift $\alpha _f$ is conjugate to the companion matrix
	\begin{equation}
	\label{eq:companion}
M_f= \left[
	\begin{smallmatrix}
0&1&0&\dots&0&0
	\\
0&0&1&\dots&0&0
	\\
\vdots&&\vdots&\ddots&\vdots&0
	\\
0&0&0&\dots&0&1
	\\
-f_0&-f_1&-f_2&\dots&-f_{m-2}&-f_{m-1}
	\end{smallmatrix}
\right],
	\end{equation}
acting on $\mathbb{T}^{m}$ from the left, where the isomorphism between $X_f$ and $\mathbb{T}^{m}$ is the coordinate projection\vspace{-2mm}
	$$
x \mapsto \left[
	\begin{smallmatrix}
x_0
	\\
x_1
	\\
\vdots
	\\
x_{m-1}
	\end{smallmatrix}
\right].
	$$
	\end{exam}

	\begin{exas}
	\label{e:x2}
Consider the irreducible polynomials $f_1=2,\enspace f_2=2-u,\enspace f_3=3-2u$ in $R_1$. Then $\alpha _{f_i}$ is the shift on $X_{f_i}$ with
	\begin{align*}
X_{f_1}&=\{x=(x_n)\in\mathbb{T}^\mathbb{Z}: 2x_n=0\enspace(\textup{mod}\;1)\enspace\textup{for every}\enspace n\in\mathbb{Z}\},
	\\
X_{f_2}&=\{x=(x_n)\in\mathbb{T}^\mathbb{Z}: 2x_n=x_{n+1}\enspace
 (\textup{mod}\;1)\enspace\textup{for every}\enspace n\in\mathbb{Z}\},
	\\
X_{f_3}&=\{x=(x_n)\in\mathbb{T}^\mathbb{Z}: 3x_n=2x_{n+1}\enspace
 (\textup{mod}\;1)\enspace\textup{for every}\enspace n\in\mathbb{Z}\},
	\end{align*}
respectively. In each case $\alpha _{f_i}$ is
 ergodic and expansive.

Clearly, $\alpha _{f_1}$ is the full
 two-shift. For $i=2,3$ we define surjective group homomorphisms $\phi _i\colon
 X_{f_i}\to \mathbb{T}$ by $\phi _i(x)=x_0$ for every
 $x=(x_n)\in X_{f_i}$. Then $\phi _2\circ \alpha _{f_2}=M_2\circ \phi _2$, where
 $M_2x=2x$ for every $x\in\mathbb{T}$. In other words, $\alpha _{f_2}$ is
 multiplication by $2$ on $\mathbb{T}$, made invertible. Similarly we see
 that $\alpha _{f_3}$ corresponds to `multiplication by $3/2$' on $\mathbb{T}$.
	\end{exas}

	\begin{exam}
	\label{e:nonexpansive}
(1) Let $f=u^4-u^3-u^2-u+1$. Then $\alpha _f$ is conjugate to the matrix\vspace{-2mm}
	\begin{equation}
	\label{eq:companion2}
M_f= \left[
	\begin{smallmatrix}
\hphantom{-}0&1&0&0
	\\
\hphantom{-}0&0&1&0
	\\
\hphantom{-}0&0&0&1
	\\
-1&1&1&1
	\end{smallmatrix}
\right],
	\end{equation}
acting on $\mathbb{T}^4$. Put $v=u+u^{-1}$ and consider the polynomial $g(v)=v^2-v-3=u^{-2}f$ with the roots $\zeta _\pm=\frac12\pm \sqrt\frac52$. Since $\zeta _+>2$ and $|\zeta _-|<2$, the solutions of the equation $u+u^{-1}=\zeta _+$ are of the form $\theta ,\theta ^{-1}$ with $\theta >1$, and the solutions of $u+u^{-1}=\zeta _-$ are conjugate complex numbers of absolute value 1. Hence $\theta $ is a \textit{Salem number} and the automorphism $\alpha _f$ is nonexpansive and ergodic.

\smallskip (2) Let $f=5u^2-6u+5$. The roots of $f$ have the form $\frac35 \pm i\cdot \frac45$ and are both of absolute value 1. In particular, $\alpha _f$ is nonexpansive, but certainly ergodic. This example already appears in \cite{Lind 82}.

\smallskip There are, of course, irreducible noncyclotomic polynomials of arbitrarily high degree, all of whose roots have absolute value 1. Here are some more examples:\vspace{-4mm}
	\begin{gather*}
f = 2u^2 - u + 2,\\
f = 2u^4 - u^3 + 2u^2 - u + 2,\\
f = 2u^6 - 2u^5 + 4u^4 - 3u^3 + 4u^2 - 2u + 2.
	\end{gather*}
	\end{exam}

We conclude this section by recalling basic facts about entropy of (irreducible) automorphisms.

	\begin{theo}[Entropy]
	\label{t:entropy}
For every nonzero $f\in R_1$ of the form \eqref{eq:f}, the topological entropy of $\alpha _f$ coincides with the measure-theoretic entropy $h_{\lambda _{X_f}}(\alpha _f)$ of $\alpha _f$ w.r.t. the normalized Haar measure $\lambda _{X_f}$ of $X_f$, and is given by
	\begin{equation}
	\label{eq:entropy}
h(\alpha _f) = \log |f_m| + \sum\nolimits_{\theta \in \Theta _f^+}\log |\theta | = \int_0^1 \log \,|f(e^{2\pi it})|\,dt.
	\end{equation}
If $\alpha _f$ is ergodic, $\lambda _{X_f}$ is the unique measure of maximal entropy of $\alpha _f$.
	\end{theo}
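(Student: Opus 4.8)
The plan is to treat separately the polynomial identity in \eqref{eq:entropy}, the value of the topological and measure‑theoretic entropies, and the intrinsic‑ergodicity statement. I would begin with the second equality in \eqref{eq:entropy}, which involves only the polynomial $f$. Factoring $f(z)=f_m\prod_{j=1}^{m}(z-\theta_j)$ over $\mathbb{C}$ — legitimate since $f_0\ne0$ forces all $\theta_j\ne0$ — gives $\log|f(e^{2\pi it})|=\log f_m+\sum_{j=1}^{m}\log|e^{2\pi it}-\theta_j|$, so that, after integrating over $t\in[0,1]$, it suffices to recall Jensen's formula $\int_0^1\log|e^{2\pi it}-\theta|\,dt=\max(0,\log|\theta|)$, valid for every $\theta\in\mathbb{C}$, including $|\theta|=1$. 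Adding the contributions yields $\int_0^1\log|f(e^{2\pi it})|\,dt=\log|f_m|+\sum_{\theta\in\Theta_f^+}\log|\theta|$; this common quantity is the logarithmic Mahler measure of $f$.

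For the identity $h(\alpha_f)=\int_0^1\log|f(e^{2\pi it})|\,dt$ I would pass to the dual group: the short exact sequence $0\to X_f\to\mathbb{T}^{\mathbb{Z}}\xrightarrow{f(\sigma)}\mathbb{T}^{\mathbb{Z}}\to0$ dualizes, via $\widehat{\mathbb{T}^{\mathbb{Z}}}=R_1$, to $0\to R_1\xrightarrow{\,\cdot f\,}R_1\to\widehat{X_f}\to0$, so $\widehat{X_f}\cong R_1/fR_1$ with $\widehat{\alpha_f}$ acting as multiplication by $u$. Thus $\alpha_f$ is the principal algebraic $\mathbb{Z}$‑action attached to $f$, and the formula $h(\alpha_f)=\int_0^1\log|f(e^{2\pi it})|\,dt$ is the one‑dimensional case of the entropy formula for algebraic $\mathbb{Z}^d$‑actions, which I would quote from \cite{DSAO}. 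A more self‑contained route splits into two cases: if $f_m=|f_0|=1$ then $X_f\cong\mathbb{T}^m$ and $\alpha_f$ is conjugate to the companion automorphism $M_f$ of Example \ref{e:companion}, whose eigenvalues are exactly the roots of $f$, so the classical entropy formula for toral automorphisms gives $h(\alpha_f)=\sum_{\theta\in\Theta_f^+}\log|\theta|$ (and $\log|f_m|=0$); for general $f$ one invokes the Yuzvinskii addition formula together with the decomposition of $X_f$ into its connected component and the finitely many $p$‑adic pieces attached to the primes dividing $f_0f_m$, and checks that the local contributions add up — through the product formula in the number field generated by a root of $f$ — to $\log|f_m|+\sum_{\theta\in\Theta_f^+}\log|\theta|$. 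That $h_{\lambda_{X_f}}(\alpha_f)$ equals the same number is again part of the cited entropy formula; alternatively $h_{\lambda_{X_f}}(\alpha_f)\le h(\alpha_f)$ by the variational principle, while $h_{\lambda_{X_f}}(\alpha_f)\ge h(\alpha_f)$ because the topological entropy of any automorphism of a compact metrizable group is attained by the Haar measure.

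It remains to prove that, when $f$ is not cyclotomic (equivalently, when $\alpha_f$ is ergodic), $\lambda_{X_f}$ is the only measure of maximal entropy. If $f$ is hyperbolic, $\alpha_f$ is expansive, and this is the intrinsic ergodicity of expansive ergodic principal algebraic $\mathbb{Z}$‑actions contained in \cite{DSAO}. If $f$ is nonhyperbolic but noncyclotomic — the case in which $\alpha_f$ is quasihyperbolic, as in the Salem examples of Example \ref{e:nonexpansive} — the same conclusion holds, but the argument must be adapted to the nonexpansive situation, where the standard devices (entropy‑expansiveness, specification) are not directly available. I expect this to be the main obstacle: one has to argue that the obstruction to expansiveness produces no additional measure of maximal entropy, so that any such measure is forced to coincide with $\lambda_{X_f}$ on the entropy‑carrying directions, and hence everywhere. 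The remainder — Jensen's formula, the dualization, the quoted entropy formula, and the expansive case of intrinsic ergodicity — is routine.
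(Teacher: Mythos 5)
The paper's own proof of Theorem \ref{t:entropy} is purely a citation: the entropy formula \eqref{eq:entropy} is attributed to Yuzvinskii (with pointers to Lind--Ward and Lind--Schmidt--Ward), and the uniqueness of the measure of maximal entropy to Berg. Your sketch of the entropy identities is a reasonable reconstruction of what lies behind those citations: Jensen's formula for the second equality, and either the dual description $\widehat{X_f}\cong R_1/fR_1$ together with the entropy formula for principal algebraic actions, or the split via the companion matrix case, the Yuzvinskii addition formula, the adelic decomposition, and the product formula. This is essentially the route taken in \cite{Lind+Ward} and \cite{LSW}, and the observation that for a group automorphism the Haar measure attains the topological entropy is indeed classical and correct.

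The genuine gap is in your treatment of the intrinsic ergodicity claim. You split into a hyperbolic case, for which you invoke intrinsic ergodicity of expansive ergodic systems, and a nonhyperbolic case, for which you offer no argument beyond the heuristic that ``the obstruction to expansiveness produces no additional measure of maximal entropy.'' That heuristic is not a proof, and indeed you flag it yourself as ``the main obstacle.'' The case split is a red herring: the result cited in the paper, Berg \cite{Berg}, proves that for \emph{any} ergodic automorphism of a compact metrizable group the Haar measure is the unique measure of maximal entropy, with no expansiveness hypothesis at all. Berg's argument runs through convolutions of invariant measures (if $\mu$ is invariant then $h_{\mu*\lambda}(\alpha)\ge h_\mu(\alpha)$ with equality forcing $\mu*\lambda=\lambda$, and ergodicity then pins $\mu$ down), and this works uniformly across the expansive and nonexpansive regimes. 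So you should replace the dangling nonexpansive case by a citation of, or appeal to, Berg's convolution argument; as written, your proof of the uniqueness statement is incomplete precisely in the nonexpansive case that is the actual focus of the later sections of the paper.
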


	\begin{proof}
Equation \eqref{eq:entropy} is due to S.A. Yuzvinskii \cite{Yuzvinskii} (see also \cite{Lind+Ward} and \cite{LSW}). The uniqueness of the measure of maximal entropy was proved by K. Berg \cite{Berg}.
	\end{proof}

For toral automorphisms, \eqref{eq:entropy} gives the familiar expression of entropy in terms of the `large' roots of $f$. For the Examples \ref{e:x2} we obtain that $h(\alpha _{f_1})=h(\alpha _{f_2})=\log 2$ and $h(\alpha _{f_3})=\log 3$. For the polynomials $f$ in Example \ref{e:nonexpansive} (2), $h(\alpha _f)$ is the logarithm of the leading coefficient of $f$ (i.e., $\log 5$ or $\log 2$).

	\begin{theo}[Entropy and Periodic Points]
	\label{t:periodic}
For every positive integer $k$ we denote by $P_k(\alpha _f) = \{x\in X_f:\alpha _f^kx=x\}$ the set of periodic points of $\alpha _f$, with period $k$. For $0\ne f\in R_1$, $h(\alpha _f)=\lim_{k\to\infty }\frac1k\,\log\, |P_k(\alpha _f)|$.
	\end{theo}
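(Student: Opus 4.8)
The plan is to compute $|P_k(\alpha_f)|$ exactly by Pontryagin duality and then to identify $\frac1k\log|P_k(\alpha_f)|$ as a cyclotomic-resultant quantity whose limit is the logarithmic Mahler measure of $f$ supplied by Theorem~\ref{t:entropy}. Since replacing $f$ by $\pm u^{-a}f$ (with $a\ge0$) changes neither $X_f$ nor $\alpha_f$, I may assume $f$ has the form \eqref{eq:f}; and, as the computation below will show, $P_k(\alpha_f)$ is infinite precisely when $f$ has a zero that is a $k$-th root of unity, so the substantive case --- which I assume henceforth --- is that in which $f$ has no root of unity among its zeros, in particular every ergodic $\alpha_f$. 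In the remaining cases the same computation applies along those $k$ for which $P_k(\alpha_f)$ is finite, the roots of unity among the zeros of $f$ then contributing nothing to the exponential growth rate.

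First I would set up the duality. Identifying $\widehat{\mathbb{T}^{\mathbb{Z}}}$ with the discrete group $R_1$ through the pairing $\langle x,h\rangle=\sum_k h_k x_k$, the dual of the shift $\sigma$ is multiplication by $u$ on $R_1$, so the dual of $f(\sigma)$ is multiplication by $f$; since $f(\sigma)$ is surjective with kernel $X_f$, this gives $\widehat{X_f}=R_1/fR_1$, with $\alpha_f$ dual to multiplication by $u$. Because $P_k(\alpha_f)=\ker(\alpha_f^k-\mathrm{id})$, its annihilator in $\widehat{X_f}$ is the image of multiplication by $u^k-1$, whence
	\begin{displaymath}
\widehat{P_k(\alpha_f)}\ \cong\ R_1/(f,u^k-1).
	\end{displaymath}
Reducing modulo $u^k-1$ presents $\mathbb{Z}[u^{\pm1}]/(u^k-1)$ as the free abelian group of rank $k$ with basis $1,u,\dots,u^{k-1}$, on which multiplication by $f$ acts as a $\mathbb{Z}$-linear endomorphism $M_{f,k}$; by the Chinese remainder theorem over $\mathbb{C}$ this endomorphism is diagonalised by the characters $u\mapsto\zeta$, $\zeta^k=1$, so $\det M_{f,k}=\prod_{\zeta^k=1}f(\zeta)=\operatorname{Res}(u^k-1,f)$. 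As $f$ has no root of unity as a zero this determinant is nonzero, so $P_k(\alpha_f)$ is finite, and by the Smith normal form
	\begin{displaymath}
|P_k(\alpha_f)|\ =\ \bigl|\det M_{f,k}\bigr|\ =\ \prod_{\zeta:\,\zeta^k=1}|f(\zeta)|\ =\ \bigl|\operatorname{Res}(u^k-1,f)\bigr|.
	\end{displaymath}

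Next I would take the limit. Factoring $f=f_m\prod_i(u-\theta_i)$ gives $|P_k(\alpha_f)|=|f_m|^k\prod_i|\theta_i^k-1|$, so that $\frac1k\log|P_k(\alpha_f)|=\log|f_m|+\frac1k\sum_i\log|\theta_i^k-1|$ (equivalently, $\frac1k\log|P_k(\alpha_f)|=\frac1k\sum_{j=0}^{k-1}\log|f(e^{2\pi ij/k})|$ is a Riemann-type sum for $\int_0^1\log|f(e^{2\pi it})|\,dt$). For $\theta_i\in\Theta_f^+$ one has $|\theta_i^k-1|=|\theta_i|^k\,|1-\theta_i^{-k}|$ with $|1-\theta_i^{-k}|\to1$, so that term contributes $\log|\theta_i|$, and for $\theta_i\in\Theta_f^-$ it contributes $0$. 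When $f$ is hyperbolic, $\Theta_f^\circ=\varnothing$ (Definition~\ref{d:hyperbolic}), these are all the terms, and one obtains $\frac1k\log|P_k(\alpha_f)|\to\log|f_m|+\sum_{\theta\in\Theta_f^+}\log|\theta|=h(\alpha_f)$ by Theorem~\ref{t:entropy}; this part is entirely elementary.

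The step I expect to be the main obstacle is the contribution of the zeros on the unit circle in the nonhyperbolic ergodic case --- for instance when $f$ is a Salem polynomial as in Example~\ref{e:nonexpansive}. The trivial bound $|\theta_i^k-1|\le2$ immediately gives the upper estimate $\limsup_k\frac1k\log|P_k(\alpha_f)|\le h(\alpha_f)$, so what remains is the matching lower bound, namely
	\begin{displaymath}
\frac1k\sum_{\theta_i\in\Theta_f^\circ}\log|\theta_i^k-1|\ \longrightarrow\ 0 .
	\end{displaymath}
Since $f$ has no root of unity among its zeros, each $\theta_i^k-1$ is a nonzero algebraic number of bounded degree; but the naive lower bound --- from the fact that a fixed integer multiple of $\theta_i^k-1$ is a nonzero algebraic integer and so has norm of absolute value at least $1$ --- only yields $|\theta_i^k-1|\ge e^{-O(k)}$, which is far too weak here. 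The right input is a Diophantine estimate, such as Gelfond's lower bound for $|\theta^k-1|$ or, more generally, Baker's theorem on linear forms in logarithms of algebraic numbers, producing constants $C_i>0$ and $\kappa_i\ge1$ depending only on $f$ (not on $k$) with $|\theta_i^k-1|\ge C_i\,k^{-\kappa_i}$. Then $\frac1k\sum_{\theta_i\in\Theta_f^\circ}\log|\theta_i^k-1|\ge -O(k^{-1}\log k)\to0$, which together with the upper estimate and the previous paragraph finishes the proof via the entropy formula of Theorem~\ref{t:entropy}. Apart from this last Diophantine ingredient, everything is a formal manipulation with duals and determinants.
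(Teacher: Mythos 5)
The paper supplies no proof of its own for this theorem --- only a citation to Marcus and to Lind (1982), Section 4 --- and your argument is a correct reconstruction of the standard argument in those sources: Pontryagin duality gives $\widehat{P_k(\alpha_f)}\cong R_1/(f,u^k-1)$ and hence $|P_k(\alpha_f)|=|\mathrm{Res}(u^k-1,f)|=|f_m|^k\prod_j|\theta_j^k-1|$, the roots off the unit circle contribute $\log|f_m|+\sum_{\theta\in\Theta_f^+}\log|\theta|=h(\alpha_f)$ by elementary estimates, and a Gelfond--Baker lower bound $|\theta^k-1|\ge Ck^{-\kappa}$ (a linear form in the two logarithms $\log\theta$ and $i\pi$) disposes of the unit-modulus roots. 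You correctly isolate the Diophantine estimate as the only non-formal ingredient and correctly observe that the naive norm bound $|\theta_i^k-1|\ge e^{-O(k)}$ is exponentially too weak, since the conjugates of $\theta_i^k-1$ include $\theta_j^k-1$ with $|\theta_j|>1$.
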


	\begin{proof}
See \cite[p. 129]{marcus} and \cite[Sec. 4]{Lind}. For a discussion of the connection between entropy and the logarithmic growth-rate of periodic points in a more general context we refer to \cite{LSV1, LSV2}.
	\end{proof}

\section{Homoclinic points of irreducible group automorphisms}
	\label{s:homoclinic}

\emph{For the remainder of this article we assume that the irreducible polynomial $f$ in \eqref{eq:f} is noncyclotomic.}

\subsection{Linearization}

We denote by $\| \cdot \|_1$ and $\| \cdot \|_\infty $ the norms on the Banach spaces $\ell ^ 1(\mathbb{Z},\mathbb{R})$ and $\ell ^ \infty (\mathbb{Z},\mathbb{R})$ and write $\ell ^ 1(\mathbb{Z},\mathbb{Z}) \subset \ell ^ 1(\mathbb{Z},\mathbb{R})$ and $\ell ^ \infty (\mathbb{Z},\mathbb{Z}) \subset \ell ^ \infty (\mathbb{Z},\mathbb{R})$ for the subgroups of integer-valued functions. By viewing every $h\in R_1$ of the form \eqref{eq:h} as the element $(h_n)\in \ell ^ 1(\mathbb{Z},\mathbb{Z})$ we can identify $R_1$ with $\ell ^ 1(\mathbb{Z},\mathbb{Z})$.

Continuity of maps on $\ell ^ \infty (\mathbb{Z},\mathbb{R})$ will usually be understood with respect to the \textit{bounded weak$^*$-topology}, i.e., the strongest topology on $\ell ^ \infty (\mathbb{Z},\mathbb{R})$ which induces the weak$^*$-topology (or, equivalently, the topology of coordinate-wise convergence) on bounded subsets of $\ell ^ \infty (\mathbb{Z},\linebreak[0]\mathbb{Z})$. In this topology $\ell ^ \infty (\mathbb{Z},\mathbb{Z})$ is a closed subgroup of $\ell ^\infty (\mathbb{Z},\mathbb{R})$, and the shift $\bar{\sigma }\colon \ell ^\infty (\mathbb{Z},\mathbb{R})\longrightarrow \ell ^\infty (\mathbb{Z},\mathbb{R})$, defined as in \eqref{eq:shift} by
	\begin{equation}
	\label{eq:barsigma}
\bar{\sigma }(w)_n=w_{n+1},
	\end{equation}
is a homeomorphism. For every $r\ge0$, the sets
	\begin{equation}
	\label{eq:Br}
	\begin{gathered}
\bar{B}_r(\ell ^ \infty (\mathbb{Z},\mathbb{R}))=\{ w \in \ell ^ \infty (\mathbb{Z},\mathbb{R}):\| w \|_\infty \le r \},
	\\
\bar{B}_r(\ell ^ \infty (\mathbb{Z},\mathbb{Z}))=\bar{B}_r(\ell ^ \infty (\mathbb{Z},\mathbb{R}))\cap \ell ^ \infty (\mathbb{Z},\mathbb{Z})
	\end{gathered}
	\end{equation}
are compact and shift-invariant.As in \eqref{eq:hsigma} we set
	\begin{equation}
	\label{eq:hsigmabar}
\smash{h(\bar\sigma )=\sum\nolimits_{k \in \mathbb{Z}}h_k \bar\sigma ^ k\colon \ell ^ \infty (\mathbb{Z},\mathbb{R})\longrightarrow \ell ^ \infty (\mathbb{Z},\mathbb{R})}
	\end{equation}
for every $h=\sum_{k \in \mathbb{Z}}h_ku ^ k \in R_1$.

\smallskip Consider the continuous, surjective, shift-equivariant group homomorphism $\rho \linebreak[0]\colon \ell ^ \infty (\mathbb{Z},\mathbb{R})\linebreak[0]\longrightarrow \mathbb{T}^ \mathbb{Z}$ given by
	\begin{equation}
	\label{eq:rho}
\rho (w)_n=w_n\;(\textup{mod}\;1),\enspace n\in \mathbb{Z},
	\end{equation}
for every $w=(w_n)\in \ell ^ \infty (\mathbb{Z},\mathbb{R})$. The shift-invariant subgroup $X_f\subset \mathbb{T}^\mathbb{Z}$ in \eqref{eq:principal} gives rise to two shift-invariant groups of $\ell ^\infty (\mathbb{Z},\mathbb{R})$ which play an important role in the discussion of symbolic covers or representations of the automorphism $\alpha _f$:
	\begin{equation}
	\label{eq:Wf}
	\begin{gathered}
W_f\coloneqq\rho ^{-1}(X_f)=f(\bar{\sigma })^{-1}(\ell ^ \infty (\mathbb{Z},\mathbb{Z}))\subset \ell ^ \infty (\mathbb{Z},\mathbb{R}),
	\\
V_f\coloneqq f(\bar{\sigma })(W_f)\subset \ell ^ \infty (\mathbb{Z},\mathbb{Z}).
	\end{gathered}
	\end{equation}
Note that $W_f$ is closed and contains $\ker \rho =\ell ^ \infty (\mathbb{Z},\mathbb{Z})$. The kernel
	\begin{equation}
	\label{eq:Wf0}
W_f ^\circ =\ker f(\bar{\sigma })\subset W_f
	\end{equation}
of $f(\bar{\sigma })$ is obviously finite-dimensional, and $\bar{\sigma }$ is linear on $W_f^\circ$. Hence $\bar{\sigma }$ has a nonzero eigenvector in the complexification $\mathbf{W}_f^\circ=\mathbb{C} \otimes_\mathbb{R}W_f ^\circ $ of $W_f ^\circ $ with eigenvalue $\theta \in \mathbb{C}$, say, where $f(\theta )=0$. As $\bar{\sigma }$ is an isometry on $\mathbf{W}_f ^\circ $ we conclude that $\theta \in \Theta _f ^\circ $. Conversely, if $\theta \in \Theta _f ^\circ $, we set $v_n=\theta ^ n$ for every $n \in \mathbb{Z}$ and obtain that $v=(v_n)\in \mathbf{W}_f ^\circ $. This shows that $W_f ^\circ$ is the linear span of the vectors $\{ \textup{Re} (w(\theta )),\textup{Im} (w(\theta )):\theta \in \Theta _f ^\circ \}$ with
	\begin{equation}
	\label{eq:womega}
w(\theta )_n=\theta ^ n, \qquad \textup{Re} (w(\theta ))_n=\textup{Re}(\theta ^ n), \qquad \textup{Im} (w(\theta ))_n = \textup{Im} (\theta ^ n)
	\end{equation}
for every $n \in \mathbb{Z}$ and $\theta \in \Theta _f ^\circ $, where $\textup{Re}$ and $\textup{Im}$ denote the real and imaginary parts. Note that
	\begin{equation}
	\label{eq:X0}
X_f^\circ=\rho (\ker f(\bar{\sigma }))=\rho (W_f ^\circ )
	\end{equation}
is an $\alpha _f$-invariant subgroup of $X_f$. The irreducibility of $\alpha _f$ implies that the closure of $X_f ^\circ$ is either equal to $\{ 0 \}$ (if $\alpha _f$ is expansive), or to $X_f$ (if $\alpha _f$ is nonexpansive). The group $X_f ^\circ\subset X_f$ in \eqref{eq:X0} is isomorphic to $W_f ^\circ$, since $\rho $ is injective on $W_f ^\circ$, and $\alpha _f$ acts isometrically on $X_f^\circ$.

\subsection{Homoclinic points of $\alpha _f$}

In order to determine the homoclinic points of $\alpha _f$ we write
	$$
\tfrac 1{f(u)}= \tfrac 1{f_m}\sum\nolimits_{\theta \in \Theta _f}\tfrac{b_\theta }{u-\theta }
	$$
for the partial fraction decomposition of $1/f$, where $b_\theta \in \mathbb{C}$ for every $\theta \in \Theta _f$. Define elements $w ^{\pm }$ and $w ^{\circ }$ in $\ell ^ \infty (\mathbb{Z},\mathbb{R})$ by
	\begin{equation}
	\label{eq:homoclinic pm}
	\begin{aligned}
w ^{+}_n&=
	\begin{cases}
\frac 1{f_m}\cdot \sum_{\theta \in \Theta _f ^-}&\hphantom{-}b_\theta \theta ^{n-1}\hspace{5mm}\textup{if}\enspace n\ge1,
	\\
\frac 1{f_m}\cdot \sum_{\theta \in \Theta _f ^\circ \cup \Theta _f ^+}&-b_\theta \theta ^{n-1}\hspace{5mm}\textup{if}\enspace n\le0,
	\end{cases}
	\\
w ^{-}_n&=
	\begin{cases}
\frac 1{f_m}\cdot \sum_{\theta \in \Theta _f ^-\cup \Theta _f ^\circ }&\hphantom{-}b_\theta \theta  ^{n-1}\hspace{5mm}\textup{if}\enspace n\ge1,
	\\
\frac 1{f_m}\cdot \sum_{\theta \in \Theta _f ^+}&-b_\theta \theta ^{n-1}\hspace{5mm}\textup{if}\enspace n\le0,
	\end{cases}
	\\
w ^{\circ }_n&=\hspace{2.7mm}\tfrac 1{f_m}\cdot \textstyle \sum_{\theta \in \Theta _f ^\circ }\hspace{12mm}b_\theta \theta ^{n-1}\hspace{5mm}\textup{for every}\enspace n \in \mathbb{Z}.
	\end{aligned}
	\end{equation}
Then
	\begin{equation}
	\label{eq:homoclinic2}
	\begin{gathered}
w ^{\circ }\in W_f ^\circ ,\qquad w ^{+}+w ^{\circ }=w ^{-},
	\\
f(\bar{\sigma })(w ^{+})_n=f(\bar{\sigma })(w ^{-})_n=v ^ \Delta _n\coloneqq
	\begin{cases}
1&\textup{if}\enspace n=0,
	\\
0&\textup{otherwise}.
	\end{cases}
	\end{gathered}
	\end{equation}
The points $w ^{\pm }\in \ell ^ \infty (\mathbb{Z},\mathbb{R})$ have the following properties.
	\begin{enumerate}
	\item[(a)]\label{a}
$w^\pm\in W_f$ by \eqref{eq:Wf} and \eqref{eq:homoclinic2}, and $x ^{\pm }\coloneqq\rho (w ^{\pm })\in X_f$ by \eqref{eq:Wf}.
	\item[(b)]\label{b}
$\lim_{n \to \infty }w ^{+}_n=\lim_{n \to \infty }w ^{-}_{-n}=0$ exponentially fast.
	\item[(c)]\label{c}
If $\alpha _f$ is nonexpansive, then $\lim_{n \to \infty }x^{+}_n=\lim_{n \to \infty }x^{-}_{-n}=0$, but neither of these points is homoclinic.
	\item[(d)]\label{d}
If $\alpha _f$ is expansive, then $w^{+}=w^{-}$ and $x ^{+}=x ^{-}$. Put $w^\Delta =w^{\pm }$ and $x^\Delta =\rho (w^\Delta )$. Then $\lim_{|n|\to\infty }w^\Delta _n=0$ and $x^\Delta $ is homoclinic.
	\end{enumerate}

\subsubsection{The expansive case}

	\smallskip \begin{theo}
	\label{t:expansive}
If $\alpha _f$ is expansive, then every homoclinic point of $\alpha _f$ is of the form $x=h(\alpha _f)(x^\Delta )$ for some $h\in R_1$, where $x^\Delta =\rho (w^{+}) =\rho (w^{-})$. In other words, the homoclinic group $\Delta _{\alpha _f}(X_f)$ of $\alpha _f$ is the subgroup of $X_f$ generated by the orbit of the homoclinic point $x^\Delta $.
	\end{theo}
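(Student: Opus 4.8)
The plan is to prove the two inclusions separately. The inclusion $\{h(\alpha _f)(x^\Delta):h\in R_1\}\subseteq\Delta _{\alpha _f}(X_f)$ is immediate: by property~(d) above the point $x^\Delta=\rho(w^\Delta)$ is homoclinic, and $\Delta _{\alpha _f}(X_f)$ is an $\alpha _f$-invariant subgroup of $X_f$ (Definition~\ref{d:homoclinic}), so it contains every finite integer combination $\sum _nc_n\alpha _f^nx^\Delta=h(\alpha _f)(x^\Delta)$ with $h=\sum _nc_nu^n\in R_1$. The heart of the matter is the reverse inclusion: every homoclinic point $x$ of $\alpha _f$ lies in $R_1\cdot x^\Delta$.

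For that, the idea is to \emph{linearize} $x$. First I would fix a compatible metric on $\mathbb{T}^\mathbb{Z}$, say $d(y,y')=\sum _n2^{-|n|}\,\|y_n-y_n'\|$ where $\|\cdot\|$ denotes distance to $0$ in $\mathbb{T}$; since $d(\alpha _f^kx,0)\ge\|x_k\|$ and $x$ is homoclinic, the coordinates satisfy $\|x_n\|\to0$ as $|n|\to\infty$. Choosing for each $n$ the representative $w_n\in(-\tfrac12,\tfrac12]$ with $w_n\equiv x_n\pmod 1$ produces a bounded real sequence $w=(w_n)\in\ell^\infty(\mathbb{Z},\mathbb{R})$ with $\rho(w)=x$ and $w_n\to0$ as $|n|\to\infty$. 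Now set $v:=f(\bar\sigma)(w)\in\ell^\infty(\mathbb{Z},\mathbb{R})$. Applying $\rho$ and using that $\rho$ is a shift-equivariant homomorphism together with $x\in X_f=\ker f(\sigma)$ from \eqref{eq:principal} gives $\rho(v)=f(\sigma)(x)=0$, so $v\in\ker\rho=\ell^\infty(\mathbb{Z},\mathbb{Z})$ is integer-valued; and since $v_n=\sum _kf_kw_{n+k}$ is a fixed finite combination of coordinates of $w$, we also have $v_n\to0$ as $|n|\to\infty$. An integer sequence tending to $0$ is finitely supported, hence $v\in R_1$.

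It remains to recognize $w$ as $h(\bar\sigma)(w^\Delta)$. Writing $v=\sum _kv_ku^k$ and putting $h:=\sum _kv_ku^{-k}\in R_1$, the element $w':=h(\bar\sigma)(w^\Delta)=\sum _kv_k\bar\sigma^{-k}(w^\Delta)$ is again a bounded real sequence, and by \eqref{eq:homoclinic2} and the $\bar\sigma$-equivariance of $f(\bar\sigma)$ one has $f(\bar\sigma)(w')=\sum _kv_k\bar\sigma^{-k}(v^\Delta)=v=f(\bar\sigma)(w)$. Hence $w-w'\in\ker f(\bar\sigma)=W_f^\circ$ by \eqref{eq:Wf0}. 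This is exactly where expansiveness enters: since $\alpha _f$ is expansive we have $\Theta _f^\circ=\varnothing$, and the linearization discussion above then forces $W_f^\circ=\{0\}$ (a nonzero element of its complexification would produce a $\bar\sigma$-eigenvalue lying in $\Theta _f^\circ$). Therefore $w=w'=h(\bar\sigma)(w^\Delta)$, and applying $\rho$ yields $x=\rho(w)=h(\sigma)(\rho(w^\Delta))=h(\alpha _f)(x^\Delta)$, as desired; the reformulation in terms of the subgroup generated by the $\alpha _f$-orbit of $x^\Delta$ is then just the observation that $\{h(\alpha _f)(x^\Delta):h\in R_1\}$ consists exactly of the finite integer combinations of the points $\alpha _f^nx^\Delta$.

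The step I expect to carry the real weight is the passage from "$x$ homoclinic" to "$v=f(\bar\sigma)(w)\in R_1$": this is where the qualitative decay of the coordinates of $x$ gets converted into the \emph{algebraic} finiteness that makes $x$ a genuine $R_1$-combination of $x^\Delta$, and it depends on having chosen a lift $w$ that decays rather than an arbitrary bounded one. The other essential ingredient, $\ker f(\bar\sigma)=\{0\}$, is precisely the feature that singles out the expansive case; in the nonexpansive case $W_f^\circ\ne\{0\}$ and this last uniqueness step collapses, which is consistent with the difficulties discussed in the later sections. An alternative, more structural route would phrase everything via Pontryagin duality, identifying $X_f$ with $\widehat{R_1/(f)}$ and $\Delta _{\alpha _f}(X_f)$ with a module of $\ell^1$-type functionals on it as in \cite{LS}, but the linearization argument sketched here is self-contained given the properties (a)--(d) and the formulas \eqref{eq:homoclinic pm}.
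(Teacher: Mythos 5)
Your proposal is correct and follows essentially the same route as the paper: lift $x$ to a decaying $w\in W_f$, observe that $f(\bar\sigma)(w)$ is an integer-valued sequence tending to $0$ and hence an element of $R_1$, then reconstruct $w$ as $h(\bar\sigma)(w^\Delta)$ and invoke $\ker f(\bar\sigma)=W_f^\circ=\{0\}$ (the expansiveness hypothesis) to conclude. The only differences are cosmetic elaborations (an explicit choice of representatives in $(-\tfrac12,\tfrac12]$ for the lift, spelling out the easy inclusion, and naming the polynomial $\sum_k v_ku^{-k}$ directly where the paper writes $h^*=h(u^{-1})$), none of which change the argument.
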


	\begin{proof}
If $\alpha _f$ is expansive, then $f(\bar{\sigma })(w^\Delta )=v^\Delta $ by \eqref{eq:homoclinic2}. If a point $x\in X_f$ is homoclinic, we can lift it to a point $w\in W_f$ satisfying that $\rho (w)=x$ and $\lim_{|n|\to\infty }w_n = 0$. By \eqref{eq:Wf}, $h\coloneqq f(\bar{\sigma })(w)\in\ell ^\infty (\mathbb{Z},\mathbb{Z})$, which implies that $h\in \ell ^1(\mathbb{Z},\mathbb{Z})=R_1$.

We write $h$ as $h=\sum_{n\in \mathbb{Z}}h_nu^n$ and set $h^*=h(u^{-1})=\sum_{n\in \mathbb{Z}}h_{-n}u^n$. The points $w$ and $w'=h^*(\bar{\sigma })(w^\Delta )$ satisfy that $f(\bar{\sigma })(w)=f(\bar{\sigma })(w') = h$ and hence, since the kernel of $f(\bar{\sigma })$ is trivial, that $w=w'$ and $x=\rho (w)= \rho (h^*(\bar{\sigma })(x^\Delta ))=h^*(\alpha _f)(x^\Delta )$.
	\end{proof}

 Motivated by Theorem \ref{t:expansive} we introduce the following definition.

	\begin{defi}
	\label{d:fundamental}
A homoclinic point $w\in X_f$ of $\alpha _f$ is \textit{fundamental} if its orbit generates the group $\Delta _{\alpha _f}(X_f)$ of all homoclinic points of $\alpha _f$.
	\end{defi}

	\begin{exam}
	\label{e:alphaA}
Consider the matrix $A\in \textup{GL}(2,\mathbb{Z})$ in \eqref{eq:A} and the associated expansive automorphism $\alpha _A$ of $\mathbb{T}^2$. Which of the homoclinic points $x^\Delta ,y^\Delta ,z^\Delta ,w^\Delta $ in Figure 1 \vpageref{Figure1} is fundamental?

If $\pi \colon \mathbb{R}^2\longrightarrow \mathbb{Z}^2$ is the quotient map, then every homoclinic point $w$ of $\alpha _A$ is of the form $w=\pi (v_+ \cap (v_-+\mathbf{m}))$ for some $\mathbf{m}\in \mathbb{Z}^2$ and will therefore be denoted by $w(\mathbf{m})$. Then $\alpha _f^k(w(\mathbf{m}))=w(A^k\mathbf{m})$ for every $k\in \mathbb{Z}$. Since $\{A^k\mathbf{m}:k\in \mathbb{Z}\}$ generates $\mathbb{Z}^2$ if and only if $\mathbf{m}=(m_1,m_2)$ is primitive (i.e., if $\gcd(m_1,m_2)=1$), the points $x^\Delta =w(1,0),\,y^\Delta =w(1,1),\,z^\Delta =w(0,1)$ are fundamental, but $w^\Delta =w(0,2)$ is not.
	\end{exam}

	\begin{exas}
	\label{e:x3}
Let $f_i,\,i=1,2,3$, be the polynomials appearing in Example \ref{e:x2}. For $f_1=2$, put
	\begin{displaymath}
\smash{x^\Delta _n=
	\begin{cases}
\frac12\;(\textup{mod}\,1)&\textup{if}\;n=0,
	\\
0&\textup{otherwise}.
	\end{cases}}
	\end{displaymath}
For $f_2=2-u$, set
	\begin{displaymath}
\smash{x^\Delta _n=
	\begin{cases}
2^{n-1}\;(\textup{mod}\,1)&\textup{if}\;n\le0,
	\\
0&\textup{otherwise},
	\end{cases}}
	\end{displaymath}
and for $f_3=3-2u$, let
	\begin{displaymath}
\smash[t]{x^\Delta _n=
	\begin{cases}
\frac{3^{n-1}}{2^n}\;(\textup{mod}\,1)&\textup{if}\;n\le0,
	\\
0&\textup{otherwise}.
	\end{cases}}
	\end{displaymath}
In all these cases $x^\Delta $ is fundamental homoclinic for $\alpha _{f_i}$.

\smallskip For $f=2-3u$, the point
	\begin{displaymath}
\smash{x^\Delta _n=
	\begin{cases}
\frac{2^{n}}{3^{n+1}}\;(\textup{mod}\,1)&\textup{if}\;n\ge0,
	\\
0&\textup{otherwise},
	\end{cases}}\vspace{-1mm}
	\end{displaymath}
is fundamental homoclinic.
	\end{exas}

\subsubsection{The nonexpansive case}

	\smallskip\begin{theo}
	\label{t:nonexpansive}
If $\alpha _f$ is nonexpansive, it has no nonzero homoclinic points.
	\end{theo}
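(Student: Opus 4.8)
The plan is to assume $x=(x_n)\in X_f$ is $\alpha_f$-homoclinic and to show that $x=0$. Since $\alpha_f$ is the shift on $X_f\subset\mathbb{T}^{\mathbb{Z}}$, homoclinicity of $x$ means precisely that $\|x_n\|_{\mathbb{T}}\to0$ as $|n|\to\infty$, so I can choose the lift $w=(w_n)\in\ell^\infty(\mathbb{Z},\mathbb{R})$ of $x$ with every $w_n\in(-\tfrac12,\tfrac12]$; then $\rho(w)=x$ and $w_n\to0$ as $|n|\to\infty$. Since $\rho(f(\bar\sigma)(w))=f(\sigma)(x)=0$, the sequence $h\coloneqq f(\bar\sigma)(w)$ is integer valued, and because it also tends to $0$ at $\pm\infty$ it has finite support, so $h\in R_1$. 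Writing $h^*=h(u^{-1})$ and using $f(\bar\sigma)(w^+)=f(\bar\sigma)(w^-)=v^\Delta$ from \eqref{eq:homoclinic2} together with $h^*(\bar\sigma)(v^\Delta)=h$, one gets $f(\bar\sigma)(h^*(\bar\sigma)(w^\pm))=h=f(\bar\sigma)(w)$, so both differences $w-h^*(\bar\sigma)(w^+)$ and $w-h^*(\bar\sigma)(w^-)$ lie in $\ker f(\bar\sigma)=W_f^\circ$ (cf.\ \eqref{eq:Wf0}).

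The key auxiliary fact is an ``independence at infinity'' statement: any $u\in W_f^\circ$ with $u_n\to0$ as $n\to+\infty$, or as $n\to-\infty$, must vanish. Indeed, by \eqref{eq:womega} the space $\mathbf{W}_f^\circ$ is spanned over $\mathbb{C}$ by the sequences $w(\theta)=(\theta^n)_{n\in\mathbb{Z}}$ with $\theta\in\Theta_f^\circ$; writing $u_n=\sum_{\theta\in\Theta_f^\circ}d_\theta\theta^n$ and averaging $\theta_0^{-n}u_n$ over a long block of indices $n$, the off-diagonal geometric sums stay bounded while $u_n\to0$, which forces $d_{\theta_0}=0$ for each $\theta_0\in\Theta_f^\circ$. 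Now $w_n\to0$ at $\pm\infty$, while $h^*(\bar\sigma)(w^+)$ tends to $0$ at $+\infty$ (since $w^+_n\to0$ as $n\to+\infty$ by property~(b) above and $h^*(\bar\sigma)$ is a finite combination of shifts) and $h^*(\bar\sigma)(w^-)$ tends to $0$ at $-\infty$. Applying the auxiliary fact to the two differences above gives $w=h^*(\bar\sigma)(w^+)=h^*(\bar\sigma)(w^-)$, and subtracting, together with $w^-=w^++w^\circ$ from \eqref{eq:homoclinic2}, yields $h^*(\bar\sigma)(w^\circ)=0$.

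To finish I upgrade $h^*(\bar\sigma)(w^\circ)=0$ to $h^*(\bar\sigma)|_{W_f^\circ}=0$. Over $\mathbb{C}$ the operator $\bar\sigma$ is diagonal on $\mathbf{W}_f^\circ$ with the one-dimensional eigenspaces $\mathbb{C}\,w(\theta)$, $\theta\in\Theta_f^\circ$, on which $h^*(\bar\sigma)$ acts by multiplication by $h^*(\theta)$. In this eigenbasis the coefficient of $w(\theta)$ in $w^\circ$ is $b_\theta/(f_m\theta)$, and this is nonzero because $b_\theta\ne0$ for every $\theta\in\Theta_f$ (otherwise $1/f$ would have no pole at $\theta$). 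Hence $h^*(\bar\sigma)(w^\circ)=0$ forces $h^*(\theta)=0$ for all $\theta\in\Theta_f^\circ$, i.e.\ $h^*(\bar\sigma)$ vanishes on $W_f^\circ$. Since $\rho$ restricts to an isomorphism of $W_f^\circ$ onto $X_f^\circ$ (see \eqref{eq:X0}) intertwining $\bar\sigma$ with $\alpha_f$, it follows that $h^*(\alpha_f)$ vanishes on $X_f^\circ$; but $\alpha_f$ is nonexpansive and irreducible, so $X_f^\circ$ is dense in $X_f$, and $h^*(\alpha_f)$ is continuous, whence $h^*(\alpha_f)=0$ on all of $X_f$. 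Finally, from $w=h^*(\bar\sigma)(w^+)$ and $\rho\circ h^*(\bar\sigma)=h^*(\sigma)\circ\rho$ we get $x=\rho(w)=h^*(\alpha_f)(x^+)$ with $x^+=\rho(w^+)\in X_f$ (property~(a) above), so $x=0$; hence $\Delta_{\alpha_f}(X_f)=\{0\}$.

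The real obstacle is that the clean argument of Theorem~\ref{t:expansive} breaks down here. In the expansive case $\ker f(\bar\sigma)$ is trivial and a homoclinic point has a unique, exponentially decaying lift in $W_f$; now $\ker f(\bar\sigma)=W_f^\circ$ is a nonzero space of bounded, non-decaying, almost periodic sequences, and a homoclinic point lifts only to a sequence that vanishes at both ends without belonging to any decaying subspace. The work is to show that the almost periodic ambiguity forced by this kernel must in fact be zero, which is exactly where the precise asymptotics of $w^\pm$ (exponential decay at one end, an almost periodic tail at the other) and the independence-at-infinity lemma for unimodular exponential sums are used; I expect this to be the most delicate step to carry out in detail.
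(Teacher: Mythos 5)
Your argument is correct, and it follows the same broad strategy as the paper's proof: lift the would-be homoclinic point $x$ to a decaying $w\in W_f$, observe that $h=f(\bar\sigma)(w)\in\ell^\infty(\mathbb{Z},\mathbb{Z})$ has finite support so $h\in R_1$, exploit the almost periodicity of $W_f^\circ$ to pin down $w$ modulo $\ker f(\bar\sigma)$, and conclude that $h$ (equivalently $h^*$) vanishes at the unimodular roots. Where you genuinely diverge is in the two places you flag as delicate. The paper uses only $w^-$: decay of $w$ and of $\tilde w^-=h^*(\bar\sigma)(w^-)$ at $-\infty$ gives $w=\tilde w^-$ via almost periodicity, and then the explicit asymptotic formula for $\tilde w^-_n$ as $n\to+\infty$ (only the $\Theta_f^\circ$-terms survive) combined with $w_n\to 0$ is read off to produce $\sum_{\theta\in\Theta_f^\circ}b_\theta\theta^nh^*(\theta)=0$. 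You instead run the same one-sided ``independence at infinity'' argument at both ends, getting $w=h^*(\bar\sigma)(w^+)$ and $w=h^*(\bar\sigma)(w^-)$, and subtract to obtain $h^*(\bar\sigma)(w^\circ)=0$ directly; this is a bit more symmetric and avoids the explicit asymptotic computation. The endgames also differ: the paper deduces $f\mid h^*$, hence $\tilde w^-=h^*(\bar\sigma)(w^-)=g(\bar\sigma)(f(\bar\sigma)(w^-))=g(\bar\sigma)(v^\Delta)\in\ell^\infty(\mathbb{Z},\mathbb{Z})$ so $x=\rho(w)=0$, a purely algebraic conclusion; you instead observe that $h^*(\bar\sigma)$ annihilates all of $W_f^\circ$, pass to $h^*(\alpha_f)|_{X_f^\circ}=0$, invoke density of $X_f^\circ$ in the nonexpansive case and continuity to get $h^*(\alpha_f)\equiv 0$ on $X_f$, and conclude $x=h^*(\alpha_f)(x^+)=0$. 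Both conclusions are valid; the paper's is self-contained and algebraic, while yours leans more on the topological dichotomy ($X_f^\circ$ dense iff nonexpansive) already established in Section~\ref{s:homoclinic}. Your intermediate observation that $b_\theta\neq 0$ for every $\theta\in\Theta_f$ (simple poles of $1/f$, by irreducibility) is exactly the fact the paper uses implicitly.
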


	\begin{proof}
Suppose that $\alpha _f$ is nonexpansive, but that there exists a nonzero point $x\in X_f$ satisfying \eqref{eq:homoclinic}. We can lift $x$ to a point a point $w\in W_f$ satisfying $\rho (w)=x$ and $\lim_{|n|\to\infty }w_n = 0$ (as in the expansive case). Then $h\coloneqq f(\bar{\sigma })(w)\in\ell ^\infty (\mathbb{Z},\mathbb{Z})\in R_1$, and the point $\tilde{w}^-\coloneqq h^*(\bar{\sigma })(w^{-})$ satisfies that $f(\bar{\sigma })(\tilde{w}^-)= f(\bar{\sigma })(w)$, so that $w-\tilde{w}^-\in \ker f(\bar{\sigma })=W_f^\circ$.

From property (b) of $w^-$ \vpageref{b} we know that $\lim_{n \to-\infty }\tilde{w}^-_n=\lim_{n \to-\infty }\linebreak[0]w_n=0$. Since $w-\tilde{w}\in W_f ^\circ$ and every point in $W_f^\circ$ is almost periodic, we conclude that $w=\tilde{w}^-$. However,
	$$
\tilde{w}_n^-=\tfrac 1{f_m}\sum\nolimits_{\theta \in \Theta _f ^\circ \cup \Theta _f ^-}b_\theta \theta ^{n-1}h(\theta )
$$
for all sufficiently large positive $n$, which shows that
\begin{equation}
\label{eq:sum}
\sum\nolimits_{\theta \in \Theta _f ^\circ }b_\theta \theta ^ nh(\theta )=0\enspace \textup{for every}\enspace n\ge0.
\end{equation}
From \eqref{eq:sum} we see that $h(\theta )=0$ for every $\theta \in \Theta _f ^\circ $ or, equivalently, that $h$ is divisible by $f$. We set $h=fh'$ with $h'\in R_1$, $v'=h'(\bar{\sigma })(v ^ \Delta )\in \ell ^ \infty (\mathbb{Z},\mathbb{Z})$ and $\tilde{w}'=h'(\bar{\sigma })(w ^{-})\in W_f$ as above, and conclude that $w=\tilde{w}^-=f(\bar{\sigma })(\tilde{w}')\in \ell ^ \infty (\mathbb{Z},\mathbb{Z})$ and $x=\rho (w)=0$, contrary to our choice of $x$.
	\end{proof}

Although $\alpha _f$ has no nonzero homoclinic points if it is not expansive, we have at our disposal the `one-sided homoclinic' points $x^\pm$ in property (a) \vpageref{a}. As in Example \ref{e:alphaA} it may be helpful to identify these points in a few special cases.

	\begin{exam}
	\label{e:salem}
Consider the automorphism $\alpha =\alpha _{M_f}$ of $\mathbb{T}^4$ defined by the nonhyperbolic matrix $M_f$ in \eqref{eq:companion2}. We use the notation of Example \ref{e:nonexpansive} and write $W^+$ and $W^-$ for the the one-dimensional eigenspaces of $M_f$ corresponding to the eigenvalues $\theta $ and $\theta ^{-1}$, and $W^\circ$ for the two-dimensional eigenspace corresponding to the complex eigenvalues $\zeta ,\zeta ^{-1}$, on which $M_f$ acts as an irrational rotation. Although the intersections $W^+\cap (W^-+\mathbf{m})$ are empty whenever $\mathbf{m}\in \mathbb{Z}^4$ is nonzero, there exist, for every $\mathbf{m}\in \mathbb{Z}^4$, unique points $w(\mathbf{m})^+\in W ^-\cap (W^+ +W^\circ +\mathbf{m})$ and $w(\mathbf{m})^-\in (W ^- +W^\circ ) \cap (W^+ +\mathbf{m})$. If $\pi \colon \mathbb{R}^4\longrightarrow \mathbb{T}^4$ is the quotient map, the points $x(\mathbf{m})^\pm=\pi (w(\mathbf{m}^\pm))$ satisfy that $\lim_{n\to\infty }\alpha ^n(x(\mathbf{m})^+)= \lim_{n\to-\infty }\alpha ^n(x(\mathbf{m})^-)=0$.
	\end{exam}

	\begin{exam}
	\label{e:funny}
Let $f=5u^2-6u+5$ (cf. Example \ref{e:nonexpansive} (2)). An elementary recursive calculation shows that $x^\pm = \rho (w^\pm)$ with
	\begin{gather*}
w^- = (\dots, 0, \underline{0}, 0, \tfrac{1}{5}, \tfrac{6}{25}, \tfrac{11}{125}, -\tfrac{84}{625}, \dots),
	\\
w^+ = (\dots , -\tfrac{84}{625}, \tfrac{11}{125}, \tfrac{6}{25}, \underline{\tfrac{1}{5}}, 0, 0, 0, \dots ),
	\end{gather*}
where the zero-th coordinates are underlined.

\smallskip For $f=2u^2-u+2$ we obtain $x^\pm = \rho (w^\pm)$ with
	\begin{gather*}
w^- = (\dots, 0, \underline{0}, 0, \tfrac{1}{2}, \tfrac{1}{4}, -\tfrac{3}{8}, -\tfrac{7}{16}, \dots),
	\\
w^+ = (\dots, -\tfrac{7}{16}, -\tfrac{3}{8}, \tfrac{1}{4}, \underline{\tfrac{1}{2}}, 0,0,0,\dots ).
	\end{gather*}
	\end{exam}

\section{Symbolic covers of expansive automorphisms}\label{ss:covers}

Suppose that $\alpha _f$ is expansive, and that $x^\Delta =\rho (w^\Delta )$ is the fundamental homoclinic point of $\alpha _f$ described in Theorem \ref{t:expansive} (cf. Definition \ref{d:fundamental}). As in \eqref{eq:xi_w} we define maps $\bar{\xi }\colon \ell ^\infty (\mathbb{Z},\mathbb{Z})\longrightarrow W_f$ and $\xi \colon \ell ^\infty (\mathbb{Z},\mathbb{Z})\longrightarrow X_f$ by setting, for every $v=(v_n)\in \ell ^\infty (\mathbb{Z},\mathbb{Z})$,
	\begin{equation}
	\label{eq:xi}
	\begin{gathered}
\bar{\xi }(v)=\sum\nolimits_{n\in \mathbb{Z}}v_n\bar{\sigma }^{-n} w^\Delta \\ \xi (v)=\rho \circ \bar{\xi }(v)=\sum\nolimits_{n\in \mathbb{Z}}v_n\alpha _f^{-n} x^\Delta .
	\end{gathered}
	\end{equation}

	\begin{theo}
	\label{t:expansive2}
\textup{(1)} The map $\bar{\xi }\colon \ell ^\infty (\mathbb{Z},\mathbb{Z})\longrightarrow W_f$ is a continuous and shift-equi\-variant group homomorphism.

\textup{(2)} For every $v\in \ell ^\infty (\mathbb{Z},\mathbb{Z})$, $f(\bar{\sigma })\circ \bar{\xi }(v)=v$. Hence $V_f=\ell ^\infty (\mathbb{Z},\mathbb{Z})$, and the maps $\bar{\xi }\colon V_f\longrightarrow W_f$ and $f(\bar{\sigma })\colon W_f\longrightarrow V_f$ are continuous group isomorphisms which are inverse to each other.

\textup{(3)} The map $\xi \colon \ell ^\infty (\mathbb{Z},\mathbb{Z})\longrightarrow X_f$ is a continuous, surjective and shift-equivariant group homomorphism with kernel $f(\bar{\sigma })(\ell ^\infty (\mathbb{Z},\mathbb{Z}))$.
	\end{theo}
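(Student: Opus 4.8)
The plan is to prove the three parts in order, using the one-sided homoclinic point $w^\Delta=w^+=w^-$ (the expansive case of \eqref{eq:homoclinic2}) and the exponential decay in property (b) \vpageref{b} as the analytic backbone.

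For part (1), I would first check that the series defining $\bar\xi(v)=\sum_{n\in\mathbb{Z}}v_n\bar\sigma^{-n}w^\Delta$ converges in $\ell^\infty(\mathbb{Z},\mathbb{R})$ for every bounded integer sequence $v$: since $w^\Delta$ decays exponentially as $|n|\to\infty$ by (b)(d), the coordinate $\bigl(\bar\sigma^{-n}w^\Delta\bigr)_k=w^\Delta_{k-n}$ is summable over $n$ uniformly in $k$, so $\|\bar\xi(v)\|_\infty\le\|v\|_\infty\sum_{j\in\mathbb{Z}}|w^\Delta_j|<\infty$; thus $\bar\xi$ maps into $\ell^\infty(\mathbb{Z},\mathbb{R})$ and is visibly a group homomorphism. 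Continuity with respect to the bounded weak$^*$-topology reduces, by definition of that topology, to continuity on bounded subsets of $\ell^\infty(\mathbb{Z},\mathbb{Z})$ equipped with coordinate-wise convergence: if $v^{(j)}\to v$ coordinate-wise with $\|v^{(j)}\|_\infty\le r$, then each coordinate $\bar\xi(v^{(j)})_k=\sum_n v^{(j)}_n w^\Delta_{k-n}$ converges to $\bar\xi(v)_k$ by dominated convergence (the dominating summable function being $r\,|w^\Delta_{k-n}|$), and uniform $\ell^\infty$-boundedness of the images gives the required convergence in the bounded weak$^*$-topology. Shift-equivariance is the formal identity $\bar\xi(\bar\sigma v)=\sum_n v_{n+1}\bar\sigma^{-n}w^\Delta=\bar\sigma\sum_m v_m\bar\sigma^{-m}w^\Delta=\bar\sigma\,\bar\xi(v)$, valid because the series converges absolutely coordinate-wise. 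Finally $\bar\xi(v)\in W_f$ will follow once part (2) is established, since $f(\bar\sigma)(\bar\xi(v))=v\in\ell^\infty(\mathbb{Z},\mathbb{Z})$ and $W_f=f(\bar\sigma)^{-1}(\ell^\infty(\mathbb{Z},\mathbb{Z}))$ by \eqref{eq:Wf}.

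For part (2), the key computation is $f(\bar\sigma)\circ\bar\xi(v)=v$ for all $v\in\ell^\infty(\mathbb{Z},\mathbb{Z})$. Because $f(\bar\sigma)$ is a finite sum $\sum_k f_k\bar\sigma^k$, it is continuous and commutes with the (coordinate-wise absolutely convergent) infinite sum, so $f(\bar\sigma)\bigl(\bar\xi(v)\bigr)=\sum_{n}v_n\,\bar\sigma^{-n}\bigl(f(\bar\sigma)w^\Delta\bigr)=\sum_n v_n\,\bar\sigma^{-n}v^\Delta$, using $f(\bar\sigma)w^\Delta=v^\Delta$ from \eqref{eq:homoclinic2} in the expansive case. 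Since $\bar\sigma^{-n}v^\Delta$ is the sequence with a single $1$ in coordinate $n$ and $0$ elsewhere, $\sum_n v_n\bar\sigma^{-n}v^\Delta=v$. This immediately gives that $f(\bar\sigma)\colon W_f\to V_f$ is surjective onto all of $\ell^\infty(\mathbb{Z},\mathbb{Z})$, hence $V_f=f(\bar\sigma)(W_f)=\ell^\infty(\mathbb{Z},\mathbb{Z})$; injectivity of $f(\bar\sigma)$ on $W_f$ holds because $\ker f(\bar\sigma)=W_f^\circ$ is trivial in the expansive case (this is exactly the content of $\Theta_f^\circ=\varnothing$, cf. the discussion following \eqref{eq:X0}); and $\bar\xi\circ f(\bar\sigma)=\mathrm{id}_{W_f}$ follows from the just-proved $f(\bar\sigma)\circ\bar\xi=\mathrm{id}$ together with injectivity of $f(\bar\sigma)$. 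So $\bar\xi$ and $f(\bar\sigma)$ are mutually inverse continuous group isomorphisms between $V_f=\ell^\infty(\mathbb{Z},\mathbb{Z})$ and $W_f$.

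For part (3), continuity, the homomorphism property and shift-equivariance of $\xi=\rho\circ\bar\xi$ are inherited from those of $\bar\xi$ (part (1)) and the continuous shift-equivariant homomorphism $\rho$ of \eqref{eq:rho}. Surjectivity: given $x\in X_f$, lift it to $w\in W_f$ with $\rho(w)=x$ (possible since $W_f=\rho^{-1}(X_f)$), put $v=f(\bar\sigma)(w)\in\ell^\infty(\mathbb{Z},\mathbb{Z})$; then $\bar\xi(v)=w$ by part (2), so $\xi(v)=\rho(w)=x$. For the kernel, one inclusion is clear: if $v=f(\bar\sigma)(u)$ with $u\in\ell^\infty(\mathbb{Z},\mathbb{Z})$ then $\bar\xi(v)=u$ by part (2), whence $\xi(v)=\rho(u)=0$ since $u$ is integer-valued. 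Conversely, if $\xi(v)=0$ then $\bar\xi(v)\in\ker\rho=\ell^\infty(\mathbb{Z},\mathbb{Z})$, and $v=f(\bar\sigma)(\bar\xi(v))$ exhibits $v$ as lying in $f(\bar\sigma)(\ell^\infty(\mathbb{Z},\mathbb{Z}))$. The main obstacle, such as it is, is purely the topological bookkeeping in part (1): verifying that $\bar\xi$ is continuous for the bounded weak$^*$-topology rather than merely coordinate-wise, which is handled by the uniform $\ell^\infty$-bound on $\bar\xi(v)$ coming from the summability of $w^\Delta$; everything else is a formal manipulation of absolutely (coordinate-wise) convergent series together with the already-proved triviality of $\ker f(\bar\sigma)$ in the expansive case.
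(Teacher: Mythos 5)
Your proof is correct and follows essentially the same route as the paper's (very terse) argument: exponential decay of $w^\Delta$ gives convergence and continuity of $\bar\xi$, the identity $f(\bar\sigma)w^\Delta=v^\Delta$ from \eqref{eq:homoclinic2} yields $f(\bar\sigma)\circ\bar\xi=\mathrm{id}$ and hence (2), and (3) reduces to $\ker\rho=\ell^\infty(\mathbb{Z},\mathbb{Z})$. You have simply filled in the details the paper leaves implicit (the dominated-convergence argument for bounded weak$^*$ continuity, the triviality of $\ker f(\bar\sigma)$ in the expansive case, and the two inclusions for the kernel in (3)).
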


	\begin{proof}
The exponential decay of the coordinates of $w^\Delta $ guarantees that $\bar{\xi }$ is a well-defined and continuous group homomorphism which is obviously shift-equivariant. The second equation in \eqref{eq:homoclinic2} shows that $f(\bar{\sigma })\circ \bar{\xi }=\textup{Id}_{\ell ^\infty (\mathbb{Z},\mathbb{Z})}$, which proves (2). For the proof of (3) it suffices to note that $\ker \rho =\ell ^\infty (\mathbb{Z},\mathbb{Z})$.
	\end{proof}

One can use Theorem \ref{t:expansive2} for the construction of symbolic covers of $(X_f,\alpha _f)$ \`a la Vershik. We start with a basic observation.

	\begin{prop}
	\label{p:cover1}
Let $V=\bar{B}_{\|f\|_1}(\mathbb{Z},\mathbb{Z})=\{v\in\ell ^\infty (\mathbb{Z},\mathbb{Z}):\|v\|_\infty \le \|f\|_1\}$ \textup{(}cf. \eqref{eq:Br}\textup{)}. Then $\xi (V)=X_f$.
	\end{prop}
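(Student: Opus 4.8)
The plan is to show that every $x\in X_f$ can be written as $\xi(v)$ for some $v\in\ell^\infty(\mathbb{Z},\mathbb{Z})$ with $\|v\|_\infty\le\|f\|_1$. By Theorem~\ref{t:expansive2}(3) the map $\xi$ is already surjective, so the only content is the bound on the sup-norm of a preimage. First I would lift $x$ to a point $w\in W_f$ with $\rho(w)=w\ (\mathrm{mod}\ 1)=x$ and $\|w\|_\infty\le\tfrac12$; this is possible because $\ker\rho=\ell^\infty(\mathbb{Z},\mathbb{Z})$, so we may reduce each coordinate of any lift into $[-\tfrac12,\tfrac12)$. Then $v\coloneqq f(\bar\sigma)(w)\in\ell^\infty(\mathbb{Z},\mathbb{Z})$ by the defining relation \eqref{eq:Wf} for $W_f$, and by Theorem~\ref{t:expansive2}(2) we have $\bar\xi(v)=w$, hence $\xi(v)=\rho(w)=x$. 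It remains only to estimate $\|v\|_\infty$: since $v_n=\sum_{k}f_k w_{n+k}$ and $|w_j|\le\tfrac12$ for all $j$, the triangle inequality gives $|v_n|\le\sum_k|f_k|\cdot\tfrac12=\tfrac12\|f\|_1$, which is comfortably within the stated bound $\|f\|_1$.

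Actually the bound $\tfrac12\|f\|_1$ obtained this way is sharper than $\|f\|_1$, so the proposition as stated allows some slack; in the writeup I would just record the crude estimate $|v_n|\le\|f\|_1\cdot\|w\|_\infty\le\|f\|_1$ and not worry about optimizing, since the point of Proposition~\ref{p:cover1} is merely to pin down \emph{a} uniformly bounded shift-invariant subset $V\subset\ell^\infty(\mathbb{Z},\mathbb{Z})$ whose image under $\xi$ is all of $X_f$ — a set one can then intersect with further constraints to cut down to a genuine symbolic (finite-alphabet) cover. So the logical skeleton is: (i) reduce a lift coordinatewise into $[-\tfrac12,\tfrac12)$; (ii) apply $f(\bar\sigma)$ to land in $\ell^\infty(\mathbb{Z},\mathbb{Z})$; (iii) use $f(\bar\sigma)\circ\bar\xi=\mathrm{Id}$ to see this integer sequence maps back to $x$; (iv) bound its sup-norm by $\|f\|_1\cdot\tfrac12$.

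There is really no serious obstacle here — the only thing to be careful about is the direction of the argument: one does \emph{not} start from an arbitrary $v\in V$ and try to hit $x$, but rather starts from $x$, produces a bounded real lift, and \emph{defines} $v$ to be its image under $f(\bar\sigma)$. The exactness statement in Theorem~\ref{t:expansive2}(2) (that $\bar\xi$ and $f(\bar\sigma)$ are mutually inverse isomorphisms between $W_f$ and $V_f=\ell^\infty(\mathbb{Z},\mathbb{Z})$) is precisely what makes step (iii) immediate, so the proof is a three-line consequence of results already established. If one wanted the constant $\|f\|_1$ to be attained or to be best possible one would need to think harder about which $x$ force large coordinates in $v$, but the proposition does not ask for that.
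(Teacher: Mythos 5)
Your proof is correct and follows the same strategy as the paper's: lift $x$ to a bounded representative $w\in W_f$, set $v=f(\bar\sigma)(w)\in\ell^\infty(\mathbb{Z},\mathbb{Z})$, and use the fact that $\bar\xi$ inverts $f(\bar\sigma)$ on $W_f$ to conclude $\xi(v)=x$, with the sup-norm of $v$ bounded via $\|v\|_\infty\le\|f\|_1\|w\|_\infty$. The only (inessential) difference is your choice of fundamental domain $[-\tfrac12,\tfrac12)$ instead of the paper's $[0,1)$, which yields a slightly sharper constant $\tfrac12\|f\|_1$ in place of $\|f\|_1$.
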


	\begin{proof}
Since $\rho (W_f\cap [0,1]^\mathbb{Z})=X_f$ and $f(\bar{\sigma })(W_f\cap [0,1]^\mathbb{Z})\subset V$, we can find, for every $x\in X_f$, a $w\in W_f\cap [0,1]^\mathbb{Z}$ with $\rho (w)=x$ and $v\coloneqq f(\bar{\sigma })(w)\in V$. Then $\xi (v)=x$.
	\end{proof}

We have thus found a symbolic cover $(V,\bar{\sigma })_\xi $ of $(X_f,\alpha _f)$. However, since the entropies of the restriction of $\bar{\sigma }$ to $V$ and of $\alpha _f$ satisfy that $h(\bar{\sigma }|_V)=\log (2\|f\|_1-1)>h(\alpha _f) = \int_0^1\log|f(e^{2\pi is})|\,ds $ by Theorem \ref{t:entropy}, $(V,\bar{\sigma })_\xi $ is not an equal entropy cover of $(X_f,\alpha _f)$. Nevertheless even the simple-minded statement of Proposition \ref{p:cover1} yields a very strong specification property of $\alpha _f$.

	\begin{coro}
	\label{c:specification}
If $d$ is a metric on $X_f$ then there exists, for every $\varepsilon >0$, an integer $N(\varepsilon )>0$ with the following properties.

\textup{(1)} Let $\mathcal{I}$ be a finite or infinite collection of disjoint nonempty subsets of $\mathbb{Z}$ such that the distances $|I-I'|=\min \{|m-n|:m\in I,\,n\in I'\}$ are $\ge N(\varepsilon )$ whenever $I,I'$ are distinct elements of $\mathcal{I}$. Then there exists, for every collection $(x_I)_{I\in \mathcal{I}}$ of elements of $X_f$, a point $y\in X_f$ with $d(\alpha _f^kx_I,\alpha _f^ky)<\varepsilon $ for every $I\in \mathcal{I}$ and $k\in I$.

\textup{(2)} If $J\coloneqq \bigcup_{I\in \mathcal{I}}I$ is finite, the the point $y$ in \textup{(1)} can be chosen to be periodic with any period $p\ge N(\varepsilon )+\max\{|k-l|:k,l\in J\}$.
	\end{coro}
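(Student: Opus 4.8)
The plan is to deduce Corollary~\ref{c:specification} directly from Proposition~\ref{p:cover1} by exploiting the fact that the covering map $\xi$ is defined by a \emph{rapidly convergent sum} of shifted copies of the fundamental homoclinic point $w^\Delta$, so that the value of $\xi(v)$ near a coordinate $k$ is essentially determined by the entries $v_n$ with $n$ close to $k$. More precisely, fix a metric $d$ on $X_f$ inducing its topology; since $\xi\colon V\longrightarrow X_f$ is continuous on the compact metrizable space $V=\bar B_{\|f\|_1}(\mathbb{Z},\mathbb{Z})$ and the coordinates of $w^\Delta$ decay exponentially (property (b) and (d) above), for every $\varepsilon>0$ I would choose $N(\varepsilon)$ so large that $\sum_{|n|\ge N(\varepsilon)/2}\bigl|w^\Delta_n\bigr|$ is small enough that altering a sequence $v\in V$ outside a window of radius $N(\varepsilon)/2$ around a coordinate $k$ changes the $k$-th coordinate of $\rho\circ\bar\xi(v)=\xi(v)$ (and hence, by shift-equivariance, the value $\alpha_f^k\xi(v)$ read in a fixed finite window) by less than $\varepsilon$ in the metric $d$.

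For part (1): given the collection $\mathcal{I}$ of pairwise $N(\varepsilon)$-separated subsets of $\mathbb{Z}$ and target points $(x_I)_{I\in\mathcal{I}}$, use Proposition~\ref{p:cover1} to pick, for each $I$, some $v^{(I)}\in V$ with $\xi(v^{(I)})=x_I$. Then splice these together: define $v\in V$ by setting $v_n=v^{(I)}_n$ whenever $n$ lies within distance $N(\varepsilon)/2$ of $I$ (these conditions are mutually exclusive because the $I$'s are $N(\varepsilon)$-separated), and $v_n=0$ (or anything in the allowed range) for all remaining $n$. Set $y=\xi(v)\in X_f$. For $k\in I$ we compare $\alpha_f^k y=\xi(\bar\sigma^k v)$ with $\alpha_f^k x_I=\xi(\bar\sigma^k v^{(I)})$: the sequences $\bar\sigma^k v$ and $\bar\sigma^k v^{(I)}$ agree on all coordinates of absolute value $\le N(\varepsilon)/2$, so by the choice of $N(\varepsilon)$ their images under $\xi$ agree at the $0$-th coordinate up to $\varepsilon$; applying the same comparison at $\alpha_f^{k+j}$ for $j$ in a fixed finite window (or, more cleanly, observing that the finitely many coordinates defining the $d$-ball of radius $\varepsilon$ are all controlled once $N(\varepsilon)$ is taken large enough relative to that window) gives $d(\alpha_f^k x_I,\alpha_f^k y)<\varepsilon$, as required.

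For part (2): when $J=\bigcup_{I\in\mathcal{I}}I$ is finite, perform the same splicing but on a \emph{periodic} sequence. Given a period $p\ge N(\varepsilon)+\max\{|k-l|:k,l\in J\}$, let $v\in V$ be the $p$-periodic integer sequence whose entries on one fundamental domain are dictated by the $v^{(I)}$'s on the windows of radius $N(\varepsilon)/2$ around the (finitely many) elements of $J$ and $0$ elsewhere; the lower bound on $p$ guarantees that within one period these windows do not overlap or wrap around, so $v$ is well defined and lies in $V$. Since $\bar\xi$ (and hence $\xi$) is shift-equivariant and $v$ is $p$-periodic, $\bar\sigma^p v=v$ forces $\alpha_f^p\,\xi(v)=\xi(v)$, i.e. $y=\xi(v)\in P_p(\alpha_f)$; the same window-comparison as in part~(1) gives the approximation estimates.

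The main obstacle is purely bookkeeping: making the dependence on $\varepsilon$ uniform. One must be careful that the metric $d$ is arbitrary, so "close in $d$" is not literally "agrees on a fixed set of coordinates"; the fix is standard — first pass to a concrete metric on $X_f\subset\mathbb{T}^\mathbb{Z}$ that is a weighted sum of coordinate distances, use uniform continuity of $\xi$ on the compact set $V$ to convert the $d$-estimate into an estimate on finitely many coordinates, and only then choose the truncation radius $N(\varepsilon)$ from the exponential tail of $w^\Delta$. Everything else is the elementary splicing argument above, using nothing beyond Proposition~\ref{p:cover1}, the shift-equivariance of $\xi$, and the exponential decay of the fundamental homoclinic point.
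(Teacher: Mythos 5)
The paper states Corollary~\ref{c:specification} without proof, so there is nothing to compare against directly; your splicing argument is exactly the argument the author clearly has in mind when calling this a corollary of Proposition~\ref{p:cover1}, and it is correct. The only bookkeeping to tighten is the window radius: if two elements of $\mathcal{I}$ are at distance exactly $N(\varepsilon)$, windows of radius $N(\varepsilon)/2$ around them can share a boundary coordinate, so one should use strictly smaller windows (or replace $N(\varepsilon)$ by $2N(\varepsilon)+1$), and similarly in part~(2) the bound $p\ge N(\varepsilon)+\operatorname{diam}J$ ensures the translates of $J$ by $p\mathbb{Z}$ are $\ge N(\varepsilon)$ apart, after which the part~(1) construction applied to the periodized family produces a $p$-periodic $v$ and hence a $p$-periodic $y=\xi(v)$.
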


In order to construct \textit{equal entropy} covers of $(X_f,\alpha _f)$ we consider the lexicographic order $\prec$ on $\ell ^1(\mathbb{Z},\mathbb{Z})$ (defined by putting $h \succ 0$ if the leading term in $h$ is positive) and set, for every \textit{SFT} $W\subset V$ with $\xi (W)=X_f$,
	\begin{equation}
	\label{eq:W*}
W^*= W \smallsetminus \bigcup\nolimits_{\{h\in \ell ^1(\mathbb{Z},\mathbb{Z}):h \succ 0\}}(W+f(\bar{\sigma })h).
	\end{equation}

	\begin{prop}[\protect{\cite[Proposition 4.2]{S3}}]
	\label{p:W*}
Let $W\subset V$ be a transitive \textit{SFT} with $\xi (W)=X_f$, and let $W^*\subset W$ be defined by \eqref{eq:W*}. Then $W^*$ is a mixing sofic shift with $\xi (W^*)=X_f$ and $h(\bar\sigma |_{W^*})=h(\alpha _f)$. Furthermore, the restriction of $\xi $ to $W^*$ is bounded-to-one.

If there exists a fixed point $\mathbf{c}\in W$ of $\bar{\sigma }$ with
	\begin{equation}
	\label{eq:unique}
\xi ^{-1}(\{\xi (\mathbf{c})\})\cap W=\{\mathbf{c}\},
	\end{equation}
then the restriction of $\xi $ to $W^*$ is almost one-to-one.
	\end{prop}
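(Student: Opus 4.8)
The plan is to decompose Proposition \ref{p:W*} into its three component assertions and handle them in order: first the entropy and soficity of $W^*$, then the surjectivity $\xi(W^*) = X_f$, then the bounded-to-one (and, under the extra hypothesis, almost one-to-one) property of $\xi|_{W^*}$. Throughout, the organising idea is that $W^*$ selects, from each fiber of $\xi|_W$, the lexicographically minimal representative. Indeed, by Theorem \ref{t:expansive2}(3) the kernel of $\xi$ is $f(\bar\sigma)(\ell^\infty(\mathbb{Z},\mathbb{Z}))$, so two points $v, v' \in W$ have $\xi(v) = \xi(v')$ precisely when $v - v' = f(\bar\sigma)h$ for some $h \in \ell^\infty(\mathbb{Z},\mathbb{Z})$; since $v, v' \in V$ are uniformly bounded, such an $h$ is automatically bounded, hence lies in $\ell^1(\mathbb{Z},\mathbb{Z}) = R_1$ because $f$ is noncyclotomic and $f(\bar\sigma)$ restricted to bounded sequences is injective with ``inverse'' given by convolution against the exponentially-decaying $w^\Delta$. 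So $W^*$ in \eqref{eq:W*} is exactly the set of $v \in W$ minimal in their $\xi$-fiber with respect to $\prec$.

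For soficity and entropy, I would argue that $W^*$ is a subshift (it is $\bar\sigma$-invariant because $\prec$ is shift-compatible in the relevant sense, and closed by a limiting argument using that membership in $W + f(\bar\sigma)h$ for $h \succ 0$ is detected on finitely many coordinates once $\|v\|_\infty$ is bounded), and that it is sofic because it is the image of the transitive SFT $W$ under the ``choose-the-$\prec$-minimal-fiber-element'' operation, which can be realised as a finite-state transduction — equivalently, $W^*$ is the set of $W$-points avoiding a regular ``can-be-lowered'' language. Mixing then follows from transitivity of $W$ together with the fact that $\xi|_{W^*}$ is bounded-to-one (once that is established) and $\xi$ intertwines with the ergodic, hence topologically mixing on the relevant time scale, $\alpha_f$; alternatively mixing is inherited because a sofic shift that is an equal-entropy bounded-to-one extension of a mixing system and is itself transitive must be mixing. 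For the entropy equality $h(\bar\sigma|_{W^*}) = h(\alpha_f)$: the inequality $\geq$ is clear since $\xi(W^*) = X_f$ forces $h(\bar\sigma|_{W^*}) \geq h(\alpha_f)$ by the variational principle applied through the factor map; the inequality $\leq$ comes precisely from bounded-to-one-ness, since a bounded-to-one factor map preserves topological entropy. So the entropy statement is not really independent — it is a corollary of the bounded-to-one claim plus surjectivity.

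Surjectivity $\xi(W^*) = X_f$ reduces to showing that \emph{every} $\xi$-fiber inside $W$ has a $\prec$-minimal element. Here I would use that $W \subset V = \bar B_{\|f\|_1}(\ell^\infty(\mathbb{Z},\mathbb{Z}))$ is a \emph{uniformly bounded} set of integer sequences: within a fixed fiber $\xi^{-1}(\{x\}) \cap W$, the set of ``correction polynomials'' $\{h \in R_1 : v + f(\bar\sigma)h \in W \text{ for the base point } v\}$ has coefficients controlled by $\|f\|_1$ and the geometry of $W$, and one shows the lexicographic order is a well-order on this finite-width family of bounded sequences — more carefully, that any descending $\prec$-chain stabilises because lowering the leading nonzero discrepancy coordinate is a strictly-decreasing-then-bounded-below process once the window is fixed, and a compactness/diagonalisation argument produces the infimum and shows it lies in $W$ (using that $W$ is closed). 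The fact that $\xi(W) = X_f$ is given, so every fiber is nonempty, and the minimal element of a nonempty fiber lies in $W^*$ by construction and maps to the same $x$.

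The bounded-to-one claim — which I expect to be the main obstacle — requires showing that for each $x \in X_f$ the fiber $\xi^{-1}(\{x\}) \cap W^*$ is not just a singleton in general but uniformly bounded in cardinality over all $x$. The idea is: if $v, v' \in W^*$ both map to $x$, then $v - v' = f(\bar\sigma)h$ with $h \in R_1$, and \emph{both} $v$ and $v'$ being $\prec$-minimal in the same fiber forces $h$ to be, in a suitable sense, ``small'' — one gets that no tail of $h$ can be one-signed (else one of $v, v'$ could be lowered by subtracting $f(\bar\sigma)(u^N \cdot(\text{leading monomial of }h))$ or similar), which combined with the bound $\|v\|_\infty, \|v'\|_\infty \leq \|f\|_1$ and $v - v' = f(\bar\sigma)h$ forces $\|h\|_\infty$ and then the support of $h$ to be bounded by a constant depending only on $f$ and the window size of the SFT $W$; hence only boundedly many $h$ are possible. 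This is exactly the place where the hypothesis that $W$ is an \emph{SFT} (not merely a subshift) is used, and where one must be careful about the interaction between the lexicographic order and the defining window of $W$. For the final ``almost one-to-one'' statement under \eqref{eq:unique}: the fixed point $\mathbf{c}$ with $\xi^{-1}(\{\xi(\mathbf{c})\}) \cap W = \{\mathbf{c}\}$ is a point of the transitive sofic shift $W^*$ at which $\xi|_{W^*}$ is genuinely one-to-one, and a bounded-to-one factor map between transitive systems that is one-to-one over a single point of the domain whose image is a transitive point of the range (a fixed point works after passing to its orbit closure, which is all of $X_f$ by transitivity combined with $\xi(\mathbf c)$ being doubly transitive — or rather, one argues directly that the set where the fiber is a singleton is a dense $G_\delta$ containing all doubly transitive points) is almost one-to-one; I would invoke the standard fact (as in \cite{S3}, or a direct argument) that for bounded-to-one factor maps onto a transitive system, injectivity over one point propagates to a residual set of points, and in particular to all doubly transitive points, which is the definition required.
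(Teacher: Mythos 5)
The paper itself does not reprove Proposition~\ref{p:W*}; it cites \cite[Proposition~4.2]{S3}. So the comparison has to be against what a correct argument must look like, and there your proposal has a genuine gap at the very start that undermines the rest.

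You assert that if $v,v'\in W$ satisfy $\xi(v)=\xi(v')$, then $v-v'=f(\bar\sigma)h$ for some $h\in\ell^\infty(\mathbb{Z},\mathbb{Z})$, and that since $h$ is bounded it ``hence lies in $\ell^1(\mathbb{Z},\mathbb{Z})=R_1$.'' That implication is false: a bounded integer sequence need not have finite support, so it need not lie in $\ell^1(\mathbb{Z},\mathbb{Z})$. Concretely, by Theorem~\ref{t:expansive2}(3) the kernel of $\xi$ is $f(\bar\sigma)(\ell^\infty(\mathbb{Z},\mathbb{Z}))$, which is strictly larger than $f(\bar\sigma)(R_1)$; for instance two distinct periodic points $v,v'\in W$ with $\xi(v)=\xi(v')$ (which do occur) yield $h=f(\bar\sigma)^{-1}(v-v')$ periodic and nonzero, hence bounded but not in $\ell^1$. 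Consequently $W^*$ does \emph{not} ``select the $\prec$-minimal representative from each $\xi$-fibre'': the construction \eqref{eq:W*} only quotients out the $R_1$-part of the kernel, which is exactly why the Proposition claims \emph{bounded-to-one} rather than injective, and why \eqref{eq:unique} is needed as an additional hypothesis to upgrade to almost one-to-one. Your organising picture contradicts the statement you are trying to prove.

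This error propagates. The surjectivity argument, the bounded-to-one argument, and (via your own reduction) the entropy and mixing arguments all depend on the mistaken identification of the fibre structure with the $R_1$-orbit structure. Two further points deserve care even once the above is repaired. First, the lexicographic order $\prec$ on $R_1$ by leading coefficient is a total order but \emph{not} a well-order: the chain $u^{-1}\succ u^{-2}\succ u^{-3}\succ\cdots$ is strictly decreasing and bounded below by $0$, so the existence of a $\prec$-minimal admissible $h$ requires an actual argument (a compactness/closure argument using that $W$ is a closed SFT inside the compact ball $V$ and that the family $\{h: v+f(\bar\sigma)h\in W\}$ is controlled by the window of $W$), not appeal to well-ordering. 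Second, the claim that $W^*$ is sofic because ``the can-be-lowered condition is regular'' needs justification: the witnessing $h\succ0$ can have its leading index arbitrarily far to the right, and one must show this reduces to a finite-memory check; this is where the SFT structure and the uniform bound $W\subset V$ genuinely enter, and a proof along these lines should make that reduction explicit rather than assert it.
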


As in \cite[Proposition 5.2]{S3} one can find a \textit{SFT} $W\subset V$ with the properties required by Proposition \ref{p:W*}. Indeed, since $\alpha _f$ is expansive, it has only finitely many fixed points, $\kappa \ge 1$, say. Then
	\begin{displaymath}
D=\{t\in \mathbb{R}: (\dots ,t,t,t,\dots )\in W_f\} = \tfrac1\kappa \mathbb{Z}.
	\end{displaymath}
We put $J=[-1/2\kappa ,1-1/2\kappa ]\subset \tilde{J}=(-5/8\kappa ,1-3/8\kappa )\subset \mathbb{R}$ and observe that $\rho (W_f\cap J^\mathbb{Z})=X_f$, and that $W_f\cap \tilde{J}^\mathbb{Z}$ contains exactly $\kappa $ elements of $D$. As explained in the proof of \cite[Proposition 5.2]{S3}, there exists a \textit{SFT} $W\subset V$ such that $W_f\cap J^\mathbb{Z}\subset \bar{\xi }(V) \subset W_f\cap \tilde{J}^\mathbb{Z}$. Then $\xi (W)=X_f$, and every fixed point of $\alpha _f$ has a unique pre-image under $\xi $ in $W$. This proves our next corollary.

	\begin{coro}[\protect{\cite[Theorem 5.1]{S3}}]
	\label{c:1-1-cover}
Suppose that $\alpha _f$ is expansive. Then there exists a mixing sofic shift $W^*\subset \ell ^\infty (\mathbb{Z},\mathbb{Z})$ such that $\xi (W^*)=X_f$ and the restriction of $\xi $ to $W^*$ is injective on the set of doubly transitive points in $W^*$.
	\end{coro}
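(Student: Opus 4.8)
The plan is to derive this corollary directly from Proposition \ref{p:W*} by exhibiting a transitive SFT $W\subset V$ with $\xi(W)=X_f$ that contains a fixed point $\mathbf{c}$ of $\bar\sigma$ satisfying the uniqueness condition \eqref{eq:unique}. Once such a $W$ is in hand, Proposition \ref{p:W*} immediately produces the mixing sofic shift $W^*\subset W\subset\ell^\infty(\mathbb{Z},\mathbb{Z})$ with $\xi(W^*)=X_f$ and $h(\bar\sigma|_{W^*})=h(\alpha_f)$, and the final clause of that proposition tells us that $\xi|_{W^*}$ is almost one-to-one, i.e.\ injective on the doubly transitive points of $W^*$. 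So the entire content of the corollary reduces to the construction step, exactly as in the discussion preceding the statement.

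First I would use expansiveness of $\alpha_f$ to record that $\alpha_f$ has only finitely many fixed points, say $\kappa\ge1$ of them, and correspondingly that the set $D=\{t\in\mathbb{R}:(\dots,t,t,t,\dots)\in W_f\}$ of ``constant'' lifts equals $\tfrac1\kappa\mathbb{Z}$; this is where the hypotheses $f_0\ne0$, $f_m>0$ and noncyclotomicity (so that $f(1)\ne0$, hence $\kappa=|f(1)|$) enter. Next I would choose the two nested fundamental domains $J=[-\tfrac1{2\kappa},1-\tfrac1{2\kappa}]\subset\tilde J=(-\tfrac5{8\kappa},1-\tfrac3{8\kappa})$ on the real line, noting the two key properties: $\rho(W_f\cap J^\mathbb{Z})=X_f$ (so that any SFT sandwiched between the corresponding level sets still surjects onto $X_f$ under $\xi$), and $W_f\cap\tilde J^\mathbb{Z}$ meets $D$ in exactly $\kappa$ points, namely the $\kappa$ elements of $\tfrac1\kappa\mathbb{Z}$ lying in $\tilde J$. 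The construction of an SFT $W\subset V$ with $\bar\xi(V)\cap W_f$ caught between $W_f\cap J^\mathbb{Z}$ and $W_f\cap\tilde J^\mathbb{Z}$ — that is, $W_f\cap J^\mathbb{Z}\subset\bar\xi(W)\subset W_f\cap\tilde J^\mathbb{Z}$ — is carried out verbatim as in the proof of \cite[Proposition 5.2]{S3}: one approximates the half-open-to-open comparison of the intervals $J$ and $\tilde J$ by finitely many coordinate conditions, using that $f(\bar\sigma)$ is a bounded operator with $\|f(\bar\sigma)\|\le\|f\|_1$ so that windows of bounded width suffice.

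Finally I would check that this $W$ feeds correctly into Proposition \ref{p:W*}. Since $W_f\cap J^\mathbb{Z}\subset\bar\xi(W)$ and $\rho(W_f\cap J^\mathbb{Z})=X_f$, we get $\xi(W)=X_f$; one also notes that $W$, being built from $V$ with the sandwiching property, is transitive (or can be replaced by a transitive sub-SFT still surjecting onto $X_f$, since $\alpha_f$ is mixing). The fixed point of $\bar\sigma$ in $W$ is $\mathbf{c}=\bar\xi^{-1}$ applied to the constant integer sequence corresponding to the element $0\in D\cap\tilde J$; because $W_f\cap\tilde J^\mathbb{Z}$ contains exactly $\kappa$ elements of $D$ and these biject with the $\kappa$ fixed points of $\alpha_f$ under $\rho$, every fixed point of $\alpha_f$ has a unique $\xi$-preimage in $W$, which is precisely condition \eqref{eq:unique} at $\mathbf{c}$. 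Proposition \ref{p:W*} then gives that $\xi|_{W^*}$ is almost one-to-one, completing the proof. The main obstacle is purely the SFT-construction step — translating the strict inclusion $J\subset\tilde J$ of real intervals into a finite-type condition on integer sequences so that the resulting shift is genuinely of finite type while still trapping $\bar\xi(W)$ between the two level sets — and that is exactly the point where I would lean on the argument already given in \cite[Proposition 5.2]{S3} rather than reproduce it.
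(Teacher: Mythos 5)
Your proposal follows exactly the route the paper takes: use the fixed-point count $\kappa=|f(1)|$ and the nested intervals $J\subset\tilde J$ to produce, via \cite[Proposition 5.2]{S3}, an SFT $W\subset V$ with $W_f\cap J^\mathbb{Z}\subset\bar\xi(W)\subset W_f\cap\tilde J^\mathbb{Z}$, verify surjectivity of $\xi|_W$ and the uniqueness condition \eqref{eq:unique}, and then invoke Proposition~\ref{p:W*}. This matches the paper's own argument (including the slightly redundant noncyclotomicity remark, since expansiveness already forces $\Theta_f^\circ=\varnothing$), and your explicit mention of the transitivity requirement on $W$ is a welcome small clarification that the paper leaves implicit.
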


Although one can determine sofic covers of the form \eqref{eq:W*} explicitly --- at least in simple examples --- there are no `natural' symbolic covers or representations of $(X_f,\alpha _f)$, unless $f$ is a Pisot polynomial.\footnote{\label{Pisot}\, An irreducible polynomial $f$ of the form \eqref{eq:f} is \textit{Pisot} if it has a root $\beta >1$ whose conjugates all have absolute value $<1$.} In this case the two-sided $\beta $-shift $V_\beta \subset \ell ^\infty (\mathbb{Z},\mathbb{Z})$ is an equal entropy (and hence bounded-to-one) cover of $(X_f,\alpha _f)$ (\cite{Ver3, ES, SV, S3}). As an instance of a wider class of conjectures, collectively referred to as \textit{Pisot conjecture(s)}, $(V_\beta ,\bar{\sigma })_\xi $ had been conjectured for some time to be an almost one-to-one cover of $(X_f,\alpha _f)$. After earlier partial results (cf. e.g., \cite{S3,Sidorov}), this conjecture was recently settled affirmatively by Barge in \cite{Barge}.\footnote{\,I am grateful to N. Sidorov for alerting me to this reference.}

\section{Pseudocovers of nonexpansive automorphisms}\label{s:nonexpansive}

Is there any hope of constructing symbolic covers or symbolic representations of $(X_f,\alpha _f)$ in the nonexpansive case? We start with the bad news.

	\begin{defi}
	\label{d:homoclinic relation}
Let $T$ be a homeomorphism of a compact metrizable space $Y$ with a metric $\delta $. Two points $x,y \in Y$ are \emph{homoclinic} if $\lim_{|n|\to \infty }\delta (T ^ nx,\linebreak[0]T ^ ny)=0$. The \emph{homoclinic equivalence relation} $\boldsymbol{\Delta }_T(Y)$ is defined as
	$$
\boldsymbol{\Delta }_T(Y)=\{(x,y)\in Y ^ 2:x\enspace \textup{and}\enspace y\enspace \textup{are homoclinic}\}.
	$$
For every $x \in Y$ we denote by
	$$
\boldsymbol{\Delta }_T(x)=\{ y \in Y:(x,y)\in \boldsymbol{\Delta }_T(Y)\}
	$$
the \emph{homoclinic equivalence class} of $x$. The homoclinic relation $\boldsymbol{\Delta }_T(Y)$ is \emph{topologically transitive} if $\boldsymbol{\Delta }_T(y)$ is dense in $Y$ for \emph{some} $y \in Y$, and \emph{minimal} if $\boldsymbol{\Delta }_T(y)$ is dense in $Y$ for \emph{every} $y \in Y$.

All these definitions are independent of the specific choice of the metric $\delta $.
	\end{defi}

	\begin{prop}
\label{p:bad}
Suppose that $(X_f,\alpha _f)$ is nonexpansive and that $T$ is a homeomorphism of a compact metrizable space $Y$ whose homoclinic relation $\boldsymbol{\Delta }_Y(T)$ is topologically transitive. If $\phi \colon Y \longrightarrow X$ is a continuous $(T,\alpha _f)$-equivariant map then $\phi (Y)$ consists of a single fixed point $\bar{x}$ of $\alpha $ in $X_f$.
	\end{prop}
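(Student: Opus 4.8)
The plan is to exploit the two facts established earlier about the nonexpansive case: first, that $\alpha_f$ has no nonzero homoclinic points (Theorem \ref{t:nonexpansive}), and second, that $\alpha_f$ acts isometrically on a \emph{dense} subgroup of $X_f$ — namely, by the discussion in Subsection on linearization, when $\alpha_f$ is nonexpansive the closure of $X_f^\circ=\rho(W_f^\circ)$ is all of $X_f$, and $\alpha_f$ restricted to $X_f^\circ$ is an isometry. First I would observe that a continuous equivariant map sends homoclinic pairs to homoclinic pairs: if $(y,y')\in\boldsymbol{\Delta}_T(Y)$, then since $\phi$ is uniformly continuous and $\phi\circ T^n=\alpha_f^n\circ\phi$, we get $\lim_{|n|\to\infty} d(\alpha_f^n\phi(y),\alpha_f^n\phi(y'))=0$, i.e. $\phi(y)-\phi(y')\in\Delta_{\alpha_f}(X_f)$. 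By Theorem \ref{t:nonexpansive} this homoclinic group is trivial, so $\phi(y)=\phi(y')$ whenever $y,y'$ are homoclinic in $Y$.

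Next I would use topological transitivity of $\boldsymbol{\Delta}_T(Y)$: fix a point $y_0\in Y$ whose homoclinic class $\boldsymbol{\Delta}_T(y_0)$ is dense in $Y$. By the previous paragraph $\phi$ is constant on $\boldsymbol{\Delta}_T(y_0)$, with value $\bar x:=\phi(y_0)$, and by continuity of $\phi$ and density of $\boldsymbol{\Delta}_T(y_0)$ we conclude $\phi(Y)=\{\bar x\}$. Finally, equivariance forces $\alpha_f(\bar x)=\alpha_f(\phi(y_0))=\phi(T y_0)=\bar x$, so $\bar x$ is a fixed point of $\alpha_f$. (A small remark: the statement of the proposition writes $\alpha$ and $X$ in a couple of places where $\alpha_f$ and $X_f$ are meant; the argument is unaffected.)

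The only place requiring a little care — and the step I expect to be the crux — is the assertion that homoclinic pairs in $Y$ map to homoclinic pairs in $X_f$; this is exactly where uniform continuity of $\phi$ on the compact space $Y$ is used, together with the defining relation $\phi\circ T^n=\alpha_f^n\circ\phi$. Once that is in hand, the argument is purely formal: the absence of nontrivial homoclinic points in the nonexpansive target collapses the whole transitive homoclinic class to a point, and density plus continuity propagate this to all of $Y$. Notably, isometricity of $\alpha_f$ on $X_f^\circ$ is not strictly needed for this proposition — Theorem \ref{t:nonexpansive} already delivers the triviality of $\Delta_{\alpha_f}(X_f)$ — but it is the conceptual reason behind that theorem and explains why nonexpansiveness is the essential hypothesis.
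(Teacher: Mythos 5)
Your proposal is correct and follows the same route as the paper's proof: continuity and equivariance send homoclinic pairs in $Y$ to homoclinic pairs in $X_f$, Theorem~\ref{t:nonexpansive} collapses $\Delta_{\alpha_f}(X_f)$ to $\{0\}$, and density of one homoclinic class together with continuity of $\phi$ then collapses $\phi(Y)$ to a single $\alpha_f$-fixed point. The extra detail you supply on uniform continuity and on why the image point is fixed is exactly what the paper leaves implicit.
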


	\begin{proof}
For any homoclinic pair $(y,y')$ in $Y$, the points $\phi (y)$ and $\phi (y')$ are homoclinic in $X_f$, and hence $\phi (y)-\phi (y')\in \Delta _{\alpha _f}(X_f)=\{0\}$ by Theorem \ref{t:nonexpansive}. If $\boldsymbol{\Delta }_T(y)$ is dense in $Y$ for some $y \in Y$ then the continuity of $\phi $ implies that $\phi (Y)$ is a single point which must be fixed under $\alpha _f$.
	\end{proof}

	\begin{coro}
	\label{c:bad}
Let $Y$ be a mixing \textit{SFT} with finite or countably infinite alphabet. If $(X_f,\alpha _f)$ is nonexpansive, then every continuous equivariant map $\phi \colon Y \longrightarrow X_f$ sends $Y$ to a single point.
	\end{coro}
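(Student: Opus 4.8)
The plan is to derive Corollary \ref{c:bad} directly from Proposition \ref{p:bad} by checking that a mixing \textit{SFT} $Y$ with finite or countably infinite alphabet, together with its shift $\sigma $, satisfies the hypothesis that its homoclinic relation $\boldsymbol{\Delta }_\sigma (Y)$ is topologically transitive --- in fact minimal. Once this is established, Proposition \ref{p:bad} applies verbatim (with $T=\sigma $ on $Y$) and tells us that any continuous equivariant $\phi \colon Y\longrightarrow X_f$ has image a single fixed point of $\alpha _f$, which is exactly the assertion.

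So the real content is the homoclinic transitivity of a mixing \textit{SFT}. First I would recall that in a shift space two points $x,y\in Y\subset \mathsf{A}^\mathbb{Z}$ are homoclinic precisely when they agree outside a finite window, i.e. $x_n=y_n$ for all $|n|$ sufficiently large; this is immediate from the fact that the standard metric on $\mathsf{A}^\mathbb{Z}$ measures the size of the largest central block on which two sequences coincide, and from the definition of homoclinicity as $\lim_{|n|\to\infty }\delta (\sigma ^nx,\sigma ^ny)=0$. (For a countably infinite alphabet one uses a compatible metric for which the same characterization holds; the cylinder topology is still the relevant one here.) Next, fix any $y\in Y$ and any point $z\in Y$ and any $\varepsilon >0$; choose $N$ so that agreement of the central $[-N,N]$-block forces distance $<\varepsilon $. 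By mixing there is an $M$ (depending only on $N$, via the mixing constant) such that for any two admissible words $u,v$ of length $\le 2N+1$ there is an admissible word of length $M$ connecting $u$ to $v$. Using this twice --- once to splice the central block of $z$ onto the left tail of $y$, once to splice it back onto the right tail of $y$ --- produces a point $y'\in Y$ that is homoclinic to $y$ (it agrees with $y$ outside a finite window) and whose central $[-N,N]$-block is that of $z$, hence $\delta (y',z)<\varepsilon $. This shows $\boldsymbol{\Delta }_\sigma (y)$ is dense in $Y$ for every $y$, so $\boldsymbol{\Delta }_\sigma (Y)$ is (topologically transitive, indeed) minimal.

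With that in hand the proof is: apply Proposition \ref{p:bad} with $Y$ the given mixing \textit{SFT}, $T=\sigma $, and the given $\phi $; its hypothesis holds by the previous paragraph, and $(X_f,\alpha _f)$ is nonexpansive by assumption, so $\phi (Y)=\{\bar x\}$ for a fixed point $\bar x$ of $\alpha _f$. I expect the only mildly delicate point to be the countably infinite alphabet case: one must make sure that ``mixing \textit{SFT} with countably infinite alphabet'' still supplies the gluing property uniformly enough that a spliced point lands in $Y$ and is genuinely homoclinic (agrees with the reference point off a finite set), and that the metric one uses is compatible with the coordinate topology so that the homoclinic characterization above is valid; but since the transition structure of an \textit{SFT} is finitely specified around any given window, the gluing argument localizes to finitely many symbols and goes through. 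If one prefers to avoid re-examining the countable case, one can instead simply note that the argument of Proposition \ref{p:bad} only uses density of one homoclinic class together with continuity of $\phi $, and density of $\boldsymbol{\Delta }_\sigma (y)$ for a single well-chosen $y$ suffices --- which is what the splicing construction delivers.
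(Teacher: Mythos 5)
Your proposal is correct and follows the same route as the paper: observe that the homoclinic relation of a mixing \textit{SFT} is minimal, then invoke Proposition~\ref{p:bad}. The paper simply asserts the minimality in one line; you supply the standard splicing argument (two points of a shift space are homoclinic iff they agree off a finite window, and mixing lets one glue a long central block of any target point onto the two tails of a reference point), which is exactly the intended justification. Your cautionary remark about the countably infinite alphabet is fair --- in that case $Y$ is no longer compact, so the hypothesis of Proposition~\ref{p:bad} is not literally satisfied and one must check separately that a continuous equivariant $\phi$ carries homoclinic pairs to homoclinic pairs --- but the paper itself glosses over this in the same way, so this is not a divergence from the source.
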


	\begin{proof}
If $T$ is the shift on $Y$, then $\boldsymbol{\Delta }_Y(T)$ is minimal, and our claim follows from Proposition \ref{p:bad}.
	\end{proof}

	\begin{coro}
	\label{c:bad2}
Let $Y$ be a topologically transitive sofic shift with finite alphabet. Then every continuous equivariant map $\phi \colon Y \longrightarrow X_f$ sends $Y$ to a finite set.
	\end{coro}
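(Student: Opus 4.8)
The plan is to reduce Corollary \ref{c:bad2} to Corollary \ref{c:bad} by recalling that a topologically transitive sofic shift $Y$ over a finite alphabet is a factor of a mixing \textit{SFT} after passing to a suitable iterate, and then exploiting that $\phi$ has a well-controlled image already on the \textit{SFT} level. More precisely, let $Y \subset \mathsf{A}^\mathbb{Z}$ be a topologically transitive sofic shift and $T$ the shift on $Y$. By the structure theory of sofic shifts (for instance via the Fischer cover, or by first restricting to a mixing component), there is an integer $p \geq 1$ and a disjoint decomposition $Y = \bigsqcup_{j=0}^{p-1} T^j Y_0$, where each $Y_0$ is a mixing sofic shift for $T^p$; moreover each $Y_0$ is itself a factor of a mixing \textit{SFT} $Z_0$ (with finite alphabet) via a continuous $T^p$-equivariant surjection $\pi_0 \colon Z_0 \longrightarrow Y_0$.

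First I would apply Corollary \ref{c:bad} to the mixing \textit{SFT} $(Z_0, T^p)$ and the continuous $(T^p,\alpha_f^p)$-equivariant map $\phi \circ \pi_0 \colon Z_0 \longrightarrow X_f$: since $\alpha_f^p$ is again a nonexpansive irreducible group automorphism (indeed $\alpha_f^p$ is expansive iff $\alpha_f$ is, and Theorem \ref{t:nonexpansive} applies to its homoclinic group directly — or one simply notes that any $\alpha_f^p$-homoclinic point of $X_f$ is also $\alpha_f$-homoclinic), Corollary \ref{c:bad} forces $\phi \circ \pi_0$ to be constant, equal to some point $\bar x_0 \in X_f$ fixed under $\alpha_f^p$. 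Since $\pi_0$ is surjective, $\phi$ is constant on $Y_0$, equal to $\bar x_0$. Doing this for each of the $p$ translates, $\phi$ is constant on each $T^j Y_0$ with value $\alpha_f^j \bar x_0$ by $T$-equivariance, so $\phi(Y) = \{\bar x_0, \alpha_f \bar x_0, \dots, \alpha_f^{p-1}\bar x_0\}$, a finite set. Alternatively, and perhaps more cleanly, one can bypass the \textit{SFT} cover entirely: Corollary \ref{c:bad} (applied with $T^p$ in place of $T$) already tells us that $\phi$ restricted to the mixing sofic shift $Y_0$ is constant, since the homoclinic relation of a mixing sofic shift is still minimal; this is really the content of Proposition \ref{p:bad} applied to $(Y_0, T^p)$, whose homoclinic relation is topologically transitive.

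The key steps, in order, are: (i) decompose the transitive sofic shift $Y$ into $p$ cyclically permuted mixing sofic pieces $T^j Y_0$; (ii) observe that $\alpha_f^p$ is still nonexpansive irreducible, so its only homoclinic point is $0$; (iii) invoke Proposition \ref{p:bad} (or Corollary \ref{c:bad}) for $(Y_0, T^p) \to (X_f, \alpha_f^p)$ to conclude $\phi|_{Y_0}$ is a single point $\bar x_0$; (iv) propagate via $\phi \circ T = \alpha_f \circ \phi$ to get $\phi|_{T^j Y_0} = \{\alpha_f^j \bar x_0\}$ and hence $|\phi(Y)| \leq p$.

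The main obstacle is step (i): justifying that a topologically transitive sofic shift over a finite alphabet decomposes into finitely many cyclically permuted mixing sofic subsystems under a power of the shift, and that each such piece is still sofic. This is standard — it follows from the spectral decomposition of the underlying (transitive) labelled graph, or from the fact that the period of a transitive sofic shift is well-defined and $Y^{(p)} := Y \cap (T\text{-period-}p\text{ structure})$ yields mixing pieces — but it does require citing or reproving the right piece of symbolic-dynamics folklore (see e.g. Lind--Marcus). A subtle point worth a remark: one must ensure the mixing pieces $Y_0$ still have \emph{finite} alphabet so Corollary \ref{c:bad} (stated for countable alphabets anyway) or Proposition \ref{p:bad} applies; this is automatic since $Y_0 \subset Y \subset \mathsf{A}^\mathbb{Z}$. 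Everything else is a routine equivariance bookkeeping argument.
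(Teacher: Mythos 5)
Your proof is correct, and it follows the same basic route as the paper: pass to a cover or decomposition into mixing pieces, apply Corollary \ref{c:bad} (equivalently Proposition \ref{p:bad}) to each piece, and propagate by equivariance. In fact your argument is more careful than the paper's, whose one-line proof reads ``Since $Y$ is a continuous equivariant image of a topologically transitive \textit{SFT} with finite alphabet, the result follows from Corollary \ref{c:bad}'' --- but Corollary \ref{c:bad} as stated applies to \emph{mixing} SFTs, and the homoclinic relation of a transitive but non-mixing SFT is not topologically transitive on the whole space (homoclinic classes stay inside cyclic components). So the period decomposition you spell out in step (i) is genuinely needed, as is the observation in step (ii) that an $\alpha_f^p$-homoclinic point is $\alpha_f$-homoclinic (by continuity of the finitely many maps $\alpha_f^r$, $0\le r<p$, at $0$); the paper leaves both implicit. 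One small refinement: rather than verifying that the homoclinic relation of a mixing \emph{sofic} shift is minimal, it is slightly cleaner to lift each mixing sofic piece $T^jY_0$ to a mixing SFT cover and apply the corollary there, exactly as in your primary route, since the corresponding fact for mixing SFTs is what the paper actually establishes in the proof of Corollary \ref{c:bad}.
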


	\begin{proof}
Since $Y$ is a continuous equivariant image of a topologically transitive \textit{SFT} with finite alphabet, the result follows from Corollary \ref{c:bad}.
	\end{proof}

	\begin{rema}
	\label{r:bad}
Some non-sofic shift-spaces have a topologically transitive homoclinic equivalence relation. For example, if $\beta >1$ is a real number, and if $V_\beta \subset \{ 0,\dots ,\lceil \beta -1 \rceil \} ^ \mathbb{Z}$ is the two-sided beta-shift space, then the homoclinic equivalence relation $\boldsymbol{\Delta }_{\bar \sigma}(V_\beta )$ of the beta-shift $\sigma _\beta $ is topologically transitive, although $V_\beta $ is in general not sofic.
	\end{rema}

The Corollaries \ref{c:bad} -- \ref{c:bad2} and Remark \ref{r:bad} imply that $\alpha _f$ cannot have Markov \textup{(}or sofic\textup{)} partitions or covers in any reasonable sense, if it is nonexpansive. However, by a general result in \cite[Theorem 7.4]{BFF} or \cite[Theorem 8.6]{BD}, $(X_f,\alpha _f)$ has an equal entropy symbolic cover; regrettably, nothing much can be said about how `nice' such a cover could be.\footnote{\label{entexp}\, A sufficient condition for a homeomorphism $T$ of a compact metric space $X$ to have an equal entropy symbolic cover is that $T$ be \textit{asymptotically $h$-expansive} (cf. \cite[p. 720]{BFF}), a condition which can be verified quite easily for $\alpha _f$.}

\smallskip In an attempt to imitate --- at least in spirit --- the construction of symbolic covers in the expansive case we set, for every $v=(v_n)\in \ell ^ \infty (\mathbb{Z},\mathbb{Z})$ and $k\in \mathbb{Z}$,
	\begin{equation}
	\label{eq:barxi*}
	\begin{aligned}
\bar{\xi }^*(v)_k&=\sum\nolimits_{n\ge0}v_n w^-_{k-n}+\sum\nolimits_{n<0}v_n w^+_{k-n}
	\\
&= \Bigl[\sum\nolimits _{n\ge0}v_n \bar{\sigma }^{-n}(w ^-)+\sum\nolimits_{n<0}v_n \bar{\sigma }^{-n}(w ^+)\Bigr]_k.
	\end{aligned}
	\end{equation}
Since the coordinates $w_n ^+$ and $w_{-n}^-$ decay exponentially as $n \to \infty $ by property (b) \vpageref{b}, $\bar{\xi }^*(v)_k$ converges for every $k\in \mathbb{Z}$, and for every $k\in \mathbb{Z}$ the map $v\mapsto \bar{\xi }^*(v)_k$ is continuous on $\ell ^\infty (\mathbb{Z},\mathbb{Z})$. If we put
	\begin{equation}
	\label{eq:l*}
\ell ^*(\mathbb{Z},\mathbb{R})=\bigl\{ w=(w_n) \in \mathbb{R}^ \mathbb{Z}: \sup\nolimits_{n \in \mathbb{Z}} \tfrac{|w_n|}{|n|+1}<\infty \bigr\} \supset \ell ^ \infty (\mathbb{Z},\mathbb{R}),
	\end{equation}
then \eqref{eq:barxi*} shows that $\bar{\xi }^*(v) = (\bar{\xi }^*(v)_k)_{k\in \mathbb{Z}} \in \ell ^*(\mathbb{Z},\mathbb{R})$ for every $v\in \ell ^\infty (\mathbb{Z},\mathbb{Z})$.

We extend the maps $\bar{\sigma },f(\bar{\sigma })$ and $\rho $ to $\ell ^*(\mathbb{Z},\mathbb{R})$ in the obvious manner and note that the kernel of $f(\bar{\sigma })$ does not change with this extension:
	\begin{equation}
	\label{eq:kernel}
\{w\in \ell ^*(\mathbb{Z},\mathbb{R}): f(\bar{\sigma })(w)=0\}=W_f^\circ.
	\end{equation}

	\begin{theo}[\cite{LiS2}]
	\label{t:boundedness}
Let $W_f\subset \ell ^\infty (\mathbb{Z},\mathbb{R})$ and $V_f=f(\bar{\sigma })(W_f)\subset \ell ^\infty (\mathbb{Z},\mathbb{Z})$ be given by \eqref{eq:Wf}, and define $\bar{\xi }^*\colon \ell ^\infty (\mathbb{Z},\mathbb{Z})\longrightarrow \ell ^*(\mathbb{Z},\mathbb{R})$ by \eqref{eq:barxi*} -- \eqref{eq:l*}.

\smallskip\textup{(1)} $f(\bar{\sigma })\circ \bar{\xi }^*(v)=v$\enspace and\enspace $\bar{\xi }^*\circ f(\bar{\sigma })(w)-w \in W_f ^\circ$ for every $v \in \ell ^\infty (\mathbb{Z},\mathbb{Z})$ and $w \in W_f $.

\smallskip\textup{(2)} $V_f=\{v\in \ell ^\infty (\mathbb{Z},\mathbb{Z}):\bar{\xi }^*(v)\in \ell ^\infty (\mathbb{Z},\mathbb{R})\}$\enspace and\enspace $\bar{\xi }^*(V_f)\subset W_f$.

\smallskip\textup{(3)} There exists a constant $c>0$ with $\|\bar{\xi }^*\circ f(\bar{\sigma })(w)\|_\infty \le c \cdot \|w\|_\infty$ for every $w\in W_f$.
	\end{theo}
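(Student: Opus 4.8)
The plan is to analyze $\bar\xi^*$ through its two pieces, the "backward" part built from $w^-$ and the "forward" part built from $w^+$, and to exploit the identity $w^+ + w^\circ = w^-$ together with $f(\bar\sigma)(w^\pm) = v^\Delta$ from \eqref{eq:homoclinic2}.

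\smallskip
For \textup{(1)}: Applying $f(\bar\sigma)$ termwise to the second expression in \eqref{eq:barxi*} and using $f(\bar\sigma)(\bar\sigma^{-n}(w^\pm)) = \bar\sigma^{-n}(v^\Delta)$, I get $f(\bar\sigma)\circ\bar\xi^*(v) = \sum_{n\in\mathbb{Z}} v_n\bar\sigma^{-n}(v^\Delta) = v$; one must check that $f(\bar\sigma)$ commutes with the (coordinatewise convergent) infinite sums, which is routine since $f$ has finite support and the sums converge coordinatewise. For the second claim, fix $w\in W_f$, put $v = f(\bar\sigma)(w)\in\ell^\infty(\mathbb{Z},\mathbb{Z})$, and set $u = \bar\xi^*(v)\in\ell^*(\mathbb{Z},\mathbb{R})$. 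By the first part, $f(\bar\sigma)(u) = v = f(\bar\sigma)(w)$, so $u - w\in\ker f(\bar\sigma)$ inside $\ell^*(\mathbb{Z},\mathbb{R})$; by \eqref{eq:kernel} this kernel equals $W_f^\circ$, giving $\bar\xi^*\circ f(\bar\sigma)(w) - w\in W_f^\circ$.

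\smallskip
For \textup{(2)}: The inclusion $\supseteq$ in the first identity: if $v\in V_f$, pick $w\in W_f$ with $f(\bar\sigma)(w) = v$; then $\bar\xi^*(v) = w + (\text{element of }W_f^\circ)$ by \textup{(1)}, and since $W_f^\circ\subset\ell^\infty(\mathbb{Z},\mathbb{R})$ (its elements are bounded, being built from the $\theta^n$ with $|\theta| = 1$), we get $\bar\xi^*(v)\in\ell^\infty(\mathbb{Z},\mathbb{R})$, and in fact $\bar\xi^*(v)\in W_f$ — this simultaneously proves $\bar\xi^*(V_f)\subseteq W_f$. For $\subseteq$: if $v\in\ell^\infty(\mathbb{Z},\mathbb{Z})$ with $\bar\xi^*(v)\in\ell^\infty(\mathbb{Z},\mathbb{R})$, then by \textup{(1)} $f(\bar\sigma)(\bar\xi^*(v)) = v$; since $\bar\xi^*(v)\in\ell^\infty(\mathbb{Z},\mathbb{R})$ and $v = f(\bar\sigma)(\bar\xi^*(v))\in\ell^\infty(\mathbb{Z},\mathbb{Z})$, the definition \eqref{eq:Wf} gives $\bar\xi^*(v)\in W_f$ and hence $v\in f(\bar\sigma)(W_f) = V_f$.

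\smallskip
For \textup{(3)}: This is the real content and the main obstacle — it is an a priori uniform estimate, whereas \textup{(1)} and \textup{(2)} are algebraic bookkeeping. Write $u = \bar\xi^*\circ f(\bar\sigma)(w)$; by \textup{(1)}, $u = w + w^\sharp$ with $w^\sharp\in W_f^\circ$, so it suffices to bound $\|w^\sharp\|_\infty$ by $c\|w\|_\infty$. Since $W_f^\circ$ is finite-dimensional with $\bar\sigma$ acting isometrically on it (it is spanned by the bounded sequences $\operatorname{Re}(w(\theta)),\operatorname{Im}(w(\theta))$, $\theta\in\Theta_f^\circ$), a norm on $W_f^\circ$ is determined by finitely many coordinates, so it is enough to control a fixed finite window of $w^\sharp = u - w$. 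The coordinate $u_k = \bar\xi^*(f(\bar\sigma)(w))_k$ is, by \eqref{eq:barxi*}, an absolutely convergent sum $\sum_{n\ge 0}(f(\bar\sigma)w)_n w^-_{k-n} + \sum_{n<0}(f(\bar\sigma)w)_n w^+_{k-n}$ whose coefficients $(f(\bar\sigma)w)_n$ are bounded by $\|f\|_1\|w\|_\infty$ and whose weights $w^-_{k-n}$ (for $n\ge 0$) and $w^+_{k-n}$ (for $n<0$) decay exponentially by property (b), so $|u_k|\le C_k\|f\|_1\|w\|_\infty$ with $C_k = \|f\|_1\bigl(\sum_{n\ge 0}|w^-_{k-n}| + \sum_{n<0}|w^+_{k-n}|\bigr)$ finite; this handles finitely many $k$. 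I expect the subtlety to be that a naive bound of this kind only controls $u$ on a finite window, not $\|u\|_\infty$ globally; the resolution is precisely to pass through $w^\sharp\in W_f^\circ$ as above, so that a finite-window bound upgrades to a global one by finite-dimensionality, after which $\|u\|_\infty\le\|w\|_\infty + \|w^\sharp\|_\infty\le (1 + c')\|w\|_\infty$. Consolidating constants gives the asserted $c$. The one point deserving care — and the likely hard part — is verifying that the implicit constant in the finite-dimensional norm equivalence on $W_f^\circ$, and the choice of which finite window of coordinates to read off, can be made uniform in $w$; this is where I would cite or adapt the argument of \cite{LiS2}.
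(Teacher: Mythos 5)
Your proposal is correct and follows essentially the same route as the paper: (1) and (2) by direct termwise application of $f(\bar\sigma)$ together with the kernel identity \eqref{eq:kernel}, and (3) by writing $\bar\xi^*\circ f(\bar\sigma)(w)-w\in W_f^\circ$, bounding a finite window of coordinates directly from \eqref{eq:barxi*}, and upgrading via the norm equivalence \eqref{eq:bound2} on the finite-dimensional space $W_f^\circ$. Your closing worry is unfounded: the constant $c''$ in \eqref{eq:bound2} depends only on the fixed space $W_f^\circ$ and not on $w$, and since $f_0f_m\ne0$ any $m$ consecutive coordinates (say $0,\dots,m-1$, with $m=\textup{deg}(f)$) already determine an element of $\ker f(\bar\sigma)$, so the finite window and the equivalence constant are both fixed once and for all.
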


	\begin{proof}
The proof of these statements is taken from \cite{LiS2}. Equation \eqref{eq:homoclinic2} shows that $f(\bar{\sigma })(\bar{\xi }^*(v))=v$ for every $v\in \ell ^1(\mathbb{Z},\mathbb{Z})$. If $v\in \ell ^\infty (\mathbb{Z},\mathbb{Z})$ we define $v^{(n)}\in \ell ^1(\mathbb{Z},\mathbb{Z}),\;n\ge1$, by setting
	\begin{displaymath}
v^{(n)}_k=
	\begin{cases}
v_k&\textup{if}\;|k|\le n,
	\\
0&\textup{otherwise}.
	\end{cases}
	\end{displaymath}
Then $f(\bar{\sigma })(\bar{\xi }^*(v^{(n)}))=v^{(n)}$ for every $n\ge1$, and by letting $n\to\infty $ and observing that $\bar{\xi }^*(v^{(n)})\to \bar{\xi }^*(v)$ coordinate-wise as $n\to\infty $, we obtain the first identity in (1) for every $v\in \ell ^\infty (\mathbb{Z},\mathbb{Z})$. The second relation in (1) follows from the first by setting $v=f(\bar{\sigma })(w)$ for $w\in W_f$: since $f(\bar{\sigma })(\bar{\xi }^*(f(\bar{\sigma })(w)))=f(\bar{\sigma })(w)$, we obtain that $\bar{\xi }^*(v) = \bar{\xi }^*\circ f(\bar{\sigma })(w)-w\in W_f^\circ$ by \eqref{eq:kernel}. In particular, $\bar{\xi }^*(v)$ differs from $w$ by an element of $W_f^\circ$ and thus lies in $\ell ^\infty (\mathbb{Z},\mathbb{R})$, which proves that $\bar{\xi }^*(V_f)\subset W_f$.

We have shown that $V_f\subset \{v\in \ell ^\infty (\mathbb{Z},\mathbb{Z}):\bar{\xi }^*(v)\in \ell ^\infty (\mathbb{Z},\mathbb{R})\}$. For the reverse inclusion we take $v\in \ell ^\infty (\mathbb{Z},\mathbb{Z})$ such that $\bar{\xi }^*(v)\in \ell ^\infty (\mathbb{Z},\mathbb{R})$ and note that $f(\bar{\sigma })(\bar{\xi }^*(v))=v\in \ell ^\infty (\mathbb{Z},\mathbb{Z})$ by (1). Then $\bar{\xi }^*(v)\in W_f$ by \eqref{eq:Wf}, and hence $v=f(\bar{\sigma })(\bar{\xi }^*(v))\in V_f$. This completes the proof of (2).

Finally we turn to (3). From \eqref{eq:barxi*} it is clear there exists a constant $c'>0$ with $|\bar{\xi }^*(v)|_n\le c'\cdot \| v \|_\infty $ for every $v \in \ell ^ \infty (\mathbb{Z},\mathbb{Z})$ and $n=0,\dots ,m-1$, where $m = \textup{deg}(f)$ is the degree of $f$. Hence
	$$
|\bar{\xi }^*\circ f(\bar{\sigma })(w)_n - w_n|\le c' \| f(\bar{\sigma })(w)\|_\infty + \|w\|_\infty \le (c' \| f \|_1 +1)\cdot \| w \|_\infty
	$$
for every $w \in \ell ^ \infty (\mathbb{Z},\mathbb{R})$ and $n=0,\dots ,m-1$. Since there exists a constant $c''>0$ with
\begin{equation}
\label{eq:bound2}
\| w \|_\infty \le c''\cdot \max\,\{|w_0|,\dots ,|w_{m-1}|\}
\end{equation}
for every $w \in W_f ^\circ $ by \eqref{eq:womega}, the second relation in (1) allows us to find a constant $c>0$ with
	\begin{equation}
	\label{eq:bound}
\| \bar{\xi }^*\circ f(\bar{\sigma })(w)\|_\infty \le \| \bar{\xi }^*\circ f(\bar{\sigma })(w)-w\|_\infty + \|w\|_\infty\le c \cdot \| w \|_\infty
	\end{equation}
for every $w \in W_f$.
	\end{proof}

As we shall see later, the space $V_f$ plays an important role in the search for anything resembling symbolic covers or symbolic representations of $(X_f,\alpha _f)$ in the nonexpansive case. The following corollary of Theorem \ref{t:boundedness} (2) gives a little bit of information about this somewhat elusive object.

	\begin{coro}
	\label{c:boundedness}
\textup{(1)} An element $v\in \ell ^\infty (\mathbb{Z},\mathbb{Z})$ lies in $V_f$ if and only if
	\begin{equation}
	\label{eq:boundedness}
\smash[b]{\sup\nolimits_{m,n\ge0}\, \Bigl|\sum\nolimits_{k=-m}^{n} v_k\theta ^k\Bigr| < \infty}
	\end{equation}
for every $\theta \in \Theta _f^\circ $.

\smallskip\textup{(2)} $V_f\supset \ell ^1(\mathbb{Z},\mathbb{Z})$.

\smallskip\textup{(3)} $V_f$ contains every periodic element of $\ell ^\infty (\mathbb{Z},\mathbb{Z})$. In particular, if $V\subset \ell ^\infty (\mathbb{Z},\mathbb{Z})$ is a \textit{SFT}, then $V\cap V_f$ is dense in $V$.

\smallskip\textup{(4)} If $r$ is a positive integer, then there exists, for every $\varepsilon >0$, a subshift $V\subset \{0,\dots ,r\}^\mathbb{Z}\cap V_f$ with topological entropy $\ge r+1-\varepsilon $.
	\end{coro}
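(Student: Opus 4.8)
The plan is to derive (1) directly from Theorem \ref{t:boundedness}(2), and then obtain (2), (3), (4) as successively more quantitative consequences. For (1), recall from \eqref{eq:homoclinic pm} that the coordinates $w^{\pm}_n$ are, for $n$ large in absolute value in the relevant direction, exponentially small, while the ``borderline'' contributions come precisely from the terms $b_\theta\theta^{n-1}$ with $\theta\in\Theta_f^\circ$. Writing out $\bar{\xi}^*(v)_k$ from \eqref{eq:barxi*} and separating the $\Theta_f^\circ$-part from the (geometrically decaying) $\Theta_f^-$- and $\Theta_f^+$-parts, one sees that $\bigl|\bar{\xi}^*(v)_k\bigr|$ is bounded uniformly in $k$ if and only if, for every $\theta\in\Theta_f^\circ$, the partial sums $\sum_{k=-m}^{n}v_k\theta^k$ are bounded uniformly in $m,n\ge0$; the decaying parts contribute a quantity bounded by a constant multiple of $\|v\|_\infty$ regardless. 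By Theorem \ref{t:boundedness}(2), $v\in V_f$ iff $\bar{\xi}^*(v)\in\ell^\infty(\mathbb{Z},\mathbb{R})$, which gives the stated equivalence. (One should be slightly careful that $\Theta_f^\circ$ may contain non-real $\theta$, so the bound \eqref{eq:boundedness} is really a bound on a complex partial sum; but $\Theta_f^\circ$ is finite and closed under complex conjugation, and a uniform bound on one of $\theta,\bar\theta$ is equivalent to a uniform bound on the other, so this causes no trouble.)

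Statement (2) is then immediate: if $v\in\ell^1(\mathbb{Z},\mathbb{Z})$ then $\sum_{k=-m}^{n}|v_k||\theta|^k=\sum_{k=-m}^{n}|v_k|\le\|v\|_1<\infty$ since $|\theta|=1$, so \eqref{eq:boundedness} holds trivially. For (3), if $v$ has period $p$, then $\sum_{k=-m}^{n}v_k\theta^k$ can be grouped into blocks of length $p$; the sum over one full block is $\bigl(\sum_{j=0}^{p-1}v_j\theta^j\bigr)\theta^{k_0}$ for an appropriate shift $k_0$, and the geometric series $\sum_\ell\theta^{\ell p}$ has bounded partial sums unless $\theta^p=1$. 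If $\theta^p=1$ then $f$ would have a root of unity among its roots, making $f$ cyclotomic, contrary to the standing assumption of Section \ref{s:homoclinic}; hence $\theta^p\ne1$ and the partial sums over complete blocks stay bounded, while the (at most $p-1$) leftover terms contribute at most $(p-1)\|v\|_\infty$. So every periodic $v$ lies in $V_f$. The density assertion follows because periodic points are dense in any \textit{SFT}.

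Statement (4) is the one I expect to require real work. The idea is to exhibit, for a given $r$ and $\varepsilon$, a large class of sequences in $\{0,\dots,r\}^\mathbb{Z}$ satisfying \eqref{eq:boundedness} for all $\theta\in\Theta_f^\circ$ simultaneously, with exponential growth rate at least $r+1-\varepsilon$. The natural approach: fix a large block length $L$ and consider all sequences that are concatenations of blocks $B\in\{0,\dots,r\}^L$ drawn from a set $\mathcal{B}$ chosen so that (i) each $B\in\mathcal{B}$ has $\sum_{j=0}^{L-1}B_j\theta^j$ small (say of size $o(L)$, or even bounded) for every $\theta\in\Theta_f^\circ$, and (ii) $\log|\mathcal{B}|/L\ge r+1-\varepsilon$. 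Boundedness of the partial sums \eqref{eq:boundedness} along such a concatenation then reduces, as in the proof of (3), to controlling a sum of block-contributions $\sum_\ell c_{B^{(\ell)}}\,\theta^{\ell L}$ where each $|c_{B^{(\ell)}}|$ is small; if the $c$'s are merely bounded one still needs $\theta^L\ne1$ (again guaranteed by noncyclotomicity) to sum the geometric-type series, plus a uniform bound. The main obstacle is the counting step (ii) together with (i): one must show that the ``near-cancellation'' constraints $\sum_{j}B_j\theta^j\approx0$, which are finitely many linear constraints (one complex constraint per $\theta\in\Theta_f^\circ$, so boundedly many real constraints independent of $L$), cut down the $(r+1)^L$ available blocks only by a subexponential factor. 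This is plausible by a pigeonhole/box argument — partition the range of the map $B\mapsto\bigl(\sum_j B_j\theta^j\bigr)_{\theta\in\Theta_f^\circ}$, whose image sits in a ball of radius $O(L)$ in a fixed-dimensional space, into $O(L^{\dim})=L^{O(1)}$ cells, so some cell contains at least $(r+1)^L/L^{O(1)}$ blocks; taking differences of blocks in a common cell gives the required bounded-$c$ property after a harmless adjustment of the alphabet. Making the alphabet come out as $\{0,\dots,r\}$ rather than something slightly larger, and keeping the entropy loss below $\varepsilon$, is where the bookkeeping is delicate, but no new idea beyond this pigeonhole scheme should be needed.
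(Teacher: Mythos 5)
Parts (1)--(3) are correct and match the paper's (terse) argument: for (1) you separate the exponentially decaying $\Theta_f^{\pm}$-contributions in $\bar\xi^*(v)$ (which give an $\ell^\infty$-bounded term for any $v$) from the $\Theta_f^\circ$-contributions, exactly as the paper does via its auxiliary $y\in\ell^1$, so that $\bar\xi^*(v)\in\ell^\infty(\mathbb{Z},\mathbb{R})$ reduces to \eqref{eq:boundedness}; (2) and (3) are then, as the paper says, immediate, and your periodicity/noncyclotomicity observations for (3) are the right ones.

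Your sketch of (4), however, has a gap that is not bookkeeping but a wrong approach. You propose to take the subshift of \emph{all} concatenations of blocks from a family $\mathcal{B}$ whose members have $|c_B(\theta)|=\bigl|\sum_jB_j\theta^j\bigr|$ small (or landing in a single small pigeonhole cell). Such a subshift is \emph{not} contained in $V_f$: even if $|c_{B^{(\ell)}}(\theta)|\le1$ for every $\ell$, the cumulative sum $\sum_{\ell<m}\theta^{\ell L}c_{B^{(\ell)}}(\theta)$ is unbounded in general --- choosing the blocks so that $c_{B^{(\ell)}}$ correlates with $\overline{\theta^{\ell L}}$ along the concatenation makes it grow linearly in $m$. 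The geometric-series cancellation you invoke only works when the block contribution is literally \emph{constant}, as for the periodic sequences of (3); once the $c_B$ merely lie in a small ball the errors are adversarial and accumulate, and taking differences of blocks in a common cell shrinks the $c$'s but never kills them (and enlarges the alphabet). So no choice of $\mathcal{B}$, other than a singleton (which gives entropy $0$), puts the full-concatenation subshift inside $V_f$. What one must control is the cumulative sum itself, i.e.\ estimate the entropy of a generalized disk system $\Sigma_C=\bigl\{v\in\{0,\dots,r\}^\mathbb{Z}:\sup_{m,n}\bigl|\sum_{k=-m}^nv_k\theta^k\bigr|\le C\ \text{for all}\ \theta\in\Theta_f^\circ\bigr\}\subset V_f$, as in Remark \ref{r:boundedness}: after centering the alphabet at $r/2$ (the constant contributes a bounded geometric series since no $\theta\in\Theta_f^\circ$ is a root of unity), the partial sums become a bounded-increment zero-mean walk, and the standard confinement estimate --- the probability a diffusive walk of length $n$ stays in a disk of radius $C$ is at least of order $e^{-\lambda n/C^2}$ --- yields $h(\Sigma_C)\ge\log(r+1)-O(1/C^2)$. (The stated bound $\ge r+1-\varepsilon$ must be read as $\ge\log(r+1)-\varepsilon$; the full shift on $r+1$ symbols has entropy $\log(r+1)$.) The missing ingredient is this random-walk confinement estimate, not the pigeonhole count.
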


	\begin{proof}
For the proof of (1) we define an element $y\in \ell ^1(\mathbb{Z},\mathbb{R})$ by setting
	\begin{displaymath}
y_n=
	\begin{cases}
w_n^+&\textup{if}\enspace n\ge1,
	\\
w_n^-&\textup{if}\enspace n\le0.
	\end{cases}
	\end{displaymath}
Clearly, $\sum_{n\in \mathbb{Z}}v_n \bar{\sigma }^{-n}y \in \ell ^\infty (\mathbb{Z},\mathbb{R})$, and $\bar{\xi }^*(v)\in\ell ^\infty (\mathbb{Z},\mathbb{R})$ if and only if \eqref{eq:boundedness} is satisfied. According to Theorem \ref{t:boundedness} (2), this proves (1).

The statements (2) and (3) are immediate consequences of (1), and (4) is left as a little exercise for the reader.
	\end{proof}

	\begin{rema}[Disk Systems]
	\label{r:boundedness}
The boundedness condition \eqref{eq:boundedness} is closely related to the `disk systems' discussed by K. Petersen in \cite[pp. 416, 424]{Petersen}: fix a \textit{single} $\theta \in \mathbb{S}=\{c\in \mathbb{C}:|c|=1\}$ and consider the closed, shift-invariant subset $\Sigma _{(c,\theta )}\subset \{-1,1\}^\mathbb{Z}$ consisting of all sequences $v=(v_k)$ satisfying that $\sup_{m,n\ge0}\, \bigl|\sum_{k=-m}^{n} v_k\theta ^k\bigr| \le c$ for some fixed $c>0$. This system can be thought of as a kind of random walk restricted to a disk with radius $c$. In \cite{Petersen}, the author studies dynamical properties of $\Sigma _{(c,\theta )}$ depending on the parameters $\theta $ and $c$, like soficity, positivity of entropy, and the relation between entropy and the logarithmic growth rate of periodic points. In one of the examples in \cite{Petersen}, $\theta $ is chosen as a root of the polynomial $f=u^4-u^3-u^2-u+1$ in Example \ref{e:nonexpansive} (1).

For every $r>0$, the closed set $\Sigma _r=\{v\in V_f:\bar{\xi }^*(v)\le r\}$ is a `generalized disk system' in the spirit of \cite{Petersen}, but (possibly) involving several different irrational rotations and a bigger alphabet.
	\end{rema}

\smallskip We return to the connection between the spaces $V_f$ and $W_f$. The map $\bar{\xi }^*\colon \ell ^\infty (\mathbb{Z},\linebreak[0]\mathbb{Z})\longrightarrow \ell ^*(\mathbb{Z},\mathbb{R})$ in Theorem \ref{t:boundedness} can obviously not be expected to be shift-equi\-variant. Indeed,\vspace{-2mm}
	\begin{equation}
	\label{eq:commutation}
\mathsf{d}(n,v)\coloneqq\bar{\sigma }^ n \circ \bar{\xi }^*(v)-\bar{\xi }^*\circ \bar{\sigma }^ n(v)=
	\begin{cases}
\hphantom{-}\sum_{j=0}^{n-1}v_j \bar{\sigma }^{n-j}w ^\circ&\textup{if}\enspace n>0,
	\\
\hphantom{-}0&\textup{if}\enspace n=0,
	\\
-\sum_{j=1}^{n}v_{-j}\bar{\sigma }^{j-n}w ^\circ &\textup{if}\enspace n<0,
	\end{cases}
	\end{equation}
for every $n \in \mathbb{Z}$ and $v \in \ell ^\infty (\mathbb{Z},\mathbb{Z})$ (cf. \eqref{eq:homoclinic2}), and the resulting map
	\begin{equation}
	\label{eq:d}
\mathsf{d}\colon \mathbb{Z}\times \ell ^\infty (\mathbb{Z},\mathbb{Z})\longrightarrow W_f ^\circ
	\end{equation}
is continuous in $v$ for every $n\in \mathbb{Z}$ and satisfies the cocycle equation
\begin{equation}
\label{eq:d-cocycle}
\mathsf{d}(m,\bar{\sigma }^ nv)+\bar{\sigma }^ m \mathsf{d}(n,v)=\mathsf{d}(m+n,v)
\end{equation}
for every $m,n \in \mathbb{Z}$ and $v \in \ell ^\infty (\mathbb{Z},\mathbb{Z})$.

We set
	\begin{equation}
	\label{eq:xi*}
\xi ^*=\rho \circ \bar{\xi }^*\colon \ell ^\infty (\mathbb{Z},\mathbb{Z})\longrightarrow \mathbb{T}^\mathbb{Z}.
	\end{equation}
According to Theorem \ref{t:boundedness} (1), $\rho \circ f(\bar{\sigma })\circ \bar{\xi }^*(v)=f(\alpha _f)\circ \rho \circ \bar{\xi }^*(v)=0$ for every $v\in \ell ^\infty (\mathbb{Z},\mathbb{Z})$, so that
	\begin{equation}
	\label{eq:xi*2}
\xi ^*(\ell ^\infty (\mathbb{Z},\mathbb{Z})) \subset X_f.
	\end{equation}

Equation \eqref{eq:commutation} shows that $\xi ^*$ is \emph{equivariant modulo $X_f ^\circ$}, and our next result implies that both $\xi ^*$ and its restriction $\xi ^*|_{V_f}$ to $V_f$ are \emph{surjective modulo $X_f ^\circ$}.

\smallskip We recall the notation \eqref{eq:Br} and set, for every $r\ge0$,
	\begin{equation}
	\label{eq:BrWf}
	\begin{gathered}
\bar{B}_r(W_f)=W_f \cap \bar{B}_r(\ell ^ \infty (\mathbb{Z},\mathbb{R})),\qquad \bar{B}_r(W_f^\circ)=W_f^\circ \cap \bar{B}_r(\ell ^ \infty (\mathbb{Z},\mathbb{R})),
	\\
\bar{B}_r(V_f)=V_f \cap \bar{B}_r(\ell ^ \infty (\mathbb{Z},\mathbb{R})).
	\end{gathered}
	\end{equation}

	\begin{theo}
	\label{t:2}
Let $Y_f=W_f\cap [0,1)^\mathbb{Z}$, denote by $\bar{Y}_f$ the closure of $Y_f$ in $W_f$ \textup{(}or, equivalently, in $\ell ^\infty (\mathbb{Z},\mathbb{R})$\textup{)}, and set $Z_f=f(\bar{\sigma })(Y_f)$ and $\bar{Z}_f = f(\bar{\sigma })(\bar{Y}_f)=\overline{f(\bar{\sigma })(Y_f)}$.
	\begin{enumerate}
	\item
$\bar{Z}_f$ is a closed, bounded, shift-invariant subset of $V_f$ without isolated points.
	\item
$\bar{\sigma }$ is topologically transitive on $\bar{Z}_f$,
	\item
$\xi ^*(\bar{Z}_f)+X_f ^\circ= \xi ^*(\bar{Z}_f) + \rho \bigl(\bar{B}_r(W_f^\circ)\bigr)= X_f$ for some $r>0$,
	\item
$h(\bar{\sigma }|_{\bar{Y}_f}) = h(\bar{\sigma }|_{\bar{Z}_f})=h(\alpha _f)$.
	\end{enumerate}
	\end{theo}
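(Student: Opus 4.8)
\textit{Proof proposal.} The plan is to work mostly in the linearized picture and transport everything through $f(\bar\sigma)$ and $\rho$. The map $f(\bar\sigma)\colon W_f\to V_f$ is a bijection with continuous inverse $\bar\xi^*$ modulo $W_f^\circ$ (Theorem \ref{t:boundedness}(1)), and on $\bar Y_f$ it is a homeomorphism onto $\bar Z_f$ by definition; so parts (1), (2) and (4) will follow once the corresponding statements are established for $\bar Y_f$. For (1), $\bar Y_f$ is closed and bounded by construction, and it is shift-invariant because $Y_f=W_f\cap[0,1)^\mathbb{Z}$ has the property that $\bar\sigma(Y_f)$ and $\bar\sigma^{-1}(Y_f)$ differ from $Y_f$ only by adjustments of finitely many coordinates using elements of $W_f^\circ\subset W_f$ (here one uses that $\rho$ is injective on $W_f^\circ$ and that $W_f$ is $f(\bar\sigma)$-saturated), and this passes to the closure; then $f(\bar\sigma)(\bar Y_f)\subset V_f$ follows from Theorem \ref{t:boundedness}(2) since $\bar Z_f$ consists of bounded integer sequences whose $\bar\xi^*$-images lie in $W_f\subset\ell^\infty$. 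That $\bar Z_f$ has no isolated points reduces to the same for $\bar Y_f$; here I would argue that $Y_f$ itself is perfect — any $w\in Y_f$ can be perturbed by adding a small element of $W_f^\circ$ (which is infinite-dimensional-in-effect, i.e. nonzero and with a dense orbit by nonexpansiveness and irreducibility via \eqref{eq:X0}) while staying in $[0,1)^\mathbb{Z}$ after a tiny correction — so $\bar Y_f$, being its closure, is perfect too.

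For (2), topological transitivity of $\bar\sigma$ on $\bar Z_f$ is equivalent to that on $\bar Y_f$, and I would obtain the latter from a specification-type argument in the spirit of Corollary \ref{c:specification}: given two nonempty open subsets of $\bar Y_f$, pick $w,w'\in Y_f$ witnessing them on finitely many coordinates; splice $w$ on a left block and $\bar\sigma^N w'$ on a right block, correcting the finitely many in-between coordinates to stay in $W_f\cap[0,1)^\mathbb{Z}$ using the surjectivity of $\rho|_{W_f\cap[0,1)^\mathbb{Z}}$ onto $X_f$ together with the fact that $W_f\supset\ell^\infty(\mathbb{Z},\mathbb{Z})$ so one has enough room. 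The key point, as in the expansive case, is that the gluing cost is governed by $\|f\|_1$ and the exponential/linear decay in \eqref{eq:barxi*}, so for $N$ large the spliced point lies close to $w$ on the left window and returns close to $w'$ under $\bar\sigma^N$.

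For (3), surjectivity modulo $X_f^\circ$: given $x\in X_f$, use $\rho(W_f\cap[0,1)^\mathbb{Z})=X_f$ to lift $x$ to some $w\in Y_f$, set $v=f(\bar\sigma)(w)\in Z_f\subset\bar Z_f$; then by Theorem \ref{t:boundedness}(1), $\bar\xi^*(v)-w\in W_f^\circ$, and by the uniform bound Theorem \ref{t:boundedness}(3) applied with $\|w\|_\infty\le1$ this difference lies in $\bar B_r(W_f^\circ)$ for a fixed $r=c$. Hence $\xi^*(v)=\rho(\bar\xi^*(v))=\rho(w)+\rho(\bar\xi^*(v)-w)=x+\rho(\text{something in }\bar B_r(W_f^\circ))$, which rearranges to $x\in\xi^*(\bar Z_f)+\rho(\bar B_r(W_f^\circ))$. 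The reverse inclusion $\xi^*(\bar Z_f)+\rho(\bar B_r(W_f^\circ))\subset\xi^*(\bar Z_f)+X_f^\circ\subset X_f$ is immediate from \eqref{eq:xi*2} and \eqref{eq:X0}. For (4), the upper bound $h(\bar\sigma|_{\bar Z_f})\le h(\alpha_f)$ follows because $\xi^*$ restricted to $\bar Z_f$ is finite-to-one modulo the compact fibre $\bar B_r(W_f^\circ)$ (a zero-entropy isometric factor), so $\bar Z_f$ is an "almost cover" of $X_f$ and the Yuzvinskii entropy formula Theorem \ref{t:entropy} caps it; more carefully, one uses that $\bar\xi^*$ is a uniformly bounded perturbation-by-$W_f^\circ$ of a shift-equivariant map, so counting $(n,\varepsilon)$-separated sets in $\bar Y_f$ is comparable to counting them in $X_f$ up to the bounded isometric correction. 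The matching lower bound $h(\bar\sigma|_{\bar Y_f})\ge h(\alpha_f)$ comes from part (3): since $\bar Y_f$ maps onto $X_f$ modulo a fixed compact isometric piece, and $\lambda_{X_f}$ is the measure of maximal entropy, a pullback (via a measurable section) of a generic measure yields an invariant measure on $\bar Y_f$ of entropy $\ge h(\alpha_f)$ — here the cocycle \eqref{eq:commutation}–\eqref{eq:d-cocycle} taking values in the zero-entropy group $W_f^\circ$ is what guarantees no entropy is lost or spuriously created.

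The main obstacle I anticipate is part (4), specifically the careful bookkeeping that the failure of shift-equivariance of $\xi^*$ — encoded in the cocycle $\mathsf{d}(n,v)$ of \eqref{eq:commutation} — does not inflate entropy. One must argue that although $\xi^*\circ\bar\sigma^n\ne\alpha_f^n\circ\xi^*$, the discrepancy $\mathsf{d}(n,v)$ lives in the finite-dimensional isometric space $W_f^\circ$, hence contributes nothing to exponential orbit-complexity growth; making this precise likely requires a Rokhlin-type skew-product argument or a direct separated-set estimate showing that two points of $\bar Y_f$ with the same $X_f$-image to within $\bar B_r(W_f^\circ)$ over a long time window are already forced close, which is where the boundedness constant $c$ of Theorem \ref{t:boundedness}(3) does the real work.
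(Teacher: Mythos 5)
Parts (1)--(3) are roughly on track, and your argument for (3) is essentially the paper's: lift $x\in X_f$ to $w\in Y_f$ via $\rho$, set $v=f(\bar\sigma)(w)$, and use Theorem~\ref{t:boundedness} to place $\bar\xi^*(v)-w$ in $\bar B_{1+c}(W_f^\circ)$. For (1) and (2), though, you are overcomplicating matters: $Y_f=W_f\cap[0,1)^\mathbb{Z}$ is shift-invariant on the nose (both $W_f$ and $[0,1)^\mathbb{Z}$ are), not merely ``up to adjustment by $W_f^\circ$''; and the paper's shortcut is that $\rho|_{Y_f}\colon Y_f\to X_f$ is a continuous $(\bar\sigma,\alpha_f)$-equivariant bijection, so topological transitivity and the absence of isolated points transfer from $(X_f,\alpha_f)$, then pass to the closures $\bar Y_f$ and $\bar Z_f$. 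Your specification-style splicing for (2) is not needed, and your perfectness argument (perturb $w\in Y_f$ by a tiny element of $W_f^\circ$) fails at any coordinate equal to $0$ unless you show the perturbation can be taken with the correct sign there, which is not automatic.

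The genuine gap is in (4), which you correctly flag as the hard part, but your sketch does not close it. The claim that ``counting $(n,\varepsilon)$-separated sets in $\bar Y_f$ is comparable to counting them in $X_f$ up to the bounded isometric correction'' is precisely the content of what needs to be proved, not something one may invoke. Moreover you conflate two separate difficulties that the paper keeps apart. The cocycle $\mathsf d$ of \eqref{eq:commutation}--\eqref{eq:d-cocycle} is relevant only to comparing $\bar Z_f$ with the skew-product $\mathsf C\subset\bar Z_f\times W_f^\circ$; there, Lemma~\ref{l:entropy} handles it by a spanning-set estimate exploiting that $\bar\sigma$ is an isometry of the finite-dimensional space $W_f^\circ$. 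The real danger lies elsewhere: $\rho|_{Y_f}$ is a continuous bijection onto $X_f$ that is not open, so two points $y,y'\in\bar Y_f$ with $\rho(y),\rho(y')$ uniformly $\varepsilon$-close over an entire window $F_N$ can still be $\varepsilon$-separated in $\bar Y_f$ because of wrap-around near the endpoints of $[0,1)$; a priori this could exponentially inflate the number of separated points. The paper's Lemma~\ref{l:hanfeng} shows it does not, via a nonobvious combinatorial device: each $y$ in a cloud $W_{N}$ of such points is encoded by its wrap-around pattern $y^*\in\{0,1\}^{F_N}$; the constraint $f(\bar\sigma)(\bar y-\bar z)=0$ over most of the window forces $\bar y-\bar z$ into an affine space $V_N$ of dimension $m=\deg f$; and Lemma~\ref{l:affine} plus the Sauer--Shelah/Pajor Lemma~\ref{l:sauer-shelah} show that a family of half-space sign patterns on an $m$-dimensional space can shatter no set of size $m+1$, hence $|W_N|$ is polynomial in $N$. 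That dimension/VC-type step is the missing idea, and a ``bounded perturbation'' heuristic is no substitute for it. Your sketch of the lower bound $h(\bar\sigma|_{\bar Y_f})\ge h(\alpha_f)$ (pull back Haar measure through the measurable inverse of $\rho|_{Y_f}$) is essentially what the paper does and is fine.
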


Theorem \ref{t:2} (3) motivates the following definition.

	\begin{defi}[\cite{LiS2}]
	\label{d:pseudocover}
A closed, bounded, shift-invariant subset $V \subset \ell ^\infty (\mathbb{Z},\mathbb{Z})$ is a \emph{pseudo-cover} of $X_f$ if
	\begin{equation}
	\label{eq:pseudocover}
\xi ^*(V)+X_f ^\circ=X_f.
	\end{equation}
If $Y$ has the additional property that $h(\bar{\sigma }|_Y) = h(\alpha _f)$ it will be called an \emph{equal entropy pseudo-cover} of $X_f$.
	\end{defi}

	\begin{exam}
	\label{e:pseudocover}
For every $L\ge1$ we set
	\begin{displaymath}
V_L =\{v\in \ell ^\infty (\mathbb{Z},\mathbb{Z}):0\le v_k <L\enspace \textup{for every}\enspace k\in \mathbb{Z}\}.
	\end{displaymath}
If $L$ is sufficiently large, then $V_L$ is a pseudocover of $X_f$: indeed, if $L> 2\|f\|_1$, then $V_L \supset \bar{Z}_f + \bar{v} \coloneqq \{v+\bar{v}:v\in \bar{Z}_f\}$ and $\xi ^*(V_L)\supset \xi ^*(\bar{Z}_f) + \xi ^*(\bar{v})=X_f$, where $\bar{v}= (\dots , \|f\|_1,\|f\|_1,\|f\|_1,\dots) \in V_f$ by Corollary \ref{c:boundedness} (3).
	\end{exam}

	\begin{proof}[Proof of Theorem \ref{t:2} \textup{(1) -- (3)}]
The restriction of $\rho $ to $Y_f$ is a continuous bijection from $Y_f$ onto $X_f$. Clearly, none of the spaces $Y_f$, $\bar{Y}_f$, $\bar{Z}_f$ have isolated points. Furthermore, since $(X_f,\alpha _f)$ is topologically transitive, the same is true for $(Y_f,\bar{\sigma })$, $(\bar{Y}_f,\bar{\sigma })$ and $(\bar{Z}_f,\bar{\sigma })$. This proves (1) and (2).

We turn to (3). If $x\in X_f$, and if $y\in Y_f$ satisfies that $\rho (y)=x$, then $v=f(\bar{\sigma })(y)\in Z_f$ and $\|v\|_\infty \le \|f\|_1$. If $c>0$ is the constant appearing in Theorem \ref{t:boundedness} (3), then $\|\bar{\xi }^*(v)\|_\infty \le c$. According to Theorem \ref{t:boundedness} (1), $w=\bar{\xi }^*\circ f(\bar{\sigma })(y) - y\in W_f^\circ$. Since $\|w\|_\infty \le 1+c$ we conclude that $\bar{\xi }^*(Z_f) + \bar{B}_{1+c}(W_f^\circ)\supset Y_f$ and $\xi ^*(Z_f) + \rho \bigl(\bar{B}_{1+c}(W_f^\circ)\bigr)=X_f$.
	\end{proof}

The proof of Theorem \ref{t:2} (4) will be given in the next section.

\section{The proof of Theorem \ref{t:2} (4)}

For the proof of Theorem \ref{t:2} (4) we need several intermediate results, including a crucial lemma (Lemma \ref{l:hanfeng}) whose proof was communicated to me by Hanfeng Li.

We recall the cocycle $\mathsf{d}\colon \mathbb{Z}\times V_f\longrightarrow W_f^\circ$ in \eqref{eq:d} and define a continuous map $\tau \colon V_f\times W_f^\circ\longrightarrow V_f\times W_f^\circ$ by
	\begin{equation}
	\label{eq:tau}
\tau (v,w) = (\bar{\sigma }v,\bar{\sigma }w+\mathsf{d}(1,v))
	\end{equation}
for every $(v,w)\in V_f\times W_f^\circ$. Next we define a continuous map $\bar{\zeta }\colon V_f\times W_f^\circ\longrightarrow W_f$ by setting
	\begin{equation}
	\label{eq:barzeta}
\bar{\zeta }(v,w) = \bar{\xi }^*(v) + w
	\end{equation}
for every $(v,w)\in V_f\times W_f^\circ $. An elementary calculation shows that $\bar{\zeta }$ is $(\tau ,\bar{\sigma })$-equivariant. Hence the map
	\begin{equation}
	\label{eq:zeta}
\zeta =\rho \circ \bar{\zeta }\colon V_f\times W_f^\circ \longrightarrow X_f
	\end{equation}
is continuous and $(\tau ,\alpha _f)$-equivariant.

	\begin{lemm}
	\label{l:entropy}
\textup{(1)} There exists a compact $\tau $-invariant set $\mathsf{C}\subset \bar{Z}_f\times W_f^\circ$ such that $\zeta (\mathsf{C})=X_f$ and $\pi _1(\mathsf{C})=\bar{Z}_f$, where $\pi _1\colon \bar{Z}_f\times W_f^\circ\longrightarrow \bar{Z}_f$ is the first coordinate projection.

\textup{(2)} If $\mathsf{C}\subset \bar{Z}_f\times W_f^\circ$ is a compact $\tau $-invariant set such that $\pi _1(\mathsf{C})=\bar{Z}_f$, then $h(\tau |_{\mathsf{C}}) = h(\bar{\sigma }|_{\bar{Z}_f}) = h(\bar{\sigma }|_{\bar{Y}_f})\ge h(\alpha _f)$.
	\end{lemm}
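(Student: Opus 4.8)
The plan is to establish part (1) by a pushout/fibered-product construction over $\bar Z_f$, and part (2) by a direct entropy comparison using the fact that $\pi_1$ is a surjective factor map between $\tau$ and $\bar\sigma|_{\bar Z_f}$ together with the boundedness estimate from Theorem \ref{t:boundedness} (3).

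\emph{Part (1).} First I would exhibit a $\tau$-invariant set inside $\bar Z_f\times W_f^\circ$ whose $\zeta$-image is all of $X_f$. The natural candidate is the closure of the graph-type set
	\begin{displaymath}
\mathsf{C}_0 = \{(v, w)\in Z_f\times W_f^\circ : \bar\xi^*(v)+w\in Y_f\},
	\end{displaymath}
i.e.\ the set of pairs $(f(\bar\sigma)(y), y-\bar\xi^*(f(\bar\sigma)(y)))$ as $y$ ranges over $Y_f$; by Theorem \ref{t:boundedness} (1) the second coordinate indeed lies in $W_f^\circ$, and by the estimate in the proof of Theorem \ref{t:2} (3) it lies in the fixed ball $\bar B_{1+c}(W_f^\circ)$, which is compact. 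So I would take $\mathsf C$ to be the closure of $\mathsf C_0$ in $\bar Z_f\times \bar B_{1+c}(W_f^\circ)$, a compact set. Equivariance of $\bar\zeta$ under $(\tau,\bar\sigma)$ — the ``elementary calculation'' already cited — shows that $\tau$ maps the graph of $y\mapsto(f(\bar\sigma)y,\dots)$ into the corresponding graph over $\bar\sigma Y_f$; since $\bar\sigma Y_f$ and $Y_f$ agree up to the $\rho$-identification with $X_f$, one checks $\tau(\mathsf C_0)\subset\mathsf C$, hence $\tau(\mathsf C)\subset\mathsf C$ by continuity. Surjectivity of $\zeta$ on $\mathsf C$: for $x\in X_f$ pick $y\in Y_f$ with $\rho(y)=x$; then $(f(\bar\sigma)y,\,y-\bar\xi^*(f(\bar\sigma)y))\in\mathsf C_0\subset\mathsf C$ and $\zeta$ of this pair is $\rho(y)=x$. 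Finally $\pi_1(\mathsf C)=\bar Z_f$: $\pi_1(\mathsf C_0)=Z_f$ is dense in $\bar Z_f$ and $\pi_1(\mathsf C)$ is compact, hence equals $\bar Z_f$ (one must check $\mathsf C$ is large enough that every limit of $Z_f$-points is hit, which follows because $\mathsf C$ was defined as a closure inside the product with a \emph{fixed} compact second factor containing all the relevant $w$'s).

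\emph{Part (2).} Let $\mathsf C\subset\bar Z_f\times W_f^\circ$ be any compact $\tau$-invariant set with $\pi_1(\mathsf C)=\bar Z_f$. Then $\pi_1\colon(\mathsf C,\tau)\to(\bar Z_f,\bar\sigma)$ is a topological factor map, so $h(\tau|_{\mathsf C})\ge h(\bar\sigma|_{\bar Z_f})$. For the reverse inequality I would use that the cocycle $\mathsf d(n,v)$ takes values in the \emph{finite-dimensional} space $W_f^\circ$ and that, because $\mathsf C$ sits over $\bar Z_f$ where $\|v\|_\infty\le\|f\|_1$, the second-coordinate dynamics $w\mapsto\bar\sigma w+\mathsf d(1,v)$ is a ``bounded skew product'': by Theorem \ref{t:boundedness} (3) and the second identity in Theorem \ref{t:boundedness} (1), any orbit of $\tau$ whose first coordinate comes from $Z_f$ has second coordinate confined to the fixed compact ball $\bar B_{1+c}(W_f^\circ)$, and this persists on all of $\mathsf C$ by taking closures. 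Since $\bar\sigma$ acts isometrically on $W_f^\circ$ (it is an isometry on $\mathbf W_f^\circ$, as noted after \eqref{eq:Wf0}), the fibers contribute no entropy: a Bowen-type argument shows that an $(n,\varepsilon)$-spanning set for $(\bar Z_f,\bar\sigma)$ lifts to an $(n, \varepsilon')$-spanning set for $(\mathsf C,\tau)$ of comparable cardinality, because covering the base to scale $\varepsilon$ and then covering the compact fiber ball $\bar B_{1+c}(W_f^\circ)$ by a fixed (independent of $n$) number of $\varepsilon$-balls in the finite-dimensional isometrically-acted space suffices to $(n,\varepsilon')$-span $\mathsf C$. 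Hence $h(\tau|_{\mathsf C})\le h(\bar\sigma|_{\bar Z_f})$, giving equality. The chain $h(\bar\sigma|_{\bar Z_f})=h(\bar\sigma|_{\bar Y_f})$ follows from $\bar Z_f=f(\bar\sigma)(\bar Y_f)$ with $f(\bar\sigma)$ a continuous finite-to-one (indeed, fiber $W_f^\circ$-coset, but restricted to $\bar Y_f$ it is injective up to the bounded correction) equivariant surjection — more carefully, $\bar\xi^*$ and $f(\bar\sigma)$ are mutually ``inverse modulo $W_f^\circ$'' by Theorem \ref{t:boundedness} (1), and $W_f^\circ$ carries zero entropy, so the two entropies agree. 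Finally $h(\bar\sigma|_{\bar Z_f})\ge h(\alpha_f)$ because $\zeta$ (or already $\xi^*$ composed with the $X_f^\circ$-correction in Theorem \ref{t:2} (3)) exhibits $X_f$ as a factor of a system built from $\bar Z_f$ and the zero-entropy piece $W_f^\circ$; concretely $\zeta\colon(\mathsf C,\tau)\to(X_f,\alpha_f)$ is a surjective factor map, so $h(\tau|_{\mathsf C})\ge h(\alpha_f)$, and we have just shown $h(\tau|_{\mathsf C})=h(\bar\sigma|_{\bar Z_f})$.

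\emph{Main obstacle.} The delicate point is the upper bound $h(\tau|_{\mathsf C})\le h(\bar\sigma|_{\bar Z_f})$: one must control, uniformly in the length $n$ of the orbit segment, the spread of the fiber coordinate under $\tau^n$. This is exactly where finite-dimensionality of $W_f^\circ$, isometricity of $\bar\sigma$ there, and the uniform bound from Theorem \ref{t:boundedness} (3) (giving that the fiber coordinate never leaves a fixed compact set, \emph{independently} of $n$) must be combined — without the uniform bound the skew-product could a priori inflate the number of orbit segments. This is presumably the step that needs Lemma \ref{l:hanfeng} that the authors flagged. The rest is routine: the factor-map entropy inequalities and the closure arguments in part (1) are standard once the compact fiber ball is correctly identified.
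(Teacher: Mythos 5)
Your overall strategy matches the paper's. For part (1) the paper does not take the graph-over-$Z_f$ construction but instead sets $\mathsf{C}$ to be a $\tau$-invariant saturation of a product $\bar{Z}_f\times\bar{B}_r(W_f^\circ)$, using the uniform cocycle bound $\sup_{n,z}\|\mathsf{d}(n,z)\|_\infty\le 2c$ to keep the saturation inside $\bar{Z}_f\times\bar{B}_{r+2c}(W_f^\circ)$; your ``thin'' choice $\mathsf{C}=\overline{\{(f(\bar\sigma)y,\,y-\bar\xi^*f(\bar\sigma)y):y\in Y_f\}}$ is equally valid and in fact is just $\eta(\bar Y_f)$ for the injection $\eta$ implicit in your part (2) — this also gives the cleanest justification of $h(\bar\sigma|_{\bar Y_f})=h(\bar\sigma|_{\bar Z_f})$, since $\eta$ is a topological conjugacy onto a set $\mathsf{C}'$ to which the first part of (2) applies. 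For part (2) your factor-map and spanning-set arguments are the paper's.

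Two issues deserve flagging. First, the identification of the ``delicate point'' is wrong: the upper bound $h(\tau|_\mathsf{C})\le h(\bar\sigma|_{\bar Z_f})$ does \emph{not} need Lemma \ref{l:hanfeng}. It follows directly from the spanning-set estimate plus the isometricity of $\bar\sigma$ on $W_f^\circ$ plus the uniform fiber bound. Lemma \ref{l:hanfeng} is used later, and for something genuinely harder: to close the loop $h(\bar\sigma|_{\bar Y_f})\le h(\alpha_f)$ and thereby obtain the equality $h(\bar\sigma|_{\bar Z_f})=h(\alpha_f)$ asserted in Theorem \ref{t:2} (4). Lemma \ref{l:entropy} itself only gives $\ge h(\alpha_f)$. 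Second, your spanning argument omits the one real detail that makes the skew product harmless: you must observe that the coordinates of elements of $\bar Z_f$ are integers with $\|\cdot\|_\infty\le\|f\|_1$, so for $\varepsilon$ small, $\bar\sigma$-Bowen $\varepsilon$-closeness of $z,z'$ over a window forces the corresponding coordinates of $z,z'$ to be equal, whence $\mathsf{d}(k,z)=\mathsf{d}(k,z')$ for the relevant $k$ and the fiber coordinates really do stay within a fixed $\varepsilon$-distance under $\tau^k$. Without this the cocycle correction could in principle separate the fibers along an orbit. Once this is inserted, your argument is complete and aligns with the paper's.
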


	\begin{proof}
In the proof of Theorem \ref{t:boundedness} (3) we saw that there exists a constant $c>0$ such that $\|\bar{\xi }^*(z)\|_\infty \le c$ for every $z\in \bar{Z}_f$, and that $\bar{\zeta }(\bar{Z}_f\times \bar{B}_{1+c}(W_f^\circ))\supset \bar{Y}_f$. The boundedness of $\bar{\xi }^*$ also implies that $\sup_{n\in \mathbb{Z}}\,\sup_{z\in \bar{Z}_f}\|\mathsf{d}(n,z)\|_\infty \le 2c$ (cf. \eqref{eq:commutation}), so that
	\begin{displaymath}
\tau ^k(\bar{Z}_f\times \bar{B}_r(W_f^\circ))\subset \bar{Z}_f\times \bar{B}_{r+2c}(W_f^\circ)
	\end{displaymath}
for every $k\in \mathbb{Z}$. Hence there exists a closed $\tau $-invariant subset $\mathsf{C}\subset \bar{Z}_f\times \bar{B}_{r+2c}(W_f^\circ)$ containing $\bar{Z}_f\times \bar{B}_r(W_f^\circ)$ such that $\zeta (\mathsf{C})=X_f$ (cf. Theorem \ref{t:2} (3)). This proves (1).

For the proof of (2) we assume that $\mathsf{C}\subset \bar{Z}_f\times W_f^\circ$ is a compact $\tau $-invariant set satisfying that $\pi _1(\mathsf{C})=\bar{Z}_f$. Then $\mathsf{C}\subset \bar{Z}_f\times \bar{B}_r(W_f^\circ)$ for some $r>0$. We fix a metric $\vartheta _1$ on $\bar{Z}_f$, set $\vartheta _2(w,w')=\|w-w'\|_\infty $ for all $w,w'\in W_f^\circ$, and denote by $\vartheta ((z,w),(z',w'))=\vartheta _1(z,z') + \vartheta _2(w,w')$ the product metric on $\bar{Z}_f\times W_f^\circ$. Put, for every $N\ge1$, $z,z'\in \bar{Z}_f$, and $w,w'\in W_f^\circ$,
	\begin{gather*}
\vartheta _1^{(N)}(z,z')=\max_{k=0,\dots ,N-1}\vartheta _1 (\bar{\sigma }^kz,\bar{\sigma }^kz'),
	\\
\vartheta _2^{(N)}(w,w')=\max_{k=0,\dots ,N-1}\vartheta _2 (\bar{\sigma }^kw,\bar{\sigma }^kw') = \vartheta _2 (w,w'),
	\end{gather*}
where the last identity expresses the fact that $\bar{\sigma }$ acts isometrically on $W_f^\circ$, and set
	\begin{displaymath}
\vartheta ^{(N)}((z,w),(z',w')) = \vartheta _1^{(N)}(z,z\boldsymbol{}') + \vartheta _2^{(N)}(w,w').
	\end{displaymath}

Choose, for every $\varepsilon >0$, a minimal $(\vartheta _1^{(N)},\varepsilon /2)$-spanning set $S_1(N,\varepsilon /2)\subset \bar{Z}_f$ and a minimal $(\vartheta _2,\varepsilon /2)$-spanning set $S_2(\varepsilon /2)\subset \bar{B}_r(W_f^\circ)$. Then the set $S_1(N,\varepsilon /2)\linebreak[1]\times S_2(\varepsilon /2)$ $(\vartheta ^{(N)},\varepsilon )$-spans $\mathsf{C}$ (although it need not be contained in $\mathsf{C}$), and the definition of topological entropy in terms of spanning sets shows that
	\begin{align*}
h(\tau |_{\mathsf{C}}) &\le  \sup_{\varepsilon >0} \,\limsup_{N\to\infty } \,\tfrac{1}{N} \log |S_1(N,\varepsilon /2)\times S_2(\varepsilon /2)|
	\\
&= \sup_{\varepsilon >0} \,\limsup_{N\to\infty } \,\tfrac{1}{N} \log |S_1(N,\varepsilon /2)| = h(\bar{\sigma }|_{\bar{Z}_f}).
	\end{align*}
The reverse inequality $h(\tau |_{\mathsf{C}}) \ge h(\bar{\sigma }|_{\bar{Z}_f})$ follows from the fact that $\pi _1\colon \mathsf{C}\longrightarrow \bar{Z}_f$ is surjective and $(\tau ,\bar{\sigma })$-equivariant. Finally we observe that $h(\tau |_{\bar{Z}_f}) = h(\tau |_{\mathsf{C}})\linebreak[0]= h(\tau |_{\mathsf{C}}) \ge h(\alpha _f)$, where $\mathsf{C}$ is the set defined in (1), since the map $\zeta =\rho \circ \bar{\zeta }\colon \mathsf{C}\longrightarrow X_f$ is surjective and $(\tau ,\alpha _f)$-equivariant. This completes the proof of the lemma.
	\end{proof}

The final part of the proof of Theorem \ref{t:2} (4) will be to show that $h(\bar{\sigma }|_{\bar{Y}_f})=h(\alpha _f)$. We start with a definition.

	\begin{defi}
	\label{d:sauer-shelah}
Let $S$ be a finite nonempty set, and let $\mathcal{P}(S)$ denote the family of all subsets of $S$. A collection $\mathcal{F}\subset \mathcal{P}(S)$ \textit{shatters} a set $T\subset S$ if $\mathcal{P}(T)=\{F\cap T:F\in \mathcal{F}\}$. The set of all $T\in \mathcal{P}(S)$ shattered by $\mathcal{F}$ is denoted by $\textup{sh}(\mathcal{F})$.
	\end{defi}

	\begin{lemm}[Sauer-Shelah Lemma]
	\label{l:sauer-shelah}
Let $S$ be a finite set with $|S|=n\ge 1$ elements. If $k\in \{1,\dots ,n\}$, and if $\mathcal{F}\subset \mathcal{P}(S)$ is a collection of distinct sets with $|\mathcal{F}|>\sum_{i=0}^{k-1}\binom{n}{i}$, then $\mathcal{F}$ scatters a set $T\subset S$ of size $k$.
	\end{lemm}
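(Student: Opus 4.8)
The plan is to prove the Sauer–Shelah lemma by induction on $|S|=n$, using the standard "down-shift" (compression) or, equivalently, the deletion argument that relates $\mathcal{F}$ on $S$ to two families on $S\smallsetminus\{s\}$ for a fixed element $s$. I will phrase it as the clean induction on $n$ together with a simultaneous induction on $|\mathcal{F}|$, which keeps the bookkeeping minimal. The base case $n=1$ is immediate: if $|\mathcal{F}|>\sum_{i=0}^{k-1}\binom{1}{i}$ then either $k=1$ and $|\mathcal{F}|>1$, so $\mathcal{F}=\{\varnothing,S\}$ shatters $S$; or the hypothesis is vacuous.

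For the inductive step, fix $s\in S$ and set $S'=S\smallsetminus\{s\}$. Define
	\begin{align*}
\mathcal{F}_0 &= \{F\in\mathcal{F}: s\notin F\},\\
\mathcal{F}_1 &= \{F\smallsetminus\{s\}: F\in\mathcal{F},\ s\in F\},
	\end{align*}
both regarded as families of subsets of $S'$, and also set
	\begin{displaymath}
\mathcal{G} = \{G\subset S': G\in\mathcal{F}_0\ \text{and}\ G\cup\{s\}\in\mathcal{F}\} = \{G: G\in\mathcal{F}_0\cap\mathcal{F}_1'\},
	\end{displaymath}
the family of "doubled" traces (here I am writing $\mathcal{F}_1$ as a subset of $\mathcal{P}(S')$). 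The key counting identity is $|\mathcal{F}| = |\mathcal{F}_0| + |\mathcal{F}_1|$. One checks the two structural facts: (i) every set shattered by $\mathcal{F}_0$ or by $\mathcal{F}_1$ inside $S'$ is shattered by $\mathcal{F}$ inside $S$, and (ii) if $T\subset S'$ is shattered by $\mathcal{G}$ then $T\cup\{s\}$ is shattered by $\mathcal{F}$ — because for each $A\subset T$ both $A$ and $A\cup\{s\}$ arise as traces of members of $\mathcal{F}$. Hence $\operatorname{sh}(\mathcal{F}) \supseteq \operatorname{sh}(\mathcal{F}_0)\cup\operatorname{sh}(\mathcal{F}_1)\cup\{T\cup\{s\}: T\in\operatorname{sh}(\mathcal{G})\}$, and since the third family contributes sets containing $s$ while the first two contribute sets avoiding it, one gets the VC-style inequality $|\operatorname{sh}(\mathcal{F})| \ge |\operatorname{sh}(\mathcal{F}_0)| + |\operatorname{sh}(\mathcal{G})|$; but in fact for the size-$k$ statement it is cleaner to argue directly by cases on whether $|\mathcal{F}_1|$ is large.

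Concretely: if $|\mathcal{F}_1| > \sum_{i=0}^{k-2}\binom{n-1}{i}$, then by induction $\mathcal{F}_1$ shatters some $T'\subset S'$ with $|T'|=k-1$, and moreover—this is where one uses that a set in $\mathcal{F}_1$ comes from a set of $\mathcal{F}$ containing $s$—one shows $T'\cup\{s\}$ is shattered by $\mathcal{F}$, giving a shattered set of size $k$. Otherwise $|\mathcal{F}_1|\le\sum_{i=0}^{k-2}\binom{n-1}{i}$, so
	\begin{displaymath}
|\mathcal{F}_0| = |\mathcal{F}| - |\mathcal{F}_1| > \sum_{i=0}^{k-1}\binom{n}{i} - \sum_{i=0}^{k-2}\binom{n-1}{i} = \sum_{i=0}^{k-1}\binom{n-1}{i},
	\end{displaymath}
using Pascal's rule $\binom{n}{i}=\binom{n-1}{i}+\binom{n-1}{i-1}$ to telescope; then by induction $\mathcal{F}_0$ shatters some $T\subset S'$ with $|T|=k$, and $T$ is a fortiori shattered by $\mathcal{F}$. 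The main obstacle—really the only subtle point—is the claim in the first case that shattering of $T'$ by $\mathcal{F}_1$ upgrades to shattering of $T'\cup\{s\}$ by $\mathcal{F}$: one must verify that for every $A\subseteq T'$ there is some $F\in\mathcal{F}$ with $F\cap(T'\cup\{s\})=A$ and also some $F'\in\mathcal{F}$ with $F'\cap(T'\cup\{s\})=A\cup\{s\}$. The second of these is immediate from $A$ being a trace of $\mathcal{F}_1$ (lift back by adding $s$); the first requires noticing that the same $A$ is realized by a trace of $\mathcal{F}_1$ whose lift $F'\ni s$ has $F'\cap T'=A$, so $F'\cap(T'\cup\{s\})=A\cup\{s\}$—hence one should instead run the induction so that $\mathcal{F}_1$ shatters $T'$ already guaranteeing, via the "doubled trace" family $\mathcal{G}$, that both lifts exist; equivalently, replace the crude case split by the inequality $|\operatorname{sh}(\mathcal{F})|\ge|\operatorname{sh}(\mathcal{F}_0)|+|\operatorname{sh}(\mathcal{G})|$ above and the observation that $\mathcal{G}$ has enough elements whenever $|\mathcal{F}_1|$ is large, since every member of $\mathcal{G}$ corresponds to a pair of members of $\mathcal{F}$ straddling $s$. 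I would present the argument via $\mathcal{G}$ to sidestep this, concluding the size-$k$ statement by the same Pascal telescoping applied to $|\operatorname{sh}(\mathcal{F})|$ versus $|\mathcal{F}|$.
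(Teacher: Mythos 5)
Your overall strategy (deleting an element $s$, splitting $\mathcal{F}$ into $\mathcal{F}_0$ and $\mathcal{F}_1$, Pascal telescoping) is the right neighborhood, and you correctly flag the delicate point: if $\mathcal{F}_1$ shatters a $(k-1)$-set $T'\subset S'$, it does \emph{not} follow that $\mathcal{F}$ shatters $T'\cup\{s\}$ --- shattering of $T'$ by $\mathcal{F}_1$ only produces the traces $A\cup\{s\}$ on $T'\cup\{s\}$, never the traces $A$ without $s$. But your proposed repair is itself broken, in two places. First, the claim that ``$\mathcal{G}$ has enough elements whenever $|\mathcal{F}_1|$ is large'' is false: if every $F\in\mathcal{F}$ contains $s$, then $\mathcal{F}_1$ is as large as $\mathcal{F}$ while $\mathcal{G}=\mathcal{F}_0\cap\mathcal{F}_1=\varnothing$; there is no lower bound on $|\mathcal{G}|$ in terms of $|\mathcal{F}_1|$. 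Second, and consequently, the inequality $|\operatorname{sh}(\mathcal{F})|\ge|\operatorname{sh}(\mathcal{F}_0)|+|\operatorname{sh}(\mathcal{G})|$, while true, is too weak: after applying induction it yields only $|\operatorname{sh}(\mathcal{F})|\ge|\mathcal{F}_0|+|\mathcal{G}|$, and since $\mathcal{G}\subseteq\mathcal{F}_1$ this can be strictly smaller than $|\mathcal{F}|=|\mathcal{F}_0|+|\mathcal{F}_1|$. The Pascal telescoping cannot rescue this, because the decomposition $|\mathcal{F}|=|\mathcal{F}_0|+|\mathcal{F}_1|$ does not interact usefully with $\mathcal{G}$.

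Two genuine fixes exist, and the paper uses the second. (i) \emph{Classical version}: work with $\mathcal{H}=\mathcal{F}_0\cup\mathcal{F}_1$ and $\mathcal{G}=\mathcal{F}_0\cap\mathcal{F}_1$ (as subfamilies of $\mathcal{P}(S')$), so that the counting identity $|\mathcal{F}|=|\mathcal{H}|+|\mathcal{G}|$ does hold; then a set shattered by $\mathcal{H}$ is shattered by $\mathcal{F}$, and a set $T'$ shattered by $\mathcal{G}$ lifts to a shattered set $T'\cup\{s\}$, and the case split with Pascal's rule goes through. (ii) \emph{The paper's version} (Pajor's strengthening): prove by induction on $|\mathcal{F}|$ (not on $n$) that $|\operatorname{sh}(\mathcal{F})|\ge|\mathcal{F}|$. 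Here the split is into $\mathcal{F}_0$ and $\mathcal{F}_1$ as you define them, and the crucial observation is that if a set $F$ lies in $\operatorname{sh}(\mathcal{F}_0)\cap\operatorname{sh}(\mathcal{F}_1)$ --- that is, $F$ is shattered simultaneously by both families, a condition considerably weaker than being shattered by $\mathcal{G}$ --- then $F\cup\{x\}\in\operatorname{sh}(\mathcal{F})$. Since $\operatorname{sh}(\mathcal{F}_0)\cup\operatorname{sh}(\mathcal{F}_1)$ consists of sets avoiding $x$ while the lifted sets $F\cup\{x\}$ all contain $x$, inclusion--exclusion gives $|\operatorname{sh}(\mathcal{F})|\ge|\operatorname{sh}(\mathcal{F}_0)\cup\operatorname{sh}(\mathcal{F}_1)|+|\operatorname{sh}(\mathcal{F}_0)\cap\operatorname{sh}(\mathcal{F}_1)|=|\operatorname{sh}(\mathcal{F}_0)|+|\operatorname{sh}(\mathcal{F}_1)|\ge|\mathcal{F}_0|+|\mathcal{F}_1|=|\mathcal{F}|$, and the Sauer--Shelah bound follows by counting the subsets of size $\le k-1$. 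The error in your sketch is precisely the substitution of $\operatorname{sh}(\mathcal{G})$ for $\operatorname{sh}(\mathcal{F}_0)\cap\operatorname{sh}(\mathcal{F}_1)$: the former is contained in the latter but may be much smaller, and it is the latter that matches the inclusion--exclusion identity.
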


	\begin{proof}
The following argument, taken from \cite[Theorem 1.1]{ARS}, proves a slightly stronger result due to A. Pajor \cite{Pajor}:

\smallskip{$(*)$}\enspace \textit{For every collection $\mathcal{F}\subset \mathcal{P}(S)$, $\textup{sh}(\mathcal{F})\ge |\mathcal{F}|$.}

\smallskip In order to prove $(*)$ we proceed by induction and assume that $|\mathcal{F}|=1$. Then $\textup{sh}(\mathcal{F})=\{\varnothing \}$, so that $|\textup{sh}(\mathcal{F})|=1=|\mathcal{F}|$.

For the induction step we assume that every collection $\mathcal{F}\subset \mathcal{P}(S)$ of size $\le k-1$ shatters at least $|\mathcal{F}|$ sets. Let $\mathcal{F}\subset \mathcal{P}(S)$ be a collection of $k$ distinct sets, and let $x\in S$ be an element of some, but not all, sets in $\mathcal{F}$. Put
	\begin{equation}
	\label{eq:F0F1}
\mathcal{F}_0=\{F\in \mathcal{F}:x\notin F\}, \qquad \mathcal{F}_1=\{F\smallsetminus \{x\}: x\in F\in \mathcal{F}\}.
	\end{equation}
By induction hypothesis, $|\textup{sh}(\mathcal{F}_i)|\ge |\mathcal{F}_i|$ for $i=0,1$. Then $|\mathcal{F}_0|+|\mathcal{F}_1|=|\mathcal{F}|=k$ and $\textup{sh}(\mathcal{F}_0) \cup \textup{sh}(\mathcal{F}_0)\subset \textup{sh}(\mathcal{F})$.

If a set $F\in \mathcal{P}(S)$ lies in $\textup{sh}(\mathcal{F}_0)\cap \textup{sh}(\mathcal{F}_1)$, then $\mathcal{P}(F)=\{E\cap F:E\in \mathcal{F}_0\}$, so that $x\notin F$. A glance at \eqref{eq:F0F1} shows that $F\cup\{x\}\in \textup{sh}(\mathcal{F})$. Hence $|\textup{sh}(\mathcal{F})| \ge |\textup{sh}(\mathcal{F}_0)\cup \textup{sh}(\mathcal{F}_1)| + |\textup{sh}(\mathcal{F}_0)\cap \textup{sh}(\mathcal{F}_1)| = |\textup{sh}(\mathcal{F}_0)| + |\textup{sh}(\mathcal{F}_1)|\ge k$, as claimed.

\smallskip Finally we note that the statement $(*)$ implies Lemma \ref{l:sauer-shelah}: if $|\textup{sh}(\mathcal{F})| \ge |\mathcal{F}|>\sum_{i=0}^{k-1}\binom{n}{i}$, then $\textup{sh}(\mathcal{F})$ must contain a set of size $\ge k$, since there are only $\sum_{i=0}^{k-1}\binom{n}{i}$ sets of size $\le k-1$ in $\mathcal{P}(S)$.
	\end{proof}

	\begin{lemm}
	\label{l:affine}
Let $V$ be a finite-dimensional vector space over $\mathbb{R}$, and let $k>\textup{dim}\,V$. Let furthermore $\phi _1,\dots ,\phi _k$ be affine functions on $V$ and $b_1,\dots ,b_k\in \mathbb{R}$. Then there exist $a_1,\dots ,a_k\in \{0,1\}$ such that $\bigcap_{j=1}^k W_j(a_j)=\varnothing $, where
	\begin{displaymath}
W_j(a_j) =
	\begin{cases}
\{v\in V:\phi _j(v)<b_j\}&\textup{if}\enspace a_j=1,
	\\
\{v\in V:\phi _j(v)\ge b_j\}&\textup{if}\enspace a_j=0.
	\end{cases}
	\end{displaymath}
	\end{lemm}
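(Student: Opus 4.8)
The plan is to derive a single affine identity satisfied by the $\phi_j$ and then read the required pattern $(a_1,\dots,a_k)$ directly off the signs of its coefficients; the Sauer--Shelah lemma is not actually needed for this particular statement (one could use it to reduce the general case to $k=\dim V+1$, but that borderline case would still have to be handled by hand, so I would treat all $k>\dim V$ uniformly).

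First I would replace each $\phi_j$ by the affine function $\psi_j:=\phi_j-b_j$, so that $W_j(1)=\{v\in V:\psi_j(v)<0\}$ and $W_j(0)=\{v\in V:\psi_j(v)\ge 0\}$; the claim then asks for $a_j\in\{0,1\}$ such that no $v\in V$ has $\psi_j(v)<0$ for every $j$ with $a_j=1$ and $\psi_j(v)\ge 0$ for every $j$ with $a_j=0$. The real vector space of affine functions on $V$ has dimension $\dim V+1$, so the $k+1$ functions $\psi_1,\dots,\psi_k$ together with the constant function $\mathbf 1$ are linearly dependent, since $k+1>\dim V+1$. Such a dependence cannot be supported on $\mathbf 1$ alone, so after rearranging it yields a relation $\sum_{j=1}^k\lambda_j\psi_j\equiv\gamma$ on $V$, with $\gamma\in\mathbb R$ and $(\lambda_1,\dots,\lambda_k)\ne 0$.

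Next I would normalize this relation by multiplying it through by $-1$ if necessary: if $\gamma\ne 0$, arrange $\gamma>0$; if $\gamma=0$, arrange that $\lambda_{j_0}>0$ for at least one index $j_0$ (possible because the $\lambda_j$ are not all zero). In either case $\gamma\ge 0$, with the extra information that some $\lambda_{j_0}>0$ when $\gamma=0$. Now set $a_j=1$ when $\lambda_j>0$ and $a_j=0$ otherwise, and suppose toward a contradiction that some $v$ lies in $\bigcap_{j=1}^k W_j(a_j)$. Then $\psi_j(v)<0$ for every $j$ with $\lambda_j>0$ and $\psi_j(v)\ge 0$ for every $j$ with $\lambda_j\le 0$, so $\lambda_j\psi_j(v)\le 0$ for all $j$, and $\lambda_j\psi_j(v)<0$ whenever $\lambda_j>0$. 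Summing gives $\gamma=\sum_{j}\lambda_j\psi_j(v)\le 0$; if $\gamma>0$ this is absurd, and if $\gamma=0$ the single term $\lambda_{j_0}\psi_{j_0}(v)<0$ together with the other nonpositive terms forces $\gamma<0$, again absurd. Hence $\bigcap_{j=1}^k W_j(a_j)=\varnothing$, as required.

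The one point needing care, and essentially the only non-mechanical step, is the asymmetry between the strict inequality defining $W_j(1)$ and the non-strict one defining $W_j(0)$: a strict inequality is available only on the ``$<$'' side, which is why the relation has to be oriented so that the index supplying the needed strictness in the case $\gamma=0$ has a positive coefficient and is therefore assigned $a_{j_0}=1$. Once the orientation is chosen correctly, the rest is elementary linear algebra and I expect no further obstacle.
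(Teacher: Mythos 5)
Your proof is correct, and it takes a genuinely different route from the paper's. The paper argues by induction on $\dim V$: if $\phi_k$ is constant one of $W_k(0),W_k(1)$ is already empty; otherwise it restricts to the hyperplane $X=\{\phi_k=b_k\}$, applies the induction hypothesis to the restrictions $\phi_j|_X$, and then uses convexity of $\bigcap_{j<k}W_j(a_j)$ to argue that it cannot meet both $W_k(0)$ and $W_k(1)$ without meeting $X$. You instead observe that $\psi_1,\dots,\psi_k,\mathbf 1$ are $k+1$ vectors in the $(\dim V+1)$-dimensional space of affine functions, extract a nontrivial dependence $\sum_j\lambda_j\psi_j\equiv\gamma$ with $(\lambda_j)\neq 0$, normalize so that $\gamma\ge 0$ (with a designated $\lambda_{j_0}>0$ when $\gamma=0$), and let the signs of the $\lambda_j$ dictate the $a_j$; the strict inequality contributed by each positive $\lambda_j$ then gives the contradiction. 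Your handling of the $\gamma=0$ case is the one delicate point and you treat it correctly. The linear-dependence argument is arguably more direct and makes explicit exactly where the $\dim V+1$ count enters, while the paper's induction is constructive and localizes the choice of each $a_j$; both are short, elementary, and fully correct. One small aside: the Sauer--Shelah lemma is also not used in the paper's proof of this lemma (it appears only in the surrounding proof of Lemma 6.6), so your remark distancing yourself from it is harmless but addresses a use the paper never makes.
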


	\begin{proof}[Proof \textup{(}Hanfeng Li, personal communication\textup{)}.]
We use induction on $\textup{dim}\,V$. If $\textup{dim}\,V\linebreak[0]=1$ our assertion is evident. Assume therefore that $n\ge2$, that the assertion has been proved for $\textup{dim}\,V<n$, and that $\textup{dim}\,V=n$. If $\phi _k$ is constant, then either $W_k(0)$ or $W_k(1)$ is empty, and we can take the corresponding value of $a_k$ and choose $a_1,\dots ,a_{k-1}$ arbitrarily.

If $\phi _k$ is not constant, the hyperplane $X=\{v\in V:\phi _k(v)=b_k\}$ has dimension $n-1$, and the restrictions $\phi _j |_X$ are affine functions on $X$ for $j=1,\dots ,k-1$. According to our induction hypothesis we can find $a_1,\dots ,a_{k-1}\in\{0,1\}$ such that $\bigcap_{j=1}^{k-1} W_j(a_j)'=\varnothing $, where $W_j(a_j)'=W_j(a_j)\cap X$.

We claim that at least one of the sets $W_k(0) \cap \bigcap_{j=1}^{k-1} W_j(a_j)$ and $W_k(1) \cap \bigcap_{j=1}^{k-1} W_j(a_j)$ is empty. Indeed, if both are nonempty, and if $y_i\in W_k(i) \cap \bigcap_{j=1}^{k-1} W_j(a_j)$ for $i=0,1$, then some convex combination $x$ of $y_0$ and $y_1$ lies in $X$. Since $\bigcap_{j=1}^{k-1} W_j(a_j)$ is convex, $x\in X\cap \bigcap_{j=1}^{k-1} W_j(a_j) = \varnothing $, which is impossible. This contradiction completes the proof both of the induction step and of the lemma.
	\end{proof}

	\begin{lemm}
	\label{l:hanfeng}
$h(\bar{\sigma }|_{\bar{Y}_f})=h(\alpha _f)$.
	\end{lemm}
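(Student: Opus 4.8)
I would prove the two inequalities separately, $h(\bar\sigma|_{\bar Y_f})\ge h(\alpha_f)$ being the easy half: $\bar Y_f$ is a compact, shift-invariant subset of $[0,1]^{\mathbb Z}\cap W_f$, and $\rho$ maps it continuously, equivariantly and onto $X_f$ (it is already onto on $Y_f$, cf.\ the proof of Theorem~\ref{t:2}), so $(X_f,\alpha_f)$ is a topological factor of $(\bar Y_f,\bar\sigma)$. For the reverse inequality the plan is to compare $(N,\varepsilon)$-spanning numbers of $\bar Y_f$ with those of $X_f$. The decisive point is that a point $w\in\bar Y_f$ is almost determined by $x:=\rho(w)\in X_f$: whenever $x_n\ne0$ the coordinate $w_n$ equals the representative $\tilde x_n\in[0,1)$ of $x_n$, while when $x_n=0$ only $w_n\in\{0,1\}$ is possible. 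Hence on any window $I=\{-C,\dots,N+C\}$ the block $(w_n)_{n\in I}$ is determined by $(x_n)_{n\in I}$ together with the \emph{boundary pattern} $S=\{n\in I:\,x_n=0,\ w_n=1\}$, and the problem reduces to showing that only subexponentially many patterns $S$ arise.

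This is where the finite dimensionality of $W_f^\circ$ enters. Write $w=\lim_k\tilde x^{(k)}$ with $x^{(k)}\in X_f$, $x^{(k)}\to x$; deciding, for a boundary coordinate $n$, whether $\tilde x^{(k)}_n\to0$ or $\tilde x^{(k)}_n\to1$ amounts to reading off the sign of the representative in $(-\tfrac12,\tfrac12)$ of the $n$-th coordinate of $x^{(k)}-x$. For $k$ large this difference is uniformly small on (a slight enlargement of) $I$, and a uniformly small lift $\hat\delta$ of an element of $X_f$ has $f(\bar\sigma)(\hat\delta)$ integer-valued with modulus $<1$ on $I$, hence $f(\bar\sigma)(\hat\delta)=0$ there; away from the two ends of $I$ such a solution is, by the norm comparison \eqref{eq:bound2}, close to an element of a fixed bounded ball $\bar B_r(W_f^\circ)$. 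Thus, up to an uncontrolled but \emph{bounded} number of coordinates near the ends of $I$, the pattern $S$ has the form $\{n:\,u_n<0\}$ for some $u\in\bar B_r(W_f^\circ)$; conversely, using the density of $X_f^\circ=\rho(W_f^\circ)$ in $X_f$, every $u\in W_f^\circ$ is realised by the sequence $x+\rho(u/k)\in X_f$, so the admissible patterns are, essentially, exactly the sign patterns of the $\le|I|$ linear functionals $u\mapsto u_n$ on $\bar B_r(W_f^\circ)$.

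The estimate is then purely combinatorial. By Lemma~\ref{l:affine} no more than $\dim W_f^\circ=|\Theta_f^\circ|$ of those functionals can realise all their sign combinations on $W_f^\circ$; consequently the family $\{\{n\in I:\,u_n<0\}:u\in W_f^\circ\}$ shatters no subset of $I$ of cardinality exceeding $\dim W_f^\circ$, and the Sauer--Shelah Lemma~\ref{l:sauer-shelah} (in Pajor's strengthened form) bounds its size by $\sum_{i=0}^{\dim W_f^\circ}\binom{|I|}{i}=O\!\left(N^{\dim W_f^\circ}\right)$. Hence the number of $(N,\varepsilon)$-distinguishable windows of $\bar Y_f$ is at most $O(N^{\dim W_f^\circ})$ times that of $X_f$; dividing by $N$, letting $N\to\infty$ and then $\varepsilon\to0$ gives $h(\bar\sigma|_{\bar Y_f})\le h(X_f,\alpha_f)=h(\alpha_f)$ by Theorem~\ref{t:entropy}.

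I expect the genuine difficulty to be the middle step: turning the lifting ambiguity of $\bar Y_f$ over $X_f$ into a bona fide fixed-dimensional sign-pattern problem. One has to control, uniformly in $N$, the radius of the ball in $W_f^\circ$, the window enlargement $C$, and the number of ``edge'' coordinates whose sign is not governed by $W_f^\circ$, and to bookkeep carefully the difference between coordinates of $x$ that are \emph{exactly} $0$ and those that are merely $\varepsilon$-small (which forces a two-scale covering argument). Once this is set up, the combinatorial input --- Lemmas~\ref{l:affine} and~\ref{l:sauer-shelah} --- closes the argument comfortably; the same scheme can alternatively be run measure-theoretically, by showing $h_\mu(\bar\sigma|_{\bar Y_f}\mid\rho)=0$ for every $\bar\sigma$-invariant $\mu$, the conditional process being fibrewise carried by only $O(N^{\dim W_f^\circ})$ admissible boundary patterns, and concluding via the variational principle.
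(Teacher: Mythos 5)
Your proposal follows the paper's outline closely: easy lower bound via the factor map $\rho$, then a polynomial bound on ``boundary patterns'' via the Sauer--Shelah Lemma and Lemma~\ref{l:affine}. The combinatorial core is correctly identified. But the step you yourself flag as ``the genuine difficulty'' --- reducing the sign-pattern problem to a fixed finite-dimensional space --- is handled in a way that does not work, and the paper's fix is cleaner than the one you anticipate.

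Your plan is to show that, up to bounded edge effects, the admissible boundary patterns are sign patterns of coordinate functionals restricted to a fixed bounded ball in $W_f^\circ$, of dimension $|\Theta_f^\circ|$, invoking \eqref{eq:bound2}. This reduction is not correct. After pigeonholing to a reference point $z$ (the right move, and essentially what the paper does), the difference $\bar y-\bar z$ is uniformly small on the window $F_{N}$ and satisfies $f(\bar\sigma)(\bar y-\bar z)_n=0$ for $n$ in the shrunken window $F_{N-m}$ --- but \emph{only} there, not on all of $\mathbb Z$. The relevant solution space is therefore the $m$-dimensional space $V_{N}$ of window-supported solutions of the local recursion, $m=\deg f$, and not the $|\Theta_f^\circ|$-dimensional global kernel $W_f^\circ$. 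The window-local solution space contains exponentially decaying components attached to roots $\theta$ with $|\theta|\ne 1$; these have small \emph{magnitude} on most of the window, which is what the norm comparison \eqref{eq:bound2} controls, but the sign pattern --- all that matters here --- is not governed by magnitude. Wherever the neutral component happens to be exponentially small, the exponential components can and do flip signs, so a pointwise reduction to $\bar B_r(W_f^\circ)$ fails, and the ``bounded number of edge coordinates'' you hope to absorb is not actually bounded independently of $N$ in the way your sketch requires.

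The paper's insight is that one need not make this reduction at all: one works directly with the $m$-dimensional space $V_{N}$, observes that the $N$ functionals $\phi_n$ and thresholds $b_n$ encode the boundary pattern as a system of affine half-space membership conditions on $V_{N}$, applies Lemma~\ref{l:affine} to see that no $(m+1)$-subset of $F_N$ can be shattered by these conditions, and then Sauer--Shelah gives the bound $O(N^{m+1})$ on the number of patterns --- polynomial, hence incompatible with the exponential lower bound $e^{cN}$ coming from the assumed entropy gap. This sidesteps entirely the window-enlargement, two-scale covering, and edge-coordinate bookkeeping you anticipated, all of which were symptoms of trying to pass to $W_f^\circ$. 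If you replace ``$W_f^\circ$, dimension $|\Theta_f^\circ|$'' by ``the local solution space on the window, dimension $m=\deg f$'' and run the same Sauer--Shelah/affine argument you already describe, your proof closes correctly and is essentially the paper's.
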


	\begin{proof}[Proof \textup{(}Hanfeng Li, personal communication\textup{)}.]
The map $\rho |_{Y_f}\colon Y_f\longrightarrow X_f$ is a continuous, shift-equivariant bijection. If $\mu $ is the probability measure on $Y_f$ satisfying $\mu (B)=\lambda _{X_f}(\rho (B))$ for every Borel set $B\subset Y_f$, then $h(\bar{\sigma }|_{\bar{Y}_f}) \ge h_\mu (\bar{\sigma })=h(\alpha _f)$, where the middle term denotes measure-theoretic entropy.

We assume that $h(\bar{\sigma }|_{\bar{Y}_f}) > h(\alpha _f)$ and will show that this leads to a contradiction. Define, for every finite subset $F\subset \mathbb{Z}$, pseudometrics $\vartheta _{\bar{Y}_f}^{(F)}$ and $\vartheta _{X_f}^{(F)}$ on $\bar{Y}_f$ and $X_f$ by setting
	\begin{equation}
	\label{eq:metrics}
	\begin{gathered}
\vartheta _{\bar{Y}_f}^{(F)}(x,x')=\max\nolimits_{k\in F}\,|x_k-x'_k| ,\enspace x,x'\in \bar{Y}_f,
	\\
\vartheta _{X_f}^{(F)}(y,y')= \max\nolimits_{k\in F}\,\dT y_k-y_k'\dT, \enspace y,y'\in X_f,
	\end{gathered}
	\end{equation}
where $\dT s-t\dT = \min_{k\in \mathbb{Z}}\,|a-b+k|$ denotes the usual distance between two elements $s=a+\mathbb{Z}$, $t=b+\mathbb{Z}$, of $\mathbb{T}=\mathbb{R}/\mathbb{Z}$.

For every $\varepsilon >0$ and every finite subset $F\subset \mathbb{Z}$ we choose a maximal \smash{$(\vartheta _{\bar{Y}_f}^{(F)},\varepsilon )$}-separated subset $D(\bar{Y}_f,F,\varepsilon )\subset \bar{Y}_f$ and a maximal $(\vartheta _{X_f}^{(F)},\varepsilon )$-separated subset $D(X_f,\linebreak[0]F,\varepsilon )\subset X_f$. Since $Y_f$ is dense in $\bar{Y}_f$ we may assume without loss in generality that $D(\bar{Y}_f,F,\varepsilon )\subset Y_f$.

If we set $F_N=\{0,\dots ,N-1\}$ for every $N\ge1$, then the definition of topological entropy implies that
	\begin{equation}
	\label{eq:top-ent}
	\begin{gathered}
h(\bar{\sigma }|_{\bar{Y}_f}) = \textstyle{\sup_{\varepsilon >0}\,\limsup_{N\to\infty }\frac{1}{N}\log |D(\bar{Y}_f,F_N,\varepsilon )|,}
	\\
h(\alpha _f) = \textstyle{\sup_{\varepsilon >0}\,\limsup_{N\to\infty }\frac{1}{N}\log |D(X_f,F_N,\varepsilon )|}.
	\end{gathered}
	\end{equation}

Since $h(\bar{\sigma }|_{\bar{Y}_f}) > h(\alpha _f)$ we can find $\varepsilon $ with $0<\varepsilon <1/4\|f\|_1$, $c>0$, and an increasing sequence $(N_k)_{k\ge1}$ of natural numbers, such that
	\begin{displaymath}
|D(\bar{Y}_f,F_{N_k},\varepsilon )| \ge |D(X_f,F_{N_k},\varepsilon )| \cdot e^{cN_k}
	\end{displaymath}
for every $k\ge1$.

We fix $k\ge1$ for the moment. Since the set $D(X_f,F_{N_k},\varepsilon )$ is maximal $(\vartheta _{X_f}^{(F_{N_k})},\varepsilon )$-separated,
	\begin{displaymath}
\smash[t]{X_f=\bigcup\nolimits _{x\in D(X_f,F_{N_k},\varepsilon )}\bar{B}_{\vartheta _{X_f}^{(F_{N_k})}}(x,\varepsilon /2),}
	\end{displaymath}
where $\bar{B}_{\vartheta _{X_f}^{(F_{N_k})}}(x,\varepsilon /2)$ denotes the closed $\vartheta _{X_f}^{(F_{N_k})}$-ball with centre $x$ and radius $\varepsilon /2$. We can thus choose an element $z\in D(X_f,F_{N_k},\varepsilon )$ such that the set
	\begin{equation}
	\label{eq:W1}
W_{N_k} \coloneqq \{y\in D(\bar{Y}_f,F_{N_k},\varepsilon ):\vartheta _{X_f}^{(F_{N_k})}(\rho (y),z)\le \varepsilon /2\}
	\end{equation}
has cardinality $\ge e^{cN_k}$. Let $\bar{z}\in Y_f$ be the unique element satisfying that $\rho (\bar{z})=z$ and define, for every $y\in W_{N_k}$, elements $\bar{y}\in [-1/4,5/4]^\mathbb{Z}$ and $y^*\in\{0,1\}^\mathbb{Z}$ by demanding that
	\begin{gather*}
|\bar{y}_n-\bar{z}_n|\le \varepsilon /2\quad \textup{and}\quad  \bar{y}_n= y_n\enspace (\textup{mod}\;1)\quad\textup{if}\quad n\in F_{N_k},\\
\bar{y}_n=y_n\quad\textup{if}\quad n\in \mathbb{Z}\smallsetminus F_{N_k},
	\end{gather*}
and setting
	\begin{displaymath}
y_n^*=|y_n-\bar{y}_n|\enspace \textup{for every}\enspace n\in \mathbb{Z}.
	\end{displaymath}
Since $\bar{y},\bar{z}\in W_f$, we obtain that $f(\bar{\sigma })(\bar{y}),f(\bar{\sigma })(\bar{z})\in \ell ^\infty (\mathbb{Z},\mathbb{Z})$, and the smallness of $|\bar{y}_n - \bar{z}_n|$ for $n\in F_{N_k}$ guarantees that
	\begin{equation}
	\label{eq:zeros}
[f(\bar{\sigma })(\bar{y}-\bar{z})]_n=0\enspace \textup{for every}\enspace n\in F_{N_k-m},
	\end{equation}
where $m = \textup{deg}(f)$ is the degree of $f$ (cf. \eqref{eq:f}).

By looking at the definition of $W_{N_k}$ we see that $y_n^*=1$ if $n\in F_{N_k}$ and $y_n$ and $\bar{z}_n$ lie close to opposite ends of the interval $[0,1]$, and $y^*_n=0$ otherwise (i.e., if either $n\in F_{N_k}$ and $|y_n-\bar{z}_n|\le \varepsilon /2$, or if $n\notin F_{N_k}$). Note that the map $y\mapsto y^*$ from $W_{N_k}$ to $\{0,1\}^{F_{N_k}}$ is injective: if $x,y\in W_{N_k}$ satisfy that $x^*=y^*$, then $|x_n - y_n|\le \varepsilon $ for every $n\in F_{N_k}$. Since $W_{N_k}$ is $(\vartheta _{\bar{Y}_f}^{(F_{N_k})},\varepsilon )$-separated this guarantees that $x=y$.

For every $y\in W_{N_k}$ and $n\in F_{N_k}$, exactly one of the following conditions is satisfied.
	\begin{enumerate}
	\item
[(a)] $\bar{y}_n-\bar{z}_n < -\bar{z}_n$, $y_n^*=1$, and $\bar{y}_n<0$, $\bar{z}_n<\varepsilon /2$,
	\item
[(b)] $\bar{y}_n-\bar{z}_n \ge -\bar{z}_n$, $y_n^*=0$, and $0\le \bar{y}_n<1$, $0\le \bar{z}_n <1$,
	\item
[(c)] $\bar{y}_n - \bar{z}_n \ge 1-\bar{z}_n$, $y_n^*=1$, and $\bar{y}_n \ge 1$, $\bar{z}_n \ge \varepsilon /2$,
	\item
[(d)] $\bar{y}_n - \bar{z}_n < 1-\bar{z}_n$, $y_n^* = 0$, and $0 \le \bar{y}_n<1$, $0\le \bar{z}_n <1$.
	\end{enumerate}
For every $n\in F_{N_k}$ we define a linear functional $\phi _n\colon \ell ^\infty (\mathbb{Z},\mathbb{R})\longrightarrow \mathbb{R}$ by setting
	\begin{equation}
	\label{eq:phi_n}
\phi _n(v)=
	\begin{cases}
\hphantom{-}v_n &\textup{if}\enspace \bar{z}_n <\varepsilon /2,
	\\
-v_n &\textup{if}\enspace \bar{z}_n \ge \varepsilon /2,
	\end{cases}
	\end{equation}
for every $v\in \ell ^\infty (\mathbb{Z},\mathbb{R})$. Then the following holds for every $y\in W_{N_k}$ and $n\in F_{N_k}$:
	\begin{equation}
	\label{eq:inequalities}
y_n^*=
	\begin{cases}
1\quad \textup{and}\quad
\phi _n(\bar{y}-\bar{z}) <
	\begin{cases}
-\bar{z}_n&\textup{if}\enspace \bar{z}_n<\varepsilon /2,
	\\
\bar{z}_n-1&\textup{if}\enspace \bar{z}_n\ge\varepsilon /2
	\end{cases}
	\\
0\quad \textup{and}\quad
\phi _n(\bar{y}-\bar{z}) \ge
	\begin{cases}
-\bar{z}_n&\textup{if}\enspace \bar{z}_n<\varepsilon /2,
	\\
\bar{z}_n-1&\textup{if}\enspace \bar{z}_n\ge\varepsilon /2
	\end{cases}
	\end{cases}
	\end{equation}

\smallskip We set $\overline{W}\negthinspace_{N_k}-\bar{z} = \{\bar{y}-\bar{z}: y\in W_{N_k}\}$, $W_{N_k}^* = \{y^*: y\in W_{N_k}\}$, and put
	\begin{align*}
V_{N_k}=\{v\in \ell ^\infty (\mathbb{Z},\mathbb{R})&: v_n=0\enspace \textup{for all}\enspace n\in \mathbb{Z}\smallsetminus F_{N_k}\enspace
	\\
&\textup{and}\enspace f(\bar{\sigma })(v)_n=0 \enspace \textup{for all}\enspace n\in F_{N_k-m}\}.
	\end{align*}
Since restriction of the map $f(\bar{\sigma })$ to the $N_k$-dimensional linear space $\{v\in\ell ^\infty (\mathbb{Z},\mathbb{R}):v_n=0\enspace \textup{for every}\enspace n\in \mathbb{Z}\smallsetminus F_{N_k}\}$ is injective, the space $V_{N_k}$ has dimension $m$. Furthermore, $\overline{W}\negthinspace_{N_k}-\bar{z}\subset V_{N_k}$ by \eqref{eq:zeros}.

We view every $y^*\in W_{N_k}^*$ as the set $\{n\in F_{N_k}:y^*_n=1\}$. As we just saw, the map $y\mapsto y^*$ is injective, so that $|W_{N_k}^*|\ge e^{cN_k}$. If $k$ is sufficiently large so that
	\begin{displaymath}
\sum\nolimits_{i=0}^{m+1}\textstyle{\binom{N_k}{i}} \le 1+N_k^{m+1} < e^{cN_k},
	\end{displaymath}
where $m=\textup{deg}(f)$, then Lemma \ref{l:sauer-shelah} guarantees that the family $W_{N_k}^*\subset \mathcal{P}(F_{N_k})$ shatters a set $T\subset F_{N_k}$ of size $m+1$. In other words, there exists, for every $(a_n)_{n\in T}\in \{0,1\}^T$, an element $y\in W_{N_k}$ with $y^*_n=a_n$ for every $n\in T$. This element $y$ satisfies that $\bar{y}-\bar{z}\in V_{N_k}$, and \eqref{eq:inequalities} shows that
	\begin{align*}
\phi _n(\bar{y}-\bar{z}) & < b_n \quad\textup{if}\enspace a_n = 1,
	\\
\phi _n(\bar{y}-\bar{z}) & \ge b_n \quad\textup{if}\enspace a_n = 0,
	\end{align*}
where\vspace{-2mm}
	\begin{displaymath}
b_n=
	\begin{cases}
-\bar{z}_n&\textup{if}\enspace \bar{z}_n<\varepsilon /2,
	\\
\bar{z}_n-1&\textup{if}\enspace \bar{z}_n\ge \varepsilon / 2.
	\end{cases}
	\end{displaymath}

By comparing this with Lemma \ref{l:affine} we see that $\textup{dim}\,V_{N_k}\ge m+1$, in violation of the fact that $\textup{dim}\,V_{N_k}=m$. Our assumption that $h(\bar{\sigma }|_{\bar{Y}_f}) > h(\alpha _f)$ has thus led to a contradiction. This proves the lemma.
	\end{proof}

	\begin{rema}
	\label{r:hanfeng}
The proof of Lemma \ref{l:hanfeng} by Hanfeng Li was formulated for principal algebraic actions of countable discrete amenable groups. The proof given here easily extends to that more general case.
	\end{rema}

	\begin{proof}[Proof of Theorem \ref{t:2} \textup{(4)}]
Lemma \ref{l:hanfeng} shows that $h(\bar{\sigma }|_{\bar{Y}_f})=h(\alpha _f)$. Since the map $f(\bar{\sigma })\colon \bar{Y}_f\longrightarrow \bar{Z}_f$ is surjective and shift-equivariant, $h(\alpha _f)=h(\bar{\sigma }|_{\bar{Y}_f})\ge h(\bar{\sigma }|_{\bar{Z}_f})\ge h(\alpha _f)$, where the last inequality follows from Lemma \ref{l:entropy} (2).
	\end{proof}

\section{Equal entropy covers of nonexpansive automorphisms}

From the nontriviality of the cocycle $\mathsf{d}$ in \eqref{eq:commutation} -- \eqref{eq:d-cocycle} it is clear that the map $\xi ^*\colon \bar{Z}_f\longrightarrow X_f$ is not $(\bar{\sigma },\alpha _f)$-equivariant. However, if we identify $\bar{Z}_f$ with the set $\bar{Z}_f\times \{0\}\subset \bar{Z}_f\times W_f^\circ$ and lift that set to a compact, $\tau $-invariant subset $\mathsf{C}\subset \bar{Z}_f\times W_f^\circ$ which projects onto the `base' $\bar{Z}_f$, we obtain an equal entropy cover $(\mathsf{C},\tau )_\zeta $ of $(X_f,\alpha _f)$ with covering map $\zeta \colon \mathsf{C}\longrightarrow X_f$ in \eqref{eq:zeta}.

	\begin{theo}
	\label{t:cover}
Let $\bar{Z}_f=f(\bar{\sigma })(\bar{Y}_f)\subset V_f$ be the compact shift-invariant set described in Theorem \ref{t:2}, and let $\tau \colon V_f\times W_f^\circ \longrightarrow V_f\times W_f^\circ $ be the homeomorphism defined in \eqref{eq:tau}. Then there exist a $\tau $-invariant compact set $\mathsf{C} \subset \bar{Z}_f\times W_f^\circ$ such that
	\begin{equation}
	\label{eq:coverC}
\pi _1(\mathsf{C}) = \bar{Z}_f.
	\end{equation}
Furthermore, if $\matheur{C} \subset \bar{Z}_f\times W_f^\circ$ is a compact, $\tau $-invariant set satisfying \eqref{eq:coverC}, and if $\zeta =\rho \circ \bar{\zeta }\colon V_f\times W_f^\circ\longrightarrow X_f$ is the $(\tau ,\alpha _f)$-equivariant map \eqref{eq:zeta}, then $h(\tau |_{\mathsf{C}})=h(\alpha _f)$ and $\zeta (\mathsf{C})=X_f$.
	\end{theo}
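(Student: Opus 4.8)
The plan is to read Theorem \ref{t:cover} off Lemma \ref{l:entropy} together with the entropy identity of Theorem \ref{t:2}(4); nothing beyond those is needed, so I would present it as a short deduction rather than prove anything from scratch.

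First I would invoke Lemma \ref{l:entropy}(1), which produces a compact $\tau$-invariant set $\mathsf{C}\subset\bar Z_f\times W_f^\circ$ with $\pi_1(\mathsf{C})=\bar Z_f$ (this is \eqref{eq:coverC}) and, at the same time, with $\zeta(\mathsf{C})=X_f$. Its mechanism is worth recalling in the proof: the bound $\|\bar\xi^*(z)\|_\infty\le c$ for $z\in\bar Z_f$ coming from the proof of Theorem \ref{t:boundedness}(3) forces $\sup_{n\in\mathbb{Z}}\sup_{z\in\bar Z_f}\|\mathsf{d}(n,z)\|_\infty\le 2c$, so that $\tau^k\bigl(\bar Z_f\times\bar B_r(W_f^\circ)\bigr)\subset\bar Z_f\times\bar B_{r+2c}(W_f^\circ)$ for every $k\in\mathbb{Z}$; taking $r$ as in Theorem \ref{t:2}(3), the $\tau$-orbit of $\bar Z_f\times\bar B_r(W_f^\circ)$ has compact closure and contains a compact $\tau$-invariant set $\mathsf{C}\supseteq\bar Z_f\times\bar B_r(W_f^\circ)$, whence $\zeta(\mathsf{C})\supseteq\zeta\bigl(\bar Z_f\times\bar B_r(W_f^\circ)\bigr)=X_f$ by Theorem \ref{t:2}(3).

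Second, for the entropy equality I would take an arbitrary compact $\tau$-invariant lift $\mathsf{C}$ of $\bar Z_f$ (in particular the one just built, or the set written $\matheur{C}$ in the statement) and apply Lemma \ref{l:entropy}(2), which yields $h(\tau|_{\mathsf{C}})=h(\bar\sigma|_{\bar Z_f})=h(\bar\sigma|_{\bar Y_f})\ge h(\alpha_f)$: the crucial structural input there is that $\bar\sigma$ acts isometrically on the finite-dimensional space $W_f^\circ$, so the fibre direction contributes no exponential growth to the spanning-set count, while surjectivity of the equivariant projection $\pi_1\colon\mathsf{C}\to\bar Z_f$ gives the reverse inequality. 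Feeding in Theorem \ref{t:2}(4), which asserts $h(\bar\sigma|_{\bar Y_f})=h(\bar\sigma|_{\bar Z_f})=h(\alpha_f)$, collapses this chain to $h(\tau|_{\mathsf{C}})=h(\alpha_f)$. Together with the first step this is the whole assertion.

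I do not expect a genuine obstacle: all the real work lies upstream, in the inequality $h(\bar\sigma|_{\bar Y_f})\le h(\alpha_f)$ established in Lemma \ref{l:hanfeng} via the Sauer--Shelah counting of Lemma \ref{l:sauer-shelah} and the affine-separation argument of Lemma \ref{l:affine}. The one point to keep straight while writing is which set each clause concerns: surjectivity $\zeta(\mathsf{C})=X_f$ is asserted for the $\mathsf{C}$ manufactured in Lemma \ref{l:entropy}(1), where the fibres are thick enough (one should not expect it for an arbitrary thin $\tau$-invariant lift of $\bar Z_f$, since a continuous $\tau$-invariant section would typically have image a proper closed $\alpha_f$-invariant subset), whereas the entropy identity $h(\tau|_{\mathsf{C}})=h(\alpha_f)$ holds for \emph{every} compact $\tau$-invariant set projecting onto $\bar Z_f$, because it sees only the base $\bar Z_f$ and the isometry of $\bar\sigma$ on $W_f^\circ$.
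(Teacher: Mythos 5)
Your plan correctly handles the existence claim and the entropy equality, and for those parts you are using exactly the paper's route (Lemma~\ref{l:entropy} plus Theorem~\ref{t:2}\,(4)). The gap is in the surjectivity claim. You write that $\zeta(\mathsf{C})=X_f$ is ``asserted for the $\mathsf{C}$ manufactured in Lemma~\ref{l:entropy}\,(1)'' and that ``one should not expect it for an arbitrary thin $\tau$-invariant lift of $\bar Z_f$.'' That is exactly backwards: the ``Furthermore'' clause of Theorem~\ref{t:cover} asserts $\zeta(\mathsf{C})=X_f$ for \emph{every} compact $\tau$-invariant $\mathsf{C}$ with $\pi_1(\mathsf{C})=\bar Z_f$ (the $\matheur{C}$/$\mathsf{C}$ mismatch in the statement is a typo, not a distinction between two sets). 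In particular it applies to the minimal lift $\mathsf{C}_f$ of Lemma~\ref{l:minimal}, which is a graph over $\bar Z_f'$ and is used crucially in Theorem~\ref{t:measures}; your proposal would leave that application unsupported.

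The piece you are missing is the ergodicity argument. Let $\mathsf{C}$ be any compact $\tau$-invariant set with $\pi_1(\mathsf{C})=\bar Z_f$; by compactness $\mathsf{C}\subset\bar Z_f\times\bar B_R(W_f^\circ)$ for some $R$. For $F\subset W_f^\circ$ set $T^F(\mathsf{C})=\mathsf{C}+(\{0\}\times F)$; then $T^{\bar B_s(W_f^\circ)}(\mathsf{C})$ is $\tau$-invariant and $\zeta(T^{\bar B_s(W_f^\circ)}(\mathsf{C}))=\zeta(\mathsf{C})+\rho(\bar B_s(W_f^\circ))$ is a \emph{closed, $\alpha_f$-invariant} subset of $X_f$. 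With $r$ as in Theorem~\ref{t:2}\,(3), $\zeta(T^{\bar B_{r+R}(W_f^\circ)}(\mathsf{C}))\supset\xi^*(\bar Z_f)+\rho(\bar B_r(W_f^\circ))=X_f$, and covering $\bar B_{r+R}(W_f^\circ)$ by finitely many translates $\bar B_s(W_f^\circ)+w^{(i)}$ shows that $X_f$ is a finite union of translates of $\zeta(T^{\bar B_s(W_f^\circ)}(\mathsf{C}))$. Hence this closed $\alpha_f$-invariant set has positive Haar measure, so by ergodicity of $\alpha_f$ it has full measure, and being closed it equals $X_f$, for every $s>0$. Finally, given $x\in X_f$, pick $(z^{(m)},w^{(m)})\in T^{\bar B_{1/m}(W_f^\circ)}(\mathsf{C})$ with $\zeta(z^{(m)},w^{(m)})=x$; a convergent subsequence has its limit in $\mathsf{C}$, and continuity of $\zeta$ gives $\zeta(\mathsf{C})\ni x$. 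Your intuition that a thin $\tau$-invariant section ``would typically have image a proper closed $\alpha_f$-invariant subset'' is precisely what this argument rules out.
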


	\begin{proof}
By Theorem \ref{t:boundedness} (2), there exists a constant $c>0$ such that $\|\xi ^*(z)\|_\infty \le c$ for every $z\in \bar{Z}_f$. Hence $\|\mathsf{d}(n,z)\|_\infty \le 2c$ for every $z\in \bar{Z}_f$ and $n\in \mathbb{Z}$. It follows that there exists a compact, $\tau $-invariant subset $\mathsf{C}\subset \bar{Z}_f\times \bar{B}_2(W_f^\circ)$ with $\pi _1(\mathsf{C})=\bar{Z}_f$.

The set $V_f\times W_f^\circ $ is an additive group under component-wise addition, and we put, for every pair of sets $E\subset V_f\times W_f^\circ $, $F\subset W_f^\circ$, $T^F(E)=E+(\{0\}\times F)=\{(v,w+w'):(v,w)\in E\enspace \textup{and}\enspace w'\in F\}$. Note that $\zeta (T^F(E))=\zeta (E)+\rho (F)$, and that $T^{\bar{B}_s(W_f^\circ)}(\mathsf{C})$ is $\tau $-invariant for every $s\ge0$.

If $r>0$ is the constant appearing in Theorem \ref{t:2} (3), then
	\begin{displaymath}
\zeta (T^{\bar{B}_{r+2}(W_f^\circ)}(\mathsf{C})) \supset \xi ^*(\bar{Z}_f)+\rho (\bar{B}_r(W_f^\circ))=X_f.
	\end{displaymath}
Since $\zeta $ is $(\tau ,\alpha _f)$-equivariant and $T^{\bar{B}_s(W_f^\circ)}(\mathsf{C})$ is $\tau $-invariant, the set
	\begin{displaymath}
\zeta (T^{\bar{B}_s(W_f^\circ)}(\mathsf{C}) = \zeta (\mathsf{C}) +\rho (\bar{B}_s(W_f^\circ))\subset X_f
	\end{displaymath}
is closed and  $\alpha _f$-invariant for every $s>0$. Furthermore, since there exist finitely many elements $w^{(1)},\dots ,w^{(k)}$ in $W_f^\circ$ such that $\bigcup_{i=1}^k \bar{B}_s(W_f^\circ)+w^{(i)} \supset \bar{B}_{r+2}(W_f^\circ)$ and hence $\bigcup_{i=1}^kT^{\bar{B}_s(W_f^\circ) + w^{(i)}}(\mathsf{C}) \supset T^{\bar{B}_{r+2}(W_f^\circ)}(\mathsf{C})$, we obtain that
	$$
X_f=\bigcup_{i=1}^k\zeta (T^{\bar{B}_s(W_f^\circ)}(\mathsf{C})) + \rho (w^{(i)}).
	$$
It follows that the $\alpha _f$-invariant compact set $\zeta (T^{\bar{B}_s(W_f^\circ)}(\mathsf{C}))\subset X_f$ has positive Haar measure. Since $\alpha _f$ is ergodic we obtain that
	\begin{displaymath}
\zeta (T^{\bar{B}_s(W_f^\circ)}(\mathsf{C})) = X_f
	\end{displaymath}
for every $s>0$.

Fix $x\in X_f$. Then we can find, for every $m\ge1$, an element $(z^{(m)},w^{(m)})\in T^{\bar{B}_{1/m}(W_f^\circ)}(\mathsf{C})$ with $\zeta (z^{(m)},w^{(m)})=x$. The compactness of $T^{\bar{B}_2(W_f^\circ)}(\mathsf{C})$ allows us to find a convergent subsequence $((z^{(m_k)},w^{(m_k)}))_{k\ge1}$ of $((z^{(m)},w^{(m)}))_{m\ge1}$ with limit $(z,w)\in \mathsf{C}$, and the continuity of $\zeta $ implies that $\zeta (z,w)=x$. This proves that $\zeta (\mathsf{C})=X_f$. Furthermore, $h(\tau |_{\mathsf{C}}) = h(\bar{\sigma }|_{\bar{Z}_f})=h(\alpha _f)$ by the Lemmas \ref{l:entropy} and \ref{l:hanfeng}.
	\end{proof}

\section{Invariant sets and measures of nonexpansive automorphisms}

If $\alpha _f$ is expansive, the existence of Markov partitions or, more generally, of `nice' symbolic covers makes it a triviality to find infinite closed, $\alpha _f$-invariant subsets $C\subset X_f$. If $\alpha _f$ is nonexpansive, closed $\alpha _f$-invariant subsets have quite remarkable properties and are much more difficult to construct.

	\begin{theo}[\protect{\cite[Theorem 7.1]{LiS1}}]
	\label{t:LiS1-top}
Suppose that $\alpha _f$ is nonexpansive and totally irreducible.\footnote{\,\textit{Total irreducibility} of $\alpha _f$ means that $\alpha _f^k$ is irreducible for every $k\ge1$ or, equivalently, that the polynomial $g(u)=f(u^k)$ is irreducible for every $k\ge1$ (cf. \eqref{eq:f}).} Then any closed
$\alpha _f$-invariant subset $Y \subsetneq X_f$ intersects every central leaf $x+X_f^\circ$ of $\alpha _f$ in $X_f$ in a compact subset of that leaf.
	\end{theo}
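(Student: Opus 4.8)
The plan is to reduce the assertion to a boundedness statement in a single finite-dimensional space, and then to contradict unboundedness by manufacturing an $\alpha_f$-invariant probability measure on $Y$ that is forced to be Haar measure.

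\emph{Reduction.} Fix a central leaf $L=x+X_f^\circ$ and assume $Y\cap L\ne\varnothing$; choose $x\in Y\cap L$. The leaf carries its intrinsic topology, transported from the finite-dimensional space $W_f^\circ$ ($d:=\dim_{\mathbb R}W_f^\circ$) by the bijection $w\mapsto x+\rho(w)$, and this topology is finer than the one induced from $X_f$; hence $Y\cap L$, being closed in $X_f$, is automatically closed in $L$, and by Heine--Borel it remains only to prove that
\[
C:=\{\,w\in W_f^\circ:\ x+\rho(w)\in Y\,\}
\]
is bounded in $(W_f^\circ,\|\cdot\|_\infty)$. Note $0\in C$, $C$ is closed, and --- because $Y$ is $\alpha_f$-invariant and, under the isomorphism $\rho|_{W_f^\circ}\colon W_f^\circ\to X_f^\circ$, the automorphism $\alpha_f$ becomes the $\|\cdot\|_\infty$-isometry $\bar\sigma|_{W_f^\circ}$ --- the whole $\alpha_f$-orbit of every point $x+\rho(w)$, $w\in C$, lies in $Y$, with $\alpha_f^n\bigl(x+\rho(w)\bigr)=\alpha_f^nx+\rho(\bar\sigma^nw)$.

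\emph{The contradiction.} Suppose $C$ is unbounded; pick $w_k\in C$ with $\|w_k\|_\infty\to\infty$ and set $q_k=x+\rho(w_k)\in Y\cap L$. Let $\mu_k$ be a weak$^*$-limit of the empirical averages $N^{-1}\sum_{n=0}^{N-1}\delta_{\alpha_f^nq_k}$; each $\mu_k$ is an $\alpha_f$-invariant probability measure supported on $\overline{\{\alpha_f^nq_k:n\in\mathbb Z\}}\subseteq Y$. The heart of the argument is the claim that $\mu_k\to\lambda_{X_f}$ weakly$^*$. Writing $\alpha_f^nq_k=\alpha_f^nx+\rho(\bar\sigma^nw_k)$ and passing to a joint limit of $N^{-1}\sum\delta_{(\alpha_f^nx,\,\bar\sigma^nw_k)}$ in $X_f\times W_f^\circ$, one obtains an $(\alpha_f\times\bar\sigma)$-invariant measure whose $W_f^\circ$-marginal is the Haar measure of the ``torus'' $T_k=\overline{\{\bar\sigma^nw_k:n\in\mathbb Z\}}$, a rotation orbit lying on the sphere $\{\|\cdot\|_\infty=\|w_k\|_\infty\}$; $\mu_k$ is its pushforward under $(a,v)\mapsto a+\rho(v)$. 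For a nontrivial character $\chi\in\widehat{X_f}$ the homomorphism $\chi\circ\rho|_{W_f^\circ}$ has the form $v\mapsto e^{2\pi i\langle\xi_\chi,v\rangle}$ with $\xi_\chi\ne0$ (since $\overline{X_f^\circ}=X_f$); moreover $\mathrm{span}(T_k)$ equals the smallest $\bar\sigma$-invariant subspace $E_k\ni w_k$, and $\overline{\rho(E_k)}$ is an infinite closed $\alpha_f$-invariant subgroup, hence $=X_f$ by irreducibility, so $\xi_\chi|_{E_k}\ne0$ and $\langle\xi_\chi,\cdot\rangle$ is nonconstant on $T_k$. As $\|w_k\|_\infty\to\infty$ the exponential $e^{2\pi i\langle\xi_\chi,\cdot\rangle}$ oscillates ever faster along this ``round'' set; a Weyl/van der Corput-type estimate --- using that the eigenvalues $\theta\in\Theta_f^\circ$ are not roots of unity (noncyclotomicity) and that every power $\alpha_f^\ell$ is irreducible (total irreducibility), which rules out the degenerate multiplicative coincidences among the $\theta$ that would prevent the $T_k$ from dilating to full size --- then gives $\int\chi\,d\mu_k\to0$. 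Hence $\mu_k\to\lambda_{X_f}$.

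\emph{Conclusion and the main difficulty.} Since each $\mu_k$ is supported on the closed set $Y$, $\lambda_{X_f}(Y)\ge\limsup_k\mu_k(Y)=1$, so $Y\supseteq\operatorname{supp}\lambda_{X_f}=X_f$, contradicting $Y\subsetneq X_f$; thus $C$ is bounded and $Y\cap L$ is compact in $L$. The step I expect to be the real obstacle is the equidistribution $\mu_k\to\lambda_{X_f}$: because $\alpha_f$ acts by isometries along the central leaves they carry no entropy and the naive invariance arguments on $C$ alone yield nothing, so the whole burden falls on showing that the density of these leaves together with the arithmetic of $\Theta_f^\circ$ forces large central-leaf pieces to equidistribute --- making the quantitative estimate above, including its uniformity in $k$ and the precise point where total irreducibility (rather than mere irreducibility) is indispensable, the technical core. (One could instead route the contradiction through entropy, invoking Theorem~\ref{t:entropy} so that $\lambda_{X_f}$ is the unique measure of maximal entropy and has full support, whence any invariant measure carried by $Y\subsetneq X_f$ has entropy $<h(\alpha_f)$; but one would still have to show that an unbounded $C$ produces a full-entropy invariant measure on $Y$, so the measure-theoretic route above looks more economical.)
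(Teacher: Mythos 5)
The paper itself gives no proof of this theorem; it is quoted as \cite[Theorem 7.1]{LiS1}, so there is no in-text argument to compare against.

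Your reduction to boundedness of $C=\{w\in W_f^\circ: x+\rho(w)\in Y\}$ is correct (the leaf topology is the transported topology of $W_f^\circ$, $\rho|_{W_f^\circ}$ is continuous, and Heine--Borel does the rest), and the overall plan --- from an unbounded $C$, manufacture an $\alpha_f$-invariant measure on $Y$ forced to equal $\lambda_{X_f}$ --- is a reasonable one. But the step you flag as ``the real obstacle'' is not a deferred technicality; it is essentially the theorem, and the ``Weyl/van der Corput'' framing conceals where the difficulty lives. The joint measure $\nu_k$ on $X_f\times W_f^\circ$ has $W_f^\circ$-marginal the Haar measure of $T_k$, but its $X_f$-marginal is the generic measure $\pi$ of the fixed base point $x\in Y$ --- a measure independent of $k$ about which you assume nothing, and which is supported on the proper invariant set $Y$. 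Writing
\begin{displaymath}
\int\chi\,d\mu_k=\int_{X_f\times W_f^\circ}\chi(a)\,e^{2\pi i\langle\xi_\chi,v\rangle}\,d\nu_k(a,v),
\end{displaymath}
this splits into a product --- so that only $\int_{T_k}e^{2\pi i\langle\xi_\chi,\cdot\rangle}\,d\mathrm{Haar}(T_k)$ matters and an oscillatory decay estimate applies --- precisely when $\nu_k$ is the product joining. That requires disjointness of $(\alpha_f,\pi)$ from the Kronecker system $(\bar\sigma|_{T_k},\mathrm{Haar}(T_k))$, i.e.\ that the Kronecker factor of $(\alpha_f,\pi)$ shares no eigenvalues with $\{\theta^n:\theta\in\Theta_f^\circ,\,n\in\mathbb{Z}\}$. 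But the eigenvalues of $\alpha_f$ along the central direction are exactly the $\theta\in\Theta_f^\circ$, so there is every reason to expect a measure carried by a proper invariant subset to have precisely such eigenvalues and to join with $T_k$ along a diagonal; in that case $\int\chi\,d\mu_k$ need not tend to $0$ and your estimate never enters. Your invocation of total irreducibility to ``rule out degenerate multiplicative coincidences'' names this obstruction without removing it; making it precise is a disjointness/measure-rigidity argument of the kind that occupies most of \cite{LiS1} (and which Theorems \ref{t:LiS1} and \ref{c:LiS1} of the present survey are reporting), not a uniformity-in-$k$ oscillatory estimate. As written, the proof has a genuine gap that is not independent of the statement being proved.
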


Turning to $\alpha _f$-invariant probability measures on $X_f$, we note that the Haar measure $\lambda _{X_f}$ on $X_f$ is obviously invariant under $\alpha _f$, and that it is the unique $\alpha _f$-invariant measure of maximal entropy $h(\alpha _f) = \int_0^1\log|f(e^{2\pi is})|\,ds $ (cf. Theorem \ref{t:entropy}). What about other nonatomic and ergodic $\alpha _f$-invariant probability measures on $X_f$? If $\alpha _f$ is expansive, the existence of Markov partitions makes it easy to construct such measures, but if $\alpha _f$ is nonexpansive such measures are not so easy to come by. One reason for this is explained by the following theorem.

	\begin{theo}[\protect{\cite[Thoerem 5.1]{LiS1}}]
	\label{t:LiS1}
Suppose that $\alpha _f$ is nonexpansive and totally irreducible, and that $\mu $ is an $\alpha _f$-invariant probability measure on $X_f$ which is singular with respect to $\lambda _{X_f}$. Then the conditional measure $\rho _x$ on the central leaf $x+X_f^\circ$ through $x$ is finite for almost every $x\in X_f$.
	\end{theo}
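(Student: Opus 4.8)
The idea is to study $\mu $ through its conditional measures along the central foliation $\mathcal{F}=\{x+X_f^\circ :x\in X_f\}$ of $X_f$ and to exploit that $\alpha _f$ acts \emph{isometrically} on each central leaf (cf.\ \eqref{eq:X0} and the surrounding discussion; recall $X_f^\circ \cong W_f^\circ \cong \mathbb{R}^d$ with $d=\dim W_f^\circ $, and that $\overline{X_f^\circ }=X_f$ by irreducibility). First one reduces to the case in which $\mu $ is $\alpha _f$-ergodic: since $\lambda _{X_f}$ is ergodic (Theorem \ref{t:entropy}) and any two ergodic invariant measures are equal or mutually singular, $\mu \perp \lambda _{X_f}$ forces $\mu $-a.e.\ ergodic component to be again singular to $\lambda _{X_f}$, the conditional measures of $\mu $ and of its components agree $\mu $-a.e., so it suffices to treat the ergodic case; thus assume $\mu $ ergodic and $\mu \ne \lambda _{X_f}$. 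Next one sets up, for $\mu $-a.e.\ $x$, the conditional (leafwise) measure $\rho _x$ on the leaf $x+X_f^\circ $. Because the leaves are dense the leaf partition is not measurable, so this has to be done in the manner of Einsiedler--Lindenstrauss leafwise measures for the $W_f^\circ $-translation action on $(X_f,\mu )$, working relative to a measurable cross-section / flow-box decomposition: the $\rho _x$ are locally finite Borel measures on the leaf, defined up to a scalar, which we fix by normalising $\rho _x(B(x,1))=1$, where $B(x,R)$ denotes the $R$-ball in the leaf metric transported from the $\ell ^\infty $-norm on $W_f^\circ $; they are coherent along leaves, i.e.\ $\rho _{x+w}$ is the translate of $\rho _x$ by $-w$ for $w\in X_f^\circ $. (If the $\rho _x$ are trivial a.e.\ the conclusion is immediate, so assume them nontrivial, whence $0<\rho _x(B(x,1))<\infty $ as the base point lies in the support.)

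The crucial point is $\alpha _f$-equivariance. The shift $\bar{\sigma }$ restricts to an $\ell ^\infty $-isometry of $W_f^\circ $ which is a group automorphism all of whose eigenvalues are the roots $\theta \in \Theta _f^\circ $, of modulus $1$ (cf.\ \eqref{eq:womega}), hence of determinant of absolute value $1$; so $\alpha _f$ carries the leaf through $x$ onto the leaf through $\alpha _fx$ by an isometry that maps $B(x,R)$ onto $B(\alpha _fx,R)$ for every $R$ and preserves leaf Lebesgue measure. Since $\mu $ is $\alpha _f$-invariant and $\mathcal{F}$ is $\alpha _f$-invariant, the equivariance of leafwise measures under $\mu $-preserving transformations normalising the $W_f^\circ $-action yields $(\alpha _f)_*\rho _x=\rho _{\alpha _fx}$ for $\mu $-a.e.\ $x$, with no scalar ambiguity since both sides assign mass $1$ to $B(\alpha _fx,1)$. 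Consequently, for each $R>0$ the function $g_R(x)=\rho _x(B(x,R))$ satisfies $g_R(\alpha _fx)=(\alpha _f)_*\rho _x(B(\alpha _fx,R))=\rho _x(\alpha _f^{-1}B(\alpha _fx,R))=\rho _x(B(x,R))=g_R(x)$, so $g_R$ is $\alpha _f$-invariant and, by ergodicity, $\mu $-a.e.\ equal to a constant $c_R$ (with $c_1=1$, and $c_R$ nondecreasing in $R$). If the conditional measures were infinite on a set of positive measure, ergodicity would make them infinite a.e., so $c_R\to \infty $ as $R\to \infty $. Feeding $g_R\equiv c_R$ back into the disintegration and applying it at a second leaf point $y$ (where the conditional is $\rho _x/\rho _x(B(y,1))$) shows that for $\mu $-a.e.\ $x$ one has $\rho _x(B(y,R))=c_R\cdot \rho _x(B(y,1))$ for leaf-a.e.\ $y$; the remaining point is to prove that $\rho _x(B(y,1))$ is leafwise a.e.\ constant in $y$, after which $\rho _x$ assigns the same mass to all balls of a given radius and is therefore a multiple of the Haar (Lebesgue) measure of the leaf.

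But the conditional measures on the $W_f^\circ $-orbits being Haar is exactly the statement that $\mu $ is invariant under the $X_f^\circ $-translation action; since $X_f^\circ $ is dense in $X_f$, the only translation-invariant probability measure on $X_f$ is $\lambda _{X_f}$, contradicting $\mu \ne \lambda _{X_f}$. Hence $\{x:\rho _x(x+X_f^\circ )=\infty \}$ is $\mu $-null, which is the assertion. I expect two steps to carry the real weight: the foundational construction of the leafwise measures $\rho _x$ for a \emph{dense, non-closed} foliation together with the verification of their leaf-coherence and $\alpha _f$-equivariance (there is no honest quotient by $\mathcal{F}$, so everything is argued through measurable flow boxes), and the step promoting ``$g_R$ constant'' to genuine translation invariance of $\rho _x$ leafwise a.e., which requires controlling the normalising factors $\rho _x(B(y,1))$ along a leaf. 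Total irreducibility of $\alpha _f$ is what permits running these ergodic arguments at the leaf level, by excluding proper closed $\alpha _f^k$-invariant subgroups for every $k$ and hence any nontrivial finite invariant structure of the $\mathbb{Z}$-action permuting the central leaves; this is the same hypothesis as in the topological companion, Theorem \ref{t:LiS1-top}. For the full argument see \cite[Theorem~5.1]{LiS1}.
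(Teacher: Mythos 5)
The paper states this result as a citation to \cite{LiS1} and contains no proof, so your argument has to stand on its own. The overall scheme is reasonable: Einsiedler--Lindenstrauss leafwise measures along the $W_f^\circ$-translation foliation, equivariance $(\alpha_f)_*\rho_x=\rho_{\alpha_fx}$ coming from the isometric central action (the normalisation $\rho_x(B(x,1))=1$ is consistent because $\alpha_f$ maps unit balls to unit balls), reduction to the ergodic case, and the expectation that unbounded leafwise mass should force $\mu$ to be invariant under the dense subgroup $X_f^\circ$ and hence equal to $\lambda_{X_f}$.

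However, the load-bearing step --- passing from ``$g_R\equiv c_R$ with $c_R\to\infty$'' to ``$\rho_x$ is Haar on the leaf'' --- is a real gap, and the patch you propose does not close it. Even granting the genericity statement needed to transfer ``$\mu$-a.e.\ $x$'' to ``$\rho_x$-a.e.\ $y$ on a fixed leaf'' (itself nontrivial, since each leaf is $\mu$-null and the coherence relation by itself is not enough), a locally finite measure $\rho$ on $W_f^\circ$ with $\rho(B(y,R))=c_R\,\rho(B(y,1))$ for every $y$ in its support and every $R$ need not be a multiple of Lebesgue measure: for the $\ell^\infty$-balls you use, the counting measure on a full-rank lattice has exactly this property with $c_R\to\infty$. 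So the ``remaining point'' you flag --- showing $\rho_x(B(y,1))$ is leafwise constant --- would not finish the job even if proved. The feature that must be brought to bear, and that your sketch never actually invokes, is the irrationality of $\alpha_f|_{W_f^\circ}$: the central leaf dynamics is a rotation by a noncyclotomic $\theta\in\Theta_f^\circ$ of infinite order, which is the entire dynamical content of nonhyperbolicity; in your argument you use only that $\alpha_f|_{W_f^\circ}$ has $|\det|=1$. Any correct proof has to combine this irrationality with the unboundedness of $c_R$ and recurrence of the leafwise measures to exclude discrete or otherwise non-Haar $\alpha_f$-equivariant limits, and that is precisely where the real work in \cite[Theorem~5.1]{LiS1} lies. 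Likewise, total irreducibility is invoked by you only as a permit for ergodic arguments, but it must enter substantively somewhere in the missing step: Remark~\ref{r:LiS1} shows the conclusion is simply false without it.
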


	\begin{rema}
	\label{r:LiS1}
It is clear why we have to assume total irreducibility of $\alpha _f$: otherwise there exists an infinite closed subgroup $Y\subsetneq X_f$ whose orbit under $\alpha _f$ is finite, and by averaging the normalized Haar measure of $Y$ over the orbit of $Y$ we obtain an $\alpha _f$-invariant probability measure $\nu $ on $X_f$ which violates the conclusions of Theorem \ref{t:LiS1}.
	\end{rema}

	\begin{defi}[\protect{\cite[Definitions 1.1 and 1.2]{LiS1}}]
	\label{d:LiS1}
(1) An $\alpha _f$-invariant probability measure $\mu $ on $X_f$ is \emph{virtually hyperbolic} if there exists an $\alpha _f$-invariant Borel set $B\subset X_f$ with $\mu (B)=1$ which intersects every central leaf $x+X_f^\circ$ of $\alpha _f$ in $X_f$ in at most one point, i.e. with $B\cap(x+B)=\varnothing $ for every $x\in X^\circ$.

(2) Two $\alpha _f$-invariant probability measures $\mu_1,\mu_2$ on $X_f$ are \emph{centrally equivalent} if they have an invariant joining $\nu $ (i.e. an $(\alpha _f\times\alpha _f)$-invariant measure $\nu $ on $X_f\times X_f$ with marginals $\mu_1$ and $\mu_2$) so that, for $\nu\textsl{-a.e.}\;(x,y)\in X_f\times X_f$, $x$ and $y$ lie on the same central leaf. In other words,
	$$
x-y\in X_f^\circ\enspace \textup{for}\enspace \nu \textsl{-a.e.}\;(x,y)\in X_f\times X_f.
	$$
It is not difficult to check that centrally equivalent measures have the same entropy (Proposition \ref{p:central}).
	\end{defi}

	\begin{theo}[\protect{\cite[Theorem 1.3]{LiS1}}]
	\label{c:LiS1}
\textup{(1)} Suppose that $\alpha _f$ is nonexpansive and totally irreducible, and that $\mu $ is an $\alpha _f$-invariant and weakly mixing probability measure on $X_f$ which is singular with respect to $\lambda _{X_f}$. Then $\mu $ is virtually hyperbolic.

\textup{(2)} If an $\alpha _f$-invariant probability measure $\mu \ne \lambda _{X_f}$ is ergodic, but not necessarily weakly mixing, the it is centrally equivalent to a weakly mixing $\alpha _f$-invariant probability measure $\mu '$ on $X_f$. We write, for every $x\in X_f^\circ$, $m_x$ for the unique $\alpha _f$-invariant probability measure on $X_f^\circ$ --- and hence on $X_f$ --- concentrated on the compact orbit closure $\overline{\{\alpha _f^nx:n\in \mathbb{Z}\}}$ of $x$ under $\alpha _f$. Then $\mu $ is an ergodic component of $\mu ' * m_{x_0}$ for some $x_0\in X_f^\circ$.
	\end{theo}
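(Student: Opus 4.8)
The plan is to study $\mu$ via its disintegration over the partition of $X_f$ into central leaves $x+X_f^\circ$. Writing $\mu=\int\bar\rho_x\,d\mu(x)$ with $\bar\rho_x$ the normalized conditional measure on the leaf through $x$, I would first observe that $\mu\perp\lambda_{X_f}$ (distinct ergodic measures), so Theorem~\ref{t:LiS1} makes the $\bar\rho_x$ genuine probability measures for $\mu$-a.e.\ $x$ (in part (1) this is part of the hypothesis), and Theorem~\ref{t:LiS1-top}, applied to $\operatorname{supp}\mu$, should in addition confine them to compact subsets of the leaves. Using the identification $X_f^\circ\cong W_f^\circ\cong\mathbb R^d$ on which $\alpha_f$ acts by the orthogonal map $R:=\alpha_f|_{X_f^\circ}$, I would translate the barycenter $b(x)$ of $\bar\rho_x$ to the origin, obtaining a measurable, leaf-measurable, centered family $x\mapsto\nu_x$ of probability measures on $\mathbb R^d$ with $\nu_{\alpha_f x}=R_*\nu_x$. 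Its second moment is $\alpha_f$-invariant (as $R$ is a Euclidean isometry), hence $\mu$-a.e.\ constant, so the family is uniformly tight and $x\mapsto\nu_x$ is a measure-theoretic factor of $(X_f,\mu,\alpha_f)$ onto a subsystem of an \emph{isometric} system $(\mathcal K,R_*)$ --- $\mathcal K$ a weak$^*$-compact set of centered measures carrying an $R_*$-invariant quadratic Wasserstein metric; thus the \emph{shape} of the conditionals is a discrete-spectrum factor of $\mu$. Finally I would record that the torus $\mathbb T:=\overline{\{R^n:n\in\mathbb Z\}}\le O(d)$ is nontrivial (some $\theta\in\Theta_f^\circ$ is not a root of unity, $f$ being noncyclotomic) and fixes no nonzero vector (since $1\notin\Theta_f^\circ$).

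For part (1): weak mixing forbids nontrivial discrete-spectrum factors, so $\nu_x\equiv\nu_0$ is constant and $\nu_{\alpha_f x}=R_*\nu_x$ forces $R_*\nu_0=\nu_0$, i.e.\ $\nu_0$ is $\mathbb T$-invariant. Disintegrating $\nu_0$ over $\mathbb R^d/\mathbb T$ into Haar measures on $\mathbb T$-orbits is an $R$-invariant decomposition (each orbit is fixed by $R\in\mathbb T$) that exhibits $\mu$ as an integral of $\alpha_f$-invariant measures; ergodicity collapses it, so $\nu_0$ is the Haar measure of a single orbit $\mathbb T q_0$. If $q_0\ne 0$ I would write $\mu$-a.e.\ $x$ as $b(x)+\tau(x)q_0$ with $\tau(x)\in\mathbb T/\mathrm{Stab}_{\mathbb T}(q_0)$ measurable and $\tau(\alpha_f x)=\bar R\,\tau(x)$, where $\bar R$ generates a dense subgroup of this nontrivial group, and pick a nontrivial character $\chi$: then $\chi\circ\tau$ is a nonconstant $L^2$-eigenfunction of $\alpha_f$ with eigenvalue $\chi(\bar R)\ne 1$, contradicting weak mixing. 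Hence $q_0=0$, so $\bar\rho_x=\delta_x$ for $\mu$-a.e.\ $x$, and the $\alpha_f$-invariant Borel set $B=\{x:\bar\rho_x=\delta_x\}$ meets every central leaf at most once; $\mu$ is virtually hyperbolic.

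For part (2): the setup above survives except for the collapse of the shape factor, so $x\mapsto\nu_x$ factors onto a monothetic group rotation. The structural claim I would need is that an ergodic $\mu$ has conditionals as homogeneous as the central direction allows --- there is a single $\mathbb T$-orbit $O\subset X_f^\circ$ (possibly $\{0\}$) with $\bar\rho_x=$ Haar on $b(x)+O$ for $\mu$-a.e.\ $x$; equivalently, the maximal Kronecker factor of $\mu$ is measurably conjugate to $\bigl(\overline{\{\alpha_f^n x_0\}},\alpha_f\bigr)$ for any $x_0\in O$. Granting it, I would take $m_{x_0}=$ Haar on $O=\overline{\{\alpha_f^n x_0\}}$ and $\mu':=b_*\mu$, the image of $\mu$ under the barycenter map onto the section $B':=b(X_f)$. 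Then $\mu'$ is $\alpha_f$-invariant and virtually hyperbolic; $(\mathrm{id},b)_*\mu$ is an invariant joining concentrated on $\{x-y\in X_f^\circ\}$, so $\mu$ and $\mu'$ are centrally equivalent (and have equal entropy by Proposition~\ref{p:central}); and writing $\mu$-a.e.\ $x$ as $b(x)+u(x)$ with $u(x)\in O$ Haar-distributed conditionally on the leaf and independent of $b(x)$, I would read off $\mu=\mu'*m_{x_0}$, of which $\mu$ is trivially an ergodic component. Finally $\mu'\ne\lambda_{X_f}$ (else $\mu'*m_{x_0}=\lambda_{X_f}=\mu$), and $\mu'$ is weakly mixing: the structural claim applied to the ergodic measure $\mu'$, whose conditionals are Dirac, forces the associated orbit to be $\{0\}$, which by part (1)'s analysis leaves no room for eigenvalues.

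The hard part will be this structural claim of part (2): that the central-leaf conditionals of an ergodic measure are Haar on a single $\mathbb T$-orbit --- equivalently that its Kronecker factor is carried by the central direction. This lies well beyond the soft disintegration-and-tightness reasoning above, and is where I would have to invoke the algebraic rigidity of $\alpha_f$: Theorem~\ref{t:LiS1-top} (the conditional supports, being pieces of invariant sets, are compact along central leaves --- which is also the technical point needed to make barycenters and moments legitimate) together with Theorem~\ref{t:entropy} (uniqueness of the measure of maximal entropy, which should prevent the conditionals from spreading beyond a single orbit), combined with the relative structure theory of the extension of $\mu$ over its maximal Kronecker factor. Everything else --- the central equivalence, the identity $\mu=\mu'*m_{x_0}$, and the convolution bookkeeping on $X_f$ --- is routine once this is in place.
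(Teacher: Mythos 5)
The paper does not reprove this result --- it quotes it from \cite{LiS1} --- so the comparison is to the approach one would expect the cited source to take; I'll concentrate on the internal soundness of what you propose.

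Your treatment of part (1) is a reasonable route and is in the spirit of \cite{LiS1}: finite leaf conditionals via Theorem~\ref{t:LiS1}, barycenter normalization, the shape family $x\mapsto\nu_x$ as an equivariant map into an isometric (hence discrete-spectrum) system, triviality of that factor under weak mixing, and finally elimination of a nontrivial orbit $\mathbb T q_0$ by producing a nonconstant $L^2$-eigenfunction. The small verifications (constancy of the second moment, $\chi(\bar R)\ne1$ from density of $\langle\bar R\rangle$, $\chi\circ\tau$ nonconstant since $R$ fixes no nonzero vector) all hold up.

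Part (2) has a genuine gap, and it is larger than you acknowledge. Your ``structural claim'' contains two assertions joined by ``equivalently'' that are \emph{not} equivalent, and the proof of ``$\mu'$ is weakly mixing'' needs the stronger one. That ``$\bar\rho_x$ is Haar on $b(x)+O$'' is a statement purely about the leaf conditionals, whereas ``the maximal Kronecker factor of $\mu$ is conjugate to the rotation on $\overline{\{\alpha_f^nx_0\}}$'' is much stronger: the Kronecker factor can have components invisible to the central-leaf disintegration. Your argument that $\mu'$ is weakly mixing is exactly an invocation of the strong version ($O'=\{0\}\Rightarrow$ trivial Kronecker factor), i.e.\ the converse of what part (1) proves, not part (1) itself. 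And the strong version is false in general: take $\mu$ to be the uniform measure on a period-$k$ orbit, $k>1$ (such orbits exist since $|P_k(\alpha_f)|>|P_1(\alpha_f)|$ for suitable $k$). Because $f$ is noncyclotomic, $R^k$ has no eigenvalue $1$, so the $k$ orbit points lie on $k$ pairwise distinct central leaves; hence the leaf conditionals are Dirac, $b=\mathrm{id}$, and $O=\{0\}$, yet the Kronecker factor of $\mu$ is $\mathbb Z/k\mathbb Z\ne\{0\}$. In this example your construction gives $\mu'=b_*\mu=\mu$, which is not weakly mixing, so the conclusion of your part (2) fails. (This also shows the statement as transcribed in the survey needs a hypothesis ruling out such atomic measures, or a reading I am missing: any measure centrally equivalent to such a $\mu$ must split its mass $1/k$ over $k$ leaves cyclically permuted by $\alpha_f$ and so has a $\mathbb Z/k\mathbb Z$-factor.) Either way, the disintegration-and-barycenter apparatus you set up cannot by itself yield the weak mixing of $\mu'$; the content of the theorem in \cite{LiS1} is precisely the structure theory over the \emph{full} Kronecker factor, which your sketch does not reach.
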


Recently A. Quas and T. Soo proved a general result about the collection of all invariant probability measures of toral automorphisms which is all the more remarkable in view of Theorem \ref{t:LiS1} and Corollary \ref{c:LiS1}.

	\begin{theo}[\protect{\cite[Theorem 2]{quas}}]
	\label{t:quas}
Let $T$ be a homeomorphism of a compact metrizable space $Y$, and let $\mu $ be a $T$-invariant and ergodic probability measure on $Y$ with entropy $< h(\alpha _f)$. Then there exists a $(T,\alpha _f)$-equivariant Borel map $\phi \colon Y\longrightarrow X_f$ which is injective on a set $Y'\subset Y$ of full $\mu $-measure, and which sends $\mu $ to a fully supported $\alpha _f$-invariant probability measure $\nu =\phi _*\mu $ on $X_f$.

In other words we can find, for every ergodic system $(Y,T,\mu )$ whose entropy is less than that of $\alpha _f$, an $\alpha _f$-invariant and fully supported probability measure $\nu $ on $X_f$ such that $(X,T,\mu )$ and $(X_f,\alpha _f,\nu )$ are measurably conjugate.
	\end{theo}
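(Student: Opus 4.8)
The plan is to obtain Theorem~\ref{t:quas} from the \emph{ergodic universality} machinery of Quas and Soo \cite{quas}: the result is essentially \cite[Theorem~2]{quas}, and what one must verify is that $\alpha_f$ is a topological dynamical system of the kind to which that machinery applies. Concretely, the ingredients needed are (i)~a strong specification property, (ii)~asymptotic $h$-expansiveness, and (iii)~uniqueness of the measure of maximal entropy; for $\alpha_f$ these are supplied, respectively, by Corollary~\ref{c:specification}, by Footnote~\ref{entexp}, and by Theorem~\ref{t:entropy}. Granting this, the heart of the matter is the construction of the equivariant Borel map $\phi$, which I would carry out as a marker--filler argument along a sequence of Rokhlin towers of growing height.

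First I would reduce to the symbolic setting: by Krieger's finite generator theorem we may take $(Y,T,\mu)$ to be a subshift over a finite alphabet with an ergodic shift-invariant measure, and write $h\coloneqq h_\mu(T)<h(\alpha_f)$; fix a metric $d$ on $X_f$. The map $\phi$ will be a $\mu$-a.e.\ limit $\phi=\lim_k\phi_k$ of Borel maps $\phi_k\colon Y\to X_f$, built along rapidly increasing block-lengths $n_k\to\infty$ and tolerances $\varepsilon_k\downarrow0$. At stage~$k$: by Shannon--McMillan--Breiman choose a set $\mathsf{W}_k$ of $\mu$-typical $n_k$-blocks of almost full measure with $\lvert\mathsf{W}_k\rvert\le e^{n_k(h+\delta_k)}$, $\delta_k\downarrow0$; since, by the definition of topological entropy together with Theorem~\ref{t:entropy}, $\alpha_f$ admits of order $e^{n_k h(\alpha_f)}$ pairwise $\varepsilon_k$-separated orbit segments of length $n_k$ and $h<h(\alpha_f)$, fix an injection $w\mapsto\beta_k(w)$ from $\mathsf{W}_k$ into such segments, with exponentially many segments left over, and reserve from the leftovers a fixed ``marker'' block. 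Now take a Rokhlin tower of height $n_k$ over a base $B_k$ with $\mu(B_k)<1/n_k$, cut its columns along the $n_k$-names so each column carries a label $w\in\mathsf{W}_k$, and let $\phi_k(y)$ be a point --- chosen measurably in $y$ --- whose $\alpha_f$-orbit shadows, within $\varepsilon_k$, the concatenation along the orbit of $y$ of the segments $\beta_k(w)$, slightly shortened and interspersed with markers and with gaps of length $N(\varepsilon_k)$; the coded time-intervals form a disjoint family in $\mathbb{Z}$ with the spacing required by Corollary~\ref{c:specification}(1), which produces such a point, while part~(2) of that corollary handles the thin exceptional set $\bigcup_jT^{-j}B_k$. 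Taking $n_{k+1}$ astronomically large compared with $n_k$ makes the $\phi_k$ stabilise on ever longer orbit stretches, so that $\phi$ is a well-defined $(T,\alpha_f)$-equivariant Borel map; put $\nu=\phi_*\mu$.

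It then remains to establish injectivity and full support. For injectivity I would argue that the stage-$k$ markers let one read off, from $\phi(y)$ alone, the location of the stage-$k$ tower over the orbit of $y$, hence the $n_k$-names along that orbit; letting $k\to\infty$ and using that the chosen generating partition of $(Y,T,\mu)$ is thereby recovered, the induced Borel inverse is defined on a $\mu$-conull set $Y'$ and $\phi|_{Y'}$ is injective, so that $(Y,T,\mu)\cong(X_f,\alpha_f,\nu)$ as measure-preserving systems. Asymptotic $h$-expansiveness enters here to guarantee that the entropy devoted to markers and gaps is asymptotically negligible, so that the decoding is consistent off a null set and $h_\nu(\alpha_f)=h$. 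For full support I would, at each stage, let the filler segments in the gaps range over a fixed $\varepsilon_k$-dense finite subset of orbit segments of $X_f$; then every nonempty open subset of $X_f$ receives positive $\nu$-mass. One checks in passing that this is consistent with Corollary~\ref{c:LiS1}: the $\nu$ so produced is generically singular with respect to $\lambda_{X_f}$, and virtually hyperbolic when $\mu$ is weakly mixing.

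The step I expect to be the main obstacle is the \emph{simultaneous} control of two features of the limit: that $\phi$ really is equivariant as an honest Borel map, and that the code is uniquely decodable $\mu$-a.e. One must interleave the tower structures at successive scales so that stage-$k$ markers are never overwritten at later stages, so that the decoding map is measurable, and so that the entropy bookkeeping closes up with room to spare; this is exactly the place where asymptotic $h$-expansiveness of $\alpha_f$ (Footnote~\ref{entexp}) together with uniqueness of $\lambda_{X_f}$ as the measure of maximal entropy (Theorem~\ref{t:entropy}) is used. The full details of this bookkeeping are carried out in \cite{quas}, whose hypotheses $\alpha_f$ satisfies by Corollary~\ref{c:specification} and Footnote~\ref{entexp}.
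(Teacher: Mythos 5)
Your overall reading is right: the paper does not give an independent proof here, but simply cites \cite[Theorem 2]{quas} and records which properties of $\alpha_f$ make that universality theorem applicable; your Rokhlin-tower marker--filler sketch is a reasonable summary of what the Quas--Soo machinery does. However, you misidentify one of the three hypotheses. The paper states that the properties used are \emph{entropy expansiveness} (asymptotic $h$-expansiveness, Footnote~\ref{entexp}), the \emph{small boundary property} (existence of arbitrarily fine Borel partitions of $X_f$ whose boundaries are null for every $\alpha_f$-invariant measure), and \emph{almost weak specification} (Marcus \cite{marcus}; supplied here even more strongly by Corollary~\ref{c:specification}). You have the first and third, but you replace the small boundary property with ``uniqueness of $\lambda_{X_f}$ as the measure of maximal entropy.'' That is a genuine gap: uniqueness of the MME is not a hypothesis of the Quas--Soo theorem and plays no role in the construction (the target measure $\nu$ has entropy strictly below $h(\alpha_f)$, so the MME is irrelevant), whereas the small boundary property is essential precisely at the step you flag as the ``main obstacle'' --- it is what lets one build the towers and the decoding partitions so that the coded intervals are measurable and the decoding is $\nu$-a.e.\ unambiguous, hence $\phi$ is injective off a null set. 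Without it your bookkeeping does not close.

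A second, smaller point: Corollary~\ref{c:specification} is stated in the expansive setting of Section~\ref{ss:covers}, whereas Theorem~\ref{t:quas} is invoked in the paper in the nonexpansive case; the specification property the paper actually appeals to is the \emph{almost weak specification} of \cite{marcus}, valid for all ergodic toral automorphisms. Your appeal to Corollary~\ref{c:specification} would need this caveat to cover the nonhyperbolic case that is the point of the section.
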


The proof of Theorem \ref{t:quas} only uses certain general properties of $\alpha _f$: \textit{entropy expansiveness} (cf. Footnote \ref{entexp} \vpageref{entexp}), \textit{the small boundary property} (i.e., existence of arbitrarily small partitions of the space into Borel sets whose boundaries have measure zero w.r.t. every $\alpha _f$-invariant probability measure on $X_f$), and \textit{almost weak specification} (a specification property proved in \cite{marcus} to hold for ergodic toral automorphisms).

The remainder of this section is devoted to understanding the connection between $\alpha _f$-invariant measures on $X_f$ and shift-invariant probability measures on $\bar{Z}_f$. We start with some auxiliary results.

	\begin{prop}
	\label{p:product}
Let $Y_1,Y_2$ be compact metrizable spaces, and let $\tau _1,\tau _2$ be continuous $\mathbb{Z}^d$-actions on $Y_1$ and $Y_2$ such that the topological entropy $h(\tau _2)$ of $\tau _2$ is equal to zero. We write $\pi _i\colon Y_1\times Y_2\longrightarrow Y_i$ for the two coordinate projections. If $\mu $ is a $\tau _1\times \tau _2$-invariant probability measure on $Y_1\times Y_2$ we set $\mu _i=(\pi _i)_*(\mu )$. Then $h_\mu (\tau _1\times \tau _2)=h_{\mu _1}(\tau _1)$.
	\end{prop}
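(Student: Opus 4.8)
The plan is to prove the entropy formula $h_\mu(\tau_1\times\tau_2)=h_{\mu_1}(\tau_1)$ by sandwiching. The inequality $h_\mu(\tau_1\times\tau_2)\ge h_{\mu_1}(\tau_1)$ is automatic: the coordinate projection $\pi_1\colon(Y_1\times Y_2,\tau_1\times\tau_2,\mu)\longrightarrow(Y_1,\tau_1,\mu_1)$ is a measure-preserving factor map, and entropy cannot increase under factor maps. So the real content is the reverse inequality $h_\mu(\tau_1\times\tau_2)\le h_{\mu_1}(\tau_1)$, which should follow from the hypothesis $h(\tau_2)=0$ together with the variational principle.

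The first step I would carry out is to invoke the Abramov--Rokhlin (addition) formula for entropy of a skew product, or more simply the fact that relative entropy over a factor is additive: for the factor $\pi_1$ we have
	\begin{equation*}
h_\mu(\tau_1\times\tau_2)=h_{\mu_1}(\tau_1)+h_\mu(\tau_1\times\tau_2\mid \mathcal{B}_1),
	\end{equation*}
where $\mathcal{B}_1=\pi_1^{-1}(\mathcal{B}(Y_1))$ is the $\sigma$-algebra pulled back from the first coordinate and $h_\mu(\,\cdot\mid\mathcal{B}_1)$ denotes conditional (relative) entropy. Thus it suffices to show that the relative entropy term vanishes. For this I would bound the relative entropy over $\mathcal{B}_1$ by the entropy of the second factor: since $\mathcal{B}_1$ contains all information about the $Y_1$-coordinate, conditioning on it leaves only the dynamics in the $Y_2$-direction, and a standard estimate gives $h_\mu(\tau_1\times\tau_2\mid\mathcal{B}_1)\le h_{\mu_2}(\tau_2)$. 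Here $\mu_2=(\pi_2)_*\mu$ is $\tau_2$-invariant, so by the variational principle $h_{\mu_2}(\tau_2)\le h(\tau_2)=0$, and hence the relative entropy term is zero, completing the proof.

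The step I expect to be the main (if modest) obstacle is justifying the inequality $h_\mu(\tau_1\times\tau_2\mid\mathcal{B}_1)\le h_{\mu_2}(\tau_2)$ cleanly. The clean way is: approximate by finite partitions $P$ of $Y_1\times Y_2$ of the form $P=Q_2$ pulled back from $Y_2$ (partitions of the second coordinate are cofinal among all partitions modulo $\mathcal{B}_1$, up to refinement by $\mathcal{B}_1$-measurable sets which contribute nothing to conditional entropy), observe that $h_\mu(\tau_1\times\tau_2,\pi_2^{-1}Q_2\mid\mathcal{B}_1)\le h_\mu(\tau_1\times\tau_2,\pi_2^{-1}Q_2)$, and that the latter, being computed from a partition depending only on the $Y_2$-coordinate whose iterates under $\tau_1\times\tau_2$ project to iterates of $Q_2$ under $\tau_2$, equals $h_{\mu_2}(\tau_2,Q_2)\le h_{\mu_2}(\tau_2)$. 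Taking the supremum over $Q_2$ and using that $\mathcal{B}_1\vee\{\pi_2^{-1}Q_2:Q_2\}$ generates the full $\sigma$-algebra gives the desired bound. Everything else — invariance of the marginals, the variational principle for $\mathbb{Z}^d$-actions, and the Abramov--Rokhlin decomposition — is standard and can be cited; only the bookkeeping with partitions needs a line or two of care.
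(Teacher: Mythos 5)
Your proposal is correct and the underlying idea is the same as the paper's: decompose $h_\mu(\tau_1\times\tau_2)$ as $h_{\mu_1}(\tau_1)$ plus a residual term, bound the residual by $h_{\mu_2}(\tau_2)$, and then use the variational principle together with $h(\tau_2)=0$ to kill it. The difference is mostly one of packaging. You invoke the Abramov--Rokhlin relative-entropy identity $h_\mu(\tau_1\times\tau_2)=h_{\mu_1}(\tau_1)+h_\mu(\tau_1\times\tau_2\mid\mathcal{B}_1)$ as a black box and then reduce the conditional term to $h_{\mu_2}(\tau_2)$ by restricting attention to partitions pulled back from $Y_2$. The paper does the same decomposition but directly at the level of finite partitions: for finite $\mathcal{P}$ of $Y_1$ and $\mathcal{Q}$ of $Y_2$ it writes
\begin{equation*}
h_\mu\bigl(\tau_1\times\tau_2,\tilde{\mathcal{P}}\vee\tilde{\mathcal{Q}}\bigr)
= h_\mu\bigl(\tau_1\times\tau_2,\tilde{\mathcal{P}}\bigr)
+\limsup_{n\to\infty}\tfrac1n H_\mu\bigl(\tilde{\mathcal{Q}}^{(n)}\bigm|\tilde{\mathcal{P}}^{(n)}\bigr),
\end{equation*}
drops the conditioning to bound the second term by $H_{\mu_2}(\mathcal{Q}^{(n)})$, and then applies the variational principle, avoiding a named theorem at the cost of a short inline computation. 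Both arguments are sound. One small remark: since these are $\mathbb{Z}^d$-actions, the Abramov--Rokhlin decomposition, the cofinality claim for pulled-back partitions, and the $n$-fold joins should all be read in the Følner-averaged sense, but this is standard and does not affect the argument.
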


	\begin{proof}
Let $\mathcal{P}$ and $\mathcal{Q}$ be finite Borel partitions of $Y_1$ and $Y_2$, respectively, and set $\tilde{\mathcal{P}}=\{\tilde{P}=P \times Y_2:P\in \mathcal{P}\}$ and $\tilde{\mathcal{Q}}=\{\tilde{Q}=Y_1\times Q:Q\in \mathcal{Q}\}$. Then
	\begin{align*}
h_\mu (\tau _1\negthinspace \times \negthinspace \tau _2,\tilde{P}\vee \tilde{Q}) &= h_\mu (\tau _1\negthinspace \times \negthinspace \tau _2,\tilde{\mathcal{P}})
	\\
&+ \limsup_{n\to\infty }\frac1n H_\mu \Bigl(\bigvee\nolimits_{k=0}^{n-1} (\tau _1\negthinspace \times \negthinspace \tau _2)^{-k}\bigl(\tilde{\mathcal{Q}})\Bigm| \bigvee\nolimits_{k=0}^{n-1}(\tau _1\negthinspace \times \negthinspace \tau _2)^{-k}(\tilde{\mathcal{P}})\Bigr)
	\\
&\le h_{\mu _1}(\tau _1,\mathcal{P}) + \limsup_{n\to\infty }\tfrac1n H_{\mu _2}\Bigl(\bigvee\nolimits_{k=0}^{n-1} \tau _2^{-k}\bigl(Q)\Bigr)
	\\
&= h_{\mu _1}(\tau _1,\mathcal{P}) + h_{\mu _2}(\tau _2,\mathcal{Q}) = h_{\mu _1}(\tau _1,\mathcal{P})
	\end{align*}
by the variational principle \cite{Misiurewicz}. By varying $\mathcal{P}$ and $\mathcal{Q}$ we obtain that $h_\mu (\tau _1 \times \tau _2)\le h_{\mu _1}(\tau _1)$. The reverse inequality $h_{\mu _1}(\tau _1)\le h_\mu (\tau _1\times \tau _2)$ is obvious.
	\end{proof}

	\begin{coro}
	\label{c:product}
Let $\mu $ be an $\alpha _f\times \bar{\sigma }$-invariant probability measure on $X_f\times W_f^\circ$. If $\pi _1\colon X_f\times W_f^\circ\longrightarrow X_f$ and $\pi _2\colon X_f\times W_f^\circ\longrightarrow W_f^\circ$ are the coordinate projections and $\mu _i=(\pi _i)_*(\mu )$, then $h_\mu (\alpha _f\times \bar{\sigma })=h_{\mu _1}(\alpha _f)$.
	\end{coro}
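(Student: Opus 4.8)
The plan is to deduce Corollary \ref{c:product} directly from Proposition \ref{p:product}. The one point that needs attention is that $W_f^\circ$ is a finite-dimensional real vector space rather than a compact space, so Proposition \ref{p:product} does not apply to $Y_2=W_f^\circ$ verbatim; I would remove this obstruction by exploiting the fact that $\bar\sigma$ acts on $W_f^\circ$ by an isometry (with respect to $\|\cdot\|_\infty$), so that the function $p\colon X_f\times W_f^\circ\to[0,\infty)$ given by $p(x,w)=\|w\|_\infty$ is invariant under $\alpha_f\times\bar\sigma$, and its level sets meet each fibre in a compact set.

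Concretely, I would first record the following. Since $W_f^\circ$ is finite-dimensional, the sphere $S_r=\{w\in W_f^\circ:\|w\|_\infty=r\}$ is compact for every $r\ge0$, and $\bar\sigma$ restricts to an isometry of the compact metric space $S_r$; an isometry of a compact metric space has zero topological entropy, so $h_{\mathrm{top}}(\bar\sigma|_{S_r})=0$. The partition computation carried out in the proof of Proposition \ref{p:product} already yields the product inequality $h_\mu(\alpha_f\times\bar\sigma)\le h_{\mu_1}(\alpha_f)+h_{\mu_2}(\bar\sigma)$, where $\mu_2=(\pi_2)_*\mu$. To finish it then suffices to check $h_{\mu_2}(\bar\sigma)=0$: ergodically decomposing $\mu_2=\int(\mu_2)_\omega\,d\omega$, the invariance of $\|\cdot\|_\infty$ forces each component $(\mu_2)_\omega$ to be concentrated on a single sphere $S_{r(\omega)}$, whence $h_{(\mu_2)_\omega}(\bar\sigma)\le h_{\mathrm{top}}(\bar\sigma|_{S_{r(\omega)}})=0$, and by affinity of entropy over the ergodic decomposition $h_{\mu_2}(\bar\sigma)=0$. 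Combined with the trivial reverse inequality $h_{\mu_1}(\alpha_f)\le h_\mu(\alpha_f\times\bar\sigma)$ coming from the factor map $\pi_1$, this proves the corollary.

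Alternatively, and perhaps more in the spirit of the proof of Proposition \ref{p:product}, one can decompose $\mu$ itself: write $\mu=\int\mu_\omega\,d\omega$, observe that invariance of $p$ puts each ergodic component $\mu_\omega$ on the compact $(\alpha_f\times\bar\sigma)$-invariant set $X_f\times S_{r(\omega)}$, apply Proposition \ref{p:product} there (with $Y_1=X_f$, $Y_2=S_{r(\omega)}$, $d=1$) to get $h_{\mu_\omega}(\alpha_f\times\bar\sigma)=h_{(\pi_1)_*\mu_\omega}(\alpha_f)$, and integrate, using that the $(\pi_1)_*\mu_\omega$ are $\alpha_f$-ergodic (factors of ergodic systems are ergodic) and hence realise the ergodic decomposition of $\mu_1$. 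Either way, the only genuinely substantive step is the vanishing of the entropy of $\bar\sigma$ on $W_f^\circ$ against an invariant measure, and I do not expect any real difficulty there: the invariance of the sup-norm immediately confines everything to compact spheres on which the shift is an isometry.
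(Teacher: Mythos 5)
Your argument is correct, and it takes a genuinely different (and somewhat more structural) route than the paper's. The paper does not pass through the ergodic decomposition at all: it fixes a large ball $\bar{B}_k(W_f^\circ)$, considers only partitions of $W_f^\circ$ that contain the $\bar\sigma$-invariant set $W_f^\circ\smallsetminus\bar{B}_k(W_f^\circ)$ as an atom, and invokes Walters' Theorem~8.1 --- the elementary two-piece decomposition of entropy over a pair of complementary invariant sets --- to reduce the relevant entropy term to $\mu_2(\bar{B}_k)\,h_{\mu_2^{(k)}}(\bar\sigma,\mathcal{Q}')$, which vanishes by the variational principle because $\bar\sigma$ is an isometry on the compact set $\bar{B}_k(W_f^\circ)$. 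Your first version instead establishes $h_{\mu_2}(\bar\sigma)=0$ outright by decomposing $\mu_2$ into ergodic components, each of which the $\bar\sigma$-invariance of $\|\cdot\|_\infty$ confines to a compact sphere $S_r$, and your second version applies the same invariance to confine each ergodic component of $\mu$ itself to a compact slab $X_f\times S_r$ where Proposition~\ref{p:product} applies verbatim. Both of your variants are valid; they buy conceptual clarity (a clean ``$h_{\mu_2}=0$'' statement, or a direct reduction to the already-proved compact case) at the cost of invoking the full machinery of ergodic decomposition and its affinity of entropy on a Polish but non-compact phase space, whereas the paper's truncation argument stays entirely within the compact setting and only needs the simplest finite form of entropy decomposition over invariant sets. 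Either approach is a legitimate way to handle the non-compactness of $W_f^\circ$; what they have in common --- and what is really the heart of the matter --- is the observation that $\bar\sigma$ acts isometrically on level sets of $\|\cdot\|_\infty$, forcing zero entropy there.
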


	\begin{proof}
For every $k\ge1$, the set $X_f\times \bar{B}_k(W_f^\circ)\subset X_f\times W_f^\circ$ is compact and $(\alpha _f\times \bar{\sigma })$-invariant. We denote by $\mathbf{Q}^{(k)}$ the collection of all finite Borel partitions of $W_f^\circ$ containing the set $W_f^\circ\smallsetminus \bar{B}_k(W_f^\circ)$.

Let $\mathcal{P}$ be a finite Borel partition of $X_f$, and let $\mathcal{Q}\in \mathbf{Q}^{(k)}$. We set $\tilde{\mathcal{P}}=\{\tilde{P}=P\times W_f^\circ:P\in \mathcal{P}\}$ and $\tilde{\mathcal{Q}}=\{\tilde{Q}=X_f\times Q:Q\in \mathcal{Q}\}$. If $k$ is sufficiently large so that $\mu _2(\bar{B}_k(W_f^\circ))>0$, then \cite[Theorem 8.1]{Walters} implies that
	\begin{align*}
h_\mu (\alpha _f\negthinspace \times \negthinspace \bar{\sigma },\tilde{P}\vee \tilde{Q}) &\le h_{\mu _1}(\alpha _f,\mathcal{P}) + h_{\mu _2}(\bar{\sigma },\mathcal{Q})
	\\
&= h_{\mu _1}(\alpha _f,\mathcal{P}) + \mu _2(\bar{B}_k(W_f^\circ))h_{\mu _2^{(k)}}(\bar{\sigma },\mathcal{Q}'),
	\end{align*}
where $\mu _2^{(k)}=\frac{1}{\mu _2(\bar{B}_k(W_f^\circ))}\mu _2|_{\bar{B}_k(W_f^\circ)}$ is the normalized restriction of $\mu _2$ to the collection of Borel subsets of $\bar{B}_k(W_f^\circ)$, and $\mathcal{Q}'=\{Q\cap \bar{B}_k(W_f^\circ):Q\in \mathcal{Q}\}$ is the partition of $\bar{B}_k(W_f^\circ)$ induced by $\mathcal{Q}$. Hence
	\begin{displaymath}
h_\mu (\alpha _f \times \bar{\sigma },\tilde{P}\vee \tilde{Q}) \le h_{\mu _1}(\alpha _f) + h(\bar{\sigma }|_{\bar{B}_k(W_f^\circ)}) = h_{\mu _1}(\alpha _f),
	\end{displaymath}
since $\bar{\sigma }$ acts isometrically on $\bar{B}_k(W_f^\circ)$. By letting $k\to\infty $ and then varying $\mathcal{P}$ and $\mathcal{Q}$ we obtain that $h_\mu (\alpha _f\times \bar{\sigma })\le h_{\mu _1}(\alpha _f)$. The reverse inequality $h_{\mu _1}(\alpha _f)\le h_\mu (\alpha _f\times \bar{\sigma })$ is again obvious.
	\end{proof}

	\begin{coro}
	\label{c:product2}
For every shift-invariant probability measure $\mu $ on $\bar{Y}_f$, $h_\mu (\bar{\sigma })\linebreak[0]=h_{f(\bar{\sigma })_*\mu }(\bar{\sigma })$.
	\end{coro}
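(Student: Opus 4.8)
The easy inequality needs no work: $f(\bar{\sigma})\colon(\bar{Y}_f,\bar{\sigma},\mu)\longrightarrow(\bar{Z}_f,\bar{\sigma},f(\bar{\sigma})_*\mu)$ is a measure-preserving factor map, so $h_{f(\bar{\sigma})_*\mu}(\bar{\sigma})\le h_\mu(\bar{\sigma})$. The substance is the reverse inequality, and the plan is to deduce it from the Abramov--Rokhlin addition formula for entropy relative to a factor. Writing $\mathcal{F}=f(\bar{\sigma})^{-1}(\mathcal{B}(\bar{Z}_f))$ for the factor $\sigma$-algebra (which is $\bar{\sigma}$-invariant, since $f(\bar{\sigma})$ commutes with $\bar{\sigma}$), one has $h_\mu(\bar{\sigma})=h_{f(\bar{\sigma})_*\mu}(\bar{\sigma})+h_\mu(\bar{\sigma}\mid\mathcal{F})$, and everything reduces to proving that the \emph{relative entropy} $h_\mu(\bar{\sigma}\mid\mathcal{F})$ vanishes.

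For this I would disintegrate $\mu=\int_{\bar{Z}_f}\mu_z\,d(f(\bar{\sigma})_*\mu)(z)$ over the factor, so that $\mu_z$ is carried by the fibre $f(\bar{\sigma})^{-1}(z)\cap\bar{Y}_f$, and exploit the geometry of these fibres. By Theorem \ref{t:boundedness}\,(1), every $y$ in the fibre over $z$ is uniquely of the form $y=\bar{\xi}^*(z)+w$ with $w=y-\bar{\xi}^*(z)\in W_f^\circ$, and the defining identity $\mathsf{d}(k,z)=\bar{\sigma}^k\bar{\xi}^*(z)-\bar{\xi}^*(\bar{\sigma}^kz)$ shows that in these coordinates the bijection $\bar{\sigma}^k\colon f(\bar{\sigma})^{-1}(z)\longrightarrow f(\bar{\sigma})^{-1}(\bar{\sigma}^kz)$ is the affine isometry $w\mapsto\bar{\sigma}^kw+\mathsf{d}(k,z)$ of the finite-dimensional space $W_f^\circ$ (recall that $\bar{\sigma}$ acts isometrically on $W_f^\circ$). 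Since $h_\mu(\bar{\sigma}\mid\mathcal{F})=\sup_{\mathcal{Q}}h_\mu(\bar{\sigma},\mathcal{Q}\mid\mathcal{F})$ and, by a routine approximation, this supremum may be taken over the increasing sequence of finite partitions $\mathcal{Q}_\ell$ of $\bar{Y}_f\subset[0,1]^\mathbb{Z}$ obtained by recording the coordinates with index in $\{-\ell,\dots,\ell\}$ to precision $2^{-\ell}$, it suffices to show that $h_\mu(\bar{\sigma},\mathcal{Q}_\ell\mid\mathcal{F})=0$ for each fixed $\ell$.

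Fix $\ell$ and set $m_0=\dim W_f^\circ$. In the $W_f^\circ$-coordinates above, the trace on the fibre over $z$ of $\bar{\sigma}^{-k}\mathcal{Q}_\ell$ is cut out by at most $(2\ell+1)(2^\ell-1)$ affine hyperplanes, namely the preimages under the affine isometry $w\mapsto\bar{\sigma}^kw+\mathsf{d}(k,z)$ of the coordinate level sets defining $\mathcal{Q}_\ell$; hence the trace of $\bigvee_{k=0}^{n-1}\bar{\sigma}^{-k}\mathcal{Q}_\ell$ on that fibre is an arrangement of at most $c_\ell n$ affine hyperplanes in $W_f^\circ\cong\mathbb{R}^{m_0}$, and so has at most $c_\ell'n^{m_0}$ atoms. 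Consequently $H_{\mu_z}\bigl(\bigvee_{k=0}^{n-1}\bar{\sigma}^{-k}\mathcal{Q}_\ell\bigr)\le\log(c_\ell'n^{m_0})$, and integrating over $z$ gives $H_\mu\bigl(\bigvee_{k=0}^{n-1}\bar{\sigma}^{-k}\mathcal{Q}_\ell\bigm|\mathcal{F}\bigr)=O(\log n)$. Dividing by $n$ and letting $n\to\infty$ yields $h_\mu(\bar{\sigma},\mathcal{Q}_\ell\mid\mathcal{F})=0$; letting $\ell\to\infty$ then gives $h_\mu(\bar{\sigma}\mid\mathcal{F})=0$, and hence $h_\mu(\bar{\sigma})=h_{f(\bar{\sigma})_*\mu}(\bar{\sigma})$.

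The main obstacle is precisely this vanishing of the relative entropy. It cannot be obtained with arbitrary finite Borel partitions, since iterating the isometric fibre dynamics along a wild partition may produce exponentially many atoms; the argument must combine the finite-dimensionality of $W_f^\circ$ with the fact that, after passing to the $\bar{\xi}^*$-coordinates, the fibre dynamics is affine-isometric, so that partitions built from finitely many coordinate hyperplanes refine only polynomially. Equivalently, under the identification of $\bar{Y}_f$ with a $\tau$-invariant set $\mathsf{C}\subset\bar{Z}_f\times W_f^\circ$ of the kind produced in Lemma \ref{l:entropy}, the statement reads as the skew-product analogue of Corollary \ref{c:product} for the map $\tau$ on $\bar{Z}_f\times W_f^\circ$, and the argument sketched above is that analogue carried out directly on $\bar{Y}_f$.
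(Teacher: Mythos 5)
Your argument is correct, and it reaches the conclusion by a genuinely different route than the paper. The paper's proof linearises $\bar Y_f$ by choosing a Borel section $\chi\colon\bar Z_f\to\bar Y_f$ of $f(\bar\sigma)$ and passing to the skew product $\eta(w)=(f(\bar\sigma)w,\,w-\chi(f(\bar\sigma)w))$ on $\bar Z_f\times W_f^\circ$, where the shift becomes $\tau'(z,v)=(\bar\sigma z,\bar\sigma v-c(z))$ for the measurable cocycle $c(z)=\chi(\bar\sigma z)-\bar\sigma\chi(z)\in W_f^\circ$; the crucial step is then to show that $c$ is a measurable coboundary, $c=\bar\sigma\circ b-b\circ\bar\sigma$, by taking $b(z)$ to be the barycentre of the conditional measure $\nu_z$ on the fibre (this uses that the fibre measures are supported in a bounded subset of the finite-dimensional space $W_f^\circ$, so the barycentre exists and is bounded). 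Replacing $\chi$ by $\chi-b$ turns the skew product into a genuine direct product $\bar\sigma\times\bar\sigma$ on $\bar Z_f\times W_f^\circ$, after which Proposition~\ref{p:product} (or Corollary~\ref{c:product}) kills the entropy contribution of the isometric second factor. You instead invoke the Abramov--Rokhlin addition formula and estimate the relative entropy $h_\mu(\bar\sigma\mid\mathcal F)$ directly: using the $W_f^\circ$-coordinatisation of the fibres furnished by Theorem~\ref{t:boundedness}~(1) and the cocycle identity \eqref{eq:commutation}, the fibre dynamics is affine, so the trace on each fibre of $\bigvee_{k=0}^{n-1}\bar\sigma^{-k}\mathcal Q_\ell$ is an arrangement of $O(n)$ affine hyperplanes in the fixed $m_0$-dimensional space $W_f^\circ$ and therefore has only polynomially many atoms, forcing $h_\mu(\bar\sigma,\mathcal Q_\ell\mid\mathcal F)=0$. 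Both arguments rest on the same two facts --- $\dim W_f^\circ<\infty$ and $\bar\sigma|_{W_f^\circ}$ is an isometry --- but yours avoids the coboundary step entirely, replacing a clean measurable change of coordinates with a concrete combinatorial estimate; the paper's route has the advantage of producing an explicit measurable conjugacy of $(\bar Y_f,\bar\sigma,\mu)$ with a direct product, which it then exploits. One small point worth adding for rigour: the polynomial bound on the number of atoms should allow for lower-dimensional faces of the arrangement (not just open cells) since the coordinate partitions are half-open; this inflates the exponent to at most $m_0+1$ but of course does not affect the conclusion that $\tfrac1n H_\mu(\bigvee_{k=0}^{n-1}\bar\sigma^{-k}\mathcal Q_\ell\mid\mathcal F)\to0$.
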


	\begin{proof}
We put $\nu =f(\bar{\sigma })_*\mu $. Since $f(\bar{\sigma })\colon \bar{Y}_f\longrightarrow \bar{Z}_f$ is shift-equivariant, $h_\nu (\bar{\sigma })\linebreak[0]\le h_\mu (\bar{\sigma })$. In order to prove the reverse inequality we note that the pre-image $f(\bar{\sigma })^{-1}(\{z\})\cap \bar{Y}_f$ of every $z\in \bar{Z}_f$ is closed and hence compact. By \cite[Section 1.5]{Partha} there exists a Borel map $\chi \colon \bar{Z}_f\longrightarrow \bar{Y}_f$ with $f(\bar{\sigma })\circ \chi (z)=z$ for every $z\in \bar{Z}_f$. For every $z\in \bar{Z}_f$ we set
	\begin{equation}
	\label{eq:cocycle}
c(z)=\chi \circ \bar{\sigma }(z)-\bar{\sigma }\circ \chi (z)\in W_f^\circ.
	\end{equation}
Consider the Borel map $\tau '\colon \bar{Z}_f\times W_f^\circ\longrightarrow \bar{Z}_f\times W_f^\circ$ given by
	\begin{displaymath}
\tau '(z,v) = (\bar{\sigma }z,\bar{\sigma }v-c(z))
	\end{displaymath}
for every $(z,v)\in \bar{Z}_f\times W_f^\circ$. Next we define an injective Borel map $\eta \colon \bar{Y}_f\longrightarrow \bar{Z}_f\times W_f^\circ$ by
	\begin{displaymath}
\eta (w)=(f(\bar{\sigma })(w),w-\chi \circ f(\bar{\sigma })(w))
	\end{displaymath}
and observe that $\eta \cdot \bar{\sigma }=\tau '\circ \eta $.

Due to the $(\bar{\sigma },\tau ')$-equivariance of $\eta $ the probability measure $\tilde{\nu }=\eta _*\mu $ on $\bar{Z}_f\times W_f^\circ$ is $\tau '$-invariant. Furthermore, $\tilde{\nu }$ is supported in the compact set $\bar{Z}_f\times \bar{B}_1(W_f^\circ)$, and $(\pi _1)_*\tilde{\nu }=\nu $. We decompose $\tilde{\nu }$ over $\bar{Z}_f$ by choosing a Borel measurable family $\nu _z,\,z\in \bar{Z}_f$, of probability measures on $W_f^\circ$ such that
	\begin{displaymath}
\int g(z,v)\,d\tilde{\nu }(z,v)=\int_{\bar{Z}_f}\int_{W_f^\circ}g(z,v)\, d\nu _z(v)\,d\nu (z)
	\end{displaymath}
for every bounded Borel map $g\colon \bar{Z}_f\times W_f^\circ\longrightarrow \mathbb{R}$. Since $\tilde{\nu }$ is $\tau '$-invariant, we obtain that
	\begin{equation}
	\label{eq:transform}
\int\int h(z,v)\,d\nu _z(v)\,d\nu (z)=\int\int h(\bar{\sigma }z,\bar{\sigma }v-c(z))\,d\nu _{z}(v)\,d\nu (z)
	\end{equation}
for every bounded Borel map $h\colon W_f^\circ\longrightarrow \mathbb{R}$ and $\nu \textit{-a.e.}\;z\in \bar{Z}_f$. We write $\pi _n\colon \ell ^\infty (\mathbb{Z},\mathbb{R})\linebreak[0]\longrightarrow \mathbb{R}$ for the $n$-th coordinate projection and set
	\begin{displaymath}
h_n=
	\begin{cases}
\pi _n&\textup{on}\enspace \bar{B}_1(W_f^\circ),
	\\
0&\textup{on}\enspace W_f^\circ\smallsetminus \bar{B}_1(W_f^\circ).
	\end{cases}
	\end{displaymath}
Then $b(z)_n\coloneqq \int \pi _n\,d\nu _z=\int h_n\,d\nu _z$ for every $z\in \bar{Z}_f$ and $n\in\mathbb{Z}$, and \eqref{eq:transform} (with $h=h_n$) shows that
	\begin{align*}
b(z)_n&=\int \pi _n(v) \,d\nu _z(v) = \int \pi _n(\bar{\sigma }v - c(\bar{\sigma }^{-1}z)) \,d\nu _{\bar{\sigma }^{-1}z}(v)
	\\
&= \int \pi _n(\bar{\sigma }v) \,d\nu _{\bar{\sigma }^{-1}z}(v) - c(\bar{\sigma }^{-1}z) = \int \pi _{n+1}(v)\, d\nu _{\bar{\sigma }^{-1}z}(v) - c(\bar{\sigma }^{-1}z)
	\end{align*}
for $\nu \textit{-a.e.}\; z\in \bar{Z}_f$. If we substitute $\bar{\sigma }z$ for $z$ in this equation we see that the map $z\mapsto b(z)$ from $\bar{Z}_f$ to $\bar{B}_1(K_f)$ satisfies that
	\begin{equation}
	\label{eq:coboundary}
c(z)=\bar{\sigma }\circ b(z)-b\circ \bar{\sigma }(z)\enspace \textup{for}\enspace \nu \textit{-a.e.}\;z\in \bar{Z}_f.
	\end{equation}

We replace the Borel map $\chi \colon \bar{Z}_f\longrightarrow \bar{Y}_f$ by $\chi '=\chi -b\colon \bar{Z}_f\longrightarrow W_f$ (which may, of course, no longer take values in $\bar{Y}_f$, but which is still bounded in norm) and define $\eta '\colon \bar{Y}_f\longrightarrow \bar{Z}_f\times W_f^\circ$ by
	\begin{displaymath}
\eta '(w)=(f(\bar{\sigma })(w),w-\chi '\circ f(\bar{\sigma })(w))
	\end{displaymath}
for every $w\in \bar{Y}_f$. Then $\eta '$ is injective and $\eta '\circ \bar{\sigma }=(\bar{\sigma }\times \bar{\sigma })\circ \eta '$.

Put $\tilde{\nu }'=\eta '_*\mu $. Then $(\pi _1)_*\tilde{\nu }'=\nu $, and by applying Proposition \ref{p:product} (or Corollary \ref{c:product}) we obtain that
	\begin{displaymath}
h_{\nu }(\bar{\sigma })=h_{\tilde{\nu }'}(\bar{\sigma }\times \bar{\sigma })=h_{\mu }(\bar{\sigma }).\qedhere
	\end{displaymath}
	\end{proof}

	\begin{prop}
	\label{p:central}
Let $\mu _1$ and $\mu _2$ be centrally equivalent $\alpha _f$-invariant probability measures on $X_f$. Then $h_{\mu _1}(\alpha _f)=h_{\mu _2}(\alpha _f)$.

If $\mu _1$ and $\mu _2$ are ergodic then there exist points $y_1,y_2\in X_f^\circ$ such that $\mu _i$ is an ergodic component of $\mu _j*m_{y_j}$ for $i,j\in \{1,2\}, i\ne j$ \textup{(}the measures $m_{y_i}$ are defined in Theorem \ref{t:LiS1} \textup{(2))}.
	\end{prop}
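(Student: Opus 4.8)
The plan is to derive both assertions from one construction: transporting a witnessing joining to $X_f\times X_f^\circ$. Fix an $(\alpha _f\times \alpha _f)$-invariant joining $\nu $ on $X_f\times X_f$ realizing the central equivalence, i.e.\ with marginals $\mu _1,\mu _2$ and with $x-y\in X_f^\circ$ for $\nu $-a.e.\ $(x,y)$. The group automorphisms $\Phi (x,y)=(x,x-y)$ and $\Phi '(x,y)=(y,x-y)$ of $X_f\times X_f$ both conjugate $\alpha _f\times \alpha _f$ to itself, so $\tilde \nu :=\Phi _*\nu $ and $\tilde \nu ':=\Phi '_*\nu $ are $(\alpha _f\times \alpha _f)$-invariant probability measures concentrated on $X_f\times X_f^\circ$ with first marginals $\mu _1$ and $\mu _2$, respectively, and with common second marginal $\lambda :=\delta _*\nu $, where $\delta (x,y)=x-y$.

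For the entropy equality I would transport $\tilde \nu $ through $\mathrm{id}\times (\rho |_{W_f^\circ})^{-1}$. Since $\rho |_{W_f^\circ}\colon W_f^\circ\to X_f^\circ$ is a continuous group isomorphism conjugating $\bar \sigma |_{W_f^\circ}$ to $\alpha _f|_{X_f^\circ}$ (by shift-equivariance of $\rho $), while $X_f^\circ$ is a Borel subset of $X_f$ and $(\rho |_{W_f^\circ})^{-1}$ is Borel by the Lusin--Souslin theorem, this is a measurable conjugacy; it carries $\tilde \nu $ to an $(\alpha _f\times \bar \sigma )$-invariant probability measure $\nu ^\sharp $ on $X_f\times W_f^\circ$ with $(\pi _1)_*\nu ^\sharp =\mu _1$. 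Corollary \ref{c:product} then gives $h_{\nu ^\sharp }(\alpha _f\times \bar \sigma )=h_{\mu _1}(\alpha _f)$, while $h_{\nu ^\sharp }(\alpha _f\times \bar \sigma )=h_\nu (\alpha _f\times \alpha _f)$ by invariance of entropy under measurable conjugacy. Running the same argument on $\tilde \nu '$ gives $h_{\mu _2}(\alpha _f)=h_\nu (\alpha _f\times \alpha _f)$, hence $h_{\mu _1}(\alpha _f)=h_{\mu _2}(\alpha _f)$. (One could instead verify that $\lambda $ sits on a compact set and invoke Proposition \ref{p:product}; the detour through $W_f^\circ$ avoids this.)

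For the second assertion I assume $\mu _1,\mu _2$ ergodic and replace $\nu $ by an ergodic component, still a joining of $\mu _1$ and $\mu _2$ with $x-y\in X_f^\circ$ a.e.\ because the marginals are ergodic; then $\nu ,\tilde \nu ,\tilde \nu ',\lambda $ are ergodic. Next I would identify $\lambda $: since $f$ is irreducible (hence squarefree) and noncyclotomic, $\bar \sigma $ acts on the finite-dimensional real vector space $W_f^\circ$ by a map conjugate to a product of irrational planar rotations, so every orbit closure in $W_f^\circ$ is a subtorus on which $\bar \sigma $ is a minimal, uniquely ergodic rotation, and every ergodic $\bar \sigma $-invariant measure on $W_f^\circ$ is Haar measure on one such subtorus; transporting through $\rho $, which maps an orbit closure in $W_f^\circ$ onto $\overline{\{\alpha _f^nx:n\in \mathbb{Z}\}}$ for the corresponding $x$ (continuity plus compactness), one gets $\lambda =m_{y_0}$ for some $y_0\in X_f^\circ$. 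Thus $\tilde \nu $ is an ergodic joining of the ergodic system $(X_f,\alpha _f,\mu _1)$ with the equicontinuous compact-group-rotation system $(\overline{\{\alpha _f^ny_0:n\in \mathbb{Z}\}},\alpha _f,m_{y_0})$. By the structure of joinings of an ergodic system with a group rotation, $\tilde \nu $ is one of the ergodic components of the product joining $\mu _1\otimes m_{y_0}$; pushing the ergodic decomposition of $\mu _1\otimes m_{y_0}$ forward by subtraction $(x,z)\mapsto x-z$ yields the ergodic decomposition of $\mu _1*\check m_{y_0}$ (each component stays ergodic), and $\check m_{y_0}=m_{-y_0}$, so $\mu _2=(\text{subtraction})_*\tilde \nu $ is an ergodic component of $\mu _1*m_{y_1}$ with $y_1:=-y_0\in X_f^\circ$. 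The symmetric argument with $\tilde \nu '$ (for which $\delta $ produces $m_{-y_0}$) shows $\mu _1$ is an ergodic component of $\mu _2*m_{y_2}$ with $y_2:=y_0$. The degenerate case $\mu _1=\lambda _{X_f}$ is disposed of at the start: by the first assertion $h_{\mu _2}(\alpha _f)=h(\alpha _f)$, so $\mu _2=\lambda _{X_f}$ by uniqueness of the measure of maximal entropy (Theorem \ref{t:entropy}), and $\lambda _{X_f}*m_y=\lambda _{X_f}$ is ergodic, so the claim holds with any $y_1,y_2$.

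The hard part will be the joinings step in the last paragraph: that an ergodic joining of an ergodic system with the compact-group-rotation system $(\overline{\{\alpha _f^ny_0:n\in \mathbb{Z}\}},\alpha _f,m_{y_0})$ is precisely one of the ergodic components of $\mu _1\otimes m_{y_0}$, and the bookkeeping that tracks the chosen component $\tilde \nu $ through the subtraction map so as to exhibit $\mu _2$ as an ergodic component of $\mu _1*m_{-y_0}$. The mechanism making this work is that $m_{y_0}$ is invariant under a transitive action on $\overline{\{\alpha _f^ny_0:n\in \mathbb{Z}\}}$ by rotations induced from the torus $\overline{\{(\bar \sigma |_{W_f^\circ})^n:n\in \mathbb{Z}\}}$ acting linearly on $W_f^\circ$; since this action commutes with $\alpha _f$ it permutes the ergodic components of $\mu _1\otimes m_{y_0}$ transitively, which is what pins down $\tilde \nu $ among them. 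The remaining ingredients — the reorganizations $\Phi ,\Phi '$, the identification $\lambda =m_{y_0}$, and the entropy computation via Corollary \ref{c:product} — are routine.
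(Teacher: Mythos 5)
Your proof is correct and follows the same skeleton as the paper's: transport the witnessing joining to $X_f\times W_f^\circ$ via the subtraction map and $\rho|_{W_f^\circ}$, invoke Corollary \ref{c:product} for the entropy equality, and for the ergodic statement identify the second marginal of $\Phi_*\nu$ as $m_{y_0}$ on a single orbit closure $C_{y_0}\subset X_f^\circ$. The one place you diverge usefully is the last step: the paper compresses it to ``by translating this back to our joining $\nu$ we see that $\mu_2$ is an ergodic component of $\mu_1*m_y$,'' whereas you spell out the mechanism — $(C_{y_0},\alpha _f,m_{y_0})$ is a uniquely ergodic compact group rotation, so every ergodic joining of $\mu_1$ with $m_{y_0}$ is obtained from any one of them by translating the $C_{y_0}$-coordinate by an element of the acting compact group $K=\overline{\{(\bar\sigma|_{W_f^\circ})^n\}}$, these translates are precisely the ergodic components of $\mu_1\otimes m_{y_0}$, and pushing forward along the equivariant subtraction map preserves ergodicity of components. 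That is exactly the content the paper is tacitly appealing to, and your version makes it explicit, along with keeping track of the harmless $\check m_{y_0}=m_{-y_0}$ sign coming from your choice $\Phi(x,y)=(x,x-y)$ rather than the paper's $(x,y-x)$. The separate treatment of $\mu_1=\lambda_{X_f}$ is not actually needed — the general argument handles it — but it does no harm.
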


	\begin{proof}
Let $\Phi \colon X_f\times X_f\longrightarrow X_f\times X_f$ be the continuous $(\alpha _f\times \alpha _f)$-equivariant group isomorphism given by $\Phi (x,y)=(x,y-x)$. If $\nu $ is an $(\alpha _f\times \alpha _f)$-invariant joining of $\mu _1$ and $\mu _2$ such that $x-y\in X_f^\circ$ for $\nu \textsl{-a.e.}\;(x,y)\in X_f\times X_f$, then $\Phi _*(\nu )$ is supported on the $(\alpha _f\times \alpha _f)$-invariant Borel set $X_f\times X_f^\circ\subset X_f\times X_f$.

Since the map $\rho \colon W_f\longrightarrow X_f$ sends $W_f^\circ$ bijectively to $X_f^\circ$, the map $\tilde{\Phi }\colon X_f\times W_f^\circ\longrightarrow X_f\times X_f^\circ$, given by $\tilde{\Phi }(x,w)= (x,\rho (w))$, is a continuous $(\alpha _f\times \bar{\sigma },\alpha _f\times \alpha _f)$-equivariant bijection, and there exists a unique $(\alpha _f\times \bar{\sigma })$-invariant probability measure $\tilde{\nu }$ on $X_f\times W_f^\circ$ with $\tilde{\nu }(B)=\nu (\tilde{\Phi }(B))$ for every Borel set $B\subset X_f\times W_f^\circ$. Furthermore, if $\pi _1\colon X_f\times W_f^\circ\longrightarrow X_f$ is the first coordinate projection, then $\mu _1=(\pi _1)_*\tilde{\nu }$. By Corollary \ref{c:product}, $h_{\mu _1}(\alpha _f)=h_{\tilde{\nu }}(\alpha _f\times \bar{\sigma }) = h_\nu (\alpha _f\times \alpha _f)$. Similarly we see that $h_{\mu _2}(\alpha _f)=h_{\nu }(\alpha _f\times \alpha _f)$. This proves that $h_{\mu _1}(\alpha _f)=h_{\mu _2}(\alpha _f)$.

If the measures $\mu _i$ are ergodic we can find an $(\alpha _f\times \alpha _f)$-invariant and ergodic joining $\nu $ of $\mu _1$ and $\mu _2$. The $(\alpha _f\times \alpha _f)$-invariant probability measure $\Phi _*\nu $ on $X_f\times X_f^\circ$ described above is again ergodic. For every $y\in X_f^\circ$ we write $C_y\subset X_f^\circ$ for the orbit closure $\overline{\{\alpha _f^ky:k\in \mathbb{Z}\}}\subset X_f^\circ$ and $m_y$ for the unique $\alpha _f$-invariant probability measure on $C_y$. Since $X_f\times C_y$ is $(\alpha _f\times \alpha _f)$-invariant for every $y\in X_f^\circ$, only one of these sets can have positive --- and hence full --- measure w.r.t. $\Phi _*\nu $. By translating this back to our joining $\nu $ we see that $\mu _2$ is an ergodic component of $\mu _1*m_y$. Similarly we see that there exists a $y'\in X_f^\circ$ such that $\mu _1$ is an ergodic component of $\mu _1*m_{y'}$.
	\end{proof}

We can formulate the notion of central equivalence also for $\bar{\sigma }$-invariant probability measures on $W_f$: two $\bar{\sigma }$-invariant probability measures $\mu_1,\mu_2$ on $W_f$ are \emph{centrally equivalent} if they have a $(\bar{\sigma }\times\bar{\sigma })$-invariant joining $\nu $ on $W_f\times W_f$ so that, for $\nu\textsl{-a.e.}\;(w,w')\in W_f\times W_f$, $w-w'\in W_f^\circ$. The following result is proved exactly like Proposition \ref{p:central}.

	\begin{prop}
	\label{p:central2}
Let $\mu _1$ and $\mu _2$ be centrally equivalent $\bar{\sigma }$-invariant probability measures on $W_f$. Then $h_{\mu _1}(\bar{\sigma })=h_{\mu _2}(\bar{\sigma })$.

If $\mu _1$ and $\mu _2$ are ergodic then there exist points $y_1,y_2\in W_f^\circ$ such that $\mu _i$ is an ergodic component of $\mu _j*\bar{m}_{y_j}$ for $i,j\in \{1,2\},\, i\ne j$. Here $\bar{m}_{y_i}$ is the unique $\bar{\sigma }$-invariant probability measure on the orbit closure $\overline{\{\bar{\sigma }^ky_i:k\in \mathbb{Z}\}}\subset W_f^\circ$.
	\end{prop}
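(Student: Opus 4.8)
The plan is to transcribe the proof of Proposition~\ref{p:central} with only notational changes, replacing $X_f$ by $W_f$, $\alpha_f$ by $\bar{\sigma}$, the central subgroup $X_f^\circ$ by $W_f^\circ$, the orbit closures $C_y\subset X_f^\circ$ by $C_y=\overline{\{\bar{\sigma}^ky:k\in\mathbb{Z}\}}\subset W_f^\circ$, and the measures $m_y$ by $\bar{m}_y$. Only two ingredients special to $X_f$ have to be re-examined, and both survive: the entropy-additivity statement of Corollary~\ref{c:product}, and the structural facts about the $\bar{\sigma}$-action on $W_f^\circ$ used for $X_f^\circ$. For the latter: $W_f$ and $W_f^\circ$ are additive topological groups under coordinate-wise addition, $\bar{\sigma}$ acts on $\ell^\infty(\mathbb{Z},\mathbb{R})$ by a surjective isometry, and by \eqref{eq:womega} the restriction of $\bar{\sigma}$ to the finite-dimensional space $W_f^\circ$ is a linear isometry, so the closure $\overline{\{\bar{\sigma}^k|_{W_f^\circ}:k\in\mathbb{Z}\}}$ is a compact abelian group; consequently every orbit closure $C_y\subset W_f^\circ$ is compact, $\bar{\sigma}$ acts minimally — hence uniquely ergodically, with unique invariant measure $\bar{m}_y$ — on $C_y$, and the $C_y$ partition $W_f^\circ$. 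Let $\Phi\colon W_f\times W_f\longrightarrow W_f\times W_f$ be the $(\bar{\sigma}\times\bar{\sigma})$-equivariant topological group automorphism $\Phi(w,w')=(w,w'-w)$. Given centrally equivalent $\mu_1,\mu_2$ I fix a $(\bar{\sigma}\times\bar{\sigma})$-invariant joining $\nu$ on $W_f\times W_f$ with $w-w'\in W_f^\circ$ for $\nu$-almost every $(w,w')$; since $W_f^\circ$ is a subgroup, $\Phi_*\nu$ is a $(\bar{\sigma}\times\bar{\sigma})$-invariant probability measure on $W_f\times W_f^\circ$ with first marginal $\mu_1$.

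For the entropy assertion I would first record the analogue of Corollary~\ref{c:product}: if $\lambda$ is a $(\bar{\sigma}\times\bar{\sigma})$-invariant probability measure on $W_f\times W_f^\circ$ with first marginal $\lambda_1$, then $h_\lambda(\bar{\sigma}\times\bar{\sigma})=h_{\lambda_1}(\bar{\sigma})$. The argument for Corollary~\ref{c:product} applies verbatim: for a finite partition $\tilde{\mathcal{P}}$ lifted from $W_f$ and a partition $\tilde{\mathcal{Q}}$ lifted from a partition $\mathcal{Q}$ of $W_f^\circ$ having $W_f^\circ\smallsetminus\bar{B}_k(W_f^\circ)$ as an atom, one has $h_\lambda(\bar{\sigma}\times\bar{\sigma},\tilde{\mathcal{P}}\vee\tilde{\mathcal{Q}})\le h_{\lambda_1}(\bar{\sigma},\mathcal{P})+h_{\lambda_2}(\bar{\sigma},\mathcal{Q})$, and the second summand vanishes by \cite[Theorem~8.1]{Walters} together with the fact that $\bar{\sigma}$ acts isometrically, hence with zero topological entropy, on the compact set $\bar{B}_k(W_f^\circ)$; letting $k\to\infty$ and varying the partitions gives ``$\le$'', and the opposite inequality is trivial. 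This uses nothing about $W_f$ beyond $h_{\lambda_1}(\bar{\sigma},\mathcal{P})\le h_{\lambda_1}(\bar{\sigma})$, so the non-compactness of $W_f$ is immaterial. Applying this to $\lambda=\Phi_*\nu$, and using that $\Phi$ conjugates $\bar{\sigma}\times\bar{\sigma}$ to itself, gives $h_{\mu_1}(\bar{\sigma})=h_{\Phi_*\nu}(\bar{\sigma}\times\bar{\sigma})=h_\nu(\bar{\sigma}\times\bar{\sigma})$; the same argument with the equivariant automorphism $\Psi(w,w')=(w',w-w')$ (whose second coordinate is again $W_f^\circ$-valued $\nu$-a.e.) yields $h_{\mu_2}(\bar{\sigma})=h_\nu(\bar{\sigma}\times\bar{\sigma})$, hence $h_{\mu_1}(\bar{\sigma})=h_{\mu_2}(\bar{\sigma})$.

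For the ergodic case I would first pass to an ergodic component of $\nu$: since $\mu_1,\mu_2$ are ergodic, hence extreme among invariant measures, and $\{w-w'\in W_f^\circ\}$ is an invariant Borel set of full $\nu$-measure, almost every ergodic component of $\nu$ is again a $(\bar{\sigma}\times\bar{\sigma})$-invariant joining of $\mu_1$ and $\mu_2$ concentrated on $\{w-w'\in W_f^\circ\}$, so I may assume $\nu$ ergodic. Then $\Phi_*\nu$ is ergodic, and its second marginal is an ergodic $\bar{\sigma}$-invariant probability measure on $W_f^\circ$, hence equals $\bar{m}_{y_0}$ for some $y_0\in W_f^\circ$; in particular $\Phi_*\nu$ is concentrated on $W_f\times C_{y_0}$. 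Writing $A(w,z)=w+z$ we have $\pi_2=A\circ\Phi$, so $\mu_2=A_*(\Phi_*\nu)$. To conclude that $\mu_2$ is an ergodic component of $\mu_1*\bar{m}_{y_0}=A_*(\mu_1\times\bar{m}_{y_0})$, I would argue by translation: if $K_0$ denotes the compact subgroup of $W_f^\circ$ of which $C_{y_0}$ is a coset, then $K_0$ acts on $W_f\times C_{y_0}$ by translating the second coordinate, commuting with $\bar{\sigma}\times\bar{\sigma}$; each translate $t\cdot(\Phi_*\nu)$, $t\in K_0$, is ergodic, and averaging them over the Haar measure of $K_0$ produces a $K_0$-invariant measure with first marginal $\mu_1$ whose conditional measures on the fibres over $W_f$ are $K_0$-invariant, hence equal to $\bar{m}_{y_0}$. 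Thus the average equals $\mu_1\times\bar{m}_{y_0}$, so $\Phi_*\nu$ is an ergodic component of $\mu_1\times\bar{m}_{y_0}$, and pushing forward under $A$ shows that $\mu_2=A_*(\Phi_*\nu)$ is an ergodic component of $\mu_1*\bar{m}_{y_0}$. Setting $y_1=y_0$, and running the same argument with $\mu_1$ and $\mu_2$ interchanged to produce $y_2$, completes the proof.

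I expect the genuine difficulty to lie in that final translation step — showing that $\Phi_*\nu$ really is an ergodic component of the \emph{independent} joining $\mu_1\times\bar{m}_{y_0}$, equivalently that $\mu_2$ is an ergodic component of $\mu_1*\bar{m}_{y_0}$ and not merely the additive image of some joining of $\mu_1$ with $\bar{m}_{y_0}$. The $K_0$-averaging argument settles this, and it is exactly what is compressed into the phrase ``by translating this back'' in the proof of Proposition~\ref{p:central}. Everything else — the entropy-additivity lemma, the irrelevance of the non-compactness of $W_f$, and the compactness, minimality and unique ergodicity of the orbit closures in $W_f^\circ$ — is routine, as the appeal to Proposition~\ref{p:central} already signals.
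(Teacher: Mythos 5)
Your plan is the right one, and the paper itself disposes of this proposition in a single sentence, saying only that it ``is proved exactly like Proposition~\ref{p:central}''. Your transcription of that proof is accurate (and in one respect cleaner, since there is no longer any need for the auxiliary bijection $\tilde\Phi\colon X_f\times W_f^\circ\to X_f\times X_f^\circ$ that the paper uses to pull things back to $W_f^\circ$: in the present setting everything lives in $W_f$ from the start), and your verification that the Corollary~\ref{c:product} argument is indifferent to the non-compactness of the first factor is correct.

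There is, however, a concrete error in the place where you try to make the paper's elliptic phrase ``by translating this back'' precise. You write ``if $K_0$ denotes the compact subgroup of $W_f^\circ$ of which $C_{y_0}$ is a coset \dots'' --- but $W_f^\circ$ is a finite-dimensional real vector space (it is spanned by the vectors in \eqref{eq:womega}), and the only compact subgroup of a real vector space is $\{0\}$. The orbit closure $C_{y_0}$ is a $G$-orbit, where $G=\overline{\{\bar\sigma^k|_{W_f^\circ}:k\in\mathbb{Z}\}}$ is the compact abelian group of linear isometries you introduced yourself a few lines earlier to justify minimality and unique ergodicity; a $G$-orbit in a vector space (e.g.\ a circle about the origin when $\dim W_f^\circ=2$ and $\bar\sigma$ is an irrational rotation) is in general not a translate of any subgroup. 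A second, related slip: translation of the $W_f^\circ$-coordinate by a fixed nonzero $t\in W_f^\circ$ does \emph{not} commute with $\bar\sigma\times\bar\sigma$, since $\bar\sigma(z+t)=\bar\sigma z+\bar\sigma t\ne\bar\sigma z+t$ unless $\bar\sigma t=t$. The averaging argument you want therefore has to be run over $G$ itself, acting on $W_f\times W_f^\circ$ by $g\cdot(w,z)=(w,gz)$: this action preserves $W_f\times C_{y_0}$, commutes with $\bar\sigma\times\bar\sigma$ because $G$ is abelian and contains $\bar\sigma|_{W_f^\circ}$, each $g_*(\Phi_*\nu)$ is an ergodic joining of $\mu_1$ with $\bar m_{y_0}$ (the second marginal is unchanged, being the unique $\bar\sigma$-invariant --- hence $G$-invariant --- measure on $C_{y_0}$), and Haar-averaging over $G$ produces $\mu_1\times\bar m_{y_0}$ because the resulting conditionals over the first coordinate are $G$-invariant probability measures on the transitive $G$-space $C_{y_0}$ and hence equal $\bar m_{y_0}$. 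With that replacement your argument goes through; as written the object $K_0$ does not exist.
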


	\begin{prop}
	\label{p:pre-images}
Let $\nu $ be a $\bar{\sigma }$-invariant probability measure on $\bar{Z}_f$. Then there exists a $\bar{\sigma }$-invariant probability measure $\mu $ on $\bar{Y}_f$ with $\nu =f(\bar{\sigma })_*\mu $ and hence $h_\mu (\bar{\sigma })=h_\nu (\bar{\sigma })$. If $\nu $ is ergodic under $\bar{\sigma }$, then $\mu $ can also be chosen to be ergodic.

If $\mu '$ is a $\bar{\sigma }$-invariant probability measure on $W_f$ such that $f(\bar{\sigma })_*\mu '=\nu $, then $\mu '$ is centrally equivalent to $\mu $.
	\end{prop}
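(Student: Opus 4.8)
The plan is to build the invariant measure $\mu$ on $\bar Y_f$ by pulling $\nu$ back along a Borel section of $f(\bar\sigma)$ and then averaging out the resulting cocycle, exactly as in the proof of Corollary \ref{c:product2}. First I would choose a Borel map $\chi\colon\bar Z_f\longrightarrow\bar Y_f$ with $f(\bar\sigma)\circ\chi(z)=z$ for every $z\in\bar Z_f$ (such a section exists by \cite[Section 1.5]{Partha}, since the fibres $f(\bar\sigma)^{-1}(\{z\})\cap\bar Y_f$ are compact), and define the associated cocycle $c(z)=\chi\circ\bar\sigma(z)-\bar\sigma\circ\chi(z)\in W_f^\circ$. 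As in Corollary \ref{c:product2}, the map $\eta(w)=(f(\bar\sigma)(w),\,w-\chi\circ f(\bar\sigma)(w))$ is an injective Borel map $\bar Y_f\longrightarrow\bar Z_f\times W_f^\circ$ intertwining $\bar\sigma$ with $\tau'(z,v)=(\bar\sigma z,\bar\sigma v-c(z))$. I would then observe that $\eta\circ\chi$ gives a Borel section of $\pi_1$ on $\bar Z_f\times W_f^\circ$ sitting in the image of $\eta$; equivalently, I want a $\tau'$-invariant probability measure on $\bar Z_f\times W_f^\circ$ whose $\pi_1$-projection is $\nu$ and which is supported in $\eta(\bar Y_f)$.

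The construction of that $\tau'$-invariant measure is the heart of the argument. Because $\bar\sigma$ acts isometrically on $W_f^\circ$, the skew product $\tau'$ over $(\bar Z_f,\bar\sigma,\nu)$ has zero fibre-entropy, and one can produce an invariant measure projecting to $\nu$ by a standard compactness/averaging argument: start from the measure $(\eta\circ\chi)_*\nu$ on $\bar Z_f\times W_f^\circ$ (supported in $\bar Z_f\times\bar B_1(W_f^\circ)$ since $\chi$ takes values in $\bar Y_f$ and $\bar\xi^*\circ f(\bar\sigma)$ is bounded by Theorem \ref{t:boundedness}(3)), form the Cesàro averages $\frac1N\sum_{k=0}^{N-1}(\tau')^k_*(\eta\circ\chi)_*\nu$, and extract a weak$^*$-limit point $\tilde\nu$. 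All these measures live in the compact set $\bar Z_f\times\bar B_{1+2c}(W_f^\circ)$ (the cocycle $c$ is uniformly bounded by $2c$ by \eqref{eq:commutation}), so $\tilde\nu$ exists, is $\tau'$-invariant, and has $(\pi_1)_*\tilde\nu=\nu$. The point $\mu\coloneqq(\eta^{-1})_*\tilde\nu$ — well-defined once one checks $\tilde\nu$ is concentrated on the Borel set $\eta(\bar Y_f)$, which follows because $\eta(\bar Y_f)$ is $\tau'$-invariant and contains the support of $(\eta\circ\chi)_*\nu$ — is then the desired $\bar\sigma$-invariant probability measure on $\bar Y_f$ with $f(\bar\sigma)_*\mu=\nu$. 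The entropy identity $h_\mu(\bar\sigma)=h_\nu(\bar\sigma)$ is then immediate from Corollary \ref{c:product2}. For the ergodicity statement: if $\nu$ is ergodic, decompose the constructed $\mu$ into its ergodic components; pushing forward under the shift-equivariant map $f(\bar\sigma)$, almost every ergodic component maps to an ergodic $\bar\sigma$-invariant measure on $\bar Z_f$, and since $f(\bar\sigma)_*\mu=\nu$ is ergodic, $\nu$-almost every such component maps exactly to $\nu$; picking one such component gives an ergodic $\mu$ with $f(\bar\sigma)_*\mu=\nu$ (a minor point is that the $\nu$-a.e. equality really selects a genuine component, which follows from the ergodic decomposition applied to the pair $(\mu,\nu)$).

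For the final assertion, suppose $\mu'$ is any $\bar\sigma$-invariant probability measure on $W_f$ with $f(\bar\sigma)_*\mu'=\nu$. I would exhibit a $(\bar\sigma\times\bar\sigma)$-invariant joining of $\mu$ and $\mu'$ concentrated on pairs $(w,w')$ with $w-w'\in W_f^\circ$. The natural candidate is the relatively independent joining over the common factor $(\bar Z_f,\bar\sigma,\nu)$ via the map $f(\bar\sigma)$: writing $\mu=\int_{\bar Z_f}\mu_z\,d\nu(z)$ and $\mu'=\int_{\bar Z_f}\mu'_z\,d\nu(z)$ for the disintegrations over $f(\bar\sigma)$, set $\lambda=\int_{\bar Z_f}(\mu_z\times\mu'_z)\,d\nu(z)$. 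This $\lambda$ is a $(\bar\sigma\times\bar\sigma)$-invariant joining of $\mu$ and $\mu'$ by shift-equivariance of the disintegration, and for $\lambda$-a.e.\ $(w,w')$ one has $f(\bar\sigma)(w)=f(\bar\sigma)(w')$, hence $w-w'\in\ker f(\bar\sigma)=W_f^\circ$ by \eqref{eq:Wf0}. That is precisely central equivalence of $\mu$ and $\mu'$ in the sense defined before Proposition \ref{p:central2}. The main obstacle I anticipate is the measure-theoretic bookkeeping around the Borel section $\chi$ and the verification that the limit measure $\tilde\nu$ really does sit inside $\eta(\bar Y_f)$ (rather than spilling into $\bar Z_f\times W_f^\circ$ outside the graph-like image of $\eta$); this is where one uses that $\bar Y_f$ is closed, $\eta$ is a Borel isomorphism onto its (Borel, $\tau'$-invariant) image, and the Cesàro averages stay on that image because $\tau'$ preserves it and the initial measure is supported there.
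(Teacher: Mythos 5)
The second half of your argument (the relatively independent joining over the common factor $(\bar Z_f,\bar\sigma,\nu)$ via the disintegrations $\mu_z$, $\mu'_z$, with $w-w'\in\ker f(\bar\sigma)=W_f^\circ$ almost everywhere) is exactly the paper's proof of the central-equivalence assertion, and the ergodicity refinement and the use of Corollary \ref{c:product2} for the entropy identity are also correct and match the paper's intent (the paper's citation of Proposition \ref{p:central2} at that point appears to be a slip for Corollary \ref{c:product2}).

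However, there is a genuine gap in your construction of $\mu$. You form the Cesàro averages of $(\eta\circ\chi)_*\nu$ under $\tau'$ in $\bar Z_f\times W_f^\circ$ and extract a weak$^*$ limit $\tilde\nu$, and then you need $\tilde\nu$ to be concentrated on $\eta(\bar Y_f)$ in order to define $\mu=(\eta^{-1})_*\tilde\nu$. You argue that this follows because $\eta(\bar Y_f)$ is $\tau'$-invariant and carries the initial measure; but $\eta(\bar Y_f)$ is only a Borel subset of $\bar Z_f\times W_f^\circ$ (since $\chi$, and hence $\eta$ and $\eta^{-1}$, is merely Borel, not continuous), and weak$^*$ limits do not preserve the property of giving full measure to a Borel set --- only to a closed one. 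The fact that every Cesàro average is concentrated on $\eta(\bar Y_f)$ does not imply that $\tilde\nu$ is. This is precisely the point you flag as ``the main obstacle,'' but the resolution you sketch (invariance of the image plus support of the initial measure) does not close it.

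The remedy is to abandon the detour through $\eta$ and $\tau'$, which are unnecessary for this part of the proposition, and do the averaging directly on the compact space $\bar Y_f$, as the paper indicates with its appeal to compactness of $\bar Y_f$ and continuity of $f(\bar\sigma)$. Choose a Borel section $\chi\colon\bar Z_f\longrightarrow\bar Y_f$ as you do, set $\mu_N=\tfrac1N\sum_{k=0}^{N-1}\bar\sigma^k_*\chi_*\nu\in M(\bar Y_f)$, and extract a weak$^*$ limit point $\mu$. Since $\bar Y_f$ is compact, $\mu$ is automatically a probability measure on $\bar Y_f$, and it is $\bar\sigma$-invariant by the usual Krylov--Bogolyubov argument. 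Since $f(\bar\sigma)\colon\bar Y_f\longrightarrow\bar Z_f$ is continuous and shift-equivariant and $f(\bar\sigma)_*\chi_*\nu=\nu$, one has $f(\bar\sigma)_*\mu_N=\tfrac1N\sum_{k}\bar\sigma^k_*\nu=\nu$ for every $N$, and hence $f(\bar\sigma)_*\mu=\nu$ by continuity of $f(\bar\sigma)_*$ on $M(\bar Y_f)$. The entropy identity then follows by applying Corollary \ref{c:product2}, and the ergodic refinement and the joining argument proceed as you wrote.
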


	\begin{proof}
The first assertion is clear from the compactness of $\bar{Y}_f$, the continuity of the shift-equivariant map $f(\bar{\sigma })\colon \bar{Y}_f\longrightarrow \bar{Z}_f$, and Proposition \ref{p:central2}.

In order to verify the central equivalence of $\mu $ and $\mu '$ we choose decompositions $\{\mu _z:z\in \bar{Z}_f\}$ and $\{\mu _z':z\in \bar{Z}_f\}$ of $\mu $ and $\mu '$ into probability measures such that $\mu _z(f(\bar{\sigma })^{-1}(\{z\})\cap \bar{Y}_f)= \mu _z'(f(\bar{\sigma })^{-1}(\{z\})\cap W_f) = 1$ for every $z\in \bar{Z}_f$, $\mu =\int \mu _z\,d\nu (z)$, $\mu '=\int \mu _z'\,d\nu (z)$, and the maps $z\mapsto \mu _z$ and $z\mapsto \mu _z'$ are Borel. For every $z\in \bar{Z}_f$ we define the product measure $\tilde{\mu }_z=\mu _z\times \mu _z'$ on $(f(\bar{\sigma })^{-1}(\{z\})\cap\bar{Y}_f)\times (f(\bar{\sigma })^{-1}(\{z\})\cap W_f)\subset \bar{Y}_f\times W_f$. The probability measure $\tilde{\mu }$ on $\bar{Y}_f\times W_f$, defined by
	\begin{displaymath}
\int g\, d\tilde{\mu }=\int g(y_1,y_2)\, d\tilde{\mu }_z(y_1,y_2)\,d\nu (z)
	\end{displaymath}
for every bounded real-valued Borel map $g$ on $\bar{Y}_f\times W_f$, is a joining of $\mu $ and $\mu '$ such that $y_1-y_2\in W_f^\circ$ for $\tilde{\mu }\textsl{-a.e.}\;(y_1,y_2)\in \bar{Y}_f\times W_f$.
	\end{proof}

We denote by $\bar{Z}_f'$ and $X_f'$ the set of doubly transitive points in $\bar{Z}_f$ and $X_f$, respectively, and put $Z_f'=Z_f\cap \bar{Z}_f'$ (for notation we refer to Theorem \ref{t:2}).

	\begin{lemm}
	\label{l:minimal}
Let $\mathsf{C}_f\subset \bar{Z}_f\times W_f^\circ$ be a compact $\tau $-invariant subset which is minimal with respect to the condition that $\pi _1(\mathsf{C}_f)=\bar{Z}_f$ \textup{(}cf. Theorem \ref{t:cover}\textup{)}. Then $(\mathsf{C}_f,\tau )$ is topologically transitive. Furthermore there exists a continuous map $b\colon \bar{Z}_f'\longrightarrow W_f^\circ$ such that the set of doubly transitive points in $\mathsf{C}_f$ is given by $\mathsf{C}_f'=\{(z,b(z)):z\in \bar{Z}_f'\}$, and
	\begin{equation}
	\label{eq:coboundary1}
\mathsf{d}(z)=b\circ \bar{\sigma }(z)-\bar{\sigma }\circ b(z)
	\end{equation}
for every $z\in \bar{Z}_f'$.
	\end{lemm}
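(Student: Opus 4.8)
The plan is to use that $\tau $ exhibits $\mathsf{C}_f$ as an \emph{isometric} extension of $(\bar{Z}_f,\bar{\sigma })$, together with the minimality of $\mathsf{C}_f$, to force the fibre of $\pi _1$ over each doubly transitive point to be a single point; topological transitivity will fall out along the way, and the coboundary identity will then be automatic.

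\emph{Step 1: topological transitivity, and the shape of $\mathsf{C}_f'$.} Fix $z\in \bar{Z}_f'$ and a point $(z,w)\in \mathsf{C}_f$. The $\omega $-limit set $\omega _\tau (z,w)$ is a nonempty closed $\tau $-invariant subset of $\mathsf{C}_f$; since the forward $\bar{\sigma }$-orbit of $z$ and each of its tails is dense in $\bar{Z}_f$ (recall from Theorem \ref{t:2}(1) that $\bar{Z}_f$ has no isolated points), and the fibres of $\pi _1|_{\mathsf{C}_f}$ are uniformly bounded, one gets $\pi _1(\omega _\tau (z,w))=\bar{Z}_f$, so minimality of $\mathsf{C}_f$ forces $\omega _\tau (z,w)=\mathsf{C}_f$ and the forward $\tau $-orbit of $(z,w)$ is dense. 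The same argument with the $\alpha $-limit set (using backward transitivity of $z$) gives density of the backward orbit. Hence every point of $\mathsf{C}_f$ lying over a doubly transitive point of $\bar{Z}_f$ is itself doubly transitive, so $(\mathsf{C}_f,\tau )$ is topologically transitive; and since $\pi _1$ carries doubly transitive points to doubly transitive points, $\mathsf{C}_f'=\pi _1^{-1}(\bar{Z}_f')\cap \mathsf{C}_f$.

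\emph{Step 2: reduction to singleton fibres.} By Step 1 the remaining content of the lemma is that for each $z\in \bar{Z}_f'$ the fibre $\mathsf{F}_z=\{w:(z,w)\in \mathsf{C}_f\}$ is a single point $b(z)$. Granting this, continuity of $b$ is immediate (if $z_n\to z$ in $\bar{Z}_f'$, every subsequential limit of $(z_n,b(z_n))\in \mathsf{C}_f$ lies over $z$, hence equals $(z,b(z))$), and the coboundary identity is forced: $\tau (z,b(z))=(\bar{\sigma }z,\bar{\sigma }b(z)+\mathsf{d}(1,z))$ is a point of $\mathsf{C}_f$ over $\bar{\sigma }z\in \bar{Z}_f'$, hence equals $(\bar{\sigma }z,b(\bar{\sigma }z))$, and comparing second coordinates gives $b(\bar{\sigma }z)-\bar{\sigma }b(z)=\mathsf{d}(1,z)$, which is \eqref{eq:coboundary1}. (It is convenient here to transport everything via $\bar{\zeta }(v,w)=\bar{\xi }^*(v)+w$, which is injective on $V_f\times W_f^\circ$ since applying $f(\bar{\sigma })$ and Theorem \ref{t:boundedness}(1) recovers $v$ and then $w$; this identifies $(\mathsf{C}_f,\tau )$ with a compact shift-invariant $\mathsf{B}\subset W_f$ on which $\pi _1$ becomes $f(\bar{\sigma })$ and $\zeta $ becomes $\rho $, and the singleton-fibre assertion becomes injectivity of $f(\bar{\sigma })$ on $\mathsf{B}$ over $\bar{Z}_f'$.)

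\emph{Step 3: singleton fibres --- the crux.} The function $\Psi ((z,w),(z,w'))=\|w-w'\|_\infty $ is continuous and $(\tau \times \tau )$-invariant on the compact $(\tau \times \tau )$-invariant fibred square $\mathsf{C}_f\times _{\bar{Z}_f}\mathsf{C}_f$, the invariance being exactly that $\bar{\sigma }$ acts isometrically on $W_f^\circ$. Suppose some $\mathsf{F}_{z_0}$, $z_0\in \bar{Z}_f'$, contains $w_0\ne w_1$; both $(z_0,w_i)$ are doubly transitive by Step 1, so the orbit closure $\mathsf{E}$ of $((z_0,w_0),(z_0,w_1))$ is point-transitive and $\Psi \equiv \delta :=\|w_0-w_1\|_\infty >0$ on $\mathsf{E}$; moreover $s((z,a),(z,b)):=b-a$ is $(\tau \times \tau ,\bar{\sigma })$-equivariant, so $s(\mathsf{E})$ is a compact minimal $\bar{\sigma }$-orbit closure inside $W_f^\circ \smallsetminus \{0\}$ --- a positive-dimensional equicontinuous (zero-entropy) factor, since $f$ noncyclotomic forces $\bar{\sigma }$ to have no nonzero fixed vector in $W_f^\circ$. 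Minimality of $\mathsf{C}_f$ gives $p_1(\mathsf{E})=\mathsf{C}_f$, and the goal is to contradict the existence of the nontrivial factor $s(\mathsf{E})$. The route I would take is a Gottschalk--Hedlund-type argument: a bounded orbit in the isometric skew product $\tau $ over the \emph{transitive} base $\bar{Z}_f$ should force $\mathsf{d}$ to be cohomologous to $0$ over $\bar{Z}_f'$, so that the fibre there is a single point --- and the part that genuinely needs care, and that I expect to be the main obstacle, is precisely that $\bar{Z}_f$ is only topologically transitive, not minimal, so one must replace the classical theorem (which yields a globally continuous transfer function over a minimal base) by its version producing a transfer function continuous only on the dense set of transitive points. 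In parallel I would try the measure-theoretic route: since $(\mathsf{C}_f,\tau )$ is an equal-entropy cover of $(X_f,\alpha _f)$ (Theorem \ref{t:cover}) and $\lambda _{X_f}$ is the unique measure of maximal entropy on $X_f$ (Theorem \ref{t:entropy}), a nontrivial equicontinuous piece $s(\mathsf{E})$ carrying an extra (zero-entropy) invariant measure should be incompatible with this rigidity.
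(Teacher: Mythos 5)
Your Steps 1 and 2 are correct, and they actually supply details the paper's proof leaves implicit: that every point of $\mathsf{C}_f$ lying over a doubly transitive point of $\bar{Z}_f$ is itself doubly transitive (so $\mathsf{C}_f' = \pi_1^{-1}(\bar{Z}_f')\cap\mathsf{C}_f$), and that once the fibres over $\bar{Z}_f'$ are known to be singletons, continuity of $b$ and the coboundary relation \eqref{eq:coboundary1} are formal. Your Step 3 --- singleton fibres over $\bar{Z}_f'$ --- is where you explicitly stop short, and you are right that this is the heart of the lemma.

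What the paper actually does there is short: it introduces the translation action $R^w(v,w')=(v,w+w')$ of $W_f^\circ$ and asserts that the minimality of $\mathsf{C}_f$ (with respect to $\pi_1(\mathsf{C}_f)=\bar{Z}_f$) forces $\mathsf{C}_f'\cap R^w(\mathsf{C}_f')=\varnothing$ for every nonzero $w\in W_f^\circ$, which is exactly the singleton-fibre statement restated in different language. The paper does not spell out why minimality yields this, so it is deferring (or taking as known) precisely the point you flag as the main obstacle. Your reformulation --- that a non-singleton fibre over a doubly transitive point produces, via the fibred square and the map $s((z,a),(z,b))=b-a$, a positive-dimensional equicontinuous factor inside $W_f^\circ\smallsetminus\{0\}$, which one wants to rule out either by a Gottschalk--Hedlund-type cohomology argument or by entropy/rigidity --- is a sensible way to frame the missing step, and your identification of the real difficulty (that $\bar{Z}_f$ is only topologically transitive, not minimal, so the classical Gottschalk--Hedlund theorem does not apply directly) is exactly right. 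As it stands, though, neither of your two proposed routes for Step 3 is carried through, so the proposal remains incomplete at the same place where the paper's own argument is terse; to close it you would need either a relative-minimality argument adapted to a merely transitive base, or to actually prove the paper's assertion that minimality forbids $\mathsf{C}_f'\cap R^w(\mathsf{C}_f')\ne\varnothing$ for $w\ne 0$.
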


	\begin{proof}
Let $R\colon w\mapsto R^w$ be the action of $W_f^\circ$ on $V_f\times W_f^\circ$ given by $R^w(v,w')=(v,w+w')$. Our minimality condition on $\mathsf{C}_f$ implies that $\mathsf{C}_f'\cap R^w(\mathsf{C}_f')=\varnothing $ for every nonzero $w\in W_f^\circ$. In other words, there exists a map $\mathsf{b}\colon \bar{Z}_f'\longrightarrow W_f^\circ$ such that $\mathsf{C}_f'=\{(z,b(z)):z\in \bar{Z}_f'\}$. Since $\mathsf{C}_f$ is closed, $b$ is continuous, and \eqref{eq:coboundary1} follows from the $\tau $-invariance of $\mathsf{C}_f$.
	\end{proof}

In view of Lemma \ref{l:minimal} we can define a continuous $(\bar{\sigma },\tau )$-equivariant bijection $\mathsf{b}\colon \mathsf{C}_f'\longrightarrow \bar{Z}_f'$ by setting
	\begin{displaymath}
\mathsf{b}(z)=(z,b(z))\enspace \textup{for every}\enspace z\in Z_f'.
	\end{displaymath}

	\begin{lemm}
	\label{l:injective}
Let $\bar{\zeta }^*=\bar{\zeta }\circ \mathsf{b} \colon \bar{Z}_f'\longrightarrow W_f$, and let $\zeta ^*=\rho \circ \bar{\zeta }^*\colon \bar{Z}_f'\longrightarrow X_f$. Then $\zeta ^*$ is $(\bar{\sigma },\alpha _f)$-equivariant.

\smallskip \textup{(1)} If $\nu $ is a fully supported $\bar{\sigma }$-invariant and ergodic probability measure on $\bar{Z}_f$, then $\bar{\mu }\coloneqq \bar{\zeta }^*_*\nu $ is a well-defined $\bar{\sigma }$-invariant and ergodic probability measure on $W_f$ with $h_{\bar{\mu }}(\bar{\sigma })=h_\nu (\bar{\sigma })$.

\smallskip \textup{(2)} If $\mu $ is a fully supported, $\bar{\sigma }$-invariant and ergodic probability measure on $\bar{Y}_f$, then $\nu =f(\bar{\sigma })_*\mu $ is fully supported, $\bar{\sigma }$-invariant and ergodic on $\bar{Z}_f$, and the shift-invariant probability measure $\bar{\mu }\coloneqq \bar{\zeta }^*_*\nu $ on $W_f$ is centrally equivalent to $\mu $.
	\end{lemm}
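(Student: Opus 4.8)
The plan is to verify equivariance first, then deduce (1) by transporting the measure and using the entropy identities already established, and finally to deduce (2) by exhibiting an explicit joining witnessing central equivalence. The equivariance of $\zeta ^*$ is immediate: $\mathsf{b}\colon \bar{Z}_f'\longrightarrow \mathsf{C}_f'$ is $(\bar{\sigma },\tau )$-equivariant by Lemma \ref{l:minimal}, $\bar{\zeta }\colon V_f\times W_f^\circ\longrightarrow W_f$ is $(\tau ,\bar{\sigma })$-equivariant (established right after \eqref{eq:barzeta}), and $\rho $ is $(\bar{\sigma },\alpha _f)$-equivariant; composing, $\bar{\zeta }^*$ is $(\bar{\sigma },\bar{\sigma })$-equivariant and $\zeta ^*=\rho \circ \bar{\zeta }^*$ is $(\bar{\sigma },\alpha _f)$-equivariant on $\bar{Z}_f'$.

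For (1), since $\nu $ is fully supported and ergodic, $\nu (\bar{Z}_f')=1$ (doubly transitive points carry every fully supported ergodic measure), so $\bar{\mu }=\bar{\zeta }^*_*\nu $ is a well-defined Borel probability measure on $W_f$; its $\bar{\sigma }$-invariance and ergodicity follow from the $\bar{\sigma }$-equivariance of $\bar{\zeta }^*$ and the invariance and ergodicity of $\nu $. For the entropy identity I would observe that $\bar{\zeta }^*$ is injective on $\bar{Z}_f'$ --- this is exactly the content of the description $\mathsf{C}_f'=\{(z,b(z)):z\in \bar{Z}_f'\}$ in Lemma \ref{l:minimal} together with the fact that $\bar{\zeta }=\bar{\xi }^*(v)+w$ restricted to a graph $\{(z,b(z))\}$ is injective precisely when the first coordinate determines the point. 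Hence $\bar{\zeta }^*$ is a $\bar{\sigma }$-equivariant Borel isomorphism between $(\bar{Z}_f',\bar{\sigma },\nu )$ and its image, which gives $h_{\bar{\mu }}(\bar{\sigma })=h_\nu (\bar{\sigma })$ by invariance of measure-theoretic entropy under measurable isomorphism. Alternatively, and more robustly, one factors $\bar{\zeta }^*$ through $\mathsf{C}_f$ and applies Corollary \ref{c:product2} (or Proposition \ref{p:product}): $\bar{\mu }$ is the pushforward under $\bar{\zeta }$ of $\mathsf{b}_*\nu $, which sits inside $\bar{Z}_f\times W_f^\circ$ with first marginal $\nu $ and with $W_f^\circ$-component of zero topological entropy, so $h_{\bar{\mu }}(\bar{\sigma })=h_{\mathsf{b}_*\nu }(\tau )=h_\nu (\bar{\sigma })$ by Lemma \ref{l:entropy}(2) applied at the measure-theoretic level.

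For (2), start from $\mu $ fully supported, $\bar{\sigma }$-invariant, ergodic on $\bar{Y}_f$; since $f(\bar{\sigma })\colon \bar{Y}_f\longrightarrow \bar{Z}_f$ is continuous, surjective and shift-equivariant, $\nu =f(\bar{\sigma })_*\mu $ is $\bar{\sigma }$-invariant, ergodic and fully supported on $\bar{Z}_f$, so (1) applies and $\bar{\mu }=\bar{\zeta }^*_*\nu $ is defined with $h_{\bar{\mu }}(\bar{\sigma })=h_\nu (\bar{\sigma })=h_\mu (\bar{\sigma })$ (the last equality by Corollary \ref{c:product2}). To see that $\bar{\mu }$ and $\mu $ are centrally equivalent I would construct the joining $\kappa $ on $W_f\times W_f$ as follows: decompose $\mu =\int \mu _z\,d\nu (z)$ with $\mu _z$ supported on $f(\bar{\sigma })^{-1}(\{z\})\cap \bar{Y}_f$ (as in the proof of Proposition \ref{p:pre-images}), and push forward $\int (\mu _z\times \delta _{\bar{\zeta }^*(z)})\,d\nu (z)$ into $\bar{Y}_f\times W_f$. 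This is a Borel probability measure; it is $(\bar{\sigma }\times\bar{\sigma })$-invariant because $z\mapsto \mu _z$ is almost-equivariant over $(\bar{Z}_f,\bar{\sigma },\nu )$ and $\bar{\zeta }^*$ is $\bar{\sigma }$-equivariant; its first marginal is $\mu $ and its second is $\bar{\mu }$; and for a point $(w,\bar{\zeta }^*(z))$ in its support we have $f(\bar{\sigma })(w)=z$ and, by Theorem \ref{t:boundedness}(1) together with $f(\bar{\sigma })\circ \bar{\zeta }=\pi _1$, also $f(\bar{\sigma })(\bar{\zeta }^*(z))=z$, so $w-\bar{\zeta }^*(z)\in \ker f(\bar{\sigma })=W_f^\circ$. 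Hence $\kappa $ witnesses central equivalence of $\mu $ and $\bar{\mu }$.

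The main obstacle I anticipate is the measurable-selection bookkeeping in part (2): one must ensure the disintegration $z\mapsto \mu _z$ can be chosen jointly Borel and compatibly with the shift so that the resulting $\kappa $ is genuinely $(\bar{\sigma }\times\bar{\sigma })$-invariant rather than merely invariant after discarding a null set --- but this is handled exactly as in Proposition \ref{p:pre-images}, using that the fibres $f(\bar{\sigma })^{-1}(\{z\})\cap \bar{Y}_f$ are compact and the standard measurable-selection results of \cite[Section 1.5]{Partha}. A secondary, more technical point is confirming that $\bar{\zeta }^*$ is genuinely well-defined and Borel on $\bar{Z}_f'$ --- i.e. that $b$ in Lemma \ref{l:minimal} is continuous, which that lemma already asserts --- so that $\bar{\zeta }^*_*\nu $ makes sense as a Borel measure; everything else is a matter of assembling the cited results.
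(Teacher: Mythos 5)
Your proof is correct and, for the equivariance and part (1), takes essentially the same approach as the paper's own (very terse) proof, which also invokes the equivariance of $\mathsf{b}$, notes that a fully supported ergodic measure gives $\nu(\bar{Z}_f')=1$, and appeals to injectivity of $\bar{\zeta}^*$ on $\bar{Z}_f'$ to conclude the entropy equality. A small remark on your justification of that injectivity: the phrase ``injective precisely when the first coordinate determines the point'' is somewhat circular. The clean argument is the one you actually deploy in part (2), namely that $f(\bar{\sigma})\circ\bar{\zeta}^*=\textup{Id}$ on $\bar{Z}_f'$ (since $f(\bar{\sigma})\circ\bar{\xi}^*=\textup{Id}$ by Theorem \ref{t:boundedness}(1) and $b(z)\in W_f^\circ=\ker f(\bar{\sigma})$), so $\bar{\zeta}^*$ has a left inverse. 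The paper is equally terse on this point.

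For part (2), the paper's proof of the lemma is in fact silent; the central equivalence is only addressed implicitly, being delegated to Proposition \ref{p:pre-images} (which is what the paper cites alongside this lemma when proving Theorem \ref{t:measures}). Your direct construction of the joining $\int(\mu_z\times\delta_{\bar{\zeta}^*(z)})\,d\nu(z)$ is exactly the argument of Proposition \ref{p:pre-images} specialized to the case where the second disintegration $\mu_z'$ is the Dirac mass $\delta_{\bar{\zeta}^*(z)}$, so your proof and the paper's intended route coincide in substance. The measurable-selection bookkeeping you flag as a possible obstacle is indeed handled by \cite[Section 1.5]{Partha}, as in Proposition \ref{p:pre-images}. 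One further minor point: in part (1) your alternative route invoking ``Lemma \ref{l:entropy}(2) applied at the measure-theoretic level'' should more properly cite Proposition \ref{p:product} or Corollary \ref{c:product} directly, since Lemma \ref{l:entropy}(2) concerns topological entropy; but you do also cite Proposition \ref{p:product}, so this is stylistic.
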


	\begin{proof}
The $(\bar{\sigma },\alpha _f)$-equivariance of $\zeta ^*$ follows from the equivariance of $\mathsf{b}$. If $\nu $ is a $\bar{\sigma }$-invariant, ergodic, and fully supported probability measure on $\bar{Z}_f$, then $\nu (\bar{Z}_f')=1$, and $\bar{\mu }=\bar{\zeta }^*_*\nu $ is well-defined. Furthermore, since $\bar{\zeta }^*$ is injective on $\bar{Z}_f'$, $h_{\bar{\mu }}(\bar{\sigma })=h_\nu (\bar{\sigma })$.
	\end{proof}

The following theorem summarizes the connection between shift-invariant probability measures on $\bar{Z}_f$, $\bar{Y}_f$ and $X_f$.

	\begin{theo}
	\label{t:measures}
\textup{(1)} Let $\nu $ be a $\bar{\sigma }$-invariant probability measure on $\bar{Z}_f$. Then there exists a $\bar{\sigma }$-invariant probability measure $\mu $ on $\bar{Y}_f$ such that $f(\bar{\sigma })_*\mu =\nu $ and hence $h_\mu (\bar{\sigma })=h_\nu (\bar{\sigma })$. The measure $\mu $ is unique up to central equivalence. If $\nu $ is ergodic under $\bar{\sigma }$, then $\mu $ can also be chosen to be $\bar{\sigma }$-ergodic.

Furthermore, the probability measure $\rho _*\mu $ on $X_f$ is $\alpha _f$-invariant, but may have lower entropy than $\nu $.

\smallskip \textup{(2)} If the probability measure $\nu $ in \textup{(1)} is fully supported and ergodic on $\bar{Z}_f$, then $\nu (\bar{Z}_f')=1$, and the probability measure $\bar{\zeta }_*^*\nu $ on $W_f$ is well-defined, $\bar{\sigma }$-invariant, ergodic, and centrally equivalent to the measure $\mu $ in \textup{(1)}.

\smallskip \textup{(3)} Suppose that $\nu $ is a fully supported and ergodic $\bar{\sigma }$-invariant probability measure on $\bar{Z}_f$ with $\nu (Z_f)=1$. Then the $\bar{\sigma }$-invariant probability measure $\mu $ on $\bar{Y}_f$ in \textup{(1)} with $f(\bar{\sigma })_*\mu =\nu $ satisfies that $\mu (Y_f)=1$ and hence $h_\nu (\bar{\sigma })=h_\mu (\bar{\sigma })=h_{\rho _*\mu }(\alpha _f)$. Moreover, if $\bar{\mu }$ is a $\bar{\sigma }$-invariant and ergodic probability measure on $W_f$ with $f(\bar{\sigma })_*\bar{\mu }=\nu $, then $h_\nu (\bar{\sigma })=h_{\bar{\mu }}(\bar{\sigma })=h_{\rho _*\bar{\mu }}(\alpha _f)=h_{\rho _*\mu }(\alpha _f)$.

\smallskip \textup{(4)} Finally, let $\mu '$ be a fully supported $\alpha _f$-invariant and ergodic probability measure on $X_f$, and let $\mu $ be the unique probability measure on $Y_f$ such that $\rho _*\mu =\mu '$. Put $\nu =f(\bar{\sigma })_*\mu $ and $\mu ''=\zeta ^*_*\nu $. Then $\mu '$ and $\mu ''$ are centrally equivalent.
	\end{theo}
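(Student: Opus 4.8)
The plan is to read Theorem~\ref{t:measures} off the results already established in this section, the only substantial new point being the claim $\mu(Y_f)=1$ in part~(3). For (1), the existence of a $\bar\sigma$-invariant $\mu$ on $\bar Y_f$ with $f(\bar\sigma)_*\mu=\nu$, the possibility of taking $\mu$ ergodic when $\nu$ is ergodic, and uniqueness up to central equivalence are exactly Proposition~\ref{p:pre-images} (its uniqueness clause is phrased for measures on $W_f$, but $\bar Y_f\subset W_f$, so it applies a fortiori to measures on $\bar Y_f$); the identity $h_\mu(\bar\sigma)=h_\nu(\bar\sigma)$ is Corollary~\ref{c:product2}; and $\rho_*\mu$ is $\alpha_f$-invariant because $\rho\colon\bar Y_f\to X_f$ is continuous and intertwines $\bar\sigma$ with $\alpha_f$, the entropy possibly dropping because $\rho$ is injective only on $Y_f$ and may glue distinct points of $\bar Y_f\setminus Y_f$.

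For (2) I would first note that a fully supported ergodic $\nu$ on the topologically transitive system $(\bar Z_f,\bar\sigma)$ (Theorem~\ref{t:2}(2)) is carried by the doubly transitive points — apply Birkhoff's theorem to $\bar\sigma$ and to $\bar\sigma^{-1}$ and use $\operatorname{supp}\nu=\bar Z_f$ — so $\nu(\bar Z_f')=1$ and $\bar\zeta^*_*\nu$ is defined. By Lemma~\ref{l:minimal} the section $b$ is continuous, hence $\bar\zeta^*=\bar\zeta\circ\mathsf b$ is continuous, injective and $(\bar\sigma,\bar\sigma)$-equivariant on $\bar Z_f'$, and Lemma~\ref{l:injective}(1) gives that $\bar\mu:=\bar\zeta^*_*\nu$ is $\bar\sigma$-invariant, ergodic, with $h_{\bar\mu}(\bar\sigma)=h_\nu(\bar\sigma)$. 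Since $\bar\zeta^*(z)=\bar\xi^*(z)+b(z)$ with $b(z)\in\ker f(\bar\sigma)$, Theorem~\ref{t:boundedness}(1) yields $f(\bar\sigma)\circ\bar\zeta^*=\mathrm{id}$ on $\bar Z_f'$, so $f(\bar\sigma)_*\bar\mu=\nu$; the uniqueness clause of Proposition~\ref{p:pre-images} then makes $\bar\mu$ centrally equivalent to $\mu$.

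Part~(3) is where the work lies. Both $Y_f$ and $\bar Y_f\setminus Y_f=\bigcup_n\{w\in\bar Y_f:w_n=1\}$ are $\bar\sigma$-invariant Borel sets, so ergodicity of $\mu$ forces $\mu(Y_f)\in\{0,1\}$, and the task is to exclude the value $0$. I would do this by running the selection argument underlying Proposition~\ref{p:pre-images} with a Borel section $\chi\colon\bar Z_f\to\bar Y_f$ of $f(\bar\sigma)$ chosen so that $\chi(z)\in Y_f$ for every $z\in Z_f$ (possible since $f(\bar\sigma)(Y_f)=Z_f$): because $Y_f$ is $\bar\sigma$-invariant and $\nu(Z_f)=1$, the empirical measures $\tfrac1N\sum_{n<N}\bar\sigma^n_*(\chi_*\nu)$ all live on $Y_f$, and the delicate step is to show that a weak-$*$ limit $\mu$ does not leak onto the walls $\{w_n=1\}$. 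This is exactly the point where $\nu(Z_f)=1$ must enter essentially — it should force the $\bar\sigma$-orbits of $\nu$-typical lifts to spend full density in the compact slices $\bar Y_f\cap[0,1-\varepsilon]^{\mathbb Z}$ — whereas the static geometry of the fibres $f(\bar\sigma)^{-1}(z)\cap\bar Y_f$ does not by itself prevent a measure from concentrating on the part of a fibre meeting $\{w_n=1\}$. Granting $\mu(Y_f)=1$, $\rho$ is injective on a set of full $\mu$-measure, so $\rho$ is a measure isomorphism and $h_{\rho_*\mu}(\alpha_f)=h_\mu(\bar\sigma)=h_\nu(\bar\sigma)$; for an arbitrary ergodic $\bar\mu$ on $W_f$ with $f(\bar\sigma)_*\bar\mu=\nu$, Proposition~\ref{p:pre-images} gives $\bar\mu$ centrally equivalent to $\mu$, hence $h_{\bar\mu}(\bar\sigma)=h_\mu(\bar\sigma)$ by Proposition~\ref{p:central2}, while pushing this central equivalence forward through the homomorphism $\rho$ (which maps $W_f^\circ$ onto $X_f^\circ$) and invoking Proposition~\ref{p:central} gives $h_{\rho_*\bar\mu}(\alpha_f)=h_{\rho_*\mu}(\alpha_f)$, which closes the chain of equalities.

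Finally, for (4): $\rho|_{Y_f}\colon Y_f\to X_f$ is a Borel bijection intertwining $\bar\sigma$ with $\alpha_f$ (its inverse is Borel by Lusin--Souslin), so $\mu:=(\rho|_{Y_f})^{-1}_*\mu'$ is a well-defined ergodic $\bar\sigma$-invariant measure concentrated on $Y_f$, fully supported on $\bar Y_f$ because $\mu'$ is fully supported on $X_f$; then $\nu:=f(\bar\sigma)_*\mu$ is ergodic and fully supported on $\bar Z_f=f(\bar\sigma)(\bar Y_f)$ with $\nu(Z_f)\ge\mu(Y_f)=1$, so part~(2) applies and $\bar\mu:=\bar\zeta^*_*\nu$ is centrally equivalent to $\mu$; transporting this central equivalence through $\rho$ as above shows $\mu''=\zeta^*_*\nu=\rho_*\bar\mu$ is centrally equivalent to $\rho_*\mu=\mu'$. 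I expect the assertion $\mu(Y_f)=1$ in (3) to be the one genuine obstacle: ruling out an escape of invariant mass to the non-closed boundary $\bar Y_f\setminus Y_f$ requires the dynamical hypothesis $\nu(Z_f)=1$ in a way that goes beyond the bookkeeping needed for the remaining parts.
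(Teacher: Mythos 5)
For parts (1), (2) and (4) your argument essentially reproduces the paper's proof: (1) is Proposition~\ref{p:pre-images} together with Corollary~\ref{c:product2}; (2) is Lemmas~\ref{l:minimal} and~\ref{l:injective} together with $\nu(\bar Z_f')=1$; and (4) is obtained by routing through (2)/(3) and pushing central equivalence forward under $\rho$ — the paper merely says ``(4) follows from (3)'', and your more explicit account is consistent with that.

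The substantive disagreement is in part (3). You correctly isolate $\mu(Y_f)=1$ as the crux, but you only sketch an argument (empirical measures of a lifted Borel section) and candidly admit you cannot close the gap. The paper, by contrast, dispatches it in one sentence: ``Since $f(\bar\sigma)^{-1}(Z_f)\cap\bar Y_f\subset Y_f$, any shift-invariant measure $\mu$ on $\bar Y_f$ [with $f(\bar\sigma)_*\mu=\nu$ and $\nu(Z_f)=1$] satisfies $\mu(Y_f)=1$.'' That is, it reduces the claim to exactly the set-theoretic fibre inclusion you say the ``static geometry of the fibres'' cannot supply. Your doubt appears well founded, because the inclusion as stated seems to be false: with $f=5u^2-6u+5$, $\theta=(3+4i)/5\in\Theta_f^\circ$, the constant sequence $y\equiv\tfrac34$ lies in $Y_f$ (and $f(\bar\sigma)(y)\equiv 3\in Z_f$); put $u=\tfrac14\,\textup{Re}(w(\theta))\in W_f^\circ$ and $w=y+u$, so $w_n=\tfrac34+\tfrac14\cos(n\phi)\in[\tfrac12,1]$ with $w_0=1$, $w=\lim_k\bigl(y+(1-\tfrac1k)u\bigr)\in\bar Y_f$, and $f(\bar\sigma)(w)=f(\bar\sigma)(y)\in Z_f$; yet $w\notin Y_f$. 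So the purely set-theoretic inclusion cannot carry the proof: what is really needed is a measure-theoretic statement (for example, that the wall $\{w\in\bar Y_f:w_0=1\}\cap f(\bar\sigma)^{-1}(Z_f)$ is $\mu$-null), and that step is supplied neither by the paper's one-liner nor by your sketch. The gap you flagged is real, and it affects the paper's own proof as well as yours.
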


	\begin{proof}
Assertion (1) was proved in Proposition \ref{p:pre-images}. If $\nu $ is a fully supported and ergodic $\bar{\sigma }$-invariant probability measure on $\bar{Z}_f$, then $\nu (\bar{Z}_f')=1$, and Lemma \ref{l:injective} shows that $\bar{\zeta }_*^*\nu $ is well-defined and ergodic on $\bar{Y}_f$. The central equivalence of $\mu $ and $\bar{\zeta }_*^*\nu $ was verified in Lemma \ref{l:injective} and Proposition \ref{p:pre-images}. This proves (2).

We turn to (3). Let $\nu $ be a fully supported and ergodic $\bar{\sigma }$-invariant probability measure on $\bar{Z}_f$ with $\nu (Z_f)=1$. Since $f(\bar{\sigma })^{-1}(Z_f)\cap \bar{Y}_f\subset Y_f$, any shift-invariant measure $\mu $ on $\bar{Y}_f$ satisfies that $\mu (Y_f)=1$. Hence $h_{\rho _*\mu }(\alpha _f)=h_\mu (\bar{\sigma })=h_\nu (\bar{\sigma })$.

If $\bar{\mu }$ is a $\bar{\sigma }$-invariant and ergodic probability measure on $W_f$ with $f(\bar{\sigma })_*\bar{\mu }=\nu $, then $\bar{\mu }$ is centrally equivalent to $\mu $ by Proposition \ref{p:pre-images}, and hence, by Proposition \ref{p:central2}, an ergodic component of $\mu *\bar{m}_y$  for some $y\in W_f^\circ$. Then $\rho _*\bar{\mu }$ is an ergodic component of $\rho _*\mu *m_y$ (cf. Proposition \ref{p:central}), and hence centrally equivalent to $\rho _*\mu $. It follows that $h_{\rho _*\bar{\mu }}(\alpha _f) = h_{\rho _*\mu }(\alpha _f)=h_\nu (\bar{\sigma })$.

Since (4) follows from (3), the theorem is proved completely.
	\end{proof}

	\begin{exam}
	\label{e:Lebesgue}
If we set $\mu '=\lambda _{X_f}$ in Theorem \ref{t:measures} (4) we see that $\mu ''=\mu '$, since $h_{\mu '}(\alpha _f)=h_{\mu ''}(\alpha _f)=h(\alpha _f)$ and $\mu '$ is the unique $\alpha _f$-invariant measure of maximal entropy on $X_f$. This implies that $\zeta ^*(\bar{Z}_f') = X_f\;(\textup{mod}\,\lambda _{X_f})$.
	\end{exam}

\section{Some open problems}

\subsection{The space $\bar{Z}_f$}

The space $\bar{Z}_f$ defined in Theorem \ref{t:2} is not canonical. We could have defined, for any $c\in \mathbb{R}$, $Y_f^{(c)}=W_f\cap [c,c+1)^\mathbb{Z}$, put $Z_f^{(c)}=f(\bar{\sigma })(Y_f^{(c)})$, and written $\bar{Y}_f^{(c)}$ and $\bar{Z}_f^{(c)}$ for the corresponding closures. As far as I can tell this would not have made a significant difference to any of the properties of these sets (although the actual sets would have changed, of course).

The connection between $V_f$ and, by implication, $\bar{Z}_f$, and the `disk systems' in \cite{Petersen} was mentioned in Remark \ref{r:boundedness}. In \cite{Petersen}, the author attributes to B. Marcus the conjecture that certain disk systems discussed there are almost sofic\footnote{A shift space $\Omega \subset \mathsf{A}^\mathbb{Z}$ with finite alphabet $\mathsf{A}$ is \textit{almost sofic} if there exists, for every $\varepsilon >0$, a \textit{SFT} $\Omega '\subset \Omega $ with entropy $h(\Omega ')>h(\Omega )-\varepsilon $.} and raises the question whether the system associated with the Salem polynomial in Example \ref{e:nonexpansive} (1) is almost sofic. I am tempted towards the following conjecture.

	\begin{conj}
	\label{c:sofic}
If $f$ is irreducible and nonhyperbolic, the shift space $\bar{Z}_f$ is not almost sofic.
	\end{conj}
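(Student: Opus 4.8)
I would argue by contradiction: assume $\bar{Z}_f$ is almost sofic, so there are \textit{SFT}s $\Omega_n\subset\bar{Z}_f$ with $h(\bar\sigma|_{\Omega_n})\to h(\alpha_f)$, and extract from a single high‑entropy \textit{SFT} a combinatorial object that the boundedness built into $\bar{Z}_f$ forbids. The first ingredient is a \emph{uniform} disk bound for $\bar{Z}_f$: since $\bar{Z}_f=f(\bar{\sigma})(\bar{Y}_f)$ with $\|w\|_\infty\le1$ for $w\in\bar{Y}_f$, Theorem~\ref{t:boundedness}(3) gives a constant $c=c(f)$ with $\|\bar{\xi}^*(v)\|_\infty\le c$ for \emph{every} $v\in\bar{Z}_f$; unwinding the relation between $\bar{\xi}^*$ and the partial sums used in the proof of Corollary~\ref{c:boundedness}(1) (the oscillatory $\Theta_f^\circ$‑part of $\bar{\xi}^*(v)_k$ is, modulo the exponentially decaying $\Theta_f^-\cup\Theta_f^+$‑contributions, a Vandermonde combination of the numbers $\theta^k\sum_j v_j\theta^{-j}$, and the roots $\theta\in\Theta_f^\circ$ are simple because $f$ is irreducible) this upgrades, cf. Remark~\ref{r:boundedness}, to a constant $R=R(f)$ with
\[
\sup_{m,n\ge0}\Bigl|\,\sum_{k=-m}^{n}v_k\theta^k\,\Bigr|\le R\qquad\text{for all }v\in\bar{Z}_f\text{ and }\theta\in\Theta_f^\circ .
\]

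\textbf{The combinatorial core.} Let $\Omega\subset\bar{Z}_f$ be an \textit{SFT} with $h(\bar\sigma|_\Omega)>0$. A strongly connected component of the defining graph that carries the entropy has some vertex of out‑degree $\ge2$, hence two distinct first‑return loops $p\ne q$ at that vertex; these are primitive, so $p'\coloneqq pq$ and $q'\coloneqq qp$ are distinct loops of a common length $L=|p|+|q|$ at the same vertex. Consequently $\Omega$ contains all ``block sequences'' $\phi(a)$ that equal $p'$ or $q'$ on each window $[iL,(i+1)L)$. Fix $\theta\in\Theta_f^\circ$, set $\eta=\theta^L$ (never a root of unity, since $\theta$ is not one — here one uses total irreducibility of $\alpha_f$ if one wants this for all $L$ at once), and write $P(w)=\sum_j w_j\theta^j$. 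The partial sum of $\phi(a)$ over the first $i+1$ blocks equals $P(p')\,\tfrac{\eta^{i+1}-1}{\eta-1}+\delta\sum_{i'\le i}\varepsilon_{i'}\eta^{i'}$, where $\delta=P(q')-P(p')$ and $\varepsilon_{i'}\in\{0,1\}$ records the $i'$‑th block; the first term is bounded because $\eta$ is not a root of unity. Now the key lemma: \emph{for any $\eta$ on the unit circle that is not a root of unity there is $(\varepsilon_i)\in\{0,1\}^{\mathbb N}$ with $\sup_i|\sum_{i'\le i}\varepsilon_{i'}\eta^{i'}|=\infty$.} Run the greedy walk $z_0=0$, and $z_{i+1}=z_i+\eta^i$ if $\mathrm{Re}(\overline{z_i}\eta^i)\ge0$, else $z_{i+1}=z_i$; at each ``add'' step $|z_{i+1}|^2\ge|z_i|^2+1$, so if the walk stayed bounded there would be only finitely many adds, after which $\mathrm{Re}(\overline{z}\eta^i)<0$ for all large $i$ with $z$ a fixed nonzero vector — impossible since $\{\eta^i\}$ is dense in the circle. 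Combining, if \emph{some} diamond of $\Omega$ has $P(p')\ne P(q')$ for some $\theta\in\Theta_f^\circ$, the uniform bound $R$ is violated. Hence every diamond of every positive‑entropy \textit{SFT} in $\bar{Z}_f$ must be \emph{$\theta$‑blind}: $P(p')=P(q')$ for all $\theta\in\Theta_f^\circ$, i.e.\ $p'-q'\in f\cdot\mathbb{Z}[u]$.

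\textbf{The obstacle.} What remains is to bound the entropy of a \textit{$\theta$‑blind} \textit{SFT}, and this is where the real work lies. Writing $p'-q'=fg'$ (with $\deg g'<L$) and $z_a=\sum_i\varepsilon_i\bar{\sigma}^{-iL}g'$ — a bounded integer sequence — one has $\bar{\xi}^*(\phi(a))=\bar{\xi}^*(\phi(\bar a))+z_a+w_a^\circ$ with $w_a^\circ\in W_f^\circ$ by Theorem~\ref{t:boundedness}(1), and the uniform bound $\|\bar{\xi}^*(\phi(a))\|_\infty\le c$ pins $w_a^\circ$ within $2c$ of $-z_a$; since $W_f^\circ$ has dimension only $m=\deg f$ (cf.\ \eqref{eq:bound2}), the number of achievable patterns $(\varepsilon_0,\dots,\varepsilon_{N-1})$ is at most polynomial in $N$ — forcing entropy $0$ — \emph{provided} $g'$ has a coefficient exceeding $4c$, so that the two block values are resolvable inside $W_f^\circ$. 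The genuinely hard regime is the one in which \emph{all} blind (and all ``small‑visible'') diamond‑differences of $\Omega$ are of size $O(c)$; there small positive‑entropy \textit{SFT}s may well survive inside $\bar{Z}_f$ (this does not contradict the conjecture, which only forbids entropy approaching $h(\alpha_f)$), and one must prove a \emph{quantitative} form of the greedy lemma — an effective discrepancy bound for the rotation by $\eta$, using that $\eta=\theta^L$ is algebraic and hence Diophantine (Baker's lower bounds for $\bigl|\theta^n-1\bigr|$) — and trade it against the entropy of $\Omega$ to show that no \textit{SFT} assembled from $O(c)$‑scale structure can reach entropy arbitrarily close to $h(\alpha_f)=\int_0^1\log|f(e^{2\pi it})|\,dt$. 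This quantitative trade‑off is the true obstacle; it is precisely the sharpening of K.\ Petersen's question about soficity of disk systems referred to in Remark~\ref{r:boundedness}, and it is the one step of the argument that I do not see how to make unconditional with the tools assembled above — everything else is bookkeeping around Theorem~\ref{t:boundedness} together with the elementary greedy lemma.
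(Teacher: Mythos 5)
Conjecture \ref{c:sofic} is left open in the paper --- there is no proof to compare your argument against --- and your writeup is honest that it does not close the argument either. What you do establish is a sound reduction: every $v\in\bar Z_f\subset V_f$ has $\sup_{m,n\ge0}\bigl|\sum_{k=-m}^n v_k\theta^k\bigr|<\infty$ for each $\theta\in\Theta_f^\circ$ by Corollary~\ref{c:boundedness}(1); a positive-entropy \textit{SFT} $\Omega\subset\bar Z_f$ contains a diamond $p',q'$ of a common length $L$, and hence all two-block concatenations; and the greedy walk shows that if $\sum_j(p'_j-q'_j)\theta^j\ne0$ for some $\theta\in\Theta_f^\circ$ then one such concatenation has unbounded rotated partial sums, contradicting $\Omega\subset V_f$. (Two small points: you do not need total irreducibility to guarantee that $\eta=\theta^L$ is not a root of unity --- the standing noncyclotomicity assumption already rules this out; and for this step you do not need the uniform constant from Theorem~\ref{t:boundedness}(3) at all, since Corollary~\ref{c:boundedness}(1) disqualifies a single point with unbounded partial sums from belonging to $V_f$.) The upshot --- every diamond of every \textit{SFT} inside $\bar Z_f$ satisfies $p'-q'\in f\cdot\mathbb{Z}[u^{\pm1}]$ --- is genuine structural information.

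The gap is exactly where you place it, and it is the whole difficulty. Blindness of all diamonds does not by itself cap entropy: you must rule out \textit{SFT}s assembled from blind diamonds with small $g'=(p'-q')/f$, and for those the pigeonhole/Sauer--Shelah argument you sketch (pinning $w_a^\circ\in W_f^\circ$ within $2c$ of $-z_a$ and using $\dim W_f^\circ\le m$) only works once some coefficient of $g'$ exceeds a threshold tied to the constant $c$ from Theorem~\ref{t:boundedness}(3). In the small-$\|g'\|_\infty$ regime you would need an effective lower bound on $\bigl|\sum_{i<N}\varepsilon_i\eta^i\bigr|$ that can be traded against $\log 2$ per block --- a quantitative discrepancy/Diophantine estimate for powers of a Salem conjugate on the unit circle --- and nothing in the paper supplies it. So: a correct reduction and a correctly identified obstruction, but not a proof, consistent with the statement remaining a conjecture.
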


Being almost sofic is a useful property for the purpose of data encoding (cf. \cite[p. 419]{Petersen}). Although the spaces $\bar{Z}_f$ may not share this property, they are still well-behaved in other respects. For example, their entropies are equal to the logarithmic growth rates of their periodic points by Theorem \ref{t:periodic} (in \cite{Petersen}, the author calls such shift spaces `periodically saturated').

\medskip Let mention another question about $\bar{Z}_f$.

	\begin{prob}
	\label{p:intrinsic}
If $f$ is irreducible and nonhyperbolic, is $(\bar{Z}_f,\bar{\sigma })$ intrinsically ergodic in the sense of \cite{Weiss} (i.e., is there a unique shift-invariant measure of maximal entropy)?
	\end{prob}

\subsection{Pseudocovers}

Pseudo-covers come in two flavours. According to Definition \ref{d:pseudocover}, a closed, shift-invariant subset $V\subset \ell ^\infty (\mathbb{Z},\mathbb{Z})$ is a pseudo-cover if $\xi ^*(V) + X_f^\circ = X_f$. However, the pseudo-cover $\bar{Z}_f$ in Theorem \ref{t:2} has the much stronger property that there exists a compact subset $K\subset X^\circ$ such that $\xi ^*(V) + K = X_f$. Let me call a pseudo-cover satisfying this stronger condition a \textit{strong pseudo-cover}. Strong pseudo-covers are much easier to handle than general pseudo-covers, since one can apply compactness arguments (as we did in Theorem \ref{t:cover} and Lemma \ref{l:minimal}). On the other hand, strong pseudo-covers appear to have quite a complicated structure. \textit{Can one find pseudo-covers which can be described more explicitly, e.g., sofic or almost sofic?}

\smallskip The following Problems \ref{p:pseudo} and \ref{p:funny} are aimed in this direction.

	\begin{prob}[Beta-shifts]
	\label{p:pseudo}
Let $f\in R_1$ be irreducible. For every $L\ge1$ we set
	\begin{equation}
	\label{eq:VL}
V_L =\{v\in \ell ^\infty (\mathbb{Z},\mathbb{Z}):0\le v_k <L\enspace \textup{for every}\enspace k\in \mathbb{Z}\},
	\end{equation}
and we put
	\begin{equation}
	\label{eq:VL*}
\smash{V_L^*=V_L\smallsetminus \bigcup\nolimits_{\{h\in \ell ^1(\mathbb{Z},\mathbb{Z}):h \succ 0\}}(V_L+f(\bar{\sigma })h)}
	\end{equation}
as in \eqref{eq:W*}.

If $f$ is hyperbolic, and if $L$ is sufficiently large so that $\xi (V_L)=X_f$, then $(V_L^*,\bar{\sigma })_\xi $ is an equal entropy symbolic cover of $(X_f,\alpha _f)$ which is actually sofic (Proposition \ref{p:W*}). The proof that $\xi (V_L^*)= X_f$ in \cite{S3} depends on a compactness argument which is not available if $f$ is nonhyperbolic.

If $f$ is nonhyperbolic, and if $L>2\|f\|_1$, then $V_L$ is still a pseudo-cover of $X_f$ by Example \ref{e:pseudocover}, but it is not at all clear how big the space $V_L^*$ is (cf. \cite{S4}).

If $f$ is a Salem polynomial (like the polynomial $f=u^4-u^3-u^2-u+1$ in Example \ref{e:nonexpansive} (1)), and if $\beta $ is the large root of $f$, then
	\begin{displaymath}
V_{L}^* \supset V_\beta
	\end{displaymath}
for every $L>\beta $, where $V_\beta $ is the beta-shift associated with $\beta $. In particular, $h(V_L^*)=h(V_\beta )= \log\,\beta =h(\alpha _f)$.
	\begin{itemize}\itemsep=0mm
	\item
Is the two-sided beta-shift $V_\beta $  a pseudo-cover of $X_f$?
	\item
In the special case where $f=u^4-u^3-u^2-u+1$, is the two-shift $\{0,1\}^\mathbb{Z}$ a pseudo-cover of $X_f$?
	\end{itemize}
These questions were part of the original motivation of the paper \cite{LiS2}.
	\end{prob}

	\begin{prob}
	\label{p:funny}
Let $f$ be one of the polynomials in Example \ref{e:nonexpansive} (2), e.g., $f=2u^2-u+2$. If $V_2=\{0,1\}^\mathbb{Z}$, then $V_2=V_2^*$ (cf. \eqref{eq:VL} -- \eqref{eq:VL*}) and $h(V_2) = \log2 = h(\alpha _f)$. \textit{Is $V_2$ a pseudocover of $X_f$?} For the polynomial $f=5u^2-6u+5$ in Example \ref{e:nonexpansive} the analogous question would be \textit{whether $V_5=V_5^*$ is an equal entropy pseudo-cover of $X_f$}.
	\end{prob}

	\begin{center}
\bigskip\textbf{References}
	\end{center}


\begin{thebibliography}
{99}

\bibitem{Adler} R.L. Adler, \textit{Symbolic dynamics and Markov partitions}, Bull. Amer. Math. Soc. \textbf{35} (1998), 1--56.

\bibitem{AW} R.L. Adler and B. Weiss, \textit{Entropy, a complete metric invariant of automorphisms of the torus}, Proc. Nat. Acad. Sci. U.S.A. \textbf{57} (1967), 1573--1576.

\bibitem{AW2} R.L. Adler and B. Weiss, \textit{Similarity of automorphisms of the torus}, Mem. Amer. Math. Soc., vol. 98, American Mathematical Society, Providence, R.I., 1970.

\bibitem{ARS} R.P. Anstee, L. R\'onyai, and A. Sali, \textit{Shattering News}, Graphs Combin. \textbf{18} (2002), 59--73.

\bibitem{AHFI} P. Arnoux, M. Furukado, E. Harriss and S. Ito, \textit{Algebraic numbers, free group automorphisms and substitutions on the plane}, Trans. Amer. Math. Soc. \textbf{363} (2011), 4651--4699.

\bibitem{Barge} M. Barge, \textit{The Pisot conjecture for $\beta $-substitutions}, Preprint, 2015 (\url{http://arxiv.org/pdf/1505.04408v2.pdf}).

\bibitem{Berg} K.R. Berg, \textit{Convolution of invariant measures, maximal entropy}, Math. Systems Theory \textbf{3} (1969), 146--150.

\bibitem{Bowen1} R. Bowen, \textit{Markov partitions for axiom A diffeomorphisms}, Amer. J. Math. \textbf{92} (1970), 725--747.

\bibitem{Bowen 2} R. Bowen, \textit{Markov partitions are not smooth}, Proc. Amer. Math. Soc. \textbf{71} (1978), 130--132.

\bibitem{BD} M. Boyle and T. Downarowicz, \textit{The entropy theory of symbolic extensions}, Invent. Math. \textbf{156} (2004), 119--161.

\bibitem{BFF} M. Boyle, D. Fiebig, and Ulf Fiebig, \textit{Residual entropy, conditional entropy and subshift covers}, Forum Math. \textbf{14} (2002), 713--757.



\bibitem{ES} M. Einsiedler and K. Schmidt, \textit{Markov partitions and homoclinic points of algebraic $\mathbb{Z}^d$-actions}, in: Dynamical Systems and Related Topics, Proc. Steklov Inst. Math., vol. 216, Interperiodica Publishing, Moscow, 1997, 259--279.

\bibitem{KV} R. Kenyon and A. Vershik, \textit{Arithmetic construction of sofic partitions of hyperbolic toral automorphisms}, Ergod. Th. \& Dynam. Sys. \textbf{18} (1998), 357--372.

\bibitem{Borgne} S. le Borgne, \textit{Un codage sofique des automorphismes hyperboliques du tore}, C. R. Acad. Sci. Paris S{\'e}r. I Math. \textbf{323} (1996), 1123--1128.

\bibitem{Lind 82} D. Lind, \textit{Ergodic group automorphisms are exponentially recurrent}, Israel J. Math. \textbf{41} (1982), 313--320.

\bibitem{Lind} D.A. Lind, \textit{Dynamical properties of quasihyperbolic toral automorphisms}, Ergod. Th. \& Dynam. Sys. \textbf{2} (1982), 49--68.

\bibitem{LS} D. Lind and K. Schmidt, \textit{Homoclinic points of algebraic $\mathbf{Z}^d$-actions}, J. Amer. Math. Soc. \textbf{12} (1999), 953--980.

\bibitem{LSV1} D. Lind, K. Schmidt and E. Verbitskiy, \textit{Entropy and growth rate of periodic points of algebraic $\mathbb{Z}^d$-actions}, in: Dynamical Numbers: Interplay between Dynamical Systems and Number Theory, ed. S. Kolyada, Yu. Manin, M. M\"{o}ller, P. Moree and T. Ward, Contemp. Math., vol. 532, American Mathematical Society, Providence, R.I., 2010, 195--211.

\bibitem{LSV2} D. Lind, K. Schmidt and E. Verbitskiy, \textit{Homoclinic points, atoral polynomials, and periodic points of algebraic $\mathbf{Z}^d$-actions}, Ergod. Th. \& Dynam. Sys. \textbf{33} (2013), 1060--1081.

\bibitem{LSW} D. Lind, K. Schmidt and T. Ward, \textit{Mahler measure and entropy for commuting automorphisms of compact groups}, Invent. Math. \textbf{101} (1990), 593--629.

\bibitem{Lind+Ward} D. Lind and T. Ward, \textit{Automorphisms of solenoids and p-adic entropy}, Ergod. Th. \& Dynam. Sys. \textbf{8} (1988), 411--419.

\bibitem{LiS1} E. Lindenstrauss and K. Schmidt, \textit{Invariant measures of nonexpansive group automorphisms}, Israel J. Math. \textbf{144} (2004), 29--60.

\bibitem{LiS2} E. Lindenstrauss and K. Schmidt, \textit{Symbolic representations of nonexpansive group automorphisms}, Israel J. Math. (2005), 227--266.

\bibitem{marcus} B, Marcus, \textit{A note on periodic points of toral automorphisms}, Monatsh. Math. \textbf{89} (1980), 121--129.

\bibitem{Misiurewicz} M. Misiurewicz, \textit{A short proof of the variational principle for a $\mathbb Z_+^N$-action on a compact space}, Asterisque \textbf{40} (1975), 147--157.

\bibitem{Pajor} A. Pajor, \textit{Sous-espaces $2_1^n$ des espaces de Banach}, Hermann, Paris, 1985.

\bibitem{Partha} K.R. Parthasarathy, \textit{Probability measures on metric spaces}, Academic Press, New York-London, 1967.

\bibitem{Petersen} K. Petersen, \textit{Chains, entropy, coding}, Ergod. Th. \& Dynam. Sys. \textbf{6} (1986), 415--448.

\bibitem{quas} A. Quas and T. Soo, \textit{Ergodic universality of some topological dynamical systems}, Trans. Amer. Math. Soc. (to appear) (\url{http://arxiv.org/pdf/1208.3501v3.pdf}).

\bibitem{DSAO} K. Schmidt, \textit{Dynamical Systems of Algebraic Origin}, Birkh\"auser Verlag, Basel-Berlin-Boston, 1995.

\bibitem{S2} K. Schmidt, \textit{Automorphisms of compact abelian groups and affine varieties}, Proc. London Math. Soc. \textbf{61} (1990), 480--496.

\bibitem{S3} K. Schmidt, \textit{Algebraic coding of expansive group automorphisms and two-sided beta-shifts}, Monatsh. Math. \textbf{129} (2000), 37--61.

\bibitem{S4} K. Schmidt, \textit{Quotients of $\ell^\infty(\mathbf{Z},\mathbf{Z})$ and symbolic covers of toral automorphisms.}, Amer. Math. Soc. Transl. \textbf{217} (2006), 223--246.

\bibitem{Sidorov} N. Sidorov, \textit{Bijective and general arithmetic codings for Pisot toral automorphisms}, J. Dynam. Cont. Sys. \textbf{7} (2001), 447--472.

\bibitem{SV} N. Sidorov and A. Vershik, \textit{Bijective arithmetic codings of the $2$-torus, and binary quadratic forms}, J. Dynam. Cont. Sys. \textbf{4} (1998), 365--400.

\bibitem{Sinai1} Ya.G. Sinai, \textit{Markov partitions and C-diffeomorphisms}, Functional Anal. Appl. \textbf{2} (1968), 61--82.

\bibitem{Sinai2} Ya.G. Sinai, \textit{Construction of Markov partitions}, Functional Anal. Appl. \textbf{2} (1968), 245--253.

\bibitem{Ver2} A. Vershik, \textit{The fibadic expansion of real numbers and adic transformations}, Preprint, Mittag-Leffler Institute, 1991/92.

\bibitem{Ver3} A. Vershik, \textit{Arithmetic isomorphism of hyperbolic toral automorphisms and sofic systems}, Functional Anal. Appl. \textbf{26} (1992), 170--173.


\bibitem{Walters} P. Walters, \textit{An introduction to ergodic theory}, Graduate Texts in Mathematics, vol. 79, Springer Verlag, Berlin-Heidelberg-New York, 1982.

\bibitem{Weiss} B. Weiss, \textit{Intrinsically ergodic systems}, Bull. Amer. Math. Soc. \textbf{76} (1970), 1266--1269.

\bibitem{Yuzvinskii} S.A. Yuzvinskii, \textit{Computing the entropy of a group of endomorphisms}, Siberian Math. J. \textbf{8} (1967), 172--178.

	\end{thebibliography}
	\end{document}